\documentclass[11pt,reqno]{amsart}
\usepackage{amsthm,amssymb,amsmath,soul,bropd,float,setspace,enumerate,ragged2e}
\usepackage[mathcal]{eucal}
\usepackage{color,verbatim}
\usepackage{hyperref}
\usepackage{tikz}
\setlength{\parindent}{0.25in}
\setlength{\parskip}{0.2cm}

\usepackage[top=1in, left=1in, right=1in, bottom=1in, marginpar=2cm]{geometry}

\def\Xint#1{\mathchoice
	{\XXint\displaystyle\textstyle{#1}}%
	{\XXint\textstyle\scriptstyle{#1}}%
	{\XXint\scriptstyle\scriptscriptstyle{#1}}%
	{\XXint\scriptscriptstyle\scriptscriptstyle{#1}}%
	\!\int}
\def\XXint#1#2#3{{\setbox0=\hbox{$#1{#2#3}{\int}$ }
		\vcenter{\hbox{$#2#3$ }}\kern-.6\wd0}}

\def\dashint{\Xint-}

\newtheorem{theorem}{Theorem}[section]
\newtheorem{lemma}[theorem]{Lemma}
\newtheorem{proposition}[theorem]{Proposition}
\newtheorem{corollary}[theorem]{Corollary}

\newtheorem{remark}[theorem]{Remark}

\newtheorem{definition}{Definition}[section]

\def\R{{\mathbb R}}

\def\cQ{{\mathcal Q}}

\def\Lace{\Delta}

\def\0{\varnothing}
\def\1{\left(}
\def\2{\right)}
\def\3{\left[}
\def\4{\right]}
\def\5{\left\{}
\def\6{\right\}}
\def\8{\infty}

\def\cc{\subset\subset}

\newcommand{\mres}{\mathbin{\vrule height 1.6ex depth 0pt width
		0.13ex\vrule height 0.13ex depth 0pt width 1.3ex}}

\begin{document}
	\vspace{1cm}
	\title[Semilinear Bernoulli-type problem]{The free boundary for a semilinear non-homogeneous Bernoulli problem}
	\author{Lili Du}
	\address{College of Mathematics and Statistics, Shenzhen University,Shenzhen 518061, P. R. China.}
	\email{dulili@szu.edu.cn}
	\thanks{The first author is supported by National Nature Science Foundation of China Grant 11971331, 12125102, and Sichuan Youth Science and Technology Foundation 2021JDTD0024.}
	\author{Chunlei Yang}
	\address[]{Department of Mathematics, Sichuan University, Chengdu 610064, P. R. China.}
	\email{yang\_chunlei@stu.scu.edu.cn}
\begin{abstract}
	In the classical homogeneous one-phase Bernoulli-type problem, the free boundary consists of a "regular" part and a "singular" part, as Alt and Caffarelli have shown in their pioneer work (\emph{J. Reine Angew. Math.}, 325, 105-144, 1981) that regular points are $C^{1,\gamma}$ in two-dimensions. Later, Weiss (\emph{J. Geom. Anal.}, 9, 317-326, 1999) first realized that in higher dimensions a critical dimension $d^{*}$ exists so that the singularities of the free boundary can only occur when $d\geqslant d^{*}$. 
	
	In this paper, we consider a non-homogeneous semilinear one-phase Bernoulli-type problem, and we show that the free boundary is a disjoint union of a regular and a singular set. Moreover, the regular set is locally the graph of a $C^{1,\gamma}$ function for some $\gamma\in(0,1)$. In addition, there exists a critical dimension $d^{*}$ so that the singular set is empty if $d<d^{*}$, discrete if $d=d^{*}$ and of locally finite $\mathcal{H}^{d-d^{*}}$ Hausdorff measure if $d>d^{*}$. As a byproduct, we relate the existence of viscosity solutions of a non-homogeneous problem to the Weiss-boundary adjusted energy, which provides an alternative proof to existence of viscosity solutions for non-homogeneous problems.
\end{abstract}
\maketitle
\section{Introduction and main results}
Free boundary problems are a particular type of boundary value problem where the domain is a part of the solution that is up to be determined. Typically, the classical \emph{one-phase Bernoulli-type problem} deals with the minimization problem
\begin{align}\label{Formula: AC1981 problem}
	\min\left\lbrace\int_{\Omega}|\nabla u|^{2}+Q^{2}(x)\chi_{\{u>0\}}\:dx\colon u\in H^{1}(\Omega),u=u^{0}\text{ on }S\right\rbrace.
\end{align}
Here $\Omega\subset\mathbb{R}^{d}$ ($d\geqslant2$) is a bounded open set and locally $\partial\Omega$ is a Lipschitz graph. $S\subset\partial\Omega$ is measurable and $\mathcal{H}^{d-1}(S)>0$. The Dirichlet data on $S$ is given by $u^{0}\in H^{1}(\Omega)$, $u^{0}\geqslant0$. The given force function $Q(x)\colon\Omega\to\mathbb{R}$ is assumed to be non-negative and measurable. $\chi_{\{u>0\}}$ denotes the indicator function of the set $\{u>0\}$. Minimizers of problem \eqref{Formula: AC1981 problem} are solutions to the following problem:
\begin{align}\label{Formula: Homogeneous one-phase Bernoulli}
	\Delta u=0\quad\text{ in }\quad\varOmega^{+}(u),\qquad u=0,\ \ |\nabla u|=Q(x)\quad\text{ on }\quad\partial\varOmega^{+}(u),
\end{align}
where $\varOmega^{+}(u):=\Omega\cap\{u>0\}$ and $\partial\varOmega^{+}(u):=\Omega\cap\partial\{u>0\}$. 

In this paper, we investigate a natural generalization of the above one-phase Bernoulli-type problem in the presence of a right-hand side which depends on the solution itself. Let $\mathbb{R}^{d}$, $\Omega$, $\partial\Omega$, $S$, $u^{0}$ and $Q(x)$ be the same mathematical objects as in \eqref{Formula: AC1981 problem} and \eqref{Formula: Homogeneous one-phase Bernoulli}. Let $F(t)\in C^{2,\beta}(\mathbb{R})$ be a function for some $\beta\in(0,1)$, and let us define
\begin{align}\label{Formula: f(t)}
	f(t):=-\tfrac{1}{2}F'(t).
\end{align} 
Consider the functional
\begin{align}\label{Formula: Functional J(u)}
	J(u)=\int_{\Omega}|\nabla u|^{2}+F(u)+Q^{2}(x)\chi_{\{u>0\}}\:dx
\end{align}
over the class of admissible functions
\begin{align}\label{Formula: Admissible functions K}
	\mathcal{K}:=\{u\in H^{1}(\Omega)\colon u\leqslant\Psi\text{ a.e. in }\Omega,\ \ u=u^{0}\text{ on }S\}.
\end{align}
Here $\Psi\in C^{0}(\bar{\Omega})\cap C^{2,\alpha}(\Omega)$ is a given function, satisfying
\begin{align}\label{Formula: Psi}
	\Lace\Psi+f(\Psi)\leqslant0\quad\text{ in }\Omega,\qquad\Psi>0\quad\text{ in }\Omega,\qquad 0\leqslant u^{0}\leqslant\Psi\quad\text{ on }S.
\end{align}
Observe that when the free boundary $\partial\varOmega^{+}(u)$ is smooth, the Euler-Lagrange equations of \eqref{Formula: Functional J(u)} are given by
\begin{align}\label{Formula: Semilinear problem}
	-\Delta u=f(u)\quad&\text{ in }\quad\varOmega^{+}(u),\qquad u=0,\ \ |\nabla u|=Q(x)\quad\text{ on }\quad\partial\varOmega^{+}(u).
\end{align}
Our main purposes of this article is to investigate the impact of non-zero right-hand side $f(u)$ on the structure and the regularity of the free boundary. This is because that the system \eqref{Formula: Semilinear problem} has numerous applications in physics and the right-hand side $f(u)$ has many physical explanations. For instance when $d=2$, the unknown variable $u$ can be regarded as the scalar stream function and the function $f(u)$ stands for vorticity strength and the system \eqref{Formula: Semilinear problem} is used for modeling the incompressible inviscid jet and cavity flow with general vorticity (\cite{CDW2017,CDZ2017}), and for the periodic water waves with non-zero vorticity (\cite{CS2004,V2008,VW2012,WZ2012}).

Let us shortly summarize the mathematical results relevant in the context of homogeneous one-phase problem \eqref{Formula: Homogeneous one-phase Bernoulli}. In their seminal paper \cite{AC1981}, Alt and Caffarelli derived the smoothness of the free boundary for weak solutions. After that, Caffarelli introduced the notion of viscosity solutions to the problem \eqref{Formula: Homogeneous one-phase Bernoulli} in a series of artworks \cite{C1987,C1989,C1988}, and proved the smoothness of the free boundary for a larger class of solutions. These approaches appears to be universal and were used to study quasilinear (even degenerate) Bernoulli-type problems \cite{ACF1984,DP2005}. This paper aims to extend the main theorem in the paper by Alt and Caffarelli to more general semilinear situations as well as the higher-dimensional case. However, our paper replace the core of that paper, which relies on a Weiss-type monotonicity formula and a viscosity approach developed by De Silva, which promises to be applicable to one-phase problems governed by more complex operators \cite{DY2023}.

It should be also worth noting that the appearance of singularities and the structure of the free boundary in higher dimensions ($d\geqslant3$) play a significant role in the study of minimal surfaces. The first far-reaching observation from this perspective was dedicated to Weiss \cite{W1999}, who revealed that for minimizers of the problem \eqref{Formula: AC1981 problem}, there is a critical dimension $d^{*}\geqslant3$ such that the singular set of the free boundary is empty if $d<d^{*}$, and is a discrete set of isolated points if $d=d^{*}$, and is a closed set of Hausdorff dimension $d-d^{*}$ if $d>d^{*}$. To the best of our knowledge, $d^{*}\in\{5,6,7\}$ and we refer readers to \cite{CJK2004,SJ2009,JS2015} for detail. In this article, we also study the properties of the singular set. We will prove a similar result as Weiss in \cite{W1999} for our semilinear case. The main obstacle here is that the blow-up limit of our problem appears not to minimize the original functional so that we can no longer follow directly the classical approach of Weiss. As a final twist, we add, is the precise value of $d^{*}$, which is still an open problem, even to the homogeneous problem \eqref{Formula: Homogeneous one-phase Bernoulli}.
\subsection{Assumptions, notations and definitions}
Let $F(t)\in C^{2,\beta}(\mathbb{R})$ be a convex function for some $\beta\in(0,1)$, satisfying $F''(t)\leqslant F_{0}$ for all $t\in\R$, and
\begin{align}\label{Formula: Assumption on F(t)}
	F(0)=0,\qquad -F_{0}\leqslant F'(t)\leqslant0\quad\text{ for }\quad t\leqslant0,
\end{align}
where $F_{0}\geqslant0$ is a given constant. Assume that $Q(x)\in C^{0,\beta}(\bar{\Omega})$ is a function which satisfies
\begin{align}\label{Formula: Assumption on Q(x)}
	0<Q_{\mathrm{min}}\leqslant Q(x)\leqslant Q_{\mathrm{max}}<+\infty\quad\text{ for each }\quad x\in\bar{\Omega}.
\end{align}
Here $Q_{\mathrm{min}}<Q_{\mathrm{max}}$ are two given positive constants. Note that in view of the assumptions we imposed on $F$, it is obvious that $f(t)\in C^{1,\beta}(\mathbb{R})$ defined in \eqref{Formula: f(t)} is function which satisfies
\begin{align}\label{Formula: Assumption on f(t)}
	0\leqslant f(t)\leqslant\frac{F_{0}}{2}\quad\text{ for }\quad t\leqslant0,\qquad-\frac{F_{0}}{2}\leqslant f'(t)\leqslant0\quad\text{ for }\quad t\in\mathbb{R}.
\end{align}
It should be worth noting that if $x^{0}\in\partial\varOmega^{+}(u)$ is a free boundary point, then by \eqref{Formula: Assumption on f(t)}, $f(u(x^{0}))=f(0)\geqslant0$ on $\partial\varOmega^{+}(u)$. This implies that the non-linear force term is non-negative on the free boundary. This is the main difference between the one considered in \cite{LT2015}, whose right-hand side is vanishing on the free boundary.

We further make some comments to our monotone assumptions \eqref{Formula: Assumption on F(t)} on $F(t)$. Firstly, $F(0)=0$ is quite natural since $F(t)$ is actually the primitive of $f(t)$ and $F(t):=-2\int_{0}^{t}f(s)\:ds$. $F'(t)\leqslant0$ for $t\leqslant0$ was applied to prove that minimizers are non-negative in $\Omega$. The convexity of $F''(t)\geqslant0$ is needed for applying the comparison principle. These assumptions are imposed because that all the results demonstrated in this paper are \emph{not the conditional regularity} of the free boundary and there are no apriori assumptions on the regularity of the solutions and topological properties of the free boundary. This is the main difference between our work and other non-homogeneous problems \cite{S2011}.

Throughout this paper $\mathbb{R}^{d}$ will be the usual Euclidean space of dimension $d$. We denote by $c_{d}$ or $C_{d}$ the constants depending only on the dimensions. For any $t\in\mathbb{R}$, we denote the \emph{positive} and \emph{negative} parts of $t$ by $t^{\pm}:=\max\{\pm t,0\}$, so that $t=t^{+}-t^{-}$ and $|t|=t^{+}+t^{-}$. The open $d$-dimensional ball with its center at $x^{0}=(x_{1}^{0},\dots,x_{d}^{0})\in\mathbb{R}^{d}$ of radius $r>0$ and volume $\omega_{d}r^{d}$ will be denoted by $B_{r}(x^{0}):=\{x\in\mathbb{R}^{d}\colon|x-x^{0}|<r\}$. If in particular $x^{0}=0$, we denote by $B_{r}:=B_{r}(0)$ for simplicity. The unit sphere of $d$-dimensional will be denoted by $\mathbb{S}^{d-1}$. If $u\colon B_{R}\to\mathbb{R}$ is a function expressed in polar coordinates as $u=u(\rho,\theta)$, then
\begin{align}\label{Formula: The spherical Laplacian}
    |\nabla u|^{2}=(\partial_{\rho}u)^{2}+\frac{1}{\rho^{2}}|\nabla_{\theta}u|^{2},\qquad\Delta u=\partial_{\rho\rho}u+\frac{d-1}{\rho}\partial_{\rho}u+\frac{1}{\rho^{2}}\Delta_{\mathbb{S}}u,
\end{align}
where $\nabla_{\theta}u$ is the sphere gradient and $\Delta_{\mathbb{S}}u$ is the spherical Laplacian. 

Given a set $E\subset\mathbb{R}^{d}$, we denote by $\chi_{E}$ the characteristic function of the set $E$. We denote by $D\subset\subset E$ the compact subsets $D$ of $E$. In the text we use the $d$-dimensional \emph{Lebesgue-measure} $|\cdot|$ and the $m$-dimensional \emph{Hausdorff measure} $\mathcal{H}^{m}$. Sometimes, $|\cdot|$ will also represent the absolute value, depending on the context. The \emph{Lebesgue density} for $E$ is defined to be the limit
\begin{align*}
	\vartheta:=\lim_{r\to0}\frac{|E\cap B_{r}(x^{0})|}{|B_{r}(x^{0})|},\quad\text{ where }\quad x^{0}=(x_{1}^{0},\ldots,x_{d}^{0})\in\mathbb{R}^{d}.
\end{align*}
It is easy to see that $\vartheta\in[0,1]$. For any $E\subset\mathbb{R}^{d}$ and $\vartheta\in[0,1]$, the set $E^{(\gamma)}$ will be the set of points at which $E$ has Lebesgue density equal to $\vartheta$, namely,
\begin{align}\label{Formula: Definition of E^{gamma}}
	E^{(\vartheta)}=\left\lbrace x^{0}=(x_{1}^{0},\ldots,x_{d}^{0})\in\mathbb{R}^{d}\colon\lim_{r\to0^{+}}\frac{|E\cap B_{r}(x^{0})|}{|B_{r}(x^{0})|}=\vartheta\right\rbrace.
\end{align}
Let $\partial_{\mathrm{red}}(E):=\{ x\in\mathbb{R}^{d}\colon\text{ there exists }\nu\in\partial B_{1}\text{ such that } r^{-d}\int_{B_{r}(x)}|\chi_{E}-\chi_{\{y\colon(y-x)\cdot\nu(x)<0\}}|\to0\text{ as }r\to0\}$ be the \emph{reduced boundary} of $E$.

We will use the notation $J(u,D)$ for any $D\subset\Omega$ to highlight the domain of the functional which was restricted to. Sometimes we use the notation $B_{1}^{+}(u):=B_{1}\cap\{u>0\}$ and $\partial B_{1}^{+}(u):=B_{1}\cap\partial\{u>0\}$. For each $r>0$ and $x^{0}\in\mathbb{R}^{d}$, we denote by $u_{x^{0},r}(x)$ and $u_{r}(x)$ the functions $u_{x^{0},r}(x):=\frac{1}{r}u(x^{0}+rx)$ and $u_{r}:=\frac{1}{r}u(rx)$, respectively. Let $\{r_{n}\}_{n=1}^{\infty}$ be a vanishing sequence of positive numbers, and we denote by $u_{n}(x):=u_{x^{0},r_{n}}(x)$ the \emph{blow-up sequence} for every $x\in B_{R}(x^{0})$ and $R>0$. We say that a function $u_{0}\colon\mathbb{R}^{d}\to\mathbb{R}$ is a \emph{blow-up limit} of $u$ at $x^{0}\in\Omega$ if for every $r_{n}\to0$ as $n\to\infty$ and for every $R>0$, $u_{n}$ converges uniformly to $u_{0}$ in $B_{R}$, that is, $\lim_{n\to\infty}\|u_{n}-u_{0}\|_{L^{\infty}(B_{R})}=0$. We say that $x^{0}\in\partial\varOmega^{+}(u)$ is a \emph{regular point} if there exists a blow-up limit $u_{0}$ of $u$ at $x^{0}$ which has the form
\begin{align*}
	u_{0}(x)=Q(x^{0})(x\cdot\nu)^{+},\quad\text{ for every }\quad x\in\mathbb{R}^{d},\quad\text{ and some }\quad\nu\in\mathbb{R}^{d}.
\end{align*}
The set of regular points will be denoted by $\partial_{\mathrm{reg}}\varOmega^{+}(u)$ and we define the \emph{singular part} of the free boundary as the complementary of $\partial_{\mathrm{reg}}\varOmega^{+}(u)$
\begin{align*}
	\partial_{\mathrm{sing}}\varOmega^{+}(u):=\partial\varOmega^{+}(u)\setminus\partial_{\mathrm{reg}}\varOmega^{+}(u).
\end{align*}
In the majority of this article, we work with the so-called \emph{minimizers}.
\begin{definition}[Minimizers]
	We say that $u\in\mathcal{K}$ is a \emph{minimizer} of $J$ in $\Omega$,  provided that
	\begin{align}\label{Definition: Minimizers}
		J(u)\leqslant J(v)\qquad\text{ for all }v\in\mathcal{K}.
	\end{align}
	In some contexts, $u$ is called an \emph{absolute minimizer} of $J$ in $\Omega$ if \eqref{Definition: Minimizers} is satisfied.
\end{definition}
\begin{remark}
	Most of our results in this paper can be generalized also for \emph{local minimizers}, for which \eqref{Definition: Minimizers} is satisfied for $v\in u+H_{0}^{1}(D)$ and for any $D\cc\Omega$. 
\end{remark}
The notion of viscosity solutions was first systematically introduced by Lions and his cooperators in \cite{CIL1992}, who established an integrated theory on the existence and uniqueness theory for fully-nonlinear partial differential equations. The viscosity approach to the associated free boundary problem was later developed by Caffarelli. We now define the notion of viscosity solutions to our problem and we first introduce the concept of "touch".
\begin{definition}[Touch]
	Given $u$, $\phi\in C^{0}(\Omega)$, we say that $\phi$ \emph{touches} $u$ from below (above) at $x^{0}\in\Omega$ if $u(x^{0})=\phi(x^{0})$ and $u(x)\geqslant\phi(x)$ ($u(x)\leqslant\phi(x)$) in a small neighborhood of $x^{0}$.
\end{definition}
With the aid of "touch", we formally define viscosity solutions of problem \eqref{Formula: Semilinear problem}.
\begin{definition}[Viscosity solutions]\label{Definition of viscosity solutions}
	Let $u\in C^{0}(\Omega)$ be a non-negative function in $\Omega$. Then $u$ is a solution to the problem \eqref{Formula: Semilinear problem} in the sense of \emph{viscosity}, provided that for every $x^{0}\in\overline{\varOmega^{+}(u)}$, the following does hold.
	\begin{enumerate}
		\item If $x^{0}\in\varOmega^{+}(u)$ and
		\begin{itemize}
			\item if $\phi\in C^{2}(\varOmega^{+}(u))$ touches $u$ from below at $x^{0}$, then $\Delta\phi(x^{0})+f(\phi(x^{0}))\leqslant0$,
			\item if $\phi\in C^{2}(\varOmega^{+}(u))$ touches $u$ from above at $x^{0}$, then $\Delta\phi(x^{0})+f(\phi(x^{0}))\geqslant0$.
		\end{itemize}
		\item If $x^{0}\in\partial\varOmega^{+}(u)$ and
		\begin{itemize}
			\item if $\phi\in C^{2}(\Omega)$ touches $u$ from below at $x^{0}$, then $|\nabla\phi(x^{0})|\leqslant Q(x^{0})$,
			\item if $\phi\in C^{2}(\Omega)$ touches $u$ from above at $x^{0}$, then $|\nabla\phi(x^{0})|\geqslant Q(x^{0})$.
		\end{itemize}
	\end{enumerate}
\end{definition}
\subsection{Main results and plan of the paper}
The main results in this paper state that the free boundary of the problem \eqref{Formula: Semilinear problem} can be decomposed into a disjoint union of a regular set and a singular set (presented in Theorem \ref{Theorem: Main theorem1}), the regular set of the free boundary is the graph of a $C^{1,\gamma}$ function locally, while for the singular set, there exists a critical dimension $d^{*}\in\{5,6,7\}$ so that the singularities can only occur when $d>d^{*}$ (presented in Theorem \ref{Theorem: Main theorem2}).

In proving these two results we need an array of technique tools including a Weiss-type monotonicity formula (presented in Proposition \ref{Proposition: Weiss-type monotonicity formula}), optimal linear growth of solutions (presented in Proposition \ref{Proposition: Optimal linear growth}) and an optimal linear non-degeneracy property (presented in Proposition \ref{Proposition: non-degeneracy of minimizers}). In addition, due to the Lipschitz continuity of minimizers (presented in Proposition \ref{Proposition: Lipschitz regularity of minimizers}), a blow-up analysis (presented in Proposition \ref{Proposition: Properties of blow-up limits}) was derived for every free boundary point.

The monotonicity formula for a homogeneous one-phase Bernoulli-type problem \eqref{Formula: Assumption on f(t)} was first obtained in \cite{W1999}. There special cases were considered for $f(t)\equiv0$. In this paper, we generalize it to more general settings. It should be noted that in Proposition \ref{Proposition: second monotonicity formula} we establish a monotonicity formula relating the rescaling $u_{x^{0},r}$ with its one-homogeneous extension $z_{x^{0},r}$ (See \eqref{Formula: M(21)} for the precise definition).
\begin{theorem}[Structure of the free boundary]\label{Theorem: Main theorem1}
	Let $J$ and $\mathcal{K}$ be given as in \eqref{Formula: Functional J(u)} and \eqref{Formula: Admissible functions K}, respectively. Then there is a solution to the minimization problem $\min_{v\in\mathcal{K}}J(v)$. Every solution $u$ is a Lipschitz viscosity solution to the problem \eqref{Formula: Semilinear problem} in the sense of Definition \ref{Definition of viscosity solutions}.
	
	The set $\varOmega^{+}(u)$ has locally finite perimeter in $\Omega$ and the free boundary $\partial\varOmega^{+}(u)$ can be decomposed into a disjoint union of a regular set $\partial_{\mathrm{reg}}\varOmega^{+}(u)$ and a singular set $\partial_{\mathrm{sing}}\varOmega^{+}(u)$. Namely,   
	\begin{align*}
		\partial\varOmega^{+}(u)=\bigcup_{\tfrac{1}{2}\leqslant\gamma<1}(\varOmega^{+}(u))^{(\vartheta)},
	\end{align*}
    where $(\varOmega^{+}(u))^{(\vartheta)}$ is defined in the sense of \eqref{Formula: Definition of E^{gamma}}. The regular set consists of points in which the Lebesgue density of the set $\varOmega^{+}(u)$ is precisely $\tfrac{1}{2}$, and the singular set consists of points in which the Lebesgue density of $\varOmega^{+}(u)$ is strictly between $\tfrac{1}{2}$ and $1$.
	\begin{align*}
		\partial_{\mathrm{reg}}\varOmega^{+}(u)=\left(\varOmega^{+}(u)\right)^{(1/2)}\qquad\text{ and }\qquad\partial_{\mathrm{sing}}\varOmega^{+}(u)=\bigcup_{\tfrac{1}{2}<\vartheta<1}\left(\varOmega^{+}(u)\right)^{(\vartheta)}.
	\end{align*}	
    Moreover, for every $\vartheta\in[\tfrac{1}{2},1)$ and every blow-up limit $u_{0}$, we have
    \begin{align*}
    	(\varOmega^{+}(u))^{(\vartheta)}=\left\lbrace x\in\partial\varOmega^{+}(u)\colon\vartheta=\frac{|B_{1}\cap\{u_{0}>0\}|}{|B_{1}|}\right\rbrace.
    \end{align*}
\end{theorem}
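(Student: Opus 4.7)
The plan is to handle the four assertions of the theorem in sequence, using the tool-kit of propositions advertised in Section~1.2 (existence of Lipschitz minimizers, non-degeneracy, Weiss-type monotonicity, and properties of blow-up limits) as black boxes. For the first assertion, I would argue existence by the direct method: the class $\mathcal{K}$ is non-empty because $\Psi\in\mathcal{K}$; truncating a minimizing sequence $\{u_k\}$ by $\min\{u_k^+,\Psi\}$ does not increase $J$ (the $F$-part uses $F'(t)\leqslant 0$ for $t\leqslant 0$ together with the supersolution inequality $\Delta\Psi+f(\Psi)\leqslant 0$, and the positivity term visibly drops), giving an $H^1$-bounded sequence. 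Weak $H^1$ lower semicontinuity for $\int|\nabla u|^2+F(u)$ combined with $\liminf$-lower semicontinuity of $\int Q^2\chi_{\{u>0\}}$ (valid after a.e. convergence and Fatou's lemma, on account of the uniform bound $0\leqslant u_k\leqslant\Psi$) produces a minimizer $u$. Lipschitz continuity is Proposition~\ref{Proposition: Lipschitz regularity of minimizers}; inside $\varOmega^+(u)$ the Euler--Lagrange equation $-\Delta u=f(u)$ holds in the weak sense, hence classically by standard elliptic regularity (since $f\in C^{1,\beta}$), which in particular makes $u$ a viscosity solution at interior positivity points. The free boundary condition $|\nabla u|=Q$ in the viscosity sense is obtained by the usual two-sided test-function argument: a quadratic touching from above would force $|\nabla\phi(x^0)|\geqslant Q(x^0)$ via comparison with a suitable inward-pushed competitor that lowers $J$ unless the Bernoulli slope is attained (here one invokes optimal linear growth, Proposition~\ref{Proposition: Optimal linear growth}, and non-degeneracy, Proposition~\ref{Proposition: non-degeneracy of minimizers}).

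For the structure of the free boundary, fix $x^0\in\partial\varOmega^+(u)$ and consider the blow-up sequence $u_n:=u_{x^0,r_n}$ with $r_n\to 0^+$. The Lipschitz bound and non-degeneracy extract a subsequence $u_n\to u_0$ locally uniformly, with $u_0$ Lipschitz, non-negative and non-degenerate (Proposition~\ref{Proposition: Properties of blow-up limits}). The rescaled equation reads $-\Delta u_n=r_n f(r_n u_n)$ inside the positivity set, whose right-hand side vanishes in $L^\infty_{\rm loc}$ because $|u_n|\leqslant L$ and $|f|$ is bounded; hence $u_0$ is harmonic in $\{u_0>0\}$. The Weiss-type monotonicity formula (Proposition~\ref{Proposition: Weiss-type monotonicity formula}), applied on both sides and passed to the limit, forces $u_0$ to be one-homogeneous. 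Moreover, a competitor-exchange argument converts the minimality of $u_n$ for $J$ into minimality of $u_0$ for the \emph{homogeneous} one-phase functional with constant weight $Q(x^0)$: indeed, given any competitor $v_0$ for $u_0$ on $B_R$, one lifts it to a competitor $v_n$ for $u_n$ by linear interpolation in an annulus, uses the minimality of $u_n$, and absorbs the $F$-term contribution (which scales like $r_n^2$) together with the uniform convergence $Q(x^0+r_n\,\cdot)\to Q(x^0)$.

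Because $u_0$ is in this way a one-homogeneous minimizer of the homogeneous Alt--Caffarelli functional with constant weight $Q(x^0)$, the Alt--Caffarelli classification dichotomy applies: either $u_0(x)=Q(x^0)(x\cdot\nu)^+$ for some $\nu\in\mathbb{S}^{d-1}$, in which case $|B_1\cap\{u_0>0\}|/|B_1|=\tfrac{1}{2}$, or else $u_0$ is a non-planar one-homogeneous global minimizer, whose positivity set has Lebesgue density strictly greater than $\tfrac{1}{2}$ (the strict inequality is exactly the content of the classical planarity-at-flatness theorem, and the upper bound $<1$ comes from non-degeneracy preventing $u_0$ from being identically positive on $B_1$). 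Non-degeneracy also upgrades uniform convergence to $L^1_{\rm loc}$ convergence $\chi_{\{u_n>0\}}\to\chi_{\{u_0>0\}}$, so that
\[
\lim_{n\to\infty}\frac{|\varOmega^+(u)\cap B_{r_n}(x^0)|}{|B_{r_n}|}=\lim_{n\to\infty}\frac{|B_1\cap\{u_n>0\}|}{|B_1|}=\frac{|B_1\cap\{u_0>0\}|}{|B_1|}.
\]
Since the left-hand side is a property of $\varOmega^+(u)$ at $x^0$ independent of the chosen subsequence, the density $\vartheta(x^0)\in[\tfrac{1}{2},1)$ is well-defined, equals $|B_1\cap\{u_0>0\}|/|B_1|$ for every blow-up limit, and realises exactly $\tfrac{1}{2}$ on regular points and values in $(\tfrac{1}{2},1)$ on singular points. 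Local finite perimeter of $\varOmega^+(u)$ is then a consequence of the uniform density lower bound $\vartheta\geqslant\tfrac{1}{2}$ combined with the Weiss energy bound, via the standard covering-plus-Federer-type argument, since the free boundary is sandwiched between two-sided density estimates.

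The principal obstacle I anticipate is the promotion of $u_0$ from ``one-homogeneous solution of the homogeneous limit equation'' to a \emph{bona fide minimizer} of the homogeneous Alt--Caffarelli functional with constant weight $Q(x^0)$, because only minimality unlocks the Alt--Caffarelli classification. The semilinear term $F(u)$ is not variationally invariant under the $(x,u)\mapsto(x/r_n,u/r_n)$ rescaling, and the non-constant weight $Q(x)$ also has to be frozen; the competitor-exchange sketched above needs $L^1$-convergence of the positivity indicator, which in turn relies crucially on non-degeneracy. Getting this single implication correctly is the technical heart of the proof — everything else (existence, viscosity, density identification, finite perimeter) is essentially an assembly of the cited propositions with the classical homogeneous theory.
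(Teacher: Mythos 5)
Your overall architecture matches the paper's: existence by the direct method, Lipschitz regularity and the viscosity property from the dedicated propositions, blow-up analysis plus Weiss monotonicity to get one-homogeneous blow-up limits that minimize the frozen homogeneous functional $J_{0}$ with constant weight $Q(x^{0})$, and then identification of the regular/singular sets through the Lebesgue density. The step you single out as the technical heart --- promoting $u_{0}$ to a genuine minimizer of the Alt--Caffarelli functional with frozen weight --- is indeed handled in the paper exactly as you sketch (competitor exchange with a cut-off in an annulus, absorption of the $F$-term which scales like $r_{n}^{2}$, and $L^{1}_{\mathrm{loc}}$ convergence of the indicator from non-degeneracy; this is Proposition \ref{Proposition: Properties of blow-up limits}(5)).

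There is, however, one genuine gap in your assembly: the well-definedness of $\vartheta(x^{0})$. You compute the subsequential limit $\lim_{n}|\varOmega^{+}(u)\cap B_{r_{n}}(x^{0})|/|B_{r_{n}}|=|B_{1}\cap\{u_{0}>0\}|/|B_{1}|$ along a blow-up sequence and then declare that "the left-hand side is a property of $\varOmega^{+}(u)$ at $x^{0}$ independent of the chosen subsequence". That is circular: a priori different vanishing sequences $r_{n}$ produce different blow-up limits with different positivity densities, and the existence of the full limit \eqref{Formula: S(2)} is precisely what must be proved (it is part of the statement that $x^{0}$ belongs to some $(\varOmega^{+}(u))^{(\vartheta)}$). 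The paper closes this by first proving that $\lim_{r\to0^{+}}W(u,r;Q)$ exists (Proposition \ref{Proposition: Homogeneity of blow-up limits}(1), a consequence of the almost-monotonicity of $W$ after adding $Cr+CQ_{\mathrm{max}}\rho(r)$), and then observing that along any blow-up sequence $W(u_{n};Q(r_{n}x))\to Q^{2}(x^{0})\omega_{d}\cdot|B_{1}\cap\{u_{0}>0\}|/|B_{1}|$; since the left-hand limit is subsequence-independent, so is the density. You already have the Weiss formula in hand, so the fix is short, but as written the argument does not establish that the free boundary is exhausted by the density sets $(\varOmega^{+}(u))^{(\vartheta)}$. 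Two smaller misattributions: the inequality $\vartheta\geqslant\tfrac{1}{2}$ with equality only for half-plane blow-ups does not come from a planarity-at-flatness theorem but from the spectral fact that $u_{0}|_{\partial B_{1}}$ solves $-\Delta_{\mathbb{S}}u_{0}=(d-1)u_{0}$, forcing $\mathcal{H}^{d-1}(\partial B_{1}^{+}(u_{0}))\geqslant\tfrac{d\omega_{d}}{2}$ with rigidity in the equality case; and locally finite perimeter is obtained in the paper not from density estimates and a covering argument but from the representation theorem for the Radon measure $\Delta u+f(u)\chi_{\{u>0\}}$, which gives $\mathcal{H}^{d-1}(D\cap\partial\{u>0\})<\infty$ directly.
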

\begin{theorem}[Regularity of the regular set and the dimension for the singular set]\label{Theorem: Main theorem2}
	Let $u\in\mathcal{K}$ be a minimizer of $J$ in $\Omega$, then the regular part of the free boundary $\partial_{\mathrm{reg}}\varOmega^{+}(u)$ is an open subset of $\partial\varOmega^{+}(u)$ and is locally a graph of a $C^{1,\gamma}$ function for $\gamma\in(0,\beta)$. The singular part of the free boundary $\partial_{\mathrm{sing}}\varOmega^{+}(u)$ is a closed subset of $\partial\varOmega^{+}(u)$ and
	\begin{align*}
		\mathcal{H}^{d-1}(\partial_{\mathrm{sing}}\varOmega^{+}(u))=0.
	\end{align*} 
    Moreover, there exists $d^{*}\in\{5,6,7\}$ such that
	\begin{itemize}
		\item If $d<d^{*}$, then $\partial_{\mathrm{sing}}\varOmega^{+}(u)=\varnothing$ and therefore all free boundary points are regular and $\partial\varOmega^{+}(u)=\partial_{\mathrm{reg}}\varOmega^{+}(u)$.
		\item If $d=d^{*}$, then the singular set $\partial_{\mathrm{sing}}\varOmega^{+}(u)$ contains at most a finite number of isolated points,
		\item If $d>d^{*}$, then the Hausdorff dimension of $\partial_{\mathrm{sing}}\varOmega^{+}(u)$ is less than $d-d^{*}$.
	\end{itemize}
\end{theorem}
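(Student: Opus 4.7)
The proof splits naturally into two parts---regularity of the regular set and dimension of the singular set---both relying on the blow-up analysis of Proposition~\ref{Proposition: Properties of blow-up limits} and the Weiss-type monotonicity formula of Proposition~\ref{Proposition: Weiss-type monotonicity formula}. For the regular part, at any $x^{0}\in\partial_{\mathrm{reg}}\varOmega^{+}(u)$ some blow-up is a half-plane $Q(x^{0})(x\cdot\nu)^{+}$, whose Weiss density equals the universal value $\tfrac{1}{2}\omega_{d}Q^{2}(x^{0})$. Upper semicontinuity of the boundary-adjusted Weiss density in $x^{0}$, combined with continuity of $Q$ and the spectral gap separating half-planes from all other $1$-homogeneous minimizers of the limiting functional, forces every nearby free boundary point to be regular as well, so $\partial_{\mathrm{reg}}\varOmega^{+}(u)$ is open. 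To upgrade to $C^{1,\gamma}$, I would invoke De Silva's improvement-of-flatness scheme in the viscosity setting, as already implemented for the semilinear problem in \cite{DY2023}: since $u$ is a Lipschitz viscosity solution of $-\Delta u=f(u)$ with bounded Hölder right-hand side, the partial Harnack inequality and harmonic replacement go through with $f(u)$ absorbed as a lower-order perturbation; iterating flatness at dyadic scales then yields $C^{1,\gamma}$ regularity of $\partial\varOmega^{+}(u)$ at every regular point, for any $\gamma\in(0,\beta)$.

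For the dimension of the singular set, the decisive preparatory step is the identification of blow-up limits with classical Alt--Caffarelli objects. For $v=u_{x^{0},r}$, the change of variables $y=x^{0}+rx$ gives
\begin{align*}
r^{-d}J(u,B_{rR}(x^{0}))=\int_{B_{R}}\bigl[|\nabla v|^{2}+F(rv)+Q^{2}(x^{0}+rx)\chi_{\{v>0\}}\bigr]\,dx.
\end{align*}
Since $F(0)=0$ and $v$ is uniformly bounded by Proposition~\ref{Proposition: Lipschitz regularity of minimizers}, $F(rv)\to 0$ uniformly on $B_{R}$; continuity of $Q$ converts the last term to $Q^{2}(x^{0})\chi_{\{v>0\}}$ in the limit; and the obstacle constraint $v\leqslant r^{-1}\Psi(x^{0}+rx)$ becomes vacuous because $\Psi(x^{0})>0$. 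A standard competitor argument, combined with the non-degeneracy of Proposition~\ref{Proposition: non-degeneracy of minimizers} to pass to the limit in $\chi_{\{v_{n}>0\}}$, then shows that any blow-up limit $u_{0}$ is an absolute minimizer of the homogeneous Alt--Caffarelli functional \eqref{Formula: AC1981 problem} with constant density $Q(x^{0})$; the Weiss monotonicity formula additionally forces $u_{0}$ to be $1$-homogeneous.

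With this identification in hand, the stratification is the classical Federer dimension-reduction scheme. Let $d^{*}$ denote the smallest dimension admitting a non-planar $1$-homogeneous minimizer of the homogeneous Alt--Caffarelli functional; by the results of \cite{CJK2004, JS2015}, $d^{*}\in\{5,6,7\}$. In $d<d^{*}$ every $1$-homogeneous global minimizer is a half-plane, so $\partial_{\mathrm{sing}}\varOmega^{+}(u)=\varnothing$; in $d=d^{*}$, Federer reduction together with upper semicontinuity of the Weiss density and compactness of blow-ups yields a discrete, locally finite singular set; in $d>d^{*}$ the same argument gives the bound that the Hausdorff dimension of $\partial_{\mathrm{sing}}\varOmega^{+}(u)$ is at most $d-d^{*}$. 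Closedness of $\partial_{\mathrm{sing}}\varOmega^{+}(u)$ follows from openness of the regular set, and $\mathcal{H}^{d-1}(\partial_{\mathrm{sing}}\varOmega^{+}(u))=0$ is automatic since $d^{*}\geqslant 5$.

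The principal obstacle is the blow-up identification above: the rescaled energies are \emph{not} equal to the Alt--Caffarelli energy of $v$, so one must simultaneously arrange passage to the limit in the $H^{1}$-convergence of $v_{n}$, the uniform smallness of $F(r_{n}v_{n})$, the continuity of $Q$, the disappearance of the obstacle, and the set convergence of $\{v_{n}>0\}$. Once this is secured, the dimension reduction follows the well-oiled path of Weiss, Caffarelli--Jerison--Kenig, and Jerison--Savin and yields exactly the critical dimensions $d^{*}\in\{5,6,7\}$ inherited from \eqref{Formula: AC1981 problem}.
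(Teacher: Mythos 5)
Your proposal is correct and takes essentially the same route as the paper: both rely on Proposition~\ref{Proposition: Properties of blow-up limits} to identify blow-up limits as one-homogeneous global minimizers of the constant-coefficient Alt--Caffarelli functional $J_{0}$, on De Silva's improvement-of-flatness for viscosity solutions (after Proposition~\ref{Proposition: Minimizers are viscosity solutions}) to obtain $C^{1,\gamma}$ regularity near regular points, and on a Federer dimension-reduction argument keyed to Definition~\ref{Definition: d*} for the stratification of the singular set. The only small divergence is that you derive openness of $\partial_{\mathrm{reg}}\varOmega^{+}(u)$ from upper semicontinuity of the Weiss density together with the density gap of Proposition~\ref{Proposition: Density estimates}, whereas the paper obtains it as a byproduct of the $C^{1,\gamma}$ graph structure near regular points (Proposition~\ref{Proposition: Regularity of regular points}); both are standard and equivalent here.
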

\section{Preliminaries}\label{Section: perliminaries}
In this section, we establish some fundamental facts about minimizers of $J$ in $\mathcal{K}$, including the non-negativity and the governing equation (weak sense) for $u$ in $\Omega$. Note that since $u^{0}\in H^{1}(\Omega)$, it is easy to check that $u^{0}\in\mathcal{K}$ and thus $\mathcal{K}\neq\varnothing$. Therefore, the existence of minimizers can be obtained by a compactness argument as in \cite[Lemma 3.1]{AC1981}.
\begin{proposition}[H\"{o}lder regularity]\label{Proposition: Minimizers are Holder continuous}
	Let $u\in\mathcal{K}$ be a minimizer of $J$ in $B_{r}(x^{0})\subset\Omega$, then there exists a constant $C$, so that
	\begin{align*}
		|u(x)-u(y)|\leqslant C|x-y|\log\left(\frac{1}{|x-y|}\right)\quad\text{ in }B_{r/2}(x^{0}).
	\end{align*} 
	In particular, $u\in C^{0,\alpha}(B_{r/2}(x^{0}))$ for some $\alpha\in(0,1)$.
\end{proposition}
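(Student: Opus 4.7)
The plan is to adapt the classical near-Lipschitz argument of Alt-Caffarelli \cite{AC1981} to the semilinear setting. As preliminaries I would first establish the \emph{a priori} bounds $0\ll u\ll\|\Psi\|_{L^{\infty}(\Omega)}$ in $\Omega$ and derive the weak Euler-Lagrange relations. The upper bound is built into $\mathcal{K}$; for the lower bound, $F'(t)\ll 0$ for $t\ll 0$ in \eqref{Formula: Assumption on F(t)} makes $u^{+}$ a valid competitor with $J(u^{+})\ll J(u)$, forcing $u\gg 0$. Perturbing with $\phi\in C_{c}^{\infty}(\varOmega^{+}(u))$ yields $-\Delta u=f(u)$ weakly in $\varOmega^{+}(u)$, while perturbations $u+\e\eta$ with $\eta\gg 0$ compatible with the obstacle give $-\Delta u\gg f(u)$ globally in $\Omega$ as distributions. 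Since $u$ is bounded, so is $f(u)$, and in particular $\Delta u\ll M$ in $\Omega$ as a distribution for some constant $M$.

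Next, fix $B_{r}(x^{0})\cc\Omega$ with $r$ small enough that $\inf_{B_{r}}\Psi\gg\|u\|_{L^{\infty}}$, and let $h$ be the harmonic extension of $u|_{\partial B_{r}}$ into $B_{r}$. The maximum principle yields $0\ll h\ll\|\Psi\|_{L^{\infty}}$, so the function agreeing with $h$ on $B_{r}$ and with $u$ outside belongs to $\mathcal{K}$. The minimality inequality combined with the orthogonality identity $\int_{B_{r}}|\nabla u|^{2}-\int_{B_{r}}|\nabla h|^{2}=\int_{B_{r}}|\nabla(u-h)|^{2}$ (from $u-h\in H_{0}^{1}(B_{r})$ and harmonicity of $h$) and the uniform boundedness of $F$ and $Q$ delivers the Caccioppoli-type estimate
\begin{equation*}
	\int_{B_{r}}|\nabla(u-h)|^{2}\,dx\ll C\,|B_{r}\cap\{u=0\}|\ll Cr^{d}.
\end{equation*}

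Setting $w=u-h$, I would upgrade this $L^{2}$-gradient control to the pointwise bound $\|w\|_{L^{\infty}(B_{r/2})}\ll Cr$. The upper bound $w\ll\tfrac{M}{2d}r^{2}$ is immediate by comparing with the auxiliary function $\tfrac{M}{2d}(|x-x^{0}|^{2}-r^{2})$, whose Laplacian $M$ dominates $\Delta w$ and which vanishes on $\partial B_{r}$. The matching lower bound requires a De Giorgi truncation applied to the level sets of $w^{-}=(h-u)^{+}\in H_{0}^{1}(B_{r})$, exploiting the Dirichlet control above. Combining with the harmonic oscillation decay $\osc_{B_{r/2}}h\ll\gamma\osc_{B_{r}}h$ for some universal $\gamma\in(0,1)$ coming from Harnack, one obtains $\osc_{B_{r/2}}(u)\ll\gamma\osc_{B_{r}}(u)+Cr$. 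Iterating along the dyadic scales $r_{k}=2^{-k}r_{0}$ and summing the resulting geometric-arithmetic series produces the claimed log-Lipschitz modulus $|u(x)-u(y)|\ll C|x-y|\log(1/|x-y|)$ for $x,y\in B_{r/2}(x^{0})$. The final $C^{0,\alpha}$ conclusion then follows from $t\log(1/t)\ll C_{\alpha}t^{\alpha}$ for every $\alpha\in(0,1)$ and small $t$.

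The main obstacle I anticipate is the $L^{\infty}$ lower bound on $w$: the singular part of the measure $\Delta u$ concentrated on $\partial\varOmega^{+}(u)$ carries the wrong sign, so $w$ is not superharmonic and a naive maximum principle cannot deliver the matching bound. One must carefully combine the Dirichlet control with a Moser or De Giorgi iteration on the superlevel sets of $w^{-}$, and then tune the iteration constant $\gamma$ against the linear error $Cr$ in the oscillation decay so that the geometric-arithmetic sum produces exactly the critical $r\log(1/r)$ behavior rather than a pure Hölder bound; this is the technical core of the argument.
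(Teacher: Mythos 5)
Your overall strategy is the paper's: replace $u$ in $B_{r}$ by a competitor solving an equation, extract the Dirichlet bound $\int_{B_{r}}|\nabla(u-\mathrm{repl.})|^{2}\leqslant Cr^{d}$ from minimality, and then run the Alt--Caffarelli--Friedman iteration. But three steps do not hold up as written. First, admissibility of the harmonic replacement: membership in $\mathcal{K}$ requires $h\leqslant\Psi$ \emph{pointwise}, and $h\leqslant\|\Psi\|_{L^{\infty}}$ does not give this; your hypothesis $\inf_{B_{r}}\Psi\geqslant\|u\|_{L^{\infty}}$ cannot be arranged for an arbitrary ball $B_{r}(x^{0})\subset\Omega$ (it fails whenever $\sup_{\partial B_{r}}u>\inf_{B_{r}}\Psi$, e.g.\ when $u$ is close to a non-constant $\Psi$). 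The paper avoids this by taking the \emph{semilinear} replacement $\Delta v+f(v)=0$ in $B_{r}$, $v=u$ on $\partial B_{r}$: since $\Psi$ is a supersolution of the same operator and $f'\leqslant0$, the comparison principle gives $v\leqslant\Psi$ directly --- this is precisely where the convexity of $F$ enters. Second, the claim that $\Delta u\leqslant M$ as a distribution is false: $\Delta u=-f(u)\chi_{\{u>0\}}+q_{u}\mathcal{H}^{d-1}\mres\partial\{u>0\}$ carries a positive singular part on the free boundary, so it admits no distributional upper bound --- a fact you yourself invoke two sentences later. The correct one-sided information is $\Delta u+f(u)\geqslant0$, obtained from \emph{inward} perturbations $u-\varepsilon\xi$ with $\xi\geqslant0$ (outward perturbations enlarge $\{u>0\}$ and may violate the obstacle, so they yield nothing clean). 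Consequently the ``immediate'' direction is the \emph{upper} bound $u-h\leqslant Cr^{2}$, coming from near-subharmonicity of $u-h$; your comparison function with Laplacian $+M$ under the (false) hypothesis $\Delta w\leqslant M$ would produce a lower bound, so the signs in this step must be redone.

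Third, and most substantively, the concluding oscillation iteration cannot produce the stated modulus. For harmonic $h$ one only has $\osc_{B_{\lambda r}}h\leqslant C_{d}\lambda\osc_{B_{r}}h$ with a dimensional constant $C_{d}>1$, so the recursion $\osc_{B_{\lambda r}}u\leqslant\gamma\osc_{B_{r}}u+Cr$ is available only with $\gamma=C_{d}\lambda>\lambda$; iterating gives $\osc_{B_{\rho}}u\leqslant C\rho^{1-\log C_{d}/\log(1/\lambda)}$, i.e.\ $C^{0,\alpha}$ for every $\alpha<1$, but never the borderline $\rho\log(1/\rho)$. The endpoint estimate requires working at the level of gradients: from the Dirichlet bound one shows that the averages $a_{k}:=\dashint_{B_{2^{-k}}}\nabla u\:dx$ satisfy $|a_{k+1}-a_{k}|\leqslant C$, hence $|a_{k}|\leqslant C(1+k)$, and a telescoping Campanato argument converts this into $|u(x)-u(y)|\leqslant C|x-y|\log(1/|x-y|)$. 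This is exactly the content of Theorem 2.1 of \cite{ACF1984TMS}, which the paper simply cites once the Dirichlet bound is in place; if you intend to reprove it, the iteration must be set up on $\nabla u$ rather than on oscillations.
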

\begin{proof}
	Assume without loss of generality that $x^{0}=0$. Let $v\in B_{r}$ be a function given by $\Delta v+f(v)=0$ in $B_{r}$ and $v=u$ on $\partial B_{r}$. The maximum principle gives that $v\leqslant\Psi$ in $\Omega$ and thus $v\in\mathcal{K}$. By the minimality of $u$ in $B_{r}$, one has $J(u;B_{r})\leqslant J(v;B_{r})$ and this implies
	\begin{align}\label{Formula: P(0)}
		\begin{alignedat}{2}
				0&\leqslant\int_{B_{r}}-|\nabla(v-u)|^{2}+2\nabla(v-u)\cdot\nabla v+(F(v)-F(u))\:dx+\int_{B_{r}}Q^{2}(x)(\chi_{\{v>0\}}-\chi_{\{u>0\}})\:dx\\
			&\leqslant\int_{B_{r}}-|\nabla(v-u)|^{2}\:dx+Q_{\mathrm{max}}^{2}\int_{B_{r}}\chi_{\{v>0\}}\:dx,
		\end{alignedat}
	\end{align}
	where we have used the assumption \eqref{Formula: Assumption on Q(x)} in the second inequality. Therefore,
	\begin{align*}
		\int_{B_{r}}|\nabla(u-v)|^{2}\:dx\leqslant Q_{\mathrm{max}}^{2}|B_{r}\cap\{v>0\}|\leqslant Q_{\mathrm{max}}^{2}|B_{r}|\leqslant Cr^{d},
	\end{align*}
	where $C$ depends only on $d$ and $Q_{\mathrm{max}}$. Proceeding as in \cite[Theorem 2.1]{ACF1984TMS} we complete the proof of the proposition.
\end{proof}
\begin{proposition}[Properties of minimizers]\label{Proposition: Properties of minimizers}
	Let $u\in\mathcal{K}$ be a minimizer of $J$ in $\Omega$, then $u\geqslant0$ in $\Omega$ and $u$ satisfies the equation
	\begin{align}\label{Formula: P(1)}
		\Delta u+f(u)\geqslant0\quad\text{ in }\quad\Omega,
	\end{align}
    in the weak sense. Moreover, $u$ satisfies the equation
    \begin{align}\label{Formula: P(2)}
    	\Delta u+f(u)=0\quad\text{ in }\quad\varOmega^{+}(u),
    \end{align}
    in the classical sense. That is, $u\in C^{2,\alpha}(D)$ satisfies \eqref{Formula: P(2)} for any $D\subset\subset\varOmega^{+}(u)$.
\end{proposition}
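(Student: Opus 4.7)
To prove $u \geqslant 0$ in $\Omega$, the plan is to compare $u$ against its positive part $v := u^{+}$. Since $\Psi > 0$ and $u \leqslant \Psi$, we have $u^{+} \leqslant \Psi$; since $u^{0} \geqslant 0$ on $S$, we have $u^{+} = u^{0}$ on $S$; hence $v \in \mathcal{K}$. Writing $|\nabla u|^{2} = |\nabla u^{+}|^{2} + |\nabla u^{-}|^{2}$ and noting $\{u^{+} > 0\} = \{u > 0\}$ so the indicator term cancels, the defect reduces to
\[ J(u) - J(v) = \int_{\{u < 0\}} \bigl( |\nabla u^{-}|^{2} + F(u) \bigr)\,dx. \]
By \eqref{Formula: Assumption on F(t)} ($F(0)=0$ and $F' \leqslant 0$ on $(-\infty,0]$), $F$ is non-increasing on the negatives with $F(0)=0$, so $F \geqslant 0$ there and both summands above are non-negative. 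Minimality forces $|\nabla u^{-}| \equiv 0$ a.e.\ in $\Omega$, and combining with the trace $u^{-} = 0$ on $S$ gives $u^{-} \equiv 0$.

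For the weak supersolution inequality \eqref{Formula: P(1)}, I would test minimality against $v_{\epsilon} := u - \epsilon \varphi$ with $\varphi \in C_{c}^{\infty}(\Omega)$, $\varphi \geqslant 0$, and $\epsilon > 0$ small. Admissibility is immediate: $v_{\epsilon} \leqslant u \leqslant \Psi$ and $v_{\epsilon} = u^{0}$ on $S$. The key observation is that $\{v_{\epsilon} > 0\} \subset \{u > 0\}$, so the indicator contribution to $J(v_{\epsilon}) - J(u)$ is non-positive. Using the global Taylor expansion $F(v_{\epsilon}) - F(u) = -\epsilon F'(u)\varphi + O(\epsilon^{2}) = 2 \epsilon f(u) \varphi + O(\epsilon^{2})$ (valid uniformly because $|F''| \leqslant F_{0}$), the minimality inequality reduces to
\[ 0 \leqslant -2\epsilon \int_{\Omega} \nabla u \cdot \nabla \varphi \,dx + 2\epsilon \int_{\Omega} f(u) \varphi \,dx + O(\epsilon^{2}). \]
Dividing by $\epsilon$ and sending $\epsilon \to 0^{+}$ yields $\int \nabla u \cdot \nabla \varphi \,dx \leqslant \int f(u) \varphi \,dx$ for every admissible $\varphi \geqslant 0$, which is \eqref{Formula: P(1)} in the weak sense.

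The classical equation \eqref{Formula: P(2)} in $\varOmega^{+}(u)$ is where I expect the main obstacle, since the constraint $u \leqslant \Psi$ blocks free two-sided variations on the contact set $\{u = \Psi\}$. My strategy is twofold. On the open set $\varOmega^{+}(u) \cap \{u < \Psi\}$, for $\varphi \in C_{c}^{\infty}$ supported there the perturbations $u \pm \epsilon \varphi$ belong to $\mathcal{K}$ for small $|\epsilon|$ by continuity of $u$ (Proposition \ref{Proposition: Minimizers are Holder continuous}) and compactness of $\operatorname{supp}\varphi$, and the indicator term cancels since $\operatorname{supp}\varphi \subset \varOmega^{+}(u)$; both signs of $\epsilon$ are then admissible and, after Taylor expansion and matching of $O(\epsilon)$ terms, produce $\Delta u + f(u) = 0$ weakly there. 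To handle the contact set, I would set $w := \Psi - u \geqslant 0$ and subtract \eqref{Formula: Psi} from \eqref{Formula: P(1)} (tested against non-negative $\varphi$); the mean value theorem together with $f' \leqslant 0$ (equivalent to convexity of $F$) gives, in the weak sense, $\Delta w - c(x) w \leqslant 0$ with $c(x) := -f'(\xi(x)) \geqslant 0$. The strong minimum principle then forces, on each connected component of $\Omega$, either $u < \Psi$ strictly everywhere or $u \equiv \Psi$ identically. In the latter case $\Delta u + f(u) = \Delta \Psi + f(\Psi) \leqslant 0$ classically and must equal zero in view of the reverse weak inequality from \eqref{Formula: P(1)}; in the former the variational argument of the first half covers all of $\varOmega^{+}(u)$. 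Either way $\Delta u + f(u) = 0$ weakly in $\varOmega^{+}(u)$, and since $f \in C^{1,\beta}$ interior Schauder estimates upgrade $u$ to $C^{2,\alpha}(D)$ on any $D \subset\subset \varOmega^{+}(u)$, so the equation is satisfied classically. The essential ingredients that make this work, namely the convexity of $F$, the supersolution property of $\Psi$, and the uniform bound on $F''$, are all provided by \eqref{Formula: Psi} and \eqref{Formula: Assumption on F(t)}.
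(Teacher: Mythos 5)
Your proof is correct, and for the non-negativity and the weak supersolution inequality \eqref{Formula: P(1)} it follows essentially the same route as the paper (the paper truncates via $u-\varepsilon\min\{u,0\}$ rather than $u^{+}$, but both are the same one-sided truncation argument exploiting $F'\leqslant 0$ on $(-\infty,0]$, and both then use the non-positivity of the indicator increment under downward perturbations). The genuine difference is in the proof of \eqref{Formula: P(2)}: the paper simply asserts that $u-\varepsilon\xi\in\mathcal{K}$ for $\xi\in C_{0}^{\infty}(\varOmega^{+}(u))$ and $|\varepsilon|$ small, which for $\varepsilon<0$ requires $u<\Psi$ on $\operatorname{supp}\xi$ and is not justified there. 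You close this gap explicitly: subtracting \eqref{Formula: Psi} from \eqref{Formula: P(1)} and using $f'\leqslant 0$ gives $\Delta w-c(x)w\leqslant 0$ weakly for $w=\Psi-u\geqslant 0$ with $c\geqslant 0$, so the strong minimum principle yields the dichotomy $u<\Psi$ everywhere or $u\equiv\Psi$ on a component, and in the latter case the two inequalities $\Delta u+f(u)\leqslant 0$ (from \eqref{Formula: Psi}) and $\geqslant 0$ (from \eqref{Formula: P(1)}) pin down the equation. This buys a complete treatment of the obstacle constraint at the cost of one extra maximum-principle step; the remainder (two-sided variation where $u<\Psi$, Taylor expansion using $|F''|\leqslant F_{0}$, and the Schauder bootstrap to $C^{2,\alpha}$) matches the paper.
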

\begin{proof}
	Choose $u_{\varepsilon}:=u-\varepsilon\min\{u,0\}$ for any $\varepsilon\in(0,1)$. Then $u_{\varepsilon}\in\mathcal{K}$ is admissible. By the minimality of $u$, we have
	\begin{align}\label{Formula: P(2')}
		\int_{\Omega}|\nabla\min\{u,0\}|^{2}\:dx\leqslant-\int_{\Omega}\frac{F'((1-\varepsilon)\min\{u,0\})}{2}\min\{u,0\}\:dx.
	\end{align}
    Since $F'(t)\leqslant0$ for $t\leqslant0$, we have from \eqref{Formula: P(2')} that $\int_{\Omega}|\nabla\min\{u,0\}|^{2}\:dx\leqslant0$ and thus $u\geqslant0$ in $\Omega$. Consider now the first variation $\lim_{\varepsilon\to0}\frac{1}{\varepsilon}(J(u-\varepsilon\xi)-J(u))\geqslant 0$ for any $\xi\geqslant0$, $\xi\in C_{0}^{\infty}(\Omega)$, and we have that
    \begin{align*}
    	0\leqslant-\int_{\Omega}\nabla u\cdot\nabla\xi+\frac{F'(u)}{2}\xi\:dx,
    \end{align*}
    which gives \eqref{Formula: P(1)}. By Proposition \ref{Proposition: Properties of minimizers}, $u$ is a continuous function in $\Omega$ and this implies that $\varOmega^{+}(u)$ is an open subset of $\Omega$. For any $\xi\in C_{0}^{\infty}(\varOmega^{+}(u))$, it is easy to check that the function $u-\varepsilon\xi\in\mathcal{K}$ and $\varOmega^{+}(u-\varepsilon\xi)\subset\varOmega^{+}(u)$ for $|\varepsilon|>0$ small. Thus, by the minimality of $u$ we have
    \begin{align}\label{Formula: P(3)}
    	0&=\int_{\Omega}\nabla u\cdot\nabla\xi+\frac{F'(u-\varepsilon\xi)}{2}\xi\:dx\quad\text{ for any }\quad\varepsilon.
    \end{align}
    Passing to the limit as $\varepsilon\to0$ in \eqref{Formula: P(3)} gives \eqref{Formula: P(2)}. It now follows from the Schauder interior estimate in \cite{GT1998} that $u\in C^{2,\alpha}(D)$ for any compact subset $D$ of $\varOmega^{+}(u)$.
\end{proof}
We now introduce the following Lemma, which helps us to establish the optimal linear growth of minimizers up to the free boundary.
\begin{lemma}\label{Lemma: An estimates}
	Let $\Psi$ be given in \eqref{Formula: Psi}, then for every non-negative function $u\in H^{1}(B_{r}(x^{0}))$ where $B_{r}(x^{0})\subset\Omega$ the following estimates does hold:
	\begin{align}\label{Formula: P(4)}
		|B_{r}(x^{0})\cap\{u=0\}|\left(\frac{1}{r}\dashint_{\partial B_{r}(x^{0})}u\:d\mathcal{H}^{d-1}-r\frac{F_{0}M_{0}}{2}\right)^{2}\leqslant C_{d}\int_{B_{r}(x^{0})}|\nabla(u-h)|^{2}.
	\end{align}
    Here $h$ is a function defined by $\Delta h+f(h)=0$ in $B_{r}(x^{0})$ with boundary value $h=u$ on $\partial B_{r}(x^{0})$ and $M_{0}:=\sup_{\bar{\Omega}}\Psi$.
\end{lemma}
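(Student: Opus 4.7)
My plan to prove \eqref{Formula: P(4)} is to reduce the estimate to the classical Alt--Caffarelli harmonic comparison lemma by inserting a one-sided perturbative correction that accounts for the semilinear source $f(h)$. The key new ingredient is an auxiliary superharmonic function that majorizes $h$ from above and agrees with $u$ on $\partial B_{r}(x^{0})$.

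First, I will produce a pointwise perturbative lower bound for $h$ in terms of the harmonic replacement $h_{1}$ of $u$ in $B_{r}(x^{0})$ (i.e., the harmonic function with $h_{1}=u$ on $\partial B_{r}(x^{0})$). Since $\Psi$ is a supersolution of $\Delta+f$ by \eqref{Formula: Psi} and $u\leq\Psi$ on $\partial B_{r}(x^{0})$, a routine comparison gives $0\leq h\leq\Psi\leq M_{0}$ in $B_{r}(x^{0})$. Using the monotonicity, Lipschitz bound, and $f(0)\in[0,F_{0}/2]$ from \eqref{Formula: Assumption on f(t)}, we obtain $f(s)\geq -F_{0}M_{0}/2$ for every $s\in[0,M_{0}]$. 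Setting
$$\bar h(x):=h(x)+\frac{F_{0}M_{0}}{4d}\bigl(r^{2}-|x-x^{0}|^{2}\bigr),$$
a direct computation gives $\Delta\bar h=-f(h)-F_{0}M_{0}/2\leq 0$, so $\bar h$ is superharmonic in $B_{r}(x^{0})$ with $\bar h=u$ on $\partial B_{r}(x^{0})$. The minimum principle then gives $\bar h\geq h_{1}$ in $B_{r}(x^{0})$, which rearranges to
$$h(x)\geq h_{1}(x)-\frac{F_{0}M_{0}r^{2}}{4d}\quad\text{in }B_{r}(x^{0}).$$

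Second, I will apply the Poincar\'e inequality to $u-h\in H_{0}^{1}(B_{r}(x^{0}))$, which combined with $(u-h)^{2}=h^{2}$ on $\{u=0\}$ gives
$$\int_{B_{r}(x^{0})\cap\{u=0\}}h^{2}\leq C_{d}r^{2}\int_{B_{r}(x^{0})}|\nabla(u-h)|^{2},$$
and then the Cauchy--Schwarz inequality yields
$$\left(\int_{B_{r}(x^{0})\cap\{u=0\}}h\right)^{2}\leq C_{d}r^{2}\,|B_{r}(x^{0})\cap\{u=0\}|\int_{B_{r}(x^{0})}|\nabla(u-h)|^{2},$$
so the problem reduces to producing an averaged lower bound on $h$ over the zero set.

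Third, I will combine the previous displayed lower bound for $h$ with the classical Alt--Caffarelli averaged estimate for the non-negative harmonic function $h_{1}$ (see, e.g., \cite[Lemma~3.2]{AC1981}), which exploits the Poisson-kernel representation $h_{1}(x)\geq c_{d}m(1-|x-x^{0}|^{2}/r^{2})$ together with a careful volume/distribution argument over the zero set, to yield
$$\int_{B_{r}(x^{0})\cap\{u=0\}}h_{1}\geq c_{d}\,m\,|B_{r}(x^{0})\cap\{u=0\}|,$$
where $m:=\dashint_{\partial B_{r}(x^{0})}u\,d\mathcal{H}^{d-1}$. Combining the two bounds gives
$$\int_{B_{r}(x^{0})\cap\{u=0\}}h\geq c_{d}\Bigl(m-\tfrac{r^{2}F_{0}M_{0}}{2}\Bigr)|B_{r}(x^{0})\cap\{u=0\}|$$
after absorbing dimensional constants; squaring and substituting into the bound from the second step yields \eqref{Formula: P(4)}. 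The principal obstacle is this third step: producing and applying the Alt--Caffarelli averaged lower bound for $h_{1}$ in concert with the perturbation from the first step, so that the correction term $\tfrac{rF_{0}M_{0}}{2}$ emerges cleanly in the final inequality. The classical argument of \cite{AC1981} provides the homogeneous skeleton, while the semilinear source enters only through the explicit $L^{\infty}$ discrepancy between $h$ and its harmonic replacement, captured by the first step.
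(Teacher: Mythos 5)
Your Step 1 is fine (the comparison $\bar h\geq h_{1}$ is legitimate and $f(h)\geq -F_{0}M_{0}/2$ does follow from \eqref{Formula: Assumption on f(t)}), and your Step 2 (Poincar\'e plus Cauchy--Schwarz on $u-h\in H_{0}^{1}$) is correct as far as it goes. The proof fails at Step 3. The inequality you attribute to Alt--Caffarelli,
\begin{align*}
\int_{B_{r}(x^{0})\cap\{u=0\}}h_{1}\:dx\;\geqslant\;c_{d}\,m\,|B_{r}(x^{0})\cap\{u=0\}|,
\end{align*}
is false for a general non-negative $H^{1}$ function $u$: the Poisson-kernel lower bound only gives $h_{1}(x)\geqslant c_{d}\,m\,(r-|x-x^{0}|)/r$, which degenerates linearly at $\partial B_{r}(x^{0})$, and nothing prevents the zero set from concentrating in a thin collar near the sphere (take $d=2$, boundary data $2m$ on a half-circle and $0$ on the other half, and $E$ a collar of width $\varepsilon r$ along the zero half: then $\int_{E}h_{1}\approx\varepsilon\, m|E|\ll m|E|$). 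This is not an intermediate step of \cite[Lemma 3.2]{AC1981}; it is precisely the difficulty that their argument is designed to avoid. In the paper's proof (and in Alt--Caffarelli's), one works ray by ray: the fundamental theorem of calculus plus Cauchy--Schwarz along the segment from the first zero $r_{\xi}$ to the boundary gives $v_{z}(r_{\xi}\xi)\leqslant\sqrt{r-r_{\xi}}\,(\int_{r_{\xi}}^{r}|\nabla(v_{z}-u_{z})|^{2})^{1/2}$, and after squaring against the lower bound $v_{z}(r_{\xi}\xi)\gtrsim\frac{r-r_{\xi}}{r}(m-r^{2}M)$ the depth factor $(r-r_{\xi})$ survives to the \emph{first} power, which is exactly what is needed to reconstruct $|B_{r}\cap\{u=0\}|$ after integrating over $\xi$ and over the family of transformations $T_{z}$. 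Your global Poincar\'e route decouples the two sides and throws this cancellation away; it cannot be closed without the false averaged bound.

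There is a secondary problem even if you retreat to the slicing argument: your perturbative bound from Step 1 is a \emph{constant} shift, $h\geqslant h_{1}-\frac{F_{0}M_{0}}{4d}r^{2}$, whereas the ray argument needs the correction to vanish linearly at the boundary, namely $h(x)\geqslant c_{d}\frac{r-|x-x^{0}|}{r}\bigl(\dashint_{\partial B_{r}(x^{0})}u\:d\mathcal{H}^{d-1}-r^{2}M\bigr)$ with $M=\frac{F_{0}M_{0}}{2}$. Near $\partial B_{r}(x^{0})$, where $h_{1}\approx m\,(r-|x-x^{0}|)/r$ is already small, subtracting a fixed $Cr^{2}$ destroys the lower bound, so your estimate does not imply the one actually used. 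The paper obtains the linear-in-distance form directly from \cite[Lemma 2.4(a)]{GS1996} applied to $\Delta h\leqslant M$; you would need to replace your barrier by one whose correction is comparable to $(r^{2}-|x-x^{0}|^{2})$ \emph{relative to} the distance to the boundary, or simply quote that lemma.
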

\begin{remark}
	An estimate of the type \eqref{Formula: P(4)} was first brought by Alt and Caffarelli [Lemma 3.2, \cite{AC1981}], and we generalize the result of Lemma 3.2 in \cite{AC1981} to our semilinear case.
\end{remark}
\begin{proof}
	Assume without loss of generality $x^{0}=0$. Let $v\in H^{1}(B_{r})$ be the solution to the problem
	\begin{align*}
		\min\left\lbrace\int_{B_{r}}|\nabla v|^{2}+F(v)\:dx\colon v-u\in H_{0}^{1}(B_{r}),\ \ v\geqslant u\right\rbrace.
	\end{align*}
    Then $v$ satisfies $\Delta v+f(v)\leqslant0$ in $B_{r}$ and $\Delta v+f(v)=0$ in $B_{r}\cap\{v>u\}$. Now for each $|z|\leqslant\frac{1}{2}$ we consider a transformation $T\colon B_{r}\to B_{r}$ given by $T_{z}(x):=(r-|x|)z+x$ for each $x\in B_{r}$. Define
    \begin{align*}
    	u_{z}(x):=u(T_{z}(x))\qquad\text{ and }\qquad v_{z}(x):=v(T_{z}(x)).
    \end{align*}
    Let
    \begin{align*}
    	S_{z}:=\left\lbrace\xi\in\partial B_{1}\colon u_{z}(\rho\xi)=0\quad\text{ for some }\quad\rho\in\left[\frac{r}{8},r\right]\right\rbrace.
    \end{align*}
    For $\xi\in S_{z}$, we define
    \begin{align*}
    	r_{\xi}:=\inf\left\lbrace\rho\in\left[\frac{r}{8},r\right]\colon u_{z}(\rho\xi)=0,\ \ \xi\in S_{z}\right\rbrace.
    \end{align*}
    By a slicing argument for $\mathcal{H}^{1}$ a.e. $\xi\in S_{z}$ the function $\rho\mapsto\nabla u_{z}(\rho\xi)$ and $\rho\mapsto\nabla v_{z}(\rho\xi)$ are square integrable. Define a function $g(\rho):=u_{z}(\rho\xi)-v_{z}(\rho\xi)$ for $0<\rho\leqslant r$, then $g(\rho)$ is absolutely continuous in $(0,r)$, and so, using also the fact that $g(r)=0$ (since $u=v$ on $\partial B_{r}$) and $u_{z}(r_{\xi}\xi)=0$, we have $v_{z}(r_{\xi}\xi)=g(r)-g(r_{\xi})=\int_{r_{\xi}}^{r}\nabla(v_{z}(\rho\xi)-u_{z}(\rho\xi))\cdot\xi\:d\rho$. Using H\"{o}lder inequality we have
    \begin{align}\label{Formula: P(5)}
    	v_{z}(r_{\xi}\xi)\leqslant\sqrt{r-r_{\xi}}\left(\int_{r_{\xi}}^{r}|\nabla(v_{z}-u_{z})(\rho\xi)|^{2}\:d\rho\right)^{1/2}.
    \end{align}
    Since $\Delta h=-f(h)\leqslant-f(0)+\frac{F_{0}M_{0}}{2}\leqslant\frac{F_{0}M_{0}}{2}$ for any $x\in B_{r}$, where we have used the fact $f(0)\geqslant0$ in the last inequality. Since $h\geqslant0$ on $\partial B_{r}$ satisfying $\Delta h\leqslant M$ in $B_{r}$ for $M:=\frac{F_{0}M_{0}}{2}\geqslant0$, it follows from Lemma 2.4 (a) in \cite{GS1996} that
    \begin{align*}
    	h(x)&\geqslant  r^{d}\frac{r-|x|}{(r+|x|)^{d-1}}\left(\frac{1}{r^{2}}\dashint_{\partial B_{r}}u\:d\mathcal{H}^{d-1}-\frac{2^{d-1}M}{d}\right)\geqslant C_{d}\frac{r-|x|}{r}\left(\dashint_{\partial B_{r}}u\:d\mathcal{H}^{d-1}-r^{2}M\right).
    \end{align*}
    Since $v$ is a super-solution in $B_{r}$ with the same boundary value of $h$, we have $v(x)\geqslant h(x)$ in $B_{r}$. Taking $x=(r-r_{\xi})z+r_{\xi}\xi$ yields
    \begin{align}\label{Formula: P(6)}
    	v_{z}(r_{\xi}\xi)=v((r-r_{\xi})z+r_{\xi}\xi)\geqslant C_{d}
    	\frac{r-r_{\xi}}{r}\left(\dashint_{\partial B_{r}}u\:d\mathcal{H}^{d-1}-r^{2}M\right).
    \end{align}
    Combined \eqref{Formula: P(6)} with \eqref{Formula: P(5)} together gives
    \begin{align*}
    	(r-r_{\xi})\left(\frac{1}{r}\dashint_{\partial B_{r}}u\:d\mathcal{H}^{d-1}-rM\right)^{2}&\leqslant C_{d}\int_{r_{\xi}}^{r}|\nabla(v_{z}-u_{z})(\rho\xi)|^{2}\:d\rho.
    \end{align*}
    Integrating over $\xi\in S_{z}\subset\mathbb{S}^{d-1}$, we obtain the inequality
    \begin{align*}
    	\left(\int_{S_{z}}(r-r_{\xi})\:d\xi\right)\left(\frac{1}{r}\dashint_{\partial B_{r}}u\:d\mathcal{H}^{d-1}-rM\right)^{2}\leqslant C_{d}\int_{\partial B_{1}}\int_{r_{\xi}}^{r}|\nabla(v_{z}-u_{z})(\rho\xi)|^{2}\:d\rho d\xi.
    \end{align*}
    By the uniform estimate that $r/8\leqslant r_{\xi}\leqslant r$, we have
    \begin{align*}
    	|B_{r}\setminus B_{r/4}(rz)\cap\{u=0\}|\left(\frac{1}{r}\dashint_{\partial B_{r}}u\:d\mathcal{H}^{d-1}-rM\right)^{2}&\leqslant C_{d}\int_{B_{r}}|\nabla(v_{z}-u_{z})|^{2}\:dx\\ 
    	&\leqslant C_{d}\int_{B_{r}}|\nabla(v-u)|^{2}\:dx,
    \end{align*}
    where we have used the fact that the Jacobian of $T$ is bounded by a dimensional constant which is independent of $r$. The desired result \eqref{Formula: P(4)} follows by integrating over $z$.
\end{proof}
With Lemma \ref{Lemma: An estimates} at hand, we immediately have
\begin{proposition}[Optimal linear growth of minimizers]\label{Proposition: Optimal linear growth}
	Let $u\in\mathcal{K}$ be a minimizer of $J$ in $\Omega$, and let $x^{0}\in\Omega$ be such that $|B_{r}(x^{0})\cap\{u=0\}|\neq0$ for $B_{r}(x^{0})\subset\Omega$, then
	\begin{align}\label{Formula: P(7)}
		\frac{1}{r}\dashint_{\partial B_{r}(x^{0})}u\:d\mathcal{H}^{d-1}\leqslant\sqrt{C_{d}}Q_{\mathrm{max}}+r\frac{F_{0}M_{0}}{2},
	\end{align}
	where $F_{0}$ is given in \eqref{Formula: Assumption on F(t)} and $M_{0}=\sup_{\bar{\Omega}}\Psi$.
\end{proposition}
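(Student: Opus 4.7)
The plan is to pair Lemma \ref{Lemma: An estimates} with an energy comparison argument. If I can establish the Dirichlet deficit bound
\begin{equation*}
\int_{B_{r}(x^{0})}|\nabla(u-h)|^{2}\,dx \,\leqslant\, Q_{\max}^{2}\,|B_{r}(x^{0})\cap\{u=0\}|,
\end{equation*}
then inserting it into \eqref{Formula: P(4)} and dividing by the strictly positive factor $|B_{r}(x^{0})\cap\{u=0\}|$ yields $\bigl(\tfrac{1}{r}\dashint_{\partial B_{r}(x^{0})}u\,d\mathcal{H}^{d-1}-r\tfrac{F_{0}M_{0}}{2}\bigr)^{2} \leqslant C_{d}Q_{\max}^{2}$, from which \eqref{Formula: P(7)} follows by taking the positive part of the square root. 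So the whole content of the proof is this one Dirichlet estimate.

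To produce it, I would use $h$ as a competitor in the form $\tilde{u} := h$ on $B_{r}(x^{0})$ and $\tilde{u} := u$ on $\Omega\setminus B_{r}(x^{0})$. Admissibility $\tilde{u} \in \mathcal{K}$ requires $h \leqslant \Psi$, which follows by linearising $f$ and applying the comparison principle to $\Psi - h$ using the supersolution condition \eqref{Formula: Psi}; likewise $h \geqslant 0$, from the maximum principle applied to $-\min(h,0)$ together with $f(t) \geqslant 0$ for $t \leqslant 0$ in \eqref{Formula: Assumption on f(t)}. In addition, Proposition \ref{Proposition: Properties of minimizers} supplies the weak subsolution inequality $\Delta u + f(u) \geqslant 0$, which, combined with $\Delta h + f(h) = 0$ and the fact that $f$ is nonincreasing, gives $u \leqslant h$ in $B_{r}(x^{0})$ by linearisation, and in particular $\{u>0\}\subset\{h>0\}$ there. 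The minimality inequality $J(u;B_{r}(x^{0})) \leqslant J(h;B_{r}(x^{0}))$ then expands, after writing $|\nabla u|^{2} - |\nabla h|^{2} = |\nabla(u-h)|^{2} + 2\nabla h\cdot\nabla(u-h)$ and integrating by parts against $\Delta h = -f(h)$ using $u=h$ on $\partial B_{r}(x^{0})$, to
\begin{equation*}
\int_{B_{r}(x^{0})}|\nabla(u-h)|^{2}\,dx + \int_{B_{r}(x^{0})}\bigl[F(u)-F(h)-F'(h)(u-h)\bigr]\,dx \leqslant \int_{B_{r}(x^{0})}Q^{2}(x)\chi_{\{h>0\}\cap\{u=0\}}\,dx,
\end{equation*}
where I used $F'(h) = -2f(h)$ and the inclusion of positive sets to simplify the indicator difference. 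The middle integrand is pointwise nonnegative by convexity of $F$, and the right-hand side is bounded by $Q_{\max}^{2}|B_{r}(x^{0})\cap\{u=0\}|$, giving the desired estimate.

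The main obstacle, and the genuine departure from the homogeneous Alt-Caffarelli case, is the handling of the semilinear term $F(u)-F(h)$. The boundary term produced when integrating $2\nabla h\cdot\nabla(u-h)$ by parts is exactly $-F'(h)(u-h)$, so the combination $F(u)-F(h)-F'(h)(u-h)$ that appears is precisely the quantity controlled by the convexity hypothesis $F'' \geqslant 0$ from \eqref{Formula: Assumption on F(t)}. Without this hypothesis the sign of the semilinear contribution could spoil the Dirichlet deficit estimate, and with it the optimal linear growth.
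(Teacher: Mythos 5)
Your proposal is correct and follows essentially the same route as the paper: compare $u$ with the solution $h$ of $\Delta h+f(h)=0$ sharing its boundary values (admissible since $h\leqslant\Psi$ by the maximum principle), derive $\int_{B_{r}(x^{0})}|\nabla(u-h)|^{2}\,dx\leqslant Q_{\max}^{2}|B_{r}(x^{0})\cap\{u=0\}|$ from minimality together with the convexity of $F$ exactly as in \eqref{Formula: P(0)}, and then combine with Lemma \ref{Lemma: An estimates} and divide by $|B_{r}(x^{0})\cap\{u=0\}|\neq0$. The only inessential extra in your write-up is the derivation of $u\leqslant h$ to control the indicator difference; the pointwise bound $\chi_{\{h>0\}}-\chi_{\{u>0\}}\leqslant\chi_{\{u=0\}}$ already suffices.
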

\begin{proof}
	Assume $x^{0}=0$ and let $h$ be a function defined by $\Delta h+f(h)=0$ in $B_{r}$ and $h=u$ on $\partial B_{r}$. The maximum principle gives that $h\leqslant\Psi$ in $B_{r}$ and thus $h\in\mathcal{K}$ is admissible. By the optimality $J(u)\leqslant J(h)$ and a similar calculation as in \eqref{Formula: P(0)} gives
	\begin{align}\label{Formula: P(8)}
		\int_{B_{r}}|\nabla(u-h)|^{2}\leqslant Q_{\mathrm{max}}^{2}|B_{r}\cap\{u=0\}|.
	\end{align}
    Applying Lemma \ref{Lemma: An estimates} and it follows from \eqref{Formula: P(4)} and \eqref{Formula: P(8)} that
    \begin{align*}
    	|B_{r}\cap\{u=0\}|\left(\frac{1}{r}\dashint_{\partial B_{r}}u\:d\mathcal{H}^{d-1}-r\frac{F_{0}M_{0}}{2}\right)^{2}\leqslant C_{d}Q_{\mathrm{max}}^{2}|B_{r}\cap\{u=0\}|.
    \end{align*}
    By our assumption $|B_{r}\cap\{u=0\}|\neq0$ and we obtain \eqref{Formula: P(7)}.
\end{proof}
\begin{remark}\label{Remark: Optimal linear growth}
	In view of Proposition \ref{Proposition: Optimal linear growth} and if in particular  $x^{0}\in\partial\varOmega^{+}(u)$ is a free boundary point, then choose $B_{r}(x^{0})$ be a small ball with $r\leqslant\min\{\frac{2Q_{\mathrm{max}}}{F_{0}M_{0}},1\}$, we have
	\begin{align*}
		\frac{1}{r}\dashint_{\partial B_{r}(x^{0})}u\:d\mathcal{H}^{d-1}\leqslant C_{d}^{*}Q_{\mathrm{max}},
	\end{align*}
    where $C_{d}^{*}:=\sqrt{C_{d}}+1$.
\end{remark}
\section{Optimal regularity and non-degeneracy}
The first goal of this section is to prove that minimizers are locally Lipschitz functions in $\Omega$. Precisely, we found that if a minimizer grows like its distance to the free boundary, then it is Lipschitz continuous near the free boundary. We summarize it as follows:
\begin{proposition}[Lipschitz regularity of minimizers]\label{Proposition: Lipschitz regularity of minimizers}
	Let $u\in\mathcal{K}$ be a minimizer of $J$ in $\Omega$, then $u\in C^{0,1}(\Omega)$. Moreover, for every $D\subset\subset\Omega$ containing a free boundary point, one has
	\begin{align}\label{Formula: O(''(0))}
		\sup_{D}|\nabla u(x)|\leqslant C(d,F_{0},D,\Omega)(Q_{\mathrm{max}}+F_{0}),
	\end{align}
    where $F_{0}$ is given in \eqref{Formula: Assumption on F(t)}.
\end{proposition}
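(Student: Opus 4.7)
The strategy is the standard dichotomy: away from the free boundary $u$ is classically smooth and the equation $-\Delta u = f(u)$ together with the $L^{\infty}$ bound $0\leqslant u\leqslant\Psi$ gives interior gradient control; close to the free boundary, the Lipschitz bound will be produced by combining the optimal linear growth (Remark~\ref{Remark: Optimal linear growth}) with a standard Poisson-type gradient estimate. The central algebraic identity is that the bound $u(x)\lesssim(Q_{\max}+F_{0})\cdot\mathrm{dist}(x,\partial\varOmega^{+}(u))$ in a neighbourhood of the free boundary, when fed into an interior gradient estimate on the distance ball, yields $|\nabla u|\lesssim Q_{\max}+F_{0}$.

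\textbf{Step 1 (boundedness of the forcing term).} Since $u\in\mathcal{K}$ one has $0\leqslant u\leqslant\Psi\leqslant M_{0}$ in $\Omega$, and by \eqref{Formula: Assumption on f(t)} the function $f$ is Lipschitz with $|f'|\leqslant F_{0}/2$. Combined with $0\leqslant f(0)\leqslant F_{0}/2$ this yields $\|f(u)\|_{L^{\infty}(\Omega)}\leqslant C(F_{0},M_{0})$. In particular, by Proposition~\ref{Proposition: Properties of minimizers}, $\Delta u\geqslant-C(F_{0},M_{0})$ weakly in $\Omega$.

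\textbf{Step 2 (a priori sup-bound near the free boundary).} Fix a free boundary point and work locally; set $d_{0}:=\tfrac{1}{2}\min\{2Q_{\max}/(F_{0}M_{0}),1\}$. Take $x_{0}\in D\cap\varOmega^{+}(u)$ with $d(x_{0}):=\dist(x_{0},\partial\varOmega^{+}(u))\leqslant d_{0}$ and choose $y_{0}\in\partial\varOmega^{+}(u)$ with $|x_{0}-y_{0}|=d(x_{0})$. Remark~\ref{Remark: Optimal linear growth} applied at $y_{0}$ with radius $r=2d(x_{0})\leqslant 2d_{0}$ gives
\begin{align*}
\dashint_{\partial B_{2d(x_{0})}(y_{0})}u\,d\mathcal{H}^{d-1}\leqslant 2\,C_{d}^{*}Q_{\max}\,d(x_{0}).
\end{align*}
Using Step 1, the function $w(x):=u(x)+\tfrac{C(F_{0},M_{0})}{2d}|x-y_{0}|^{2}$ is subharmonic on $B_{2d(x_{0})}(y_{0})$; the maximum principle, combined with a standard weak Harnack / mean-value comparison to pass from spherical average to supremum on the concentric half-ball containing $x_{0}$, yields
\begin{align*}
\sup_{B_{d(x_{0})/2}(x_{0})}u\;\leqslant\;C(d)\,d(x_{0})\bigl(Q_{\max}+F_{0}M_{0}\bigr).
\end{align*}

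\textbf{Step 3 (gradient bound).} By Proposition~\ref{Proposition: Properties of minimizers}, $u\in C^{2,\alpha}(B_{d(x_{0})/2}(x_{0}))$ and $-\Delta u=f(u)$ classically there. Applying the interior gradient estimate for Poisson's equation on $B_{d(x_{0})/2}(x_{0})$,
\begin{align*}
|\nabla u(x_{0})|\;\leqslant\;\frac{C(d)}{d(x_{0})}\sup_{B_{d(x_{0})/2}(x_{0})}u\;+\;C(d)\,d(x_{0})\,\|f(u)\|_{L^{\infty}}\;\leqslant\;C(d,F_{0},M_{0})\bigl(Q_{\max}+F_{0}\bigr),
\end{align*}
where the last step uses Step 2 and Step 1. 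For points with $d(x_{0})\geqslant d_{0}$, the same estimate follows directly from $C^{2,\alpha}$ interior Schauder theory applied to $-\Delta u=f(u)$ on $B_{d_{0}/2}(x_{0})$, using $\|u\|_{L^{\infty}}\leqslant M_{0}$ and $\|f(u)\|_{L^{\infty}}\leqslant C(F_{0},M_{0})$. Combining these two cases over a finite cover of $D$ yields \eqref{Formula: O(''(0))} and Lipschitz continuity.

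\textbf{Main obstacle.} The genuinely new step compared to the Alt--Caffarelli homogeneous case is Step 2: transferring the boundary-average linear growth $\dashint_{\partial B_{2d}(y_{0})}u\lesssim d\,Q_{\max}$ into a pointwise sup bound $\sup_{B_{d/2}(x_{0})}u\lesssim d(Q_{\max}+F_{0})$, in the presence of the bounded but non-trivial right-hand side $f(u)$. This requires adding the explicit quadratic corrector $\tfrac{C}{2d}|x-y_{0}|^{2}$ in order to restore subharmonicity and carefully checking that the $O(d^{2})$ error contributed by this corrector is absorbable into the desired $O(d)$ linear growth (this is why the threshold $d_{0}$ involves the ratio $Q_{\max}/(F_{0}M_{0})$). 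Once this is in place, the remaining interior regularity is routine Schauder theory.
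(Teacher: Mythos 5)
Your proof is correct but takes a genuinely different route from the paper's. The paper proves the pointwise linear growth bound $u(x^{0})/\rho(x^{0})\lesssim Q_{\max}+F_{0}$ directly by a contradiction argument in the Alt--Caffarelli style: assume the ratio exceeds $M$, rescale to $u_{\rho}$, apply the Harnack inequality to get $\inf_{B_{3/4}}u_{\rho}\geqslant cM-CF_{0}$, compare with the semilinear replacement $v$ on a ball centered at a zero of $u_{\rho}$, propagate the lower bound via an explicit barrier $C_{0}(e^{-\beta|x|^{2}}-e^{-\beta})$, and finally combine the linear lower bound $v\geqslant(cM-CF_{0})(1-|x|)$ with the minimality estimate $\int|\nabla(u_{\rho}-v)|^{2}\leqslant Q_{\max}^{2}|\{u_{\rho}=0\}|$ to force $M\lesssim Q_{\max}+F_{0}$. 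You instead reuse the already-proved Remark~\ref{Remark: Optimal linear growth} (boundary-average linear growth at free boundary points), restore subharmonicity with a quadratic corrector, and pass from the spherical average at the nearest free boundary point $y_{0}$ to a pointwise sup bound on $B_{d(x_{0})/2}(x_{0})$ via the Poisson-kernel comparison; the interior gradient estimate for the Poisson equation then finishes. This is cleaner and more modular: it avoids redoing the Harnack/barrier machinery and exposes the logical dependency on Proposition~\ref{Proposition: Optimal linear growth}, which the paper leaves implicit (the paper's Step 9 essentially re-derives a variant of Lemma~\ref{Lemma: An estimates} in the rescaled frame).

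Two small points of care. First, the terminology ``weak Harnack / mean-value comparison'' in Step~2 is imprecise: the weak Harnack inequality bounds an average by an infimum, whereas what you actually need is the opposite-direction estimate
\begin{align*}
\sup_{B_{3R/4}(y_{0})}w\;\leqslant\;C(d)\,\dashint_{\partial B_{R}(y_{0})}w\,d\mathcal{H}^{d-1}
\end{align*}
for a nonnegative subharmonic $w$, which follows from comparison with the harmonic majorant together with boundedness of the Poisson kernel on a compact sub-ball; it would be better to state this explicitly. Second, note that $\|f(u)\|_{L^{\infty}}\leqslant\tfrac{F_{0}}{2}(1+M_{0})$ rather than $F_{0}M_{0}$, so both the quadratic corrector's contribution and the final constant carry an $M_{0}$-dependence; this is fine because $M_{0}=\sup_{\bar\Omega}\Psi$ is part of the problem data and can be absorbed into the $C(d,F_{0},D,\Omega)$ of \eqref{Formula: O(''(0))} (the paper's proof has exactly the same implicit $M_{0}$-dependence through its Harnack constants).
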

\begin{proof}
	Suppose that $x^{0}\in\Omega$ with $\rho(x^{0})<\frac{1}{2}\min\{\operatorname{dist}(x^{0},\partial\Omega),1\}$ where $\rho(x^{0}):=\operatorname{dist}(x^{0},\{u=0\})$. Then we claim that
	\begin{align}\label{Formula: O('0)}
		\frac{u(x^{0})}{\rho(x^{0})}\leqslant C(d,F_{0})(Q_{\mathrm{max}}+F_{0}),
	\end{align}
    where $F_{0}$ is given in \eqref{Formula: Assumption on F(t)}. Assume $\tfrac{u(x^{0})}{\rho(x^{0})}>M>0$ for some $M>0$, and we derive an upper bound for $M$. Consider the function $u_{\rho}(x):=\frac{u(x^{0}+\rho(x-x^{0}))}{\rho}$ for $\rho:=\rho(x^{0})$ and $x\in B_{1}(x^{0})$. Then $u_{\rho}$ satisfies $\Delta u_{\rho}+\rho f(\rho u_{\rho})=0$ in $B_{1}(x^{0})$. A direct calculation gives that
    \begin{align}\label{Formula: O(0)}
    	|\rho f(\rho u_{\rho})|\leqslant\rho|f(0)|+\rho^{2}\max_{t\geqslant0}|f'(t)|u_{\rho}\leqslant\frac{\rho}{2}F_{0}+\frac{\rho^{2}}{2}F_{0}u_{\rho}.
    \end{align}
    Thanks to the Harnack inequality \cite[Corollary 1.1]{T1967}, we have
    \begin{align}\label{Formula: O(1)}
    	\inf_{B_{3/4}(x^{0})}u_{\rho}\geqslant cM-CF_{0},
    \end{align}
    where $c$ and $C$ are constants depending on $d$ and $F_{0}$. Let $y\in\partial B_{1}(x^{0})\cap\{u_{\rho}=0\}$ be a point and $v$ be a function defined by $\Delta v+\rho f(\rho v)=0$ in $B_{1}(y)$ and $v=u_{\rho}$ on $\partial B_{1}(y)$. Since $u_{\rho}$ is a minimizer, we have from  $J(u_{\rho};B_{1}(y))\leqslant J(v;B_{1}(y))$, the convexity of $F$, and a similar calculation as in \eqref{Formula: P(0)} that
    \begin{align}\label{Formula: O(2)}
    	\int_{B_{1}(y)}|\nabla(u_{\rho}-v)|^{2}\:dx\leqslant Q_{\mathrm{max}}^{2}\int_{B_{1}(y)}\chi_{\{u_{\rho}=0\}}\:dx.
    \end{align}
    It then follows from the maximum principle that $v\geqslant u_{\rho}$ in $B_{1}(y)$. This combined with \eqref{Formula: O(1)} yields $v\geqslant u_{\rho}\geqslant cM-CF_{0}>0$ in $B_{3/4}(x^{0})\cap B_{1}(y)$. Using Harnack inequality \cite[Corollary 1.1]{T1967} for the function $v$ once again gives
    \begin{align}\label{Formula: O(3)}
    	v(x)\geqslant cM-CF_{0}\quad\text{ in }\quad B_{1/2}(y).
    \end{align}
    Take $y=0$ for simplicity and define
    \begin{align*}
    	\varphi(x):=C_{0}(e^{-\beta|x|^{2}}-e^{-\beta})\qquad C_{0}:=cM-CF_{0}.
    \end{align*}
    After a direct computation, $\Delta\varphi+\rho f(\rho\varphi)>0$ for $\frac{1}{2}<|x|<1$, provided that $\beta>0$ is sufficiently large. This implies that $\Delta(\varphi-v)+\rho f(\rho\varphi)-\rho f(\rho v)>0$ in $B_{1}\setminus B_{1/2}$. It follows from the maximum principle that $v(x)\geqslant\varphi(x)=C_{0}(e^{-\beta|x|^{2}}-e^{-\beta})\geqslant cC_{0}(1-|x|)$ in $B_{1}\setminus B_{1/2}$, this together with \eqref{Formula: O(3)} gives
    \begin{align}\label{Formula: O(4)}
    	v(x)\geqslant(cM-CF_{0})(1-|x|)\quad\text{ in }\quad B_{1}.
    \end{align}
    With the aid of \eqref{Formula: O(2)} and \eqref{Formula: O(4)}, it follows from a similar argument as in \cite[Lemma 3.2]{AC1981} and \cite[Lemma 2.2]{ACF1984} that $(cM-CF_{0})^{2}\leqslant CQ_{\mathrm{max}}^{2}$. This implies $M\leqslant C(Q_{\mathrm{max}}+F_{0})$ and proves \eqref{Formula: O('0)}.
    
    We now prove that $u\in C^{0,1}(\Omega)$. Let $\rho(x^{0})<\tfrac{1}{3}\min\{\operatorname{dist}(x^{0},\partial\Omega),1\}$ where $\rho(x^{0})=\operatorname{dist}(x^{0},\{u=0\})$, and let us define $\tilde{u}(\tilde{x}):=\frac{u(x^{0}+\rho(x^{0})\tilde{x})}{\rho(x^{0})}$. Then it follows from \eqref{Formula: O('0)} that $\tilde{u}(\tilde{x})\leqslant C(Q_{\mathrm{max}}+F_{0})$ in $B_{1}$ for $C$ depends on $d$ and $F_{0}$. Since $\Delta\tilde{u}+\rho(x^{0})f(\rho(x^{0})\tilde{u})$, we have from \eqref{Formula: O('0)},  \eqref{Formula: O(0)} and elliptic estimates in \cite{GT1998} that
    \begin{align*}
    	|\nabla\tilde{u}(0)|\leqslant C(\|u\|_{C^{0}(B_{1})}+\rho(x^{0})\|f(\rho(x^{0})u)\|_{L^{\infty}(B_{1})})\leqslant C(Q_{\mathrm{max}}+F_{0}),
    \end{align*}
    where $C$ depends only on $d$ and $F_{0}$. Therefore
    \begin{align*}
    	|\nabla u(x^{0})|=|\nabla\tilde{u}(0)|\leqslant C(Q_{\mathrm{max}}+F_{0}).
    \end{align*}
    Thus, for any $D\subset\subset\Omega$, $|\nabla u(x)|$ is bounded in $D\cap\varOmega^{+}(u)\cap N$, where $N$ is a small neighborhood of the free boundary. Since $u\in C^{2,\alpha}(\varOmega^{+}(u))$ and $\nabla u=0$ a.e. in  $\{u=0\}\cap\Omega$, we have $u\in C^{0,1}(\Omega)$. The rest of the proof is standard and can be obtained by proceeding as in [Page 11, Theorem 2.3, \cite{ACF1984}].
\end{proof}
The second goal of this section is to establish a non-degeneracy property for minimizers, which gives us a uniform $L^{\infty}$ bound of minimizers up to the free boundary $\partial\varOmega^{+}(u)$.
\begin{proposition}[Non-degeneracy of minimizers]\label{Proposition: non-degeneracy of minimizers}
	For any $\kappa\in(0,1)$, there exists a positive constant $c_{d}^{*}=c_{d}^{*}(\kappa)$, so that for any $B_{r}(x^{0})\subset\Omega$ with $r<\frac{1}{F_{0}}\max\{c_{d}^{*}Q_{\mathrm{min}},1\}$, the following does hold:
	\begin{align}\label{Formula: Nondegeneray}
		\frac{1}{r}\dashint_{\partial B_{r}(x^{0})}u\:d\mathcal{H}^{d-1}\leqslant c_{d}^{*}Q_{\mathrm{min}}
	\end{align}
	implies that
	\begin{align*}
		u(x)\equiv0\quad\text{ in }B_{\kappa r}(x^{0}).
	\end{align*}
\end{proposition}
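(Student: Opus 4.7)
The plan is to adapt the classical Alt--Caffarelli non-degeneracy argument to our semilinear setting via a competitor comparison. Without loss of generality assume $x^0 = 0$ and fix $\kappa \in (0,1)$; the goal is to exhibit an admissible competitor $v \in \mathcal{K}$ that vanishes on $B_{\kappa r}$ and whose energy saving over $u$ forces either the boundary-trace average of $u$ to be bounded below or $u \equiv 0$ on $B_{\kappa r}$.

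I would first build the barrier. Let $g$ solve $\Delta g + f(g) = 0$ in the annulus $B_r \setminus \overline{B_{\kappa r}}$ with $g = u$ on $\partial B_r$ and $g = 0$ on $\partial B_{\kappa r}$, and extend $g \equiv 0$ inside $B_{\kappa r}$ and $g \equiv u$ outside $B_r$. Since $f$ is nonincreasing and $\Psi$ is a supersolution while $0$ is a subsolution of the same equation (using $f(0) \ge 0$), the comparison principle gives $0 \le g \le \min\{u,\Psi\}$ in the annulus. Set $v := \min\{u, g\}$; then $v \in \mathcal{K}$, $v = u$ outside $B_r$, $v = g$ in the annulus, and $v \equiv 0$ on $B_{\kappa r}$.

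Second, I would feed $v$ into $J(u; B_r) \le J(v; B_r)$. Using $\{v>0\}\subset\{u>0\}$, expanding $|\nabla u|^2 - |\nabla v|^2 = |\nabla(u-v)|^2 + 2\nabla v \cdot \nabla(u-v)$, and integrating the cross term by parts against $u - v = (u-g)^+$ (which vanishes on $\partial B_r$) using $-\Delta g = f(g)$ in the annulus and Hopf's lemma on $\partial B_{\kappa r}$, the cross term splits into a nonnegative boundary flux $2\int_{\partial B_{\kappa r}} u\,\partial_r g \ge 0$ plus a bulk error controlled by $|f(g)| \le C(F_0, M_0)$. The term $\int F(u) - F(v)$ is likewise controlled in absolute value by $C(F_0, M_0)\|u-v\|_{L^1(B_r)}$, since $f$ is $(F_0/2)$-Lipschitz and $0 \le v \le u \le M_0$. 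Combining these with Poincar\'e $\|u-v\|_{L^1(B_r)} \lesssim r^{d/2+1}\|\nabla(u-v)\|_{L^2(B_r)}$ and Young's inequality produces the clean bound
\begin{align*}
\int_{B_r} |\nabla(u-v)|^2 \:dx \le C_d\, Q_{\mathrm{max}}^2\, |B_r \cap \{u>0\} \cap \{v = 0\}| + C(d, F_0, M_0)\, r^{d+2}.
\end{align*}

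Finally, I would apply the slicing-in-radius argument underlying Lemma \ref{Lemma: An estimates}, transposed to the annulus, to obtain the matching lower bound
\begin{align*}
|B_r \cap \{u > 0\} \cap \{v = 0\}| \cdot \Bigl(\tfrac{1}{r}\dashint_{\partial B_r} u\,d\mathcal{H}^{d-1} - C r F_0 M_0\Bigr)^2 \le C_d \int_{B_r} |\nabla(u-v)|^2\:dx.
\end{align*}
Combining the two estimates and invoking the hypothesis $r < F_0^{-1} \max\{c_d^* Q_{\mathrm{min}},\, 1\}$ to absorb the $rF_0$ corrections yields the dichotomy: either $|B_r \cap \{u>0\} \cap \{v=0\}| = 0$, whence $u \equiv 0$ on $B_{\kappa r}$ as required, or else $\tfrac{1}{r}\dashint_{\partial B_r} u > c_d^*(\kappa) Q_{\mathrm{min}}$ for a suitably chosen $c_d^*(\kappa)$, contradicting the hypothesis. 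The main technical obstacle throughout is the loss of sign structure inherited from the semilinear terms: neither $\int\nabla v\cdot\nabla(u-v)$ nor the $F$-difference is nonnegative a priori, as they were in the classical Alt--Caffarelli problem; both corrections are brought under control precisely by the quantitative smallness of $rF_0$ built into the statement, which forces the semilinear perturbation to be subordinate to the $Q_{\mathrm{min}}$-scale and preserves the classical dichotomy.
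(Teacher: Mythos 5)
There is a genuine gap, and it is in the logic of your final dichotomy rather than in the construction of the competitor. Write $A:=B_{r}\cap\{u>0\}\cap\{v=0\}$. Your two displayed estimates are an upper bound $\int_{B_{r}}|\nabla(u-v)|^{2}\leqslant C Q_{\mathrm{max}}^{2}|A|+Cr^{d+2}$ and a lower bound $|A|\bigl(\tfrac{1}{r}\dashint_{\partial B_{r}}u-CrF_{0}M_{0}\bigr)^{2}\leqslant C\int_{B_{r}}|\nabla(u-v)|^{2}$. Chaining them and dividing by $|A|>0$ yields $\bigl(\tfrac{1}{r}\dashint_{\partial B_{r}}u-CrF_{0}M_{0}\bigr)^{2}\leqslant CQ_{\mathrm{max}}^{2}+Cr^{d+2}/|A|$, i.e.\ an \emph{upper} bound on the trace average whenever $u$ is not identically zero near the center. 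That is precisely the content of Proposition \ref{Proposition: Optimal linear growth}; it cannot produce the lower bound $\tfrac{1}{r}\dashint_{\partial B_{r}}u>c_{d}^{*}Q_{\mathrm{min}}$ that your contradiction requires. Moreover the "matching lower bound" itself is not justified by transposing Lemma \ref{Lemma: An estimates}: there the slicing works because the comparison function dominates $u$ and inherits, from the barrier $h$, a pointwise lower bound near $\partial B_{r}$ proportional to the trace average, so $v-u=v$ is quantitatively large at the first zero of $u$ along each ray. In your setting $u-v$ equals $u$ itself on $B_{\kappa r}$, and there is no a priori pointwise lower bound on $u$ there — producing one is essentially the statement being proved.

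The paper's proof runs along entirely different lines (the Alt--Caffarelli absorption scheme). The hypothesis \eqref{Formula: Nondegeneray} is first upgraded, via the sub-mean value property coming from $\Delta u_{r}\geqslant-\tfrac{rF_{0}}{2}$, to a bound $\ell_{u}:=\sup_{B_{\sqrt{\kappa}}}u_{r}\leqslant\tfrac{rF_{0}}{2}+C_{d}(\kappa)\dashint_{\partial B_{1}}u_{r}$. One then takes the competitor $w=\min\{u_{r},v\}$ with $v$ solving the annulus problem and vanishing in $B_{\kappa}$, and — using the trace theorem to control $\int_{\partial B_{\kappa}}u_{r}$ and $\int_{B_{\kappa}}u_{r}$ by $\int_{B_{\kappa}}|\nabla u_{r}|^{2}+Q^{2}\chi_{\{u_{r}>0\}}$ — arrives at the self-improving inequality in which the full energy $\int_{B_{\kappa}}|\nabla u_{r}|^{2}+Q^{2}\chi_{\{u_{r}>0\}}$ is bounded by $C\tfrac{\ell_{u}+rF_{0}}{Q_{\mathrm{min}}}\bigl[\tfrac{\ell_{u}+rF_{0}}{Q_{\mathrm{min}}}+1\bigr]$ times itself; smallness of $c_{d}^{*}$ then forces this energy to vanish. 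The measure term $Q^{2}\chi_{\{u_{r}>0\}}$ kept on the left-hand side is what makes the absorption possible, and it is exactly the term your estimates discard. To repair your argument you would need to replace the slicing step by this sup-bound-plus-absorption mechanism.
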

\begin{proof}
	Assume $x^{0}=0$ and consider the function $u_{r}$ for some $r>0$ so that $\{x\in\Omega\colon rx\}\subset\Omega$. Then obviously, $\Delta u_{r}+rf(ru_{r})=0$ in $\varOmega^{+}(u_{r})$. Define
	\begin{align}\label{Formula: O(5)}
		\ell_{u}:=\sup_{B_{\sqrt{\kappa}}}u_{r}\quad\text{ for some }\kappa\in(0,1).
	\end{align}
	Since $\Delta u_{r}\geqslant-rf(ru_{r})\geqslant-rf(0)\geqslant-\frac{rF_{0}}{2}$ in $\Omega$, it follows from the maximum principle that
	\begin{align}\label{Formula: O(6)}
		u_{r}(x)\leqslant\frac{rF_{0}}{2}\frac{1-|x|^{2}}{2d}+\frac{1-|x|^{2}}{d\omega_{d}}\int_{\partial B_{1}}\frac{u_{r}}{|x-y|^{d}}\:d\mathcal{H}^{d-1}\quad\text{ for every }x\in B_{\sqrt{\kappa}}.
	\end{align}
    Taking supremum on the both sides of \eqref{Formula: O(6)} and using \eqref{Formula: O(5)}, we have
    \begin{align}\label{Formula: O(7)}
    	\ell_{u}\leqslant\frac{rF_{0}}{2}+C_{d}(\kappa)\dashint_{\partial B_{1}}u_{r}\:d\mathcal{H}^{d-1}.
    \end{align}
    Let $v$ be a function defined by $\Delta v+rf(rv)=0$ in $B_{\sqrt{\kappa}}\setminus B_{\kappa}$, $v=0$ in $B_{\kappa}$ and $v=\ell_{u}$ outside $B_{\sqrt{\kappa}}$. Then the standard elliptic estimates yields
    \begin{align}\label{Formula: O(8)}
    	\sup_{\partial B_{\kappa}}|\nabla v|\leqslant C(\sup_{B_{\sqrt{\kappa}}}v+r\|f(rv)\|_{L^{\infty}(B_{\sqrt{\kappa}})})\leqslant C(\ell_{u}+rF_{0}),
    \end{align}
    where $C$ is a constant depending on $\kappa$ and $F_{0}$. Note that the function $w:=\min\{u_{r},v\}$ is an admissible function and so $J(u_{r};B_{\sqrt{\kappa}})\leqslant J(w;B_{\sqrt{\kappa}})$. On the other hand, $\int_{B_{\kappa}}|\nabla w|^{2}+F(rw)+Q^{2}(rx)\chi_{\{w>0\}}\:dx=0$, due to the fact that $v=0$ in $B_{\kappa}$. Thus
    \begin{align}\label{Formula: O(9)}
    	\begin{alignedat}{5}
    		J(u_{r};B_{\kappa})&=\int_{B_{\kappa}}|\nabla u_{r}|^{2}+F(ru_{r})+Q^{2}(rx)\chi_{\{u_{r}>0\}}\:dx\\
    		&\leqslant\int_{B_{\sqrt{\kappa}}\setminus B_{\kappa}}|\nabla w|^{2}-|\nabla u_{r}|^{2}+F(rw)-F(ru_{r})+Q^{2}(rx)\left(\chi_{\{w>0\}}-\chi_{\{u_{r}>0\}}\right)\:dx\\
    		&\leqslant\int_{B_{\sqrt{\kappa}}\setminus B_{\kappa}}-|\nabla(w-u_{r})|^{2}+2\nabla w\cdot\nabla(w-u_{r})+rF'(rw)(w-u_{r})\:dx\\
    		&\leqslant\int_{B_{\sqrt{\kappa}}\setminus B_{\kappa}}-|\nabla\min\{v-u_{r},0\}|^{2}+2\nabla v\cdot\nabla\min\{v-u_{r},0\}+rF'(rv)\min\{v-u_{r},0\}\:dx\\
    		&\leqslant2\int_{\partial B_{\kappa}}\min\{v-u_{r},0\}\pd{v}{\nu}\:d\mathcal{H}^{d-1},
    	\end{alignedat}
    \end{align}
    where $\nu$ is the unit normal vector. Here we have used the fact that $\int_{B_{\sqrt{\kappa}}\setminus B_{\kappa}}\chi_{\{w>0\}}-\chi_{\{u_{r}>0\}}\:dx\leqslant0$ in the third inequality since $\{w>0\}\subset\{u_{r}>0\}$. By the convexity of $F$, we have $F(ru_{r})\geqslant F'(0)ru_{r}\geqslant-rF_{0}u_{r}$ and this gives
    \begin{align}\label{Formula: O(10)}
    	\int_{B_{\kappa}}|\nabla u_{r}|^{2}+Q^{2}(rx)\chi_{\{u_{r}>0\}}\:dx\leqslant J(u_{r};B_{\kappa})+\int_{B_{\kappa}}rF_{0}u_{r}\:dx.
    \end{align}
    It follows from \eqref{Formula: O(7)}, \eqref{Formula: O(8)}, \eqref{Formula: O(9)}, \eqref{Formula: O(10)} and the trace theorem that
    \begin{align}\label{Formula: O(11)}
    	\begin{alignedat}{5}
    		&\int_{B_{\kappa}}|\nabla u_{r}|^{2}+Q^{2}(rx)\chi_{\{u_{r}>0\}}\:dx\\
    		&\leqslant2\int_{\partial B_{\kappa}}\min\{v-u_{r},0\}\pd{v}{\nu}\:d\mathcal{H}^{d-1}+\int_{B_{\kappa}}rF_{0}u_{r}\:dx\\
    		&\leqslant C(\ell_{u}+rF_{0})\left(\int_{\partial B_{\kappa}}u_{r}\:d\mathcal{H}^{d-1}+\int_{B_{\kappa}}u_{r}\:dx\right)\\
    		&\leqslant C(\ell_{u}+rF_{0})\left(\int_{B_{\kappa}}u_{r}\chi_{\{u_{r}>0\}}\:dx+C\int_{B_{\kappa}}|\nabla u_{r}|\chi_{\{u_{r}>0\}}\:dx\right)\\
    		&\leqslant C(\ell_{u}+rF_{0})\left(\frac{\ell_{u}+rF_{0}}{Q_{\mathrm{min}}^{2}}\int_{B_{\kappa}}\chi_{\{u_{r}>0\}}\:dx+\frac{C}{Q_{\mathrm{min}}}\int_{B_{\kappa}}|\nabla u_{r}|^{2}+Q^{2}(rx)\chi_{\{u_{r}>0\}}\:dx\right)\\
    		&\leqslant\frac{C(\ell_{u}+rF_{0})}{Q_{\mathrm{min}}}\left[\frac{(\ell_{u}+rF_{0})}{Q_{\mathrm{min}}}+1\right]\int_{B_{\kappa}}|\nabla u_{r}|^{2}+Q^{2}(rx)\chi_{\{u_{r}>0\}}\:dx,
    	\end{alignedat}
    \end{align}
    where $C$ is a constant depending on $d$, $F_{0}$ and $\kappa$. Thanks to \eqref{Formula: Nondegeneray} and \eqref{Formula: O(7)}, we have
    \begin{align*}
    	\frac{\ell_{u}+rF_{0}}{Q_{\mathrm{min}}}\leqslant C\left(\frac{1}{Q_{\mathrm{min}}}\dashint_{\partial B_{1}}u_{r}\:d\mathcal{H}^{d-1}+\frac{rF_{0}}{Q_{\mathrm{min}}}\right)\leqslant Cc_{d}^{*}.
    \end{align*}
    This implies that $\frac{\ell_{u}+rF_{0}}{Q_{\mathrm{min}}}$ is sufficiently small, provided that $c_{d}^{*}$ is sufficiently small. This together with the last inequality in \eqref{Formula: O(11)} implies that
    \begin{align*}
    	\int_{B_{\kappa}}|\nabla u_{r}|^{2}+Q^{2}(rx)\chi_{\{u_{r}>0\}}\:dx=0,
    \end{align*}
    for $c_{d}^{*}$ small enough. Thus $u_{r}=0$ in $B_{\kappa}$, provided that $c_{d}^{*}$ is small enough. This concludes the proof.
\end{proof}
\begin{remark}
	Proposition \ref{Proposition: non-degeneracy of minimizers} remains true if $B_{r}(x^{0})$ is not contained in $\Omega$, provided that $u=0$ in $B_{r}(x^{0})\cap\partial\Omega$.
\end{remark}
The following corollaries are direct applications of Proposition \ref{Proposition: non-degeneracy of minimizers} and Remark \ref{Remark: Optimal linear growth}.
\begin{corollary}\label{Corollary: Weak solution-the second requirement}
	Let $u\in\mathcal{K}$ be a minimizer of $J$ in $\Omega$, then for any $D\subset\subset\Omega$, there are constants $0\leqslant c_{d}^{*}\leqslant C_{d}^{*}$ so that for every $B_{r}(x^{0})\subset D$ with $x^{0}\in\partial\varOmega^{+}(u)$
	\begin{align*}
		c_{d}^{*}Q_{\mathrm{min}}\leqslant\frac{1}{r}\dashint_{\partial B_{r}(x^{0})}u\:d\mathcal{H}^{d-1}\leqslant C_{d}^{*}Q_{\mathrm{max}}.
	\end{align*}
\end{corollary}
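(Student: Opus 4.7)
The plan is to obtain the two bounds separately: the upper bound is essentially the content of Remark \ref{Remark: Optimal linear growth}, while the lower bound comes out of the contrapositive of Proposition \ref{Proposition: non-degeneracy of minimizers}. In both cases there is nothing new to prove beyond a short wrapping-up argument, so I do not anticipate any substantial obstacle; the corollary really is a packaging statement.

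For the upper bound, I would observe that since $x^{0} \in \partial\varOmega^{+}(u)$ is a free boundary point, Remark \ref{Remark: Optimal linear growth} directly yields $\frac{1}{r}\dashint_{\partial B_{r}(x^{0})} u\, d\mathcal{H}^{d-1} \leqslant C_{d}^{*} Q_{\mathrm{max}}$ whenever $r \leqslant \min\{2 Q_{\mathrm{max}}/(F_{0}M_{0}), 1\}$. Because $B_{r}(x^{0}) \subset D \cc \Omega$, the radius $r$ is bounded above in terms of $D$ and $\Omega$, so the smallness restriction can be absorbed by enlarging $C_{d}^{*}$ (at the cost of allowing dependence on $F_{0}$, $M_{0}$, and $D$ in addition to $d$).

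For the lower bound, I would argue by contradiction. Fix $\kappa \in (0,1)$, say $\kappa = 1/2$, and let $c_{d}^{*} = c_{d}^{*}(\kappa)$ denote the constant supplied by Proposition \ref{Proposition: non-degeneracy of minimizers}. If, for some admissible ball $B_{r}(x^{0}) \subset D$ with $x^{0} \in \partial\varOmega^{+}(u)$, one had $\frac{1}{r}\dashint_{\partial B_{r}(x^{0})} u\, d\mathcal{H}^{d-1} < c_{d}^{*} Q_{\mathrm{min}}$, then the non-degeneracy proposition would force $u \equiv 0$ on $B_{\kappa r}(x^{0})$. This contradicts $x^{0} \in \partial\varOmega^{+}(u) = \Omega \cap \partial\{u>0\}$, since a free boundary point has $\{u>0\}$ intersecting every one of its neighborhoods. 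The admissible radii are again handled by the compact containment $D \cc \Omega$, and any additional parameter dependence is folded into $c_{d}^{*}$.
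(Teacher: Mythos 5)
Your proposal is correct and is exactly the route the paper intends: the text introduces this corollary as "a direct application of Proposition \ref{Proposition: non-degeneracy of minimizers} and Remark \ref{Remark: Optimal linear growth}", i.e. the upper bound from the optimal linear growth remark and the lower bound from the contrapositive of non-degeneracy at a free boundary point, with the small-radius restrictions absorbed into the constants. Nothing further is needed.
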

\begin{corollary}[optimal linear nondegeneracy]\label{Corollary: Optimal linear nondegeneracy}
	Let $u\in\mathcal{K}$ be a local minimizer of $J$ in $B_{r}(x^{0})\subset\Omega$ with $x^{0}\in\partial\varOmega^{+}(u)$. Then 
	\begin{align*}
		\sup_{B_{r}(x^{0})}|u|\geqslant rc_{d}^{*}Q_{\mathrm{min}}.
	\end{align*}
\end{corollary}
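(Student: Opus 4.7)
The approach is to reduce Corollary \ref{Corollary: Optimal linear nondegeneracy} to the lower bound on spherical averages already packaged in Corollary \ref{Corollary: Weak solution-the second requirement}, via the trivial inequality between pointwise supremum and average on a sphere. The only two ingredients genuinely needed beyond that corollary are non-negativity of $u$ (so that $|u|=u$) and continuity of $u$ up to $\partial B_r(x^0)$.

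Concretely, first I would invoke Proposition \ref{Proposition: Properties of minimizers} (which remains valid for local minimizers, since its proof uses only inner variations $u-\varepsilon\min\{u,0\}$ and $u-\varepsilon\xi$ supported in $B_r(x^0)$) to conclude $u\geqslant 0$ in $B_r(x^0)$, hence $|u|\equiv u$ there. By the H\"older continuity established in Proposition \ref{Proposition: Minimizers are Holder continuous}, $u$ extends continuously to $\overline{B_r(x^0)}$, so
\[
\sup_{B_r(x^0)}u \;=\; \sup_{\overline{B_r(x^0)}}u \;\geqslant\; \sup_{\partial B_r(x^0)}u \;\geqslant\; \dashint_{\partial B_r(x^0)}u\:d\mathcal{H}^{d-1}.
\]

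Next I would apply the lower bound of Corollary \ref{Corollary: Weak solution-the second requirement} with $D\subset\subset\Omega$ any open set containing $\overline{B_r(x^0)}$ (e.g.\ a slightly larger concentric ball), obtaining $\dashint_{\partial B_r(x^0)}u\:d\mathcal{H}^{d-1}\geqslant r c_{d}^{*}Q_{\mathrm{min}}$ for the same constant $c_d^*$ furnished by Proposition \ref{Proposition: non-degeneracy of minimizers}. Chaining the two displays gives $\sup_{B_r(x^0)}|u|\geqslant r c_{d}^{*}Q_{\mathrm{min}}$, as required.

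The only delicate point is the implicit smallness restriction $r<\tfrac{1}{F_0}\max\{c_d^* Q_{\mathrm{min}},1\}$ inherited from Proposition \ref{Proposition: non-degeneracy of minimizers}. When $F_0=0$ there is nothing to check; when $F_0>0$ and the given radius fails this bound, I would instead apply the argument above on a concentric ball $B_{r_0}(x^0)$ with $r_0$ chosen at the threshold of the smallness condition, and then observe $\sup_{B_r(x^0)}|u|\geqslant\sup_{B_{r_0}(x^0)}|u|\geqslant r_0 c_{d}^{*}Q_{\mathrm{min}}$, absorbing the ratio $r_0/r$ into the constant. This is the single step requiring care; everything else is a one-line combination of already established facts.
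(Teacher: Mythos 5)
Your main chain of inequalities is exactly the paper's intended argument: non-negativity of $u$ (so $|u|=u$), continuity up to $\overline{B_r(x^0)}$, $\sup\geqslant\dashint_{\partial B_r}u$, and then the lower bound on the spherical average from Proposition \ref{Proposition: non-degeneracy of minimizers} (equivalently, the left inequality in Corollary \ref{Corollary: Weak solution-the second requirement}). That part is correct and matches the paper.

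Your proposed fix for the smallness restriction, however, does not work as stated. If $r$ exceeds the threshold $\frac{1}{F_0}\max\{c_d^*Q_{\min},1\}$ and you apply the argument on $B_{r_0}(x^0)$ with $r_0$ fixed at that threshold, you obtain only $\sup_{B_r(x^0)}|u|\geqslant r_0\,c_d^*Q_{\min}$, which is strictly weaker than the claimed $r\,c_d^*Q_{\min}$. "Absorbing $r_0/r$ into the constant" would make the constant depend on $r$, which is no longer a constant at all; and capping $r\leqslant\operatorname{diam}\Omega$ to salvage it only changes $c_d^*$ into a domain-dependent constant, contrary to the notation. In fact the statement simply cannot hold for large $r$: since $u\leqslant\Psi\leqslant M_0$, the left side is bounded by $M_0$, while $r\,c_d^*Q_{\min}$ is unbounded in $r$. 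The correct reading is that the smallness restriction of Proposition \ref{Proposition: non-degeneracy of minimizers} is tacitly inherited by the corollary (as in every later use, e.g. the proof of Proposition \ref{Proposition: density estimates}, where $r$ is taken small), not that it can be removed. Dropping your last paragraph and stating the smallness hypothesis explicitly would make the proof clean.
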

In the end of this section, we prove a density estimate that will be used in our forthcoming study.
\begin{proposition}[Density estimates]\label{Proposition: density estimates}
	Let $D\subset\subset\Omega$, then there exists a positive constant $c\in(0,1)$, such that for any $B_{r}(x^{0})\subset D$, centered at $x^{0}\in\partial\varOmega^{+}(u)$ with $r>0$ small,
	\begin{align}\label{Formula: O(12)}
		c<\frac{|B_{r}(x^{0})\cap\{u>0\}|}{|B_{r}(x^{0})|}<1-c.
	\end{align}
\end{proposition}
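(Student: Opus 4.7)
The plan is to prove the two inequalities separately. For the \emph{lower} density bound, I combine the optimal non-degeneracy of Corollary \ref{Corollary: Optimal linear nondegeneracy} with the Lipschitz regularity from Proposition \ref{Proposition: Lipschitz regularity of minimizers}. Applied on $B_{r/2}(x^0)$, the non-degeneracy provides $y\in\overline{B_{r/2}(x^0)}$ with $u(y)\geqslant c_{d}^{*}Q_{\mathrm{min}}r/2$, while the Lipschitz bound $|u(x)-u(y)|\leqslant L|x-y|$ on $D$ (with $L$ independent of $r$) forces $u>0$ throughout $B_\rho(y)$ with $\rho:=c_{d}^{*}Q_{\mathrm{min}}r/(4L)$. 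For $r$ small enough this ball lies inside $B_r(x^0)$, giving $|B_r(x^0)\cap\{u>0\}|\geqslant\omega_d\rho^d\geqslant c\,|B_r(x^0)|$.

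For the \emph{upper} density bound, equivalently $|B_r(x^0)\cap\{u=0\}|\geqslant c\,|B_r(x^0)|$, I compare $u$ with the function $h$ defined by $\Delta h+f(h)=0$ in $B_r(x^0)$ and $h=u$ on $\partial B_r(x^0)$, and then match two estimates for $\int w^{2}$ with $w:=h-u$. Applying the weak maximum principle for operators with non-positive zero-th order coefficient to $\Delta(u-h)+(f(u)-f(h))\geqslant 0$ (valid since $f'\leqslant 0$) yields $u\leqslant h$, hence $w\geqslant 0$ with $w=0$ on $\partial B_r(x^0)$. Reproducing the energy comparison from the proof of Proposition \ref{Proposition: Optimal linear growth} gives $\int|\nabla w|^{2}\leqslant Q_{\mathrm{max}}^{2}|B_r(x^0)\cap\{u=0\}|$, which via Poincar\'e's inequality yields
\begin{align*}
    \int_{B_r(x^0)} w^{2}\,dx\leqslant Cr^{2}Q_{\mathrm{max}}^{2}\,|B_r(x^0)\cap\{u=0\}|.
\end{align*}
For the matching lower bound of order $r^{d+2}$, I will establish a pointwise estimate $w(x^0)\geqslant c_1 r$: decomposing $h=H+g$ with $H$ the harmonic extension of $u|_{\partial B_r(x^0)}$ and $g$ solving $\Delta g=-f(h)$ with $g|_{\partial B_r(x^0)}=0$, the mean value property combined with Corollary \ref{Corollary: Weak solution-the second requirement} gives $H(x^0)=\dashint_{\partial B_r(x^0)}u\,d\mathcal{H}^{d-1}\geqslant c_{d}^{*}Q_{\mathrm{min}}r$, while $|g|\leqslant Cr^{2}$ since $|f(h)|\leqslant F_0/2$. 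As $u(x^0)=0$, this yields $w(x^0)\geqslant c_1 r$ for $r$ small. Interior gradient estimates for $h$ then show $w$ is $L'$-Lipschitz on $B_{r/2}(x^0)$ with $L'$ independent of $r$, so $w\geqslant c_1 r/2$ on a ball $B_{\rho'}(x^0)$ of radius $\rho'\sim r$; this gives $\int w^{2}\geqslant c_2 r^{d+2}$, which compared with the Poincar\'e bound yields $|B_r(x^0)\cap\{u=0\}|\geqslant cr^{d}$.

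The main obstacle I anticipate is verifying that the Lipschitz constant of $h$ on $B_{r/2}(x^0)$ and the pointwise estimate $h(x^0)\geqslant c_1 r$ are uniform in $r$. The key input is that $\|h\|_{L^{\infty}(B_r(x^0))}\leqslant Cr$, which follows from $\|u\|_{L^{\infty}(\partial B_r(x^0))}\leqslant Lr$ (since $u(x^0)=0$ and $u$ is Lipschitz) together with the maximum principle applied to $h$ with bounded right-hand side $|f(h)|\leqslant F_0/2$. One must also restrict $r$ so that the $O(r^{2})$ correction from $g$ does not absorb the leading $c_{d}^{*}Q_{\mathrm{min}}r$ term -- precisely the hypothesis "$r>0$ small" in the statement.
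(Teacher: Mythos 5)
Your proposal is correct and follows essentially the same strategy as the paper: non-degeneracy plus a quantitative positivity ball of radius $\sim r$ for the lower density bound, and for the upper bound a comparison with the semilinear replacement $h$, the energy inequality $\int|\nabla(u-h)|^{2}\leqslant Q_{\mathrm{max}}^{2}|B_{r}\cap\{u=0\}|$, Poincar\'e, and a pointwise gap $h-u\gtrsim r$ on an interior ball. The only cosmetic differences are that the paper produces the positivity ball via Proposition \ref{Proposition: non-degeneracy of minimizers} rather than the Lipschitz bound, and obtains the gap $v-u\geqslant cQ_{\mathrm{min}}r$ on all of $B_{\kappa r}$ directly from the Poisson representation instead of evaluating at the center and propagating by Lipschitz continuity; both variants are sound.
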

\begin{proof}
	Without loss of generality, we assume that $x^{0}=0$, and we first prove the estimate by below. It follows from Corollary \ref{Corollary: Optimal linear nondegeneracy} that there exists a point $y\in\partial B_{r}$ such that $u(y)>\eta r$ for some constant $\eta>0$. Define a function $v(x)=u(x)+\frac{F_{0}|x|^{2}}{4d}$. A direct calculation gives that $\Delta v=\Delta u+\frac{F_{0}}{2}\geqslant f(0)-f(u)\geqslant0$ in $B_{r}$. This implies that for $\kappa>0$ small, 
	\begin{align*}
		\frac{F_{0}r}{2d\kappa}+\frac{1}{\kappa r}\dashint_{\partial B_{\kappa r}(y)}u\:d\mathcal{H}^{d-1}\geqslant\frac{1}{\kappa r}\dashint_{\partial B_{\kappa r}(y)}v\:d\mathcal{H}^{d-1}\geqslant\frac{v(y)}{\kappa r}\geqslant\frac{\eta}{\kappa},
	\end{align*}
	for sufficiently small $\kappa>0$. It follows that 
	\begin{align*}
		\frac{1}{\kappa r}\dashint_{\partial B_{\kappa r}(y)}u\:d\mathcal{H}^{d-1}\geqslant\frac{\eta}{\kappa}-\frac{F_{0}r}{2d\kappa}>\frac{\eta}{4\kappa},
	\end{align*}
	provided that $r>0$ is sufficiently small. By Proposition \ref{Proposition: non-degeneracy of minimizers}, we have that $u>0$ in $B_{\kappa r}(y)$, which gives the lower bound.
	
	 Consider a function $v$ defined in $\bar{B}_{r}$ by $\Delta v+f(v)=0$ in $B_{r}$ and $v=u$ on $\partial B_{r}$. The maximum principle implies that $u\leqslant v\leqslant\Psi$ in $B_{r}$ where $\Psi$ is defined in \eqref{Formula: Psi}. Therefore $v\in\mathcal{K}$ is admissible. The minimality of $u$ and a similar calculation as in \eqref{Formula: P(0)} gives $\int_{B_{r}}|\nabla(u-v)|^{2}\:dx\leqslant Q_{\mathrm{max}}^{2}|B_{r}\cap\{u=0\}|$. Thanks to Poincar\'{e} and H\"{o}lder inequality, one has
     \begin{align}\label{Formula: O(13)}
     	Q_{\mathrm{max}}^{2}|B_{r}\cap\{u=0\}|\geqslant\int_{B_{r}}|\nabla(v-u)|^{2}\:dx\geqslant\frac{c}{r^{2}}\int_{B_{r}}|v-u|^{2}\:dx.
     \end{align}
     For any $y\in B_{kr}$, since $\Delta v\leqslant \frac{M_{0}F_{0}}{2}-f(0)$ with $M_{0}=\sup_{\bar{\Omega}}\Psi$, a direct calculation gives
     \begin{align}\label{Formula: O(14)}
     	\begin{alignedat}{2}
     		v(y)&\geqslant\frac{r^{2}-|y|^{2}}{d\omega_{d}r}\int_{\partial B_{r}}\frac{u}{|x-y|^{d}}\:d\mathcal{H}^{d-1}+(M_{0}F_{0}-f(0))\frac{|y|^{2}-r^{2}}{2d}\\
     		&\geqslant(1-C\kappa)\dashint_{\partial B_{r}}u\:d\mathcal{H}^{d-1}-Cr^{2}.
     	\end{alignedat}
     \end{align}
     It follows from \eqref{Formula: O(14)}, Proposition \ref{Proposition: Lipschitz regularity of minimizers} Proposition \ref{Proposition: non-degeneracy of minimizers} that
     \begin{align}\label{Formula: O(15)}
     	\begin{alignedat}{3}
     		v(y)-u(y)&\geqslant v(u)-C\kappa r\\
     		&\geqslant(1-C\kappa)\dashint_{\partial B_{r}}u\:d\mathcal{H}^{d-1}-C\kappa r-Cr^{2}\\
     		&\geqslant cQ_{\mathrm{min}}r,
     	\end{alignedat}
     \end{align}
     provided that $\kappa$ and $r$ are sufficiently small. 
     
     Combining with \eqref{Formula: O(13)} and \eqref{Formula: O(15)}, one has
     \begin{align}\label{Formula: O(16)}
     	Q_{\mathrm{max}}^{2}|B_{r}\cap\{u=0\}|\geqslant\frac{c}{r^{2}}\int_{B_{r}}|v-u|^{2}\:dx\geqslant\frac{c}{r^{2}}\int_{B_{kr}}|v-u|^{2}\:dx\geqslant c(\kappa)Q_{\mathrm{min}}^{2}r^{d},
     \end{align}
     for $r>0$ sufficiently small. Thus, in view of \eqref{Formula: O(16)}, we have
     \begin{align*}
     	\frac{|B_{r}\cap\{u=0\}|}{|B_{r}|}\geqslant c\frac{Q_{\mathrm{min}}}{Q_{\mathrm{max}}},
     \end{align*}
     where $c$ is independent of $r$. This gives the upper bound in \eqref{Formula: O(12)} since $\frac{Q_{\mathrm{min}}}{Q_{\mathrm{max}}}>0$.
\end{proof}
\section{Locally finite perimeter of the positivity set and blow-up limits}
In this section we prove that the set $\varOmega^{+}(u)$ has (locally) finite perimeter in $\Omega$. The first step is a measure estimate of the free boundary. To begin with, let us define
\begin{align}\label{Formula: W(0)}
	\lambda:=\Delta u+f(u),
\end{align}
and
\begin{align}\label{Formula: W(1)}
	\lambda_{0}:=\Delta u+f(u)\chi_{\{u>0\}}.
\end{align}
Then we immediately have
\begin{lemma}
	Let $\lambda$ and $\lambda_{0}$ be defined as in \eqref{Formula: W(0)} and \eqref{Formula: W(1)}, respectively, then  $\lambda$ is a Radon measure supported on $\Omega\cap\{u=0\}$ and its singular point is contained in the free boundary. $\lambda_{0}$ is a positive Radon measure supported in $\partial\varOmega^{+}(u)$.
\end{lemma}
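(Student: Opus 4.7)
The plan is to deduce all three assertions from Proposition \ref{Proposition: Properties of minimizers} together with the density estimate of Proposition \ref{Proposition: density estimates}. First, reading \eqref{Formula: P(1)} in the distributional sense, $\lambda = \Delta u + f(u)$ is a non-negative distribution in $\Omega$: for every $\xi \in C_{c}^{\infty}(\Omega)$ with $\xi \geqslant 0$,
\begin{equation*}
\langle \lambda, \xi \rangle = -\int_{\Omega} \nabla u \cdot \nabla \xi \, dx + \int_{\Omega} f(u) \xi \, dx \geqslant 0
\end{equation*}
by Proposition \ref{Proposition: Properties of minimizers}. Since $u \in C^{0,1}(\Omega)$ and $f(u) \in L^{\infty}_{\mathrm{loc}}(\Omega)$, the standard Riesz representation for non-negative distributions produces a locally finite Radon measure on $\Omega$ representing $\lambda$.

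Next I localize the measure. By Proposition \ref{Proposition: Properties of minimizers}, $\Delta u + f(u) = 0$ classically in $\varOmega^{+}(u)$, so $\lambda$ vanishes there, forcing $\supp \lambda \subset \Omega \cap \{u = 0\}$. On the interior of the coincidence set $\{u = 0\}$, $u$ is identically zero, hence $\Delta u = 0$ and $f(u) = f(0)$ classically; thus $\lambda$ coincides there with the bounded density $f(0)\chi_{\{u=0\}}$ and is in particular absolutely continuous. Consequently any singular piece of $\lambda$ must be concentrated on $\Omega \cap \partial\{u = 0\} = \partial\varOmega^{+}(u)$, which is the content of the first assertion.

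Finally, since $u \geqslant 0$ by Proposition \ref{Proposition: Properties of minimizers}, the difference $\lambda - \lambda_{0} = f(u)\chi_{\{u \leqslant 0\}}$ simplifies to $f(0)\chi_{\{u=0\}}$, yielding
\begin{equation*}
\lambda_{0} = \lambda - f(0)\chi_{\{u=0\}}.
\end{equation*}
The computations of the previous paragraph then give $\lambda_{0} = 0$ both in $\varOmega^{+}(u)$ and in the interior of $\{u = 0\}$, so $\supp \lambda_{0} \subset \partial\varOmega^{+}(u)$. The crux is upgrading this to \emph{positivity}; here I would invoke \eqref{Formula: O(12)}: no point of $\partial\varOmega^{+}(u)$ can be a Lebesgue density point of $\partial\varOmega^{+}(u)$ itself, since such density would force $|B_{r}\cap\varOmega^{+}(u)|/|B_{r}|\to 0$, contradicting the lower bound in Proposition \ref{Proposition: density estimates}. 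The Lebesgue density theorem therefore yields $|\partial\varOmega^{+}(u)| = 0$, so the absolutely continuous correction $f(0)\chi_{\{u=0\}}$ contributes no mass on $\partial\varOmega^{+}(u)$; on that set $\lambda_{0}$ coincides with the non-negative measure $\lambda$, giving positivity. The only non-routine step in the proof is this measure-theoretic vanishing of the free boundary, which is the single place where the density estimate is genuinely used.
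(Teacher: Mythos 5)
Your proof is correct, but it reaches the key conclusion $\lambda_{0}\geqslant0$ by a genuinely different route than the paper. The paper proves directly that $\Delta u+f(u)\chi_{\{u>0\}}\geqslant0$ in the distributional sense via the classical Alt--Caffarelli truncation device: testing against $\xi\,\eta(tu)$ with $\eta(t)=\max\{\min\{2-t,1\},0\}$, bounding the result from above by $\int_{\{0<u\leqslant 2/t\}}|\nabla u||\nabla\xi|\,dx$ and from below by $\int_{\Omega}\nabla u\cdot\nabla\xi\,dx-\int_{\varOmega^{+}(u)}f(u)\xi\,dx$ (using $f\geqslant0$ near $\{u=0\}$), and letting $t\to\infty$; positivity of $\lambda=\lambda_{0}+f(0)\chi_{\{u=0\}}$ is then a corollary. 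You go the other way: you get $\lambda\geqslant0$ for free from \eqref{Formula: P(1)}, and then recover $\lambda_{0}\geqslant0$ from the identity $\lambda_{0}=\lambda-f(0)\chi_{\{u=0\}}$ together with $|\partial\varOmega^{+}(u)|=0$, which you correctly extract from the lower density bound in Proposition \ref{Proposition: density estimates} and the Lebesgue density theorem (a free boundary point has $\varOmega^{+}(u)$-density at least $c$, hence $\partial\varOmega^{+}(u)$-density at most $1-c<1$). Since the density estimate is established in Section 3, before this lemma, there is no circularity, and your localization of the supports and of the singular part is sound. What each approach buys: the paper's truncation argument is self-contained and does not require knowing that the free boundary is Lebesgue-negligible (it is the argument one must use in settings where density estimates are not yet available), whereas yours is shorter and more transparent once the density estimates are in hand, and as a by-product identifies $\lambda_{0}$ as the restriction $\lambda\mres\partial\varOmega^{+}(u)$, which is essentially the first equality in \eqref{Formula: W(2')} used later.
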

\begin{proof}
	For any $\xi\in C_{0}^{\infty}(\Omega)$ with $\xi\geqslant0$, define $\eta(t):=\max\{\min\{2-t,1\},0\}$ for any $t>0$. Then
	\begin{align}\label{Formula: W(1')}
		\begin{alignedat}{2}
			\int_{\Omega}\nabla u\cdot\nabla(\xi\eta(tu))\:dx&=\int_{\Omega}\nabla u\cdot\nabla\xi\:dx+\int_{\Omega\cap\{u\geqslant\tfrac{1}{t}\}}\nabla u\cdot\nabla(\xi\eta(tu)-\xi)\:dx\\
			&=\int_{\Omega}\nabla u\cdot\nabla\xi\:dx-\int_{\varOmega^{+}(u)}f(u)\xi\:dx\\
			&\quad+\int_{\Omega\cap\{\tfrac{1}{t}\leqslant u\leqslant\tfrac{2}{t}\}}(2-ut)f(u)\xi\:dx+\int_{\Omega\cap\{u\leqslant\tfrac{2}{t}\}}f(u)\xi\:dx\\
			&\geqslant\int_{\Omega}\nabla u\cdot\nabla\xi\:dx-\int_{\varOmega^{+}(u)}f(u)\xi\:dx,
		\end{alignedat}
	\end{align}
    for $t>0$ large enough, where we have used the fact $f(u)\geqslant\frac{f(0)}{2}\geqslant0$ in $\Omega\cap\{u\leqslant\tfrac{2}{t}\}$ for $t>0$ large enough. On the other hand, one has
    \begin{align}\label{Formula: W(1'')}
    	\begin{alignedat}{2}
    		\int_{\Omega}\nabla u\cdot\nabla(\xi\eta(tu))\:dx&=\int_{\Omega\cap\{\tfrac{1}{t}\leqslant u\leqslant\tfrac{2}{t}\}}(2-ut)\nabla u\cdot\nabla\xi-\xi t|\nabla u|^{2}\:dx+\int_{\Omega\cap\{0<u\leqslant\tfrac{1}{t}\}}\nabla u\cdot\nabla\xi\:dx\\
    		&\leqslant\int_{\Omega\cap\{0<u\leqslant\tfrac{2}{t}\}}|\nabla u||\nabla\xi|\:dx.
    	\end{alignedat}
    \end{align}
    It follows from \eqref{Formula: W(1')} and \eqref{Formula: W(1'')} that
    \begin{align}\label{Formula: W(1''')}
    	\int_{\Omega}\nabla u\cdot\nabla\xi\:dx-\int_{\varOmega^{+}(u)}f(u)\xi\:dx\leqslant\int_{\Omega\cap\{0<u\leqslant\tfrac{2}{t}\}}|\nabla u||\nabla\xi|\:dx.
    \end{align} 
   Passing to the limit as $n\to\infty$ in \eqref{Formula: W(1''')}, we conclude that $\Delta u+f(u)\chi_{\{u>0\}}\geqslant0$ in the sense of distributions. Consequently, there exists a positive Radon measure $\lambda_{0}$ supported on $\partial\varOmega^{+}(u)$ so that $\lambda_{0}=\Delta u+f(u)\chi_{\{u>0\}}$. Recalling \eqref{Formula: Assumption on f(t)} that $f(0)\geqslant0$ there exists a positive Radon measure $\lambda$ supported on $\Omega\cap\{u=0\}$ so that $\lambda=\lambda_{0}+f(0)\chi_{\{u=0\}}=\Delta u+f(u)$.
\end{proof}
Compare to \cite[Theorem 4.3]{AC1981}, we have the following result adjusted to our semilinear settings.
\begin{proposition}\label{Proposition: The measure lambda and lambda0}
	Let $\lambda$ and $\lambda_{0}$ be defined as in \eqref{Formula: W(0)} and \eqref{Formula: W(1)}, respectively, and let $D\subset\subset\Omega$. Then there exist positive constants $c$ and $C$, depending on $d$, $F_{0}$ and the Lipschitz coefficient of $u$ in $B_{r}(x^{0})$, so that for any $B_{r}(x^{0})\subset G$ with $x^{0}\in\partial\varOmega^{+}(u)$ and $r<r_{0}$,
	\begin{align}\label{Formula: W(2)}
		cr^{d-1}\leqslant\int_{B_{r}(x^{0})}\:d\lambda\leqslant Cr^{d-1}.
	\end{align}
	Moreover,
	\begin{align}\label{Formula: W(2')}
		cr^{d-1}\leqslant\int_{B_{r}(x^{0})\cap\partial\{u>0\}}\:d\lambda=\int_{B_{r}(x^{0})}\:d\lambda_{0}\leqslant Cr^{d-1}.
	\end{align}
\end{proposition}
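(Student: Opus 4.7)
The upper bound in \eqref{Formula: W(2)} is the routine half. The plan is to test the distributional identity $\lambda = \Delta u + f(u)$ against a nonnegative cutoff $\eta \in C_{0}^{\infty}(B_{2r}(x^{0}))$ with $\eta \equiv 1$ on $B_{r}(x^{0})$ and $|\nabla\eta| \leq C/r$ (which is admissible for $r < r_{0}$ so that $B_{2r}(x^{0}) \subset D$). This gives
\begin{align*}
\lambda(B_{r}(x^{0})) \leq \int \eta\, d\lambda = -\int \nabla u \cdot \nabla\eta\, dx + \int f(u)\,\eta\, dx,
\end{align*}
and the Lipschitz bound of Proposition \ref{Proposition: Lipschitz regularity of minimizers} together with $|f(u)| \leq C$ (from \eqref{Formula: Assumption on f(t)} and the $L^{\infty}$ bound on $u$) controls both terms by $Cr^{d-1}$.

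For the lower bound I would adapt the harmonic-replacement argument of Alt--Caffarelli. Let $v$ be the harmonic extension of $u|_{\partial B_{r}(x^{0})}$ into $B_{r}(x^{0})$, so that $w := v - u$ vanishes on $\partial B_{r}(x^{0})$ and satisfies $-\Delta w = \lambda - f(u)$ distributionally. Evaluating the Green's-function representation at the center $x^{0}$, applying the mean-value property $v(x^{0}) = \dashint_{\partial B_{r}(x^{0})} u\, d\mathcal{H}^{d-1}$ and the lower non-degeneracy bound of Corollary \ref{Corollary: Weak solution-the second requirement}, yield
\begin{align*}
c_{d}^{*} Q_{\mathrm{min}}\, r \leq v(x^{0}) = \int_{B_{r}(x^{0})} G_{r}(x^{0},y)\, d\lambda(y) - \int_{B_{r}(x^{0})} G_{r}(x^{0},y) f(u(y))\, dy.
\end{align*}
Since $\int_{B_{r}(x^{0})} G_{r}(x^{0},y)\, dy = r^{2}/(2d)$ and $|f(u)|$ is bounded, the second term is $O(r^{2})$; hence $\int G_{r}\, d\lambda \geq cr/2$ for all sufficiently small $r$.

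The final and most delicate step is converting this weighted integral bound into the pointwise bound $\lambda(B_{r}(x^{0})) \geq c r^{d-1}$. For $d \geq 3$ the explicit formula $G_{r}(x^{0},y) = c_{d}(|y-x^{0}|^{2-d} - r^{2-d})$ combined with Fubini gives $\int G_{r}\, d\lambda = c_{d}(d-2) \int_{0}^{r} \rho^{1-d}\lambda(B_{\rho}(x^{0}))\, d\rho$; I would then invoke the upper bound $\lambda(B_{\rho}(x^{0})) \leq C\rho^{d-1}$ (already proved at every scale $\rho \leq r$) and perform a dyadic split at the crossover scale where the two controls balance to force $\lambda(B_{r}(x^{0})) \geq c r^{d-1}$. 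In $d = 2$ the same philosophy works with the logarithmic Green's function, the $\log$-integral remaining finite thanks to $\lambda(B_{\rho}) \leq C\rho$. Finally, the pointwise identity $\lambda - \lambda_{0} = f(0)\chi_{\{u=0\}}$ (since $u \equiv 0$ and $\Delta u = 0$ in the interior of $\{u=0\}$) and the fact that $\lambda_{0}$ is supported on $\partial\{u>0\}$ together yield $\int_{B_{r}(x^{0}) \cap \partial\{u>0\}} d\lambda = \int_{B_{r}(x^{0})} d\lambda_{0}$ and $|\lambda(B_{r}) - \lambda_{0}(B_{r})| \leq f(0) |B_{r}\cap\{u=0\}| \leq Cr^{d}$ by the density estimate of Proposition \ref{Proposition: density estimates}; since this error is lower order, the two-sided bound \eqref{Formula: W(2')} for $\lambda_{0}$ follows from \eqref{Formula: W(2)}. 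The main obstacle is precisely the dyadic-splitting step: extracting the sharp exponent $r^{d-1}$ from an integral bound against a singular kernel demands that the upper bound on $\lambda$ be used at \emph{every} small scale, not only at the target scale $r$.
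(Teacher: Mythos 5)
Your proposal is correct, and two of its three pieces coincide with the paper's: the upper bound in \eqref{Formula: W(2)} is obtained in the paper by testing against the cutoff $d_{\varepsilon,B_{r}(x^{0})}$ (your smooth $\eta$ on $B_{2r}$ is an equivalent variant), and \eqref{Formula: W(2')} is deduced there exactly as you do, from $\lambda=\lambda_{0}+f(0)\chi_{\{u=0\}}$ and the fact that the absolutely continuous discrepancy is $O(r^{d})$. The genuine divergence is in the lower bound. The paper also uses a Green's function representation, but it places the pole not at the free boundary point $x^{0}$ but at a point $y$ with $u(y)\geqslant c(\kappa)Q_{\mathrm{min}}r$ supplied by Corollary \ref{Corollary: Optimal linear nondegeneracy}; since $u$ is Lipschitz, $\operatorname{dist}(y,\operatorname{supp}\lambda\cap B_{r}(x^{0}))\geqslant c(\kappa)r$, so $G_{y}\leqslant C(\kappa)r^{2-d}$ on the support of $\lambda$ and the passage from $\int G_{y}\,d\lambda\geqslant c_{0}r$ to $\lambda(B_{r}(x^{0}))\geqslant cr^{d-1}$ is a one-line division with no singularity to handle. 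You instead keep the pole at $x^{0}$, where the kernel is singular on $\operatorname{supp}\lambda$, and compensate via the layer-cake identity $\int G_{r}\,d\lambda=c_{d}(d-2)\int_{0}^{r}\rho^{1-d}\lambda(B_{\rho}(x^{0}))\,d\rho$ together with the upper bound at every scale $\rho\leqslant r$. This does close, but your ``dyadic split'' needs one ingredient you leave implicit: the monotonicity $\lambda(B_{\rho}(x^{0}))\leqslant\lambda(B_{r}(x^{0}))$ for $\rho\leqslant r$, i.e.\ the positivity of $\lambda$ established in the preceding lemma. With it, splitting at $\delta r$ gives $cr\leqslant C\delta r+\lambda(B_{r}(x^{0}))\tfrac{(\delta r)^{2-d}}{d-2}$, and fixing $\delta$ with $C\delta\leqslant c/2$ yields $\lambda(B_{r}(x^{0}))\geqslant c\,\delta^{d-2}r^{d-1}$ (the logarithmic analogue works for $d=2$); without it the weighted integral bound alone does not control $\lambda(B_{r}(x^{0}))$ from below. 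In exchange for this multi-scale bookkeeping, your route stays entirely at the center point and uses only the averaged non-degeneracy of Corollary \ref{Corollary: Weak solution-the second requirement}, whereas the paper's route trades that for a pointwise non-degenerate value of $u$ at an interior point of the positivity set.
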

\begin{proof}
    For any $\varepsilon>0$, introduce the function $d_{\varepsilon,B_{r}(x^{0})}=\min\{\tfrac{\operatorname{dist}(x,\mathbb{R}^{d}\setminus B_{r}(x^{0}))}{\varepsilon},1\}$ as a test function for $B_{r}(x^{0})\subset D$. Then
    \begin{align}\label{Formula: W(6)}
    	\int_{\Omega}d_{\varepsilon,B_{r}(x^{0})}\:d\lambda=\int_{\Omega}\nabla u\cdot\nabla d_{\varepsilon,B_{r}(x^{0})}+f(u)d_{\varepsilon,B_{r}(x^{0})}\:dx.
    \end{align}
    Due to the fact that $d_{\varepsilon,B_{r}(x^{0})}$ converges to $\chi_{B_{r}(x^{0})}$ as $\varepsilon\to0$, we take the limit $\varepsilon\to0$ in \eqref{Formula: W(6)} and obtain
    \begin{align}\label{Formula: W(7)}
    	\int_{B_{r}(x^{0})}\:d\lambda=\int_{\partial B_{r}(x^{0})}\nabla u\cdot\nu\:d\mathcal{H}^{d-1}+\int_{B_{r}(x^{0})}f(u)\:dx\leqslant Cr^{d-1}+CF_{0}r^{d}\leqslant Cr^{d-1},
    \end{align}
    where the constant $C$ depends on $d$, $F_{0}$ and the Lipschitz constant of $u$ in $B_{r}(x^{0})$. This gives the right hand side of \eqref{Formula: W(2)}.
    
    Let $y\in B_{r}(x^{0})$ and $G_{y}(x)$ be the Green function for Laplacian in $B_{r}(x^{0})$ with pole $y$. If $u(y)>0$, then $y$ is outside the support of the Radon measure $\lambda$, and this implies
    \begin{align}\label{Formula: W(8)}
    	\int_{B_{r}(x^{0})}G_{y}(x)\:d\lambda=-u(y)-\int_{B_{r}(x^{0})}f(u)G_{y}(x)\:dx+\int_{\partial B_{r}(x^{0})}u\partial_{-\nu}G_{y}(x)\:d\mathcal{H}^{d-1}.
    \end{align}
    With the help of Corollary \ref{Corollary: Optimal linear nondegeneracy} there exists a point $y\in\partial B_{\kappa r}$ so that $u(y)\geqslant c_{d}^{*}(\kappa)Q_{\mathrm{min}}r>0$ for $\kappa>0$ small. On the other hand, since $u$ is Lipschitz continuous, we have $u(y)=u(y)-u(x^{0})\leqslant C\kappa r$ and $u>0$ in $B_{c(\kappa)r}(y)$ for small constants $c(\kappa)$. This combined with \eqref{Formula: W(8)} gives
    \begin{align}\label{Formula: W(9)}
    	\int_{B_{r}(x^{0})}G_{y}(x)\:d\lambda\geqslant-C\kappa r-CF_{0}r^{d}+c(1-\kappa)\dashint_{\partial B_{r}(x^{0})}u\:d\mathcal{H}^{d-1}\geqslant c_{0}r,\quad(c_{0}>0),
    \end{align}
    provided that $\kappa$ and $r$ small enough. Note that we have
    \begin{align}\label{Formula: W(10)}
    	\begin{alignedat}{2}
    		\int_{B_{r}(x^{0})}G_{y}(x)\:d\lambda&=\int_{B_{r}(x^{0})\cap\{u=0\}}G_{y}(x)\:d\lambda\leqslant\sup_{B_{r}(x^{0})\cap\{u=0\}}G_{y}(x)\int_{B_{r}(x^{0})}d\lambda\leqslant C(\kappa)r^{2-d}\int_{B_{r}(x^{0})}\:d\lambda.
    	\end{alignedat}
    \end{align}
    Here we used the fact that $\operatorname{dist}(y,B_{r}(x^{0})\cap\{u=0\})\geqslant c(\kappa)r$. Combining \eqref{Formula: W(9)} and \eqref{Formula: W(10)} together gives the left hand side of \eqref{Formula: W(2)}.
    
    We now prove \eqref{Formula: W(2')}. Firstly, the right-hand side of \eqref{Formula: W(2')} is obvious since $\int_{\partial B_{r;x^{0}}^{+}(u)}\:d\lambda\leqslant\int_{B_{r}(x^{0})}\:d\lambda$. Moreover,
    \begin{align*}
    	\int_{B_{r}(x^{0})}\:d\lambda&=\int_{B_{r}(x^{0})\cap\partial\{u>0\}}\:d\lambda+\int_{B_{r}(x^{0})\cap\{u=0\}}\:d\lambda\\
    	&\leqslant\int_{B_{r}(x^{0})\cap\partial\{u>0\}}\:d\lambda+\int_{B_{r}(x^{0})\cap\{u=0\}}f(0)\:dx\\
    	&\leqslant\int_{B_{r}(x^{0})\cap\partial\{u>0\}}\:d\lambda+C_{d}F_{0}r^{d},
    \end{align*}
    which implies that
    \begin{align*}
    	\int_{ B_{r}(x^{0})\cap\partial\{u>0\}}\:d\lambda\geqslant\int_{B_{r}(x^{0})}\:d\lambda-C_{d}F_{0}r^{d}\geqslant cr^{d-1}-C_{d}F_{0}r^{d}\geqslant cr^{d-1},
    \end{align*}
    for sufficiently small $r$, which gives the left-hand side of \eqref{Formula: W(2')}.
\end{proof}
The next theorem easily follows easily from Proposition \ref{Proposition: The measure lambda and lambda0}, precisely as in \cite{AC1981}.
\begin{theorem}[Representation theorem]\label{Theorem: representation theorem}
	Let $u$ be a minimizer. Then:
	\begin{enumerate}
		\item $\mathcal{H}^{d-1}(D\cap\partial\{u>0\})<+\infty$ for every $D\subset\subset\Omega$.
		\item There exists a Borel function $q_{u}$, such that
		\begin{align*}
			\Delta u+f(u)\chi_{\{u>0\}}=q_{u}\mathcal{H}^{d-1}\mres{\partial\{u>0\}}.
		\end{align*}
	   That is, for any $\xi\in C_{0}^{\infty}(\Omega)$,
	   \begin{align*}
	   	   -\int_{\Omega}\nabla u\cdot\nabla\xi\:dx+\int_{\varOmega^{+}(u)}f(u)\xi\:dx=\int_{\partial\varOmega^{+}(u)}q_{u}\xi\:d\mathcal{H}^{d-1}.
	   \end{align*}
	   \item For any compact subset $D$ of $\Omega$, and for any $B_{r}(x^{0})\subset D$ with $x^{0}\in\partial\varOmega^{+}(u)$, there exists $0<c\leqslant C<\infty$, independent of $r$ and $x^{0}$, so that
	   \begin{align*}
	   	    0<c\leqslant q_{u}(x)\leqslant C<+\infty\qquad\text{ and }\qquad cr^{d-1}\leqslant\mathcal{H}^{d-1}(B_{r}(x^{0})\cap\partial\{u>0\})\leqslant Cr^{d-1}.
	   \end{align*}
	\end{enumerate}
\end{theorem}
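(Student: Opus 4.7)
The strategy is to adapt the classical representation theorem [Theorem 4.5, \cite{AC1981}] to the present semilinear setting, with the positive Radon measure $\lambda_0=\Delta u+f(u)\chi_{\{u>0\}}$ from Proposition \ref{Proposition: The measure lambda and lambda0} playing the role that $\Delta u$ plays in the homogeneous case. The two-sided estimate $cr^{d-1}\ll\lambda_0(B_r(x^0))\ll Cr^{d-1}$ is the essential quantitative input; once it is available every remaining step is routine measure theory.

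For (1), fix $D\cc\Omega$ and $\delta\in(0,r_0)$. I will cover $D\cap\partial\varOmega^{+}(u)$ by balls $B_{\delta/10}(x_i)$ centered at free-boundary points and, via the Vitali $5r$-covering lemma, extract a pairwise disjoint subfamily whose $5$-fold enlargement still covers the set. The lower bound in \eqref{Formula: W(2')} applied to each disjoint ball, combined with the finiteness of $\lambda_0$ on a compact neighbourhood $D'\cc\Omega$ of $D$, gives $\sum(\delta/10)^{d-1}\ll C\lambda_0(D')<\infty$ uniformly in $\delta$, yielding (1). The same argument restricted to $B_r(x^0)$ produces the upper estimate $\mathcal{H}^{d-1}(B_r(x^0)\cap\partial\varOmega^{+}(u))\ll Cr^{d-1}$ needed in (3).

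For (2), the upper half of \eqref{Formula: W(2')} together with the same Vitali covering shows $\lambda_0(E)\ll C\mathcal{H}^{d-1}(E)$ for every Borel $E\ss\partial\varOmega^{+}(u)$. Hence $\lambda_0\ll\mathcal{H}^{d-1}\mres\partial\varOmega^{+}(u)$, and the latter is Radon by (1), so the Radon--Nikodym theorem produces a non-negative Borel density $q_u$ with $\lambda_0=q_u\mathcal{H}^{d-1}\mres\partial\varOmega^{+}(u)$. Pairing the distributional identity $\lambda_0=\Delta u+f(u)\chi_{\{u>0\}}$ against $\xi\in C_0^\infty(\Omega)$ then yields the weak form in (2). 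For the two-sided bound on $q_u$ in (3), the only missing piece is the lower bound $\mathcal{H}^{d-1}(B_r(x^0)\cap\partial\varOmega^{+}(u))\gg cr^{d-1}$, which I will obtain from the relative isoperimetric inequality applied to $\varOmega^{+}(u)\cap B_r(x^0)$: its Lebesgue density lies strictly between $c$ and $1-c$ by Proposition \ref{Proposition: density estimates}, forcing $\mathcal{H}^{d-1}(B_r(x^0)\cap\partial_{\mathrm{red}}\varOmega^{+}(u))\gg cr^{d-1}$, and $\partial_{\mathrm{red}}\varOmega^{+}(u)\ss\partial\varOmega^{+}(u)$. The Besicovitch differentiation formula
\begin{equation*}
q_u(x^0)=\lim_{r\to 0^+}\frac{\lambda_0(B_r(x^0))}{\mathcal{H}^{d-1}(B_r(x^0)\cap\partial\varOmega^{+}(u))}
\end{equation*}
together with the matching two-sided bounds on both numerator and denominator gives $0<c\ll q_u\ll C<\infty$ at $\mathcal{H}^{d-1}$-almost every point of $D\cap\partial\varOmega^{+}(u)$.

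The real work has already been carried out in Proposition \ref{Proposition: The measure lambda and lambda0}; after that there is no genuine obstruction. The only place where the semilinear term could plausibly misbehave is in the Radon--Nikodym step, but $u$ being Lipschitz forces $f(u)\in L^\infty$ and hence $f(u)\chi_{\{u>0\}}\,dx$ to be purely absolutely continuous with respect to Lebesgue measure; in particular it contributes nothing to $\lambda_0$ on the Lebesgue-null set $\partial\varOmega^{+}(u)$, so $q_u$ captures exactly the jump of the normal derivative of $u$ across the free boundary, just as in the homogeneous case.
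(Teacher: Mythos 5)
Your proof is correct and is essentially the argument the paper itself invokes: the paper gives no independent proof but states that the theorem follows from Proposition \ref{Proposition: The measure lambda and lambda0} "precisely as in \cite{AC1981}", and your covering/Radon--Nikodym/Besicovitch-differentiation scheme is exactly that standard Alt--Caffarelli argument. The only (harmless) variation is that you obtain the lower bound $\mathcal{H}^{d-1}(B_{r}(x^{0})\cap\partial\{u>0\})\geqslant cr^{d-1}$ from the relative isoperimetric inequality combined with Proposition \ref{Proposition: density estimates}, which is a standard and perfectly valid alternative to deducing it from the a.e.\ upper bound on $q_{u}$.
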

As a direct corollary of Theorem \ref{Theorem: representation theorem} and \cite[Theorem 1, Section 5.11]{E2015}, we have
\begin{corollary}[Finite perimeter of $\varOmega^{+}(u)$]\label{Corollary: Finite perimeter of the positive phase}
	Let $u\in\mathcal{K}$ be a minimizer of $J$, then the set $\varOmega^{+}(u)$ has locally finite perimeter in $\Omega$.
\end{corollary}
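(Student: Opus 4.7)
The plan is to combine the representation theorem (Theorem \ref{Theorem: representation theorem}) with a classical criterion from geometric measure theory. The first step is to extract part (1) of Theorem \ref{Theorem: representation theorem}, which gives $\mathcal{H}^{d-1}(D \cap \partial\{u > 0\}) < +\infty$ for every $D \cc \Omega$. In other words, the topological boundary of $\varOmega^{+}(u)$ has locally finite $(d-1)$-dimensional Hausdorff measure inside $\Omega$.

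Next, I would invoke the criterion of Evans--Gariepy (Theorem 1, Section 5.11 of \cite{E2015}), which states that any $\mathcal{H}^{d}$-measurable set $E \subset \mathbb{R}^{d}$ with the property that $\mathcal{H}^{d-1}(K \cap \partial E) < +\infty$ for every compact set $K$ is a set of locally finite perimeter, and moreover the perimeter is controlled by (and the reduced boundary is contained in) the topological boundary. Applying this to $E = \varOmega^{+}(u)$, working locally within compact subsets $D \cc \Omega$, yields the conclusion directly. Since the bound $\mathcal{H}^{d-1}(B_{r}(x^{0}) \cap \partial\{u > 0\}) \leqslant C r^{d-1}$ from part (3) of Theorem \ref{Theorem: representation theorem} is uniform on $D$, the perimeter measure of $\varOmega^{+}(u)$ is bounded on every compact subset of $\Omega$.

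There is essentially no obstacle at this stage: the substantive analytic work has already been absorbed into Theorem \ref{Theorem: representation theorem}, whose proof in turn rests on Proposition \ref{Proposition: The measure lambda and lambda0} and the nondegeneracy/Lipschitz estimates established earlier. The only point to be careful about is measurability of $\varOmega^{+}(u)$, which is immediate since $u$ is continuous (indeed Lipschitz, by Proposition \ref{Proposition: Lipschitz regularity of minimizers}), so that $\varOmega^{+}(u) = \Omega \cap \{u > 0\}$ is an open subset of $\Omega$ and in particular Borel. Hence the corollary follows in one short paragraph by quoting Theorem \ref{Theorem: representation theorem}(1) and the Evans--Gariepy criterion.
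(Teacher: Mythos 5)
Your argument is exactly the paper's: the corollary is stated there as a direct consequence of Theorem \ref{Theorem: representation theorem}(1) together with the criterion of \cite[Theorem 1, Section 5.11]{E2015}, which is precisely the combination you invoke. The proposal is correct and adds only the (harmless) observation that $\varOmega^{+}(u)$ is open by continuity of $u$.
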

In the rest of this section, we shall deal with the blow-up sequences
\begin{align*}
	u_{n}(x)=\frac{u(x^{0}+r_{n}x)}{r_{n}}\quad\text{  for }x^{0}\in\partial\varOmega^{+}(u).
\end{align*}
Since $|\nabla u_{n}|\leqslant C$ and $u_{n}(0)=0$, we have by a diagonal argument and the theorem of Ascoli-Arzela that there exists a non-relabel subsequence and a function $u_{0}\in W^{1,\infty}_{\mathrm{loc}}(\mathbb{R}^{d})$, so that
\begin{align}\label{Formula: W(11)}
	u_{n}\to u_{0}\quad\text{ in }\quad C_{\mathrm{loc}}^{\alpha}(\mathbb{R}^{d})\quad\text{ for some }\quad\alpha\in(0,1),
\end{align}
and
\begin{align}\label{Formula: W(12)}
	\nabla u_{n}\stackrel{*}{\rightharpoonup}\nabla u_{0}\quad\text{ in }\quad L_{\mathrm{loc}}^{\infty}(\mathbb{R}^{d}).
\end{align}
The function $u_{0}$ is usually called a \emph{blow-up limit}.
\begin{proposition}[Properties of blow-up limits]\label{Proposition: Properties of blow-up limits}
	Let $\{u_{n}\}$ be the blow-up sequence and let $u_{0}$ be given as in \eqref{Formula: W(11)} and \eqref{Formula: W(12)}, respectively. Then
	\begin{enumerate}
		\item $\partial\varOmega^{+}(u_{n})\to\partial\varOmega^{+}(u_{0})$ locally in the Hausdorff distance of $\mathbb{R}^{d}$.
		\item $\chi_{\{u_{n}>0\}}\to\chi_{\{u_{0}>0\}}$ in $L_{\mathrm{loc}}^{1}(\mathbb{R}^{d})$.
		\item $\nabla u_{n}\to\nabla u_{0}$ a.e. in $\mathbb{R}^{d}$.
		\item $u_{n}\to u_{0}$ strongly in $H_{\mathrm{loc}}^{1}(\mathbb{R}^{d})$.
		\item $u_{0}$ is a global minimizer of the functional $J_{0}(v)$ in $B_{r}$ for any $r>0$. Namely,
		\begin{align}\label{Formula: W(12')}
			J_{0}(u_{0})=\min_{v}J_{0}(v)\quad\text{ for any }v\in H^{1}(B_{r})\text{ with }v=u_{0}\text{ on }\partial B_{r},
		\end{align}
	   where $J_{0}(v):=\int_{B_{r}}|\nabla v|^{2}+Q(x^{0})\chi_{\{v>0\}}\:dx$.
	   \item $u_{0}$ satisfies the problem $\Delta u_{0}=0$ in $B_{1}^{+}(u_{0})$ and $|\nabla u_{0}|=Q(x^{0})$ on $B_{1}\cap\partial\{u_{0}>0\}$ and the non-degeneracy property: there exists some constants $0<c_{0}<1$ so that
	   \begin{align}\label{Formula: W(12'')}
	   	    c_{0}<\frac{|B_{1}\cap\{u_{0}>0\}|}{|B_{1}|}<1-c_{0}.
	   \end{align}
	\end{enumerate}
\end{proposition}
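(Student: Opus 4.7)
The plan is to establish the six assertions in the order (1), (2), (5), (4), (3), (6), because each later item relies on the previous ones. The key observation driving everything is that $u_n$ satisfies the classical equation
\begin{equation*}
    \Delta u_n + r_n f(r_n u_n) = 0 \quad \text{in } \{u_n > 0\},
\end{equation*}
and minimizes the rescaled functional
\begin{equation*}
    \tilde{J}_n(v; B_R) := \int_{B_R} \left[ |\nabla v|^2 + F(r_n v) + Q^2(x^0 + r_n y)\,\chi_{\{v > 0\}} \right] dy
\end{equation*}
among competitors with $v = u_n$ on $\partial B_R$. Since $F(0) = 0$, $F \in C^2$, and $Q$ is continuous, both $F(r_n u_n)$ and $Q^2(x^0 + r_n y)$ degenerate uniformly on bounded sets as $r_n \to 0$, so the natural limit functional is precisely $J_0$.

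For (1), I would first verify the upper-Hausdorff inclusion: if $x \in \partial\varOmega^+(u_0)$ then $u_0(x) = 0$ while every ball around $x$ meets $\{u_0 > 0\}$, so uniform convergence of $u_n$ forces $u_n(x) \to 0$ together with positive values nearby, yielding free boundary points of $u_n$ arbitrarily close to $x$. The reverse inclusion is more subtle and is where I would invoke Corollary \ref{Corollary: Optimal linear nondegeneracy}: if $x_n \in \partial\varOmega^+(u_n) \to x$, nondegeneracy gives $\sup_{B_\rho(x_n)} u_n \geq c\rho$, which passes to $u_0$ by uniform convergence, forcing $x \in \overline{\{u_0 > 0\}}$ with $u_0(x) = 0$. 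Statement (2) then follows because $\chi_{\{u_n > 0\}} \to \chi_{\{u_0 > 0\}}$ pointwise outside $\partial\varOmega^+(u_0)$, and Proposition \ref{Proposition: density estimates} combined with the Lebesgue density theorem implies $|\partial\varOmega^+(u_0)| = 0$, so dominated convergence yields $L^1_{\mathrm{loc}}$ convergence.

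For (5), given any competitor $v \in H^1(B_R)$ with $v = u_0$ on $\partial B_R$, I would construct a recovery sequence by setting $v_n := v$ on $B_{R(1-\delta_n)}$ and linearly interpolating between $v$ and $u_n$ on the annular shell $B_R \setminus B_{R(1-\delta_n)}$, where $\delta_n \to 0$ is chosen slowly enough against the uniform rate $\|u_n - u_0\|_{L^\infty(\partial B_R)} \to 0$ that the energy of the transition layer vanishes. Then $\tilde{J}_n(v_n; B_R) \to J_0(v; B_R)$, and minimality of $u_n$ gives $\limsup \tilde{J}_n(u_n; B_R) \leq J_0(v; B_R)$. A matching lower-semicontinuity bound $\liminf \tilde{J}_n(u_n; B_R) \geq J_0(u_0; B_R)$ follows from weak $L^2$ lower semicontinuity for the gradient term and from (2) for the positivity term. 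Together these yield $J_0(u_0; B_R) \leq J_0(v; B_R)$, proving (5). Specializing to $v = u_0$ forces $\tilde{J}_n(u_n; B_R) \to J_0(u_0; B_R)$, which combined with $\nabla u_n \rightharpoonup \nabla u_0$ in $L^2$ yields $\|\nabla u_n\|_{L^2(B_R)} \to \|\nabla u_0\|_{L^2(B_R)}$, hence strong $H^1$ convergence (4), from which a.e. gradient convergence (3) follows along a subsequence.

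For (6), the interior equation $\Delta u_0 = 0$ in $\varOmega^+(u_0)$ follows by passing the distributional identity for $u_n$ to the limit against test functions supported in $\varOmega^+(u_0)$, which by (1) are eventually supported in $\varOmega^+(u_n)$; the right-hand side is controlled by $r_n\|f(r_n u_n)\|_{L^\infty} \to 0$. The free-boundary condition $|\nabla u_0| = Q(x^0)$ is then a consequence of $u_0$ being a classical Alt--Caffarelli minimizer of $J_0$, obtained via standard inner-variation arguments along domain perturbations. The density bound \eqref{Formula: W(12'')} follows by applying Proposition \ref{Proposition: density estimates} to $u_n$ at the origin and passing to the limit via (2). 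The main obstacle is the recovery-sequence construction in (5): since $\chi_{\{\cdot > 0\}}$ is discontinuous, the interpolation layer must be calibrated so that no spurious free boundary or positivity mass is created on $B_R \setminus B_{R(1-\delta_n)}$, which requires carefully matching $\delta_n$ to the uniform convergence rate of $u_n \to u_0$ on $\partial B_R$; this is the most delicate point in the argument.
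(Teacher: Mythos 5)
Your proposal is correct and, for items (1), (2), (5) and (6), follows essentially the same path as the paper: (1)--(3) via uniform convergence plus the non-degeneracy of Corollary \ref{Corollary: Optimal linear nondegeneracy}, (5) via a cut-and-paste competitor with a transition layer near $\partial B_R$, and (6) by passing to the limit in the equation and invoking minimality of $J_0$. The genuine divergence is in item (4) and in the order of the argument. The paper proves strong $H^1_{\mathrm{loc}}$ convergence \emph{before} (5), by testing $\Delta u_n+r_nf(r_nu_n)=0$ with $u_n\xi$ and using the harmonicity of $u_0$ on $\{u_0>0\}$ to identify $\lim\int|\nabla u_n|^2\xi=\int|\nabla u_0|^2\xi$; it then uses this strong convergence when passing to the limit in the competitor $v_n=v+(1-\eta_\varepsilon)(u_n-u_0)$, keeping $\varepsilon$ fixed and sending $\varepsilon\to0$ only at the end. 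You instead prove (5) first and recover (4) from energy convergence ($\limsup\tilde J_n(u_n)\leqslant J_0(u_0)\leqslant\liminf\tilde J_n(u_n)$ plus weak convergence of gradients), which is the standard $\Gamma$-convergence-style route. For this ordering to be non-circular your recovery sequence must not presuppose strong convergence, and it does not: interpolating directly between $v$ and $u_n$ on a shell of width $\delta_nR$, the shell energy of $u_n$ is controlled by the uniform Lipschitz bound of Proposition \ref{Proposition: Lipschitz regularity of minimizers} together with $|B_R\setminus B_{R(1-\delta_n)}|\to0$, and the cross term $|\nabla\eta|\,|v-u_n|$ is handled by a Hardy-type estimate for $v-u_0$ near $\partial B_R$ plus $\|u_n-u_0\|_{L^\infty}\to0$ — exactly the calibration of $\delta_n$ you flag as the delicate point. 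Both routes are valid; the paper's equation-testing argument for (4) is shorter and exploits the PDE, while yours is more robust (it would survive for operators where the Caccioppoli-type identity is unavailable) at the cost of a more careful boundary-layer construction.
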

\begin{proof}
	The argument (1)-(3) are standard. We only remark that these three properties strongly rely on Corollary \ref{Corollary: Weak solution-the second requirement} in our semilinear case. For the fourth property, it suffices to show that there exists a non-relabel subsequence $\{u_{n}\}$ such that $\nabla u_{n}\to\nabla u_{0}$ strongly in $L_{\mathrm{loc}}^{2}(\mathbb{R}^{d})$, namely, for any $R>0$ the inequality
	\begin{align*}
		\limsup_{n\to\infty}\int_{B_{R}}|\nabla u_{n}|^{2}\xi\:dx\leqslant\int_{B_{R}}|\nabla u_{0}|^{2}\xi\:dx\quad\text{ for every }\xi\in C_{0}^{\infty}(B_{R}).
	\end{align*}
    For each $n$, $u_{n}$ is a solution to the problem
    \begin{align}\label{Formula: W(14)}
    	\Delta u_{n}+r_{n}f(r_{n}u_{n})=0\quad\text{ in }\quad\varOmega^{+}(u_{n}),\qquad|\nabla u_{n}|=Q(x^{0}+r_{n}x)\quad\text{ on }\quad\partial\varOmega^{+}(u_{n}).
    \end{align}
    Since $u_{n}$ converges to $u_{0}$ locally uniformly, it follows from \eqref{Formula: W(14)} that $u_{0}$ is harmonic in $\{u_{0}>0\}$. Also, using the uniform convergence, the continuity of $u_{0}$ and its harmonicity in $\{u_{0}>0\}$ we obtain 
    \begin{align*}
    	\int_{\mathbb{R}^{d}}|\nabla u_{n}|^{2}\xi\:dx&=-\int_{\mathbb{R}^{d}}u_{n}\left(\nabla u_{n}\cdot\nabla\xi-r_{n}f(r_{n}u_{n})\xi\right)\:dx\\
    	&\to-\int_{\mathbb{R}^{d}}u_{0}\nabla u_{0}\cdot\nabla\xi\:dx=\int_{\mathbb{R}^{d}}|\nabla u_{0}|^{2}\xi\:dx\quad\text{ as }\quad n\to\infty.
    \end{align*}
    It therefore follows that $u_{n}$ converges to $u_{0}$ strongly in $W_{\mathrm{loc}}^{1,2}(\mathbb{R}^{d})$ as $n\to\infty$. 
    
    We now prove (5). Consider a function $\eta_{\varepsilon}(x):=\min\{\frac{\operatorname{dist}(x,\mathbb{R}^{d}\setminus B_{r})}{\varepsilon},1\}$, it is easy to see that $\eta_{\varepsilon}\in C_{0}^{0,1}(B_{r})$ and $0\leqslant\eta_{\varepsilon}\leqslant1$. Define $v_{n}:=v+(1-\eta_{\varepsilon})(u_{n}-u_{0})$ and it follows immediately that $v_{n}=u_{n}$ outside $B_{r}$ and $\{v_{n}>0\}\subset\{v>0\}\cap\{\eta_{\varepsilon}<1\}$. Thus,
    \begin{align*}
    	&\int_{B_{r}}|\nabla u_{n}|^{2}+F(r_{n}u_{n})+Q^{2}(x^{0}+r_{n}x)\chi_{\{u_{n}>0\}}\:dx\\
    	&\leqslant\int_{B_{r}}|\nabla v_{n}|^{2}+F(r_{n}v_{n})+Q^{2}(x^{0}+r_{n}x)\chi_{\{v_{n}>0\}}\:dx\\
    	&\leqslant\int_{B_{r}}|\nabla v_{n}|^{2}+F(r_{n}v_{n})+Q^{2}(x^{0}+r_{n}x)\chi_{\{v_{n}>0\}}\:dx+\int_{B_{r}}Q^{2}(x^{0}+r_{n}x)\chi_{\{\eta_{\varepsilon}<1\}}\:dx.
    \end{align*}
    Passing to the limit as $n\to\infty$ gives
    \begin{align}\label{Formula: W(15)}
    	\int_{B_{r}}|\nabla u_{0}|^{2}+Q^{2}(x^{0})\chi_{\{u_{0}>0\}}\:dx\leqslant\int_{B_{r}}|\nabla v|^{2}+Q^{2}(x^{0})\chi_{\{v>0\}}+Q^{2}(x^{0})\int_{B_{r}}\chi_{\{\eta_{\varepsilon}<1\}}\:dx,
    \end{align}
    where we have used the statements (2) and (3). Taking $\varepsilon\to0$ in \eqref{Formula: W(15)} gives the desired result. The fact that $\Delta u_{0}=0$ follows from the uniform convergence of $u_{n}$ to $u_{0}$ and the first equation in \eqref{Formula: W(14)}. The second equation in \eqref{Formula: W(14)} follows from the fact that $u_{0}$ is a global minimizer of $J_{0}(v)$. Finally for the last property, we point out it directly follows from (5) and \cite[Lemma 3.7]{AC1981}.
\end{proof}
\begin{remark}\label{Remark: Homo v.s. Nonhomo}
	In conclusion, we compare the classical homogeneous problems with our semilinear non-homogeneous problems as follows:
	\begin{table}[H]
		\begin{center}
			\caption{Homogeneous v.s. Nonhomogeneous}
			\label{table:1}
			\begin{tabular}{|c|c|c|}
				\hline \textbf{Problem} & \textbf{Equations for Blow-up sequences} & \textbf{Equations for Blow-ups} \\
				\hline Problem \eqref{Formula: Semilinear problem} & $\Delta u_{n}+r_{n}f(u_{n})=0$ & $\Delta u_{0}=0$ \\
				\hline Problem \eqref{Formula: Homogeneous one-phase Bernoulli} & $\Delta u_{n}=0$ & $\Delta u_{0}=0$ \\
				\hline
			\end{tabular}
		\end{center}
	\end{table}
	One of the significant differences we read from the second column of Table \ref{table:1} between the two problems is that the blow-up sequence satisfies different equations. However, in view of the last column of Table \ref{table:1}, we see that the blow-up limits of the two problems are both harmonic functions. We stress that this is the main reason we can establish the regularity of the regular set $\partial_{\mathrm{reg}}\varOmega^{+}(u)$ and determine the dimension of the singular set. We will discuss these in the coming four sections precisely.
\end{remark}
\section{Weiss-type monotonicity formulas}\label{Section: Weiss-type monotonicity formula}In this section, we establish a Weiss-type monotonicity formula, which will be a useful tool in the study of homogeneity of blow-ups $u_{0}$. The Weiss boundary-adjusted energy developed by Weiss in \cite{W1999} originally considered the problem \eqref{Formula: AC1981 problem}, who proved that blow-up limits of \eqref{Formula: AC1981 problem} are homogeneous functions of degree $1$. We borrow this idea and adapt it to our settings. However, the strategy here is slightly different, since our functional $J$ involves a non-linear term $F$. Our final results reveal that the functional $J$ still possesses monotonicity after subtracting the boundary-adjusted term. Before establishing that, we need to introduce some notation.
For every $B_{r}(x^{0})\subset\Omega$ and $u\in H^{1}(B_{r}(x^{0}))$, we define
\begin{align}\label{Formula: M(1)}
	W(u,r;Q)=\frac{1}{r^{d}}\int_{B_{r}(x^{0})}(|\nabla u|^{2}+F(u)+Q^{2}(x)\chi_{\{u>0\}})\:dx-\frac{1}{r^{d+1}}\int_{\partial B_{r}(x^{0})}u^{2}\:d\mathcal{H}^{d-1},
\end{align}
and 
\begin{align}\label{Formula: M(1')}
	\begin{alignedat}{2}
		W(u_{x^{0},r})&:=W(u_{x^{0},r},1,Q(x^{0}+rx))\\
		&=\int_{B_{1}}|\nabla u_{x^{0},r}|^{2}+F(ru_{x^{0},r})+Q^{2}(x^{0}+rx)\chi_{\{u_{x^{0},r}>0\}}\:dx-\int_{\partial B_{1}}u_{x^{0},r}^{2}\:d\mathcal{H}^{d-1}.
	\end{alignedat}
\end{align}
\begin{remark}\label{Remark: Minimality of Weiss energy}
	It is easy to verify the following fact: If $u\in H^{1}(B_{1})$ is a minimizer of $J$ for all $v\in H^{1}(B_{1})$ with $v=u$ on $\partial B_{1}$. Then $u$ is also a minimizer of $W=W(\cdot,r;Q)$ in the sense that $W(u)\leqslant W(v)$ for every $v\in H^{1}(B_{1})$ with $v=u$ on $\partial B_{1}$.
\end{remark}
\begin{remark}
	Notice that the first integral in \eqref{Formula: M(1)} is exactly our standard energy $J(u;B_{r})$, while the second integral in \eqref{Formula: M(1)} is the noted \emph{Weiss boundary-adjusted} energy. We pause here for a second to display our pattern for constructing the formula \eqref{Formula: M(1)}. Firstly, we focus on the integral involving the gradient of $u$ and the non-linear term $F$. The term $|\nabla u|^{2}+F(u)$ is invoked in the first integral because we require a minimizer to satisfy the equation. Moreover, the order of $|\nabla u|^{2}+F(u)$ is in fact $r^{d-1}$, due to Proposition \ref{Proposition: Lipschitz regularity of minimizers}. Indeed, if $u$ grows like $Cr$, then $\nabla u$ would be bounded and consequently, $\int_{B_{r}(x^{0})}|\nabla u|^{2}\:dx$ should grow like $Cr^{d}$, and so we divide $J(u)$ by $r^{d}$. Secondly, our final goal is to neutralize the order $r^{d}$ caused by the first integral using the boundary adjusted term so that $W(\cdot)$ exhibits some boundedness property. Therefore, using the fact again $u$ grows like $Cr$, it is handy that $\int_{\partial B_{r}(x^{0})}u^{2}\:d\mathcal{H}^{d-1}$ grows like $Cr^{d+1}$ and this is the reason we divide this boundary integral by $\tfrac{1}{r^{d+1}}$. The matched order between two integrals is crucial so that $W$ is preserved under the scaling $u_{x^{0},r}(x)=u(x^{0}+rx)/r$ in the sense that $W(u,r,Q)=W(u_{x^{0},r})$. This allows us to passing the limit $\lim_{r\to0^{+}}W(\cdot,\cdot,r)$ and obtains the wanted homogeneity for blow up limits.
\end{remark}
The main body in this section is to prove the following proposition.
\begin{proposition}[Weiss-type monotonicity formula]\label{Proposition: Weiss-type monotonicity formula}
	Let $B_{r}(x^{0})\subset\Omega$ with $r\in(0,\delta_{0})$, where $\delta_{0}=\operatorname{dist}(x^{0},\partial\Omega)$. Let $u\in\mathcal{K}$ be a minimizer in $B_{\delta_{0}}(x^{0})$. Then
	\begin{align}\label{Formula: M(2)}
		\frac{d}{dr}W(u,r;Q)=\frac{2}{r^{d}}\int_{\partial B_{r}(x^{0})}\left(\nabla u\cdot\nu-\frac{u}{r}\right)^{2}\:d\mathcal{H}^{d-1}+\mathcal{F}_{1}(u,r;F)+\mathcal{Q}_{1}(r;Q),
	\end{align}
	where
	\begin{align}\label{Formula: M(3)}
		\mathcal{F}_{1}(u,r;F):=\frac{1}{r^{d+1}}\int_{B_{r}(x^{0})}uF'(u)\:dx,
	\end{align}
	and
	\begin{align}\label{Formula: M(4)}
		\mathcal{Q}_{1}(r;Q)&=:-\frac{1}{r^{d+1}}\int_{B_{r}}Q^{2}(x)x\cdot d\mu(x)-\frac{d}{r^{d+1}}\int_{B_{r}}Q^{2}(x)\chi_{\{u>0\}}\:dx+\frac{1}{r^{d}}\int_{\partial B_{r}}Q^{2}(x)\chi_{\{u>0\}}\:d\mathcal{H}^{d-1}.
	\end{align}
	Here we employed the notation
	\begin{align}\label{Formula: M(4')}
		d\mu=\nabla\chi_{\{u>0\}}.
	\end{align}
\end{proposition}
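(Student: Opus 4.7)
The plan is to compute $\frac{d}{dr}W(u,r;Q)$ by combining a direct differentiation of the four terms defining $W$ with a Pohozaev-type identity produced by the first variation of $J$ under inner domain perturbations; take $x^{0}=0$ without loss of generality. The direct differentiation is routine: applying the coarea formula to the volume terms and writing the boundary term as $\int_{\partial B_{r}}u^{2}\,d\mathcal{H}^{d-1}=r^{d-1}\int_{\partial B_{1}}u(r\xi)^{2}\,d\mathcal{H}^{d-1}(\xi)$ before differentiating yields
\begin{equation*}
\begin{aligned}
\frac{d}{dr}W&=\frac{1}{r^{d}}\int_{\partial B_{r}}\bigl(|\nabla u|^{2}+F(u)+Q^{2}\chi_{\{u>0\}}\bigr)\,d\mathcal{H}^{d-1}-\frac{d}{r^{d+1}}\int_{B_{r}}\bigl(|\nabla u|^{2}+F(u)+Q^{2}\chi_{\{u>0\}}\bigr)\,dx\\
&\quad+\frac{2}{r^{d+2}}\int_{\partial B_{r}}u^{2}\,d\mathcal{H}^{d-1}-\frac{2}{r^{d+1}}\int_{\partial B_{r}}u(\nabla u\cdot\nu)\,d\mathcal{H}^{d-1}.
\end{aligned}
\end{equation*}

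For the second ingredient, I would exploit minimality via an inner variation: for a smooth compactly supported vector field $\phi$ (chosen so that $v_{\varepsilon}(x):=u(\Phi_{\varepsilon}^{-1}(x))$ with $\Phi_{\varepsilon}=\mathrm{id}+\varepsilon\phi$ remains in $\mathcal{K}$), the identity $\frac{d}{d\varepsilon}\big|_{\varepsilon=0}J(v_{\varepsilon})=0$, after a change of variables and first-order Taylor expansion of $|D\Phi_{\varepsilon}^{-T}\nabla u|^{2}\det D\Phi_{\varepsilon}$, $F(u)\det D\Phi_{\varepsilon}$ and $Q^{2}(\Phi_{\varepsilon})\chi_{\{u>0\}}\det D\Phi_{\varepsilon}$, takes the form
\begin{equation*}
\int_{B_{\delta_{0}}}\bigl[\bigl(|\nabla u|^{2}+F(u)+Q^{2}\chi_{\{u>0\}}\bigr)\operatorname{div}\phi-2(D\phi\,\nabla u)\cdot\nabla u+\chi_{\{u>0\}}\,\phi\cdot\nabla Q^{2}\bigr]\,dx=0.
\end{equation*}
Testing against $\phi(x)=\eta(|x|)x$, computing $\operatorname{div}\phi=\eta'(|x|)|x|+d\eta$ and $(D\phi\,\nabla u)\cdot\nabla u=\eta'(|x|)(x\cdot\nabla u)^{2}/|x|+\eta|\nabla u|^{2}$, and passing to the limit $\eta\to\chi_{[0,r]}$ (so that $\eta'\to-\delta_{r}$) produces the Pohozaev-type identity
\begin{equation*}
\begin{aligned}
r\!\!\int_{\partial B_{r}}\!\!\bigl(|\nabla u|^{2}+F(u)+Q^{2}\chi_{\{u>0\}}\bigr)&=d\!\!\int_{B_{r}}\!\!\bigl(|\nabla u|^{2}+F(u)+Q^{2}\chi_{\{u>0\}}\bigr)+2r\!\!\int_{\partial B_{r}}\!\!(\nabla u\cdot\nu)^{2}\\
&\quad-2\int_{B_{r}}|\nabla u|^{2}+\int_{B_{r}}(x\cdot\nabla Q^{2})\chi_{\{u>0\}}.
\end{aligned}
\end{equation*}

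Substituting this into the direct derivative, the volume and boundary integrals of $|\nabla u|^{2}+F(u)+Q^{2}\chi_{\{u>0\}}$ cancel entirely, and adding and subtracting $\frac{2}{r^{d+1}}\int_{\partial B_{r}}u(\nabla u\cdot\nu)$ assembles the perfect square $\frac{2}{r^{d}}\int_{\partial B_{r}}(\nabla u\cdot\nu-u/r)^{2}$. What remains is the residual $\frac{2}{r^{d+1}}\int_{\partial B_{r}}u(\nabla u\cdot\nu)-\frac{2}{r^{d+1}}\int_{B_{r}}|\nabla u|^{2}+\frac{1}{r^{d+1}}\int_{B_{r}}(x\cdot\nabla Q^{2})\chi_{\{u>0\}}$. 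The first pair of terms equals $\mathcal{F}_{1}(u,r;F)$: Proposition~\ref{Proposition: Properties of minimizers} gives $\Delta u=F'(u)/2$ pointwise in $\varOmega^{+}(u)$, and integrating $u\Delta u$ by parts on $B_{r}\cap\{u>0\}$ (the free boundary contribution vanishing since $u\equiv0$ there) yields $\int_{B_{r}}uF'(u)=2\int_{\partial B_{r}}u(\nabla u\cdot\nu)-2\int_{B_{r}}|\nabla u|^{2}$. The last term equals $\mathcal{Q}_{1}(r;Q)$: by Corollary~\ref{Corollary: Finite perimeter of the positive phase} the set $\varOmega^{+}(u)$ has locally finite perimeter, so BV integration by parts applied to the vector field $Q^{2}x$ in $B_{r}$ gives $\int_{B_{r}}(x\cdot\nabla Q^{2})\chi_{\{u>0\}}+d\int_{B_{r}}Q^{2}\chi_{\{u>0\}}+\int_{B_{r}}Q^{2}x\cdot d\mu=r\int_{\partial B_{r}}Q^{2}\chi_{\{u>0\}}$, which after division by $r^{d+1}$ rearranges precisely into \eqref{Formula: M(4)}.

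The main obstacle will be the legitimacy of the inner variation when the obstacle constraint $v_{\varepsilon}\leq\Psi$ is active: one must ensure $v_{\varepsilon}\in\mathcal{K}$ for both signs of $\varepsilon$. This is automatic away from the contact set $\{u=\Psi\}$, and in general one localises $\phi$ to stay inside $\{u<\Psi\}$ before exhausting $B_{r}$; alternatively, one may bypass the variational route entirely by deriving the Pohozaev identity classically on $B_{r}\cap\{u>0\}$ using the pointwise equation $\Delta u=F'(u)/2$ together with the Bernoulli relation $|\nabla u|=Q$ on the reduced free boundary, as encoded in Theorem~\ref{Theorem: representation theorem} (with $q_{u}\equiv Q$ for minimizers).
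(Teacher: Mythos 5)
Your proposal takes essentially the same route as the paper: you combine (i) a direct differentiation of $W$, (ii) a Poho\v{z}aev-type identity obtained by inner (domain) variation, and (iii) an energy identity on $B_r\cap\{u>0\}$, and then read off the perfect square and the residual terms. These are exactly the paper's Lemma~\ref{Lemma: Domain variation formula}, Lemma~\ref{Lemma: Energy identity} and Lemma~\ref{Lemma: Pohozaev-type identity}, assembled in the same way, so the structure of the argument matches.

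One technical point is worth flagging. Your version of the domain-variation identity and Poho\v{z}aev identity passes through $\chi_{\{u>0\}}\,\phi\cdot\nabla Q^{2}$ and $\int_{B_r}(x\cdot\nabla Q^{2})\chi_{\{u>0\}}$, which presuppose that $Q$ is differentiable. The paper only assumes $Q\in C^{0,\beta}(\bar\Omega)$, and sidesteps the issue by writing the first variation of the $Q^{2}\chi_{\{u_t>0\}}$ term directly as $\int Q^{2}(x)\,\xi\cdot d\mu$ (Lemma~\ref{Lemma: Domain variation formula}, formula \eqref{Formula: M(14)}), so that $\nabla Q^{2}$ never appears; the Poho\v{z}aev identity (Lemma~\ref{Lemma: Pohozaev-type identity}) is then stated with the $d\mu$ term intact. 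Since you anyway convert back to $d\mu$ via BV integration by parts to identify $\mathcal{Q}_1$, the two forms are equivalent when $Q$ is smooth, and your final expression coincides with \eqref{Formula: M(4)}. But as written your intermediate steps use more regularity of $Q$ than the hypotheses grant; you should either keep the $d\mu$ form throughout (as the paper does) or add a regularization of $Q$ with a passage to the limit. Your remark about the obstacle constraint $v_\varepsilon\leq\Psi$ is a fair observation — the paper does not address it explicitly either — and your suggested localization away from $\{u=\Psi\}$ (which by the strong maximum principle and the assumptions \eqref{Formula: Psi} can be arranged near a free boundary point) is the standard fix.
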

\begin{remark}
	Since $\varOmega^{+}(u)$ is locally finite perimeter in $\Omega$, we have that $\chi_{\{u>0\}}$ is a function of bounded variation and thus \eqref{Formula: M(4')} is well-defined.
\end{remark}
\begin{remark}\label{Remark: Q}
	If in particular $Q(x)\equiv Q(x^{0})>0$, then $\mathcal{Q}_{r}(r;Q)=0$. This can be easily obtained by \eqref{Formula: M(4)} and the following simple fact
	\begin{align}\label{Formula: M(9)}
	   \begin{alignedat}{2}
	       \frac{1}{2}r^{d+1}\mathcal{Q}_{1}(r;Q)&=-\frac{1}{2}\int_{B_{r}(x^{0})}x\cdot d\mu(x)-\frac{d}{2}\int_{B_{r}(x^{0})}\chi_{\{u>0\}}\:dx+\frac{r}{2}\int_{\partial B_{r}(x^{0})}\chi_{\{u>0\}}\:d\mathcal{H}^{d-1}\\
	       &=0.
	   \end{alignedat}
	\end{align}
\end{remark}
To prove Proposition \ref{Proposition: Weiss-type monotonicity formula}, we need some fundamental tools, the first is a domain variation formula.
\begin{lemma}[Domain variation formula]\label{Lemma: Domain variation formula}
	Let $Q(x)\in C^{0}(\Omega)$ be any continuous function, let $u\in\mathcal{K}$ be a local minimizer of $J$ in $\Omega$ and let $\xi\in C_{0}^{\infty}(\Omega;\mathbb{R}^{d})$. Then we have
	\begin{align}\label{Formula: M(10)}
		\int_{\Omega}2\nabla uD\xi(\nabla u)^{\mathrm{T}}-(|\nabla u|^{2}+F(u))\operatorname{div}\xi\:dx+\int_{\Omega}Q^{2}(x)\xi(x)\cdot d\mu=0,
	\end{align}
	where $d\mu$ is defined in \eqref{Formula: M(4')}.
\end{lemma}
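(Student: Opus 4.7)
The plan is to derive the identity as the Euler--Lagrange condition under \emph{inner variations}. For $\xi\in C_{0}^{\infty}(\Omega;\mathbb{R}^{d})$ and $|t|$ sufficiently small, I set $\Phi_{t}(x):=x+t\xi(x)$, which is a diffeomorphism of $\Omega$ agreeing with the identity outside $\operatorname{supp}\xi$. Letting $\Psi_{t}:=\Phi_{t}^{-1}$, I introduce the competitor $v_{t}(y):=u(\Psi_{t}(y))$ and first check that $v_{t}\in\mathcal{K}$: the boundary condition $v_{t}=u^{0}$ on $S$ is automatic, and the obstacle constraint $v_{t}\leq\Psi$ for $|t|$ small follows from the strict separation $u<\Psi$ throughout $\Omega$. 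The latter is a consequence of the strong maximum principle applied to $w:=\Psi-u$: in $\varOmega^{+}(u)$ one has $-\Delta w\geq f(\Psi)-f(u)\geq-\tfrac{F_{0}}{2}w$ by \eqref{Formula: Assumption on f(t)}, while $w>0$ on $\Omega\setminus\varOmega^{+}(u)=\{u=0\}$ since $\Psi>0$ there, ruling out $w\equiv 0$. Combined with the Lipschitz bound of Proposition \ref{Proposition: Lipschitz regularity of minimizers}, one obtains $v_{t}\leq\Psi$ on $\operatorname{supp}\xi$ for $|t|$ small.

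By minimality of $u$, $\tfrac{d}{dt}\big|_{t=0}J(v_{t})=0$. I compute this derivative by changing variables $y=\Phi_{t}(x)$ in each of the three pieces of $J(v_{t})$, using $|\det D\Phi_{t}|=1+t\operatorname{div}\xi+O(t^{2})$, $(D\Phi_{t})^{-1}=I-tD\xi+O(t^{2})$, and $\nabla v_{t}(\Phi_{t}(x))=\nabla u(x)(D\Phi_{t}(x))^{-1}$:
\begin{align*}
    \int_{\Omega}|\nabla v_{t}|^{2}\,dy&=\int_{\Omega}\nabla u\,(D\Phi_{t})^{-1}(D\Phi_{t})^{-\mathrm{T}}(\nabla u)^{\mathrm{T}}|\det D\Phi_{t}|\,dx,\\
    \int_{\Omega}F(v_{t})\,dy&=\int_{\Omega}F(u)\,|\det D\Phi_{t}|\,dx,\\
    \int_{\Omega}Q^{2}(y)\chi_{\{v_{t}>0\}}\,dy&=\int_{\Omega}Q^{2}(\Phi_{t}(x))\chi_{\{u>0\}}(x)\,|\det D\Phi_{t}|\,dx.
\end{align*}
Differentiation at $t=0$ yields, in turn, $\int_{\Omega}\!\big[-2\nabla u\,D\xi\,(\nabla u)^{\mathrm{T}}+|\nabla u|^{2}\operatorname{div}\xi\big]\,dx$ (using the symmetry $\nabla u\,M\,(\nabla u)^{\mathrm{T}}=\nabla u\,M^{\mathrm{T}}\,(\nabla u)^{\mathrm{T}}$), $\int_{\Omega}F(u)\operatorname{div}\xi\,dx$, and $\int_{\Omega}\operatorname{div}(Q^{2}\xi)\chi_{\{u>0\}}\,dx$.

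Summing these three contributions and invoking Corollary \ref{Corollary: Finite perimeter of the positive phase}, so that $d\mu:=\nabla\chi_{\{u>0\}}$ is a well-defined vector-valued Radon measure, the BV Gauss--Green formula gives $\int_{\Omega}\operatorname{div}(Q^{2}\xi)\chi_{\{u>0\}}\,dx=-\int_{\Omega}Q^{2}\,\xi\cdot d\mu$. Rearranging signs produces exactly the identity \eqref{Formula: M(10)}. The only delicate point is the admissibility check $v_{t}\in\mathcal{K}$, which reduces to the strict inequality $\Psi-u>0$ supplied by the strong maximum principle; once that is in hand, the rest is a routine Noether-type inner-variation calculation, with the locally finite perimeter of $\varOmega^{+}(u)$ used solely at the final step to make sense of the free-boundary term.
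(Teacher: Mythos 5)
Your proof is correct and follows essentially the same inner-variation (Noether/domain variation) route as the paper. The one cosmetic difference is that the paper's competitor is $u_{t}=u\circ(\operatorname{Id}+t\xi)$ while yours is $v_{t}=u\circ(\operatorname{Id}+t\xi)^{-1}$; these differ by the sign of $t$, both yield the full first variation once $\xi\mapsto-\xi$ is exploited, and the final identity coincides after your sign flip. Where you add genuine value is the admissibility check $v_{t}\in\mathcal{K}$: the paper invokes minimality without verifying that the transported competitor stays below the obstacle $\Psi$, whereas you establish strict separation $u<\Psi$ on $\Omega$ via the strong maximum principle applied to $w=\Psi-u$ (using $\Delta\Psi+f(\Psi)\leq 0$, $\Delta u+f(u)=0$ in $\varOmega^{+}(u)$, the bound $f'\geq -F_{0}/2$, and $w=\Psi>0$ on $\{u=0\}$), and then use compactness of $\operatorname{supp}\xi$ together with the Lipschitz bound on $u$ to get $v_{t}\leq\Psi$ for small $|t|$. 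This fills a gap the paper passes over silently. Your handling of the $Q^{2}\chi_{\{u>0\}}$ term—changing variables first and then integrating by parts via the BV Gauss--Green theorem (justified by Corollary \ref{Corollary: Finite perimeter of the positive phase})—is also slightly cleaner than the paper's direct differentiation of $\chi_{\{u>0\}}(\Psi_{t}(x))$ in $t$, though both produce $\int_{\Omega}Q^{2}\xi\cdot d\mu$.
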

\begin{proof}
	Let $L:=|D\xi|$ be the Lipschitz constant of $\xi$. For $t>0$, we define $\Psi_{t}(x)=x+t\xi(x)$ and $u_{t}(x)=u(x+t\xi(x))$. One can show that, for $0<t<\frac{1}{L}$, $\Psi_{t}\colon\Omega\to\Omega$ is bijective, $\Psi_{t}^{-1}\in C^{\infty}(\Omega;\mathbb{R}^{d})$, and moreover $\Psi_{t}^{-1}(y)-y\in C_{0}^{\infty}(\Omega;\mathbb{R}^{d})$ with  $\operatorname{supp}(\Psi_{t}^{-1}(y)-y)\subset\operatorname{supp}(\Psi_{t})$. A direct calculation gives $D\Psi_{t}=\operatorname{Id}+tD\xi$ and $|\det D\Psi_{t}(x)|=1+t\operatorname{div}\xi+o(t)$.
    By a change of variables,
    \begin{align}\label{Formula: M(12)}
    	\begin{alignedat}{4}
    		\int_{\Omega}|\nabla u_{t}(x)|^{2}\:dx&=\int_{\Omega}|\nabla u_{t}(\Psi_{t}^{-1}(y))|^{2}|\det D\Psi_{t}^{-1}(y)|\:dy\\
    		&=\int_{\Omega}(|\nabla u(y)|^{2}+2t\nabla uD\xi(\nabla u)^{\mathrm{T}}+o(t))(1-t\operatorname{div}\xi+o(t))\:dy\\
    		&=\int_{\Omega}|\nabla u(y)|^{2}\:dy+t\int_{\Omega}2\nabla uD\xi(\nabla u)^{\mathrm{T}}\:dy-t\int_{\Omega}|\nabla u(y)|^{2}\operatorname{div}\xi\:dy+o(t).
    	\end{alignedat}
    \end{align}
    Similarly, one has
    \begin{align}\label{Formula: M(13)}
    	\begin{alignedat}{3}
    		\int_{\Omega}F(u_{t}(x))\:dx&=\int_{\Omega}F(u_{t}(\Psi_{t}^{-1}(y)))|\det D\Psi_{t}^{-1}(y)|\:dy\\
    		&=\int_{\Omega}F(u(y))(1-t\operatorname{div}\xi+o(t))\:dy\\
    		&=\int_{\Omega}F(u(y))\:dy-t\int_{\Omega}F(u(y))\operatorname{div}\xi\:dy+o(t).
    	\end{alignedat}    	
    \end{align}
    By differentiating \eqref{Formula: M(12)} and \eqref{Formula: M(13)} with respect to $t$ we obtain
    \begin{align}\label{Formula: M(13')}
        \left.\frac{d}{dt}\left(\int_{\Omega}|\nabla u_{t}|^{2}+F(u_{t})\:dx\right)\right|_{t=0}=\int_{\Omega}2\nabla uD\xi(\nabla u)^{\mathrm{T}}-(|\nabla u|^{2}+F(u))\operatorname{div}\xi\:dy.
    \end{align}
    We also differentiate the last term in our functional
    \begin{align}\label{Formula: M(14)}
       \begin{alignedat}{2}
            \frac{d}{dt}\left.\left(\int_{\Omega}Q^{2}(x)\chi_{\{u_{t}>0\}}(x)\:dx\right)\right|_{t=0}&=\frac{d}{dt}\int_{\Omega}Q^{2}(x)\chi_{\{u>0\}}(\Psi_{t}(x))\:dx=\int_{\Omega}Q^{2}(x)\xi\cdot d\mu(x).
       \end{alignedat}
    \end{align}
    Combining \eqref{Formula: M(13')}, \eqref{Formula: M(14)} and the fact that $u$ is a minimizer, we have
    \begin{align*}
    	0=\left.\frac{d}{dt}\right|_{t=0}J(u_{t})=\int_{\Omega}2\nabla uD\xi(\nabla u)^{\mathrm{T}}-(|\nabla u|^{2}+F(u))\operatorname{div}\xi\:dx+\int_{\Omega}Q^{2}(x)\xi\cdot d\mu.
    \end{align*}
\end{proof}
Next we introduce an energy identity comes from the equation satisfied by minimizers.
\begin{lemma}[Energy identity]\label{Lemma: Energy identity}
	Let $u\in\mathcal{K}$ be a local minimizer of $J$ in $\Omega$, then for a.e. $r\in(0,\delta_{0})$ with $\delta_{0}:=\operatorname{dist}(x^{0},\partial\Omega)$,
	\begin{align}\label{Formula: M(15)}
		\int_{B_{r}(x^{0})}\left[|\nabla u|^{2}+\frac{uF'(u)}{2}\right]\:dx=\int_{\partial B_{r}(x^{0})}u\nabla u\cdot\nu\:d\mathcal{H}^{d-1}.
	\end{align}
\end{lemma}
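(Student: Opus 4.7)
The plan is to test the weak form of the equation against $\xi = u \eta_\epsilon$, where $\eta_\epsilon$ is a Lipschitz cutoff approximating $\chi_{B_r(x^0)}$, and then pass to the limit $\epsilon \to 0$ for a.e. $r \in (0,\delta_0)$. This strategy exploits two key facts already established: the Representation Theorem (Theorem \ref{Theorem: representation theorem}), which provides the correct weak formulation, and the vanishing of $u$ on the free boundary $\partial\varOmega^{+}(u)$, which kills the singular boundary contribution.

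Concretely, Theorem \ref{Theorem: representation theorem} gives the identity
\begin{align*}
-\int_{\Omega}\nabla u\cdot\nabla\xi\:dx + \int_{\varOmega^{+}(u)} f(u)\xi\:dx = \int_{\partial\varOmega^{+}(u)} q_{u}\xi\:d\mathcal{H}^{d-1}
\end{align*}
for $\xi \in C_{0}^{\infty}(\Omega)$, and by density (using the Lipschitz regularity in Proposition \ref{Proposition: Lipschitz regularity of minimizers}) this extends to compactly supported Lipschitz test functions. I would then take $\xi_\epsilon := u\,\eta_\epsilon$ with the specific cutoff $\eta_\epsilon(x) := \min\{\operatorname{dist}(x,\mathbb{R}^d\setminus B_r(x^0))/\epsilon, 1\}$. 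Since $u\equiv 0$ on $\partial\varOmega^{+}(u)$ by continuity, the right-hand side is identically zero for every $\epsilon > 0$. Expanding $\nabla\xi_\epsilon = \eta_\epsilon\nabla u + u\nabla\eta_\epsilon$ produces
\begin{align*}
-\int_{\Omega}|\nabla u|^{2}\eta_\epsilon\:dx - \int_{\Omega} u\,\nabla u\cdot\nabla\eta_\epsilon\:dx + \int_{\varOmega^{+}(u)} f(u)\,u\,\eta_\epsilon\:dx = 0.
\end{align*}

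Next, I would pass to the limit $\epsilon\to 0^+$. The first integral tends to $-\int_{B_r(x^0)}|\nabla u|^2\,dx$ by dominated convergence, and the third integral tends to $\int_{B_r(x^0)} f(u)u\,dx$ (the distinction between $\varOmega^+(u)$ and $B_r(x^0)$ is immaterial since the integrand vanishes on $\{u=0\}$). For the middle term, observe $\nabla\eta_\epsilon = -\epsilon^{-1}\nu\,\chi_{B_r(x^0)\setminus B_{r-\epsilon}(x^0)}$ with $\nu$ the outer unit normal to $B_r(x^0)$, so by the coarea formula and the Lebesgue differentiation theorem applied to the $L^\infty_{\mathrm{loc}}$ field $u\nabla u$, for a.e. $r \in (0,\delta_0)$ this term converges to $-\int_{\partial B_r(x^0)} u\,\nabla u\cdot\nu\:d\mathcal{H}^{d-1}$. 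Combining and rearranging gives
\begin{align*}
\int_{B_r(x^0)}|\nabla u|^{2}\:dx + \int_{B_r(x^0)} f(u)\,u\:dx = \int_{\partial B_r(x^0)} u\,\nabla u\cdot\nu\:d\mathcal{H}^{d-1}.
\end{align*}
Substituting $f(u) = -\tfrac{1}{2}F'(u)$ from the definition \eqref{Formula: f(t)} yields exactly \eqref{Formula: M(15)}.

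The main technical obstacle is justifying the a.e.~convergence of the boundary-annulus integral. Since $u$ is Lipschitz by Proposition \ref{Proposition: Lipschitz regularity of minimizers}, $u\nabla u \in L^\infty_{\mathrm{loc}}(\Omega;\mathbb{R}^d)$, and the coarea formula ensures that the spherical slice $r \mapsto \int_{\partial B_r(x^0)} u\,\nabla u\cdot\nu\:d\mathcal{H}^{d-1}$ is well-defined for a.e. $r$ and equals the limit of the annular averages. A secondary (minor) point is the density argument extending the Representation Theorem to Lipschitz test functions; this is routine given that the gradient term is continuous in the $H^1$-norm and the remaining terms are continuous in the uniform norm against the bounded Radon measures $\lambda_0$ and $f(u)\chi_{\{u>0\}}\,dx$.
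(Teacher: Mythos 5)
Your proof is correct, and the computational core --- testing against $u$ times the Lipschitz cutoff $\eta_\varepsilon$ approximating $\chi_{B_r(x^0)}$ and sending $\varepsilon\to0^+$ for a.e.\ $r$ --- is exactly what the paper does. The one genuine difference is the entry point: the paper inserts $u\phi_{\varepsilon,r}$ directly into the interior weak equation \eqref{Formula: P(3)}, observing that $u\phi_{\varepsilon,r}\in H_0^1(B_r^+(u))$ because $u$ is continuous and vanishes on $\partial\{u>0\}$, so no free-boundary term ever appears; you instead route through the Representation Theorem (Theorem \ref{Theorem: representation theorem}) and then argue that the surface measure term $\int_{\partial\varOmega^+(u)}q_u\,u\,\eta_\varepsilon\,d\mathcal{H}^{d-1}$ vanishes since $u\equiv0$ there. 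Both are valid; your route invokes heavier machinery (the finite-perimeter and representation results of Section 4) where the paper only needs Proposition \ref{Proposition: Properties of minimizers}, but in exchange you avoid having to justify that $u\phi_{\varepsilon,r}$ is an admissible test function for the equation posed only on the positivity set --- the density argument you would need is the one extending the representation identity to Lipschitz test functions, which, as you note, is routine. Your handling of the annulus term via the coarea formula and Lebesgue differentiation for a.e.\ $r$ matches the paper's (implicit) justification of the same limit.
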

\begin{proof}
	Without loss of generality, we might assume that $x^{0}=0$. For any $0<\varepsilon<r$, we define $\phi_{\varepsilon,r}\in C_{0}^{\infty}(B_{r})$ such that $\phi_{\varepsilon,r}(x)=1$ in $B_{r-\varepsilon}$ and $\nabla\phi_{\varepsilon,r}(x)=-\frac{1}{\varepsilon}\frac{x}{|x|}\chi_{\{B_{r}\setminus B_{r-\varepsilon}\}}$. It is easy to see that $u\phi_{\varepsilon,r}\in H_{0}^{1}(B_{r}^{+}(u))$. Inserting $u\phi_{\varepsilon,r}$ as a test function in \eqref{Formula: P(3)} yields
	\begin{align*}
		-\int_{B_{r}^{+}(u)}\frac{F'(u)}{2}(\phi_{\varepsilon,r}u)\:dx&=\int_{B_{r}^{+}(u)}\nabla u\cdot\nabla(\phi_{\varepsilon,r}u)\:dx\\
		&=\int_{B_{r}}|\nabla u|^{2}\phi_{\varepsilon,r}\:dx-\frac{1}{\varepsilon}\int_{B_{r}\setminus\overline{B_{r-\varepsilon}}}\nabla u\cdot\frac{x}{|x|}u\:dx.
	\end{align*}
	Letting $\varepsilon\to0$, we obtain that for a.e. $r\in(0,\delta_{0})$,
	\begin{align*}
		\int_{B_{r}}\left[|\nabla u|^{2}+\frac{uF'(u)}{2}\right]\:dx=\int_{\partial B_{r}}u\nabla u\cdot\nu\:d\mathcal{H}^{d-1}.
	\end{align*}
\end{proof}
In \cite{CSY2018}, Caffarelli and et. al. developed a Poho\v{z}aev-type identity for vectorial Bernoulli problem, we borrow their idea and adapt it to our settings, which will be applied to prove the monotonicity formula.
\begin{lemma}[Poho\v{z}aev-type identity]\label{Lemma: Pohozaev-type identity}
	Let $u\in\mathcal{K}$ be a local minimizer of $J$ in $\Omega$, let $B_{r}(x^{0})\subset\Omega$ with $r\in(0,\delta_{0})$ where $\delta_{0}:=\operatorname{dist}(x^{0},\partial\Omega)$. Assume that $Q(x)$ is a continuous function, then we have the following identity
	\begin{align}\label{Formula: M(16)}
		\begin{alignedat}{2}
			d\int_{B_{r}(x^{0})}(|\nabla u|^{2}+F(u))\:dx&=2\int_{B_{r}(x^{0})}|\nabla u|^{2}\:dx+r\int_{\partial B_{r}(x^{0})}(|\nabla u|^{2}-2(\nabla u\cdot\nu)^{2})\:d\mathcal{H}^{d-1}\\
			&\quad+r\int_{\partial B_{r}(x^{0})}F(u)\:d\mathcal{H}^{d-1}+\int_{B_{r}(x^{0})}Q^{2}(x)(x-x^{0})\cdot d\mu(x),
		\end{alignedat}
	\end{align}
	where the vector Radon measure $\mu$ is defined in \eqref{Formula: M(4')}.
\end{lemma}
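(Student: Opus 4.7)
The plan is to derive \eqref{Formula: M(16)} from the domain variation formula of Lemma \ref{Lemma: Domain variation formula} by choosing a test vector field that localizes the radial field $x - x^0$ to $B_r(x^0)$. Without loss of generality I take $x^0 = 0$.

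First I would fix a small $\varepsilon > 0$ and take the same radial Lipschitz cutoff $\phi_\varepsilon$ used in the proof of Lemma \ref{Lemma: Energy identity}, so that $\phi_\varepsilon \equiv 1$ on $B_{r-\varepsilon}$, $\phi_\varepsilon \equiv 0$ outside $B_r$, and $\nabla\phi_\varepsilon = -\tfrac{1}{\varepsilon}\tfrac{x}{|x|}\chi_{B_r \setminus B_{r-\varepsilon}}$. I then set
$$\xi_\varepsilon(x) := x\,\phi_\varepsilon(x),$$
which is compactly supported in $B_r$ and admissible in Lemma \ref{Lemma: Domain variation formula} after an obvious mollification. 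A direct computation gives $D\xi_\varepsilon = \phi_\varepsilon\, I_d + x \otimes \nabla\phi_\varepsilon$ and $\operatorname{div}\xi_\varepsilon = d\,\phi_\varepsilon + x \cdot \nabla\phi_\varepsilon$, so that
$$2\,\nabla u\, D\xi_\varepsilon\,(\nabla u)^{\mathrm{T}} = 2|\nabla u|^2 \phi_\varepsilon + 2(\nabla u \cdot x)(\nabla u \cdot \nabla\phi_\varepsilon).$$

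Next, I would insert $\xi_\varepsilon$ into \eqref{Formula: M(10)} and send $\varepsilon \to 0^+$. The bulk terms are handled by dominated convergence, producing $2\int_{B_r}|\nabla u|^2\,dx$ and $-d\int_{B_r}(|\nabla u|^2 + F(u))\,dx$. Each annular integrand converges, via the coarea formula, to a surface integral on $\partial B_r$: using $\nabla u \cdot x = r\,\nabla u \cdot \nu$ and $|x| = r$ on $\partial B_r$, the contribution of $2(\nabla u \cdot x)(\nabla u \cdot \nabla\phi_\varepsilon)$ becomes $-2r\int_{\partial B_r}(\nabla u \cdot \nu)^2\,d\mathcal{H}^{d-1}$, while the piece $-(|\nabla u|^2 + F(u))\,x \cdot \nabla\phi_\varepsilon$ produces $+r\int_{\partial B_r}(|\nabla u|^2 + F(u))\,d\mathcal{H}^{d-1}$. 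Rearranging delivers exactly \eqref{Formula: M(16)}.

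The only delicate point is the passage to the limit of $\int_\Omega Q^2(x)\,\xi_\varepsilon \cdot d\mu$. Since $\{u > 0\}$ has locally finite perimeter by Corollary \ref{Corollary: Finite perimeter of the positive phase}, the vector Radon measure $\mu = \nabla\chi_{\{u > 0\}}$ has locally finite total variation, hence $|\mu|(\partial B_r) = 0$ for all but at most countably many radii $r$. For such generic radii dominated convergence gives $\int Q^2(x)\,\xi_\varepsilon \cdot d\mu \to \int_{B_r} Q^2(x)\,x \cdot d\mu(x)$, and the identity is proved. The remaining exceptional radii are recovered by taking a sequence of admissible radii $r_n \to r$ and using the continuity in $r$ of every other term in \eqref{Formula: M(16)}. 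Aside from this technicality, the argument is a straightforward first-inner-variation computation.
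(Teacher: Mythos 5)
Your proposal is correct and follows exactly the paper's route: insert $\xi=x\,\phi_{\varepsilon,r}$ into the domain variation formula \eqref{Formula: M(10)} and let $\varepsilon\to0^{+}$, which is precisely what the paper does. Your extra care with the convergence of $\int Q^{2}\xi_{\varepsilon}\cdot d\mu$ (using $|\mu|(\partial B_{r})=0$ for all but countably many radii and recovering the exceptional ones by continuity) is a welcome refinement of a step the paper passes over silently, but it does not change the argument.
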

\begin{proof}
	Assume $x^{0}=0$ and define $\xi(x):=x\phi_{\varepsilon,r}$ where $\phi_{\varepsilon,r}$ is the test function given in Lemma \ref{Lemma: Energy identity}. Introducing $\xi(x)$ as a test function into the domain variation formula \eqref{Formula: M(10)} gives
	\begin{align}\label{Formula: M(17)}
		\begin{alignedat}{3}
			&d\int_{\Omega}(|\nabla u|^{2}+F(u))\phi_{\varepsilon,r}\:dx\\
			&\quad=\int_{\Omega}2|\nabla u|^{2}\phi_{\varepsilon,r}+2(\nabla u\cdot x)(\nabla\phi_{\varepsilon,r}\cdot\nabla u)\:dx-\int_{\Omega}(|\nabla u|^{2}+F(u))(x\cdot\nabla\phi_{\varepsilon,r}(x))\:dx\\
			&\qquad+\int_{\Omega}Q^{2}(x)(x\phi_{\varepsilon,r})\cdot\:d\mu(x).
		\end{alignedat}
	\end{align}
    Passing to the limit as $\varepsilon\to0$ in \eqref{Formula: M(17)} gives the desired result.
\end{proof}
We next show that the formula \eqref{Formula: M(2)} is a direct application of Lemma \ref{Lemma: Energy identity} and Lemma \ref{Lemma: Pohozaev-type identity}.
\begin{proof}[Proof of the formula \eqref{Formula: M(2)}]
	Assume $x^{0}=0$, then for a.e. $r\in(0,\delta_{0})$, a straightforward calculation yields
	\begin{align}\label{Formula: M(17')}
	    \frac{d}{dr}\left(r^{-d-1}\int_{\partial B_{r}}u^{2}\:d\mathcal{H}^{d-1}\right)=-2r^{-d-2}\int_{\partial B_{r}}u^{2}\:d\mathcal{H}^{d-1}+2r^{-d-1}\int_{\partial B_{r}}u\nabla u\cdot\nu\:d\mathcal{H}^{d-1}.
	\end{align}
	We further compute
	\begin{align*}
		\frac{1}{2}r^{d+1}\frac{d}{dr}W(u,r;Q)&=-\frac{d}{2}\int_{B_{r}}|\nabla u|^{2}+F(u)\:dx-\frac{d}{2}\int_{B_{r}}Q^{2}(x)\chi_{\{u>0\}}\:dx\\
		&\quad+\frac{r}{2}\int_{\partial B_{r}}|\nabla u|^{2}+F(u)+Q^{2}(x)\chi_{\{u>0\}}\:d\mathcal{H}^{d-1}\\
		&\quad+\frac{1}{r}\int_{\partial B_{r}}u^{2}\:d\mathcal{H}^{d-1}-\int_{\partial B_{r}}u\nabla u\cdot\nu\:d\mathcal{H}^{d-1}.
	\end{align*}
    Using the identity \eqref{Formula: M(16)} for the first integral on the right-hand side, we get
    \begin{align*}
    	\frac{1}{2}r^{d+1}\frac{d}{dr}W(u,r;Q)&=-\frac{1}{2}\left(2\int_{B_{r}}|\nabla u|^{2}+r\int_{\partial B_{r}}|\nabla u|^{2}-2(\nabla u\cdot\nu)^{2}\:d\mathcal{H}^{d-1}\right.\\
    	&\qquad\quad\left.+r\int_{\partial B_{r}}F(u)\:d\mathcal{H}^{d-1}+\int_{B_{r}}Q^{2}(x)x\cdot d\mu(x)\right)\\
    	&\qquad\quad-\frac{d}{2}\int_{B_{r}}Q^{2}(x)\chi_{\{u>0\}}\:dx+\frac{r}{2}\int_{\partial B_{r}}(|\nabla u|^{2}+F(u))\:d\mathcal{H}^{d-1}\\
    	&\qquad\quad+\frac{r}{2}\int_{\partial B_{r}}Q^{2}(x)\chi_{\{u>0\}}\:d\mathcal{H}^{d-1}+\frac{1}{r}\int_{\partial B_{r}}u^{2}\:d\mathcal{H}^{d-1}\\
    	&\qquad\quad-\int_{\partial B_{r}}u\nabla u\cdot\nu\:d\mathcal{H}^{d-1}.
    \end{align*}
    Tidying the results, we obtain
    \begin{align}\label{Formula: M(18)}
    	\begin{alignedat}{3}
    		\frac{1}{2}r^{d+1}\frac{d}{dr}W(u,r;Q)&=-\int_{B_{r}}|\nabla u|^{2}\:dx+r\int_{\partial B_{r}}(\nabla u\cdot\nu)^{2}\:d\mathcal{H}^{d-1}+\frac{1}{r}\int_{\partial B_{r}}u^{2}\:d\mathcal{H}^{d-1}\\
    		&\qquad-\int_{\partial B_{r}}u\nabla u\cdot\nu\:d\mathcal{H}^{d-1}-\frac{1}{2}\int_{B_{r}}Q^{2}(x)x\cdot d\mu(x)\\
    		&\qquad-\frac{d}{2}\int_{B_{r}}Q^{2}(x)\chi_{\{u>0\}}\:dx+\frac{r}{2}\int_{\partial B_{r}}Q^{2}(x)\chi_{\{u>0\}}\:d\mathcal{H}^{d-1},
    	\end{alignedat}
    \end{align}
    where $\mu$ is the vector Radon measure defined in \eqref{Formula: M(4')}. It follows from Lemma \ref{Lemma: Energy identity} and applying \eqref{Formula: M(15)} for the first integral on the right-hand side of \eqref{Formula: M(18)}, we obtain for a.e. $r\in(0,\delta_{0})$,
    \begin{align}\label{Formula: M(19)}
    	\begin{alignedat}{4}
    		\frac{1}{2}r^{d+1}\frac{d}{dr}W(u,r;Q)&=r\int_{\partial B_{r}}(\nabla u\cdot\nu)^{2}\:d\mathcal{H}^{d-1}+\frac{1}{r}\int_{\partial B_{r}}u^{2}\:d\mathcal{H}^{d-1}\\
    		&\qquad-2\int_{\partial B_{r}}u\nabla u\cdot\nu\:d\mathcal{H}^{d-1}+\int_{B_{r}}\frac{uF'(u)}{2}\:dx\\
    		&\qquad-\frac{1}{2}\int_{B_{r}}Q^{2}(x)x\cdot d\mu(x)-\frac{d}{2}\int_{B_{r}}Q^{2}(x)\chi_{\{u>0\}}\:dx\\
    		&\qquad+\frac{r}{2}\int_{\partial B_{r}}Q^{2}(x)\chi_{\{u>0\}}\:d\mathcal{H}^{d-1}.
    	\end{alignedat}
    \end{align}
    Notice that
    \begin{align}\label{Formula: M(20)}
    	r\int_{\partial B_{r}}(\nabla u\cdot\nu)^{2}\:d\mathcal{H}^{d-1}-2\int_{\partial B_{r}}u\nabla u\cdot\nu\:d\mathcal{H}^{d-1}+\frac{1}{r}\int_{\partial B_{r}}u^{2}\:d\mathcal{H}^{d-1}=r\int_{\partial B_{r}}\left(\nabla u\cdot\nu-\frac{u}{r}\right)^{2}\:d\mathcal{H}^{d-1}.
    \end{align}
    The desired result follows from \eqref{Formula: M(19)} and \eqref{Formula: M(20)}.
\end{proof}
We next introduce another monotonicity formula, which is different from \eqref{Formula: M(2)} and is obtained by comparing $u_{x^{0},r}$ with its one-homogeneous extensions in $B_{1}$.
\begin{proposition}\label{Proposition: second monotonicity formula}
	Let $B_{r}(x^{0})\subset\Omega$ with $r\in(0,\delta_{0})$, where $\delta_{0}=\operatorname{dist}(x^{0},\partial\Omega)$. Let $u\in\mathcal{K}$ be a local minimizer in $B_{\delta_{0}}(x^{0})$ and let $z_{x^{0},r}\colon B_{1}\to\mathbb{R}$ be the one-homogeneous extension of $u_{x^{0},r}$ defined by
	\begin{align}\label{Formula: M(21)}
		z_{x^{0},r}:=|x|u_{x^{0},r}\left(\frac{x}{|x|}\right).
	\end{align}
    Then
    \begin{align}\label{Formula: M(22)}
    	\begin{alignedat}{2}
    		\frac{d}{dr}W(u_{x^{0},r})&=\frac{d}{r}(W(z_{x^{0},r})-W(u_{x^{0},r}))+\frac{1}{r}\int_{\partial B_{1}}|x\cdot\nabla u_{x^{0},r}-u_{x^{0},r}|^{2}\:d\mathcal{H}^{d-1}\\
    		&\ \ +\mathcal{F}_{2}(u_{x^{0},r};F)+\cQ_{2}(u_{x^{0},r};Q),
    	\end{alignedat}
    \end{align}
    where
    \begin{align}\label{Formula: M(23)}
    	\mathcal{F}_{2}(u_{x^{0},r};F)=\frac{1}{r}\int_{\partial B_{1}}F(ru_{x^{0},r})\:d\mathcal{H}^{d-1}-\frac{d}{r}\int_{B_{1}}F(rz_{x^{0},r})\:dx,
    \end{align}
    and
    \begin{align}\label{Formula: M(24)}
    	\mathcal{Q}_{2}(u_{x^{0},r};Q)=\frac{1}{r}\int_{\partial B_{1}}Q^{2}(x^{0}+rx)\chi_{\{ u_{x^{0},r}>0\}}\:d\mathcal{H}^{d-1}-\frac{d}{r}\int_{B_{1}}Q^{2}(x^{0}+rx)\chi_{\{ z_{r}>0\}}\:dx.
    \end{align}
\end{proposition}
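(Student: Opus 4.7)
The plan is to prove the identity by differentiating $W(u_{x^{0},r})$ directly in $r$ and then recognizing the resulting boundary and bulk terms through the one-homogeneous extension $z_{x^{0},r}$. Assume without loss of generality $x^{0}=0$ and write $v_{r}:=u_{x^{0},r}$. The crucial scaling identity
\[
\partial_{r}v_{r}(x)=\frac{x\cdot\nabla v_{r}(x)-v_{r}(x)}{r}
\]
follows directly from the definition $v_{r}(x)=u(rx)/r$, and it drives all subsequent computations.

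Differentiating the four contributions of $W(v_{r})$ in turn: for the Dirichlet part, I would integrate by parts using $\nabla(x\cdot \nabla v_{r})=\nabla v_{r}+(D^{2}v_{r})x$ and $x\cdot\nu=1$ on $\partial B_{1}$, obtaining
\[
\frac{d}{dr}\int_{B_{1}}|\nabla v_{r}|^{2}\,dx=\frac{1}{r}\int_{\partial B_{1}}|\nabla v_{r}|^{2}\,d\mathcal{H}^{d-1}-\frac{d}{r}\int_{B_{1}}|\nabla v_{r}|^{2}\,dx.
\]
For the $F$-term, rewrite $F'(rv_{r})(x\cdot\nabla v_{r})=r^{-1}x\cdot\nabla F(rv_{r})$ and use the divergence theorem to yield an analogous boundary/bulk pair. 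For the $Q$-term, change variables $y=rx$ to convert to an integral over $B_{r}$, differentiate explicitly in $r$, and change back; this produces the identities showing how $\mathcal{Q}_{2}$ arises. For the boundary term $\int_{\partial B_{1}}v_{r}^{2}$, direct differentiation gives $-\tfrac{2}{r}\int_{\partial B_{1}}v_{r}(x\cdot\nabla v_{r}-v_{r})\,d\mathcal{H}^{d-1}$.

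The next step is to match the resulting bulk differences with $W(z_{x^{0},r})-W(v_{r})$. Since $z_{x^{0},r}$ is one-homogeneous and agrees with $v_{r}$ on $\partial B_{1}$, the polar-coordinate expressions in \eqref{Formula: The spherical Laplacian} give $|\nabla z_{x^{0},r}|^{2}=v_{r}^{2}(\theta)+|\nabla_{\theta}v_{r}(\theta)|^{2}$, and integration in $\rho\in(0,1)$ yields
\[
\int_{B_{1}}|\nabla z_{x^{0},r}|^{2}\,dx=\frac{1}{d}\int_{\partial B_{1}}\bigl(v_{r}^{2}+|\nabla v_{r}|^{2}-(x\cdot \nabla v_{r})^{2}\bigr)\,d\mathcal{H}^{d-1},
\]
where I have used $|\nabla v_{r}|^{2}=(x\cdot\nabla v_{r})^{2}+|\nabla_{\theta}v_{r}|^{2}$ on $\partial B_{1}$. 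This is the identity that converts a bulk gradient of $z_{x^{0},r}$ into a surface integral of $v_{r}$, and it is the structural fact that makes the formula possible.

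Assembling everything, the bulk $F(rz_{r})$ and $Q^{2}\chi_{\{z_{r}>0\}}$ contributions in $\tfrac{d}{r}(W(z_{x^{0},r})-W(v_{r}))$ cancel with the corresponding terms in $\mathcal{F}_{2}$ and $\mathcal{Q}_{2}$, leaving only the $v_{r}$-side bulk integrals. Expanding $|x\cdot\nabla v_{r}-v_{r}|^{2}=(x\cdot\nabla v_{r})^{2}-2v_{r}(x\cdot\nabla v_{r})+v_{r}^{2}$ and combining with the boundary integrand produced by $\int_{B_{1}}|\nabla z_{x^{0},r}|^{2}$ yields precisely the boundary combinations $|\nabla v_{r}|^{2}$, $F(rv_{r})$, $Q^{2}\chi_{\{v_{r}>0\}}$, and the remainder $-\tfrac{2}{r}\int_{\partial B_{1}}v_{r}(x\cdot\nabla v_{r}-v_{r})$ appearing in the direct derivative from the first step, closing the identity. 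The main obstacle is purely bookkeeping: tracking the four contributions in parallel, and verifying that the surface quadratic form in $v_{r}$ and $x\cdot\nabla v_{r}$ coming from the expansion of $|x\cdot\nabla v_{r}-v_{r}|^{2}$ matches exactly with the boundary contribution produced by the one-homogeneous identity for $\int_{B_{1}}|\nabla z_{x^{0},r}|^{2}$.
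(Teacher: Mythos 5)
Your proposal is correct and follows essentially the same route as the paper: differentiate $W(u_{x^{0},r})$ term by term using the scaling identity (equivalently, the change of variables to $B_{r}$, which is how the paper phrases it together with \eqref{Formula: M(17')}), and convert $\int_{B_{1}}|\nabla z_{x^{0},r}|^{2}-\int_{\partial B_{1}}z_{x^{0},r}^{2}$ into the surface integral $\tfrac{1}{d}\int_{\partial B_{1}}(|\nabla u_{x^{0},r}|^{2}-(x\cdot\nabla u_{x^{0},r})^{2})\,d\mathcal{H}^{d-1}-\tfrac{d-1}{d}\int_{\partial B_{1}}u_{x^{0},r}^{2}\,d\mathcal{H}^{d-1}$ via the polar-coordinate identity, after which the bookkeeping closes exactly as you describe. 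The only difference is organizational: the paper expands the right-hand side of \eqref{Formula: M(22)} and shows it equals $\tfrac{d}{dr}W(u_{x^{0},r})$, whereas you start from the derivative and reassemble the right-hand side.
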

\begin{remark}\label{Remark: Reifenberg}
	In \cite{R1964}, Reifenberg exhibited the idea that a purely variational inequality, known as an epiperimetric inequality, will encode the local behavior of the free boundary. Since then, extensive literature has been dedicated to prove different kinds of epiperimetric inequalities. (We refer to \cite{SV2019} for epiperimetric inequality for problem \eqref{Formula: AC1981 problem}). The  original idea of Reifenberg is: if $\mathcal{E}\colon H^{1}(B_{1})\to\mathbb{R}$ is a given energy which satisfies the following properties:
	\begin{enumerate}
		\item $\mathcal{E}(u_{r})\leqslant\mathcal{E}(v_{r})$ for every $v_{r}\in H^{1}(B_{1})$ with $v_{r}=u_{r}$ on $\partial B_{1}$;
		\item $\frac{d}{dr}\mathcal{E}(u_{r})\geqslant\frac{C_{1}}{r}\int_{\partial B_{1}}|x\cdot\nabla u_{r}-u_{r}|^{2}\:d\mathcal{H}^{d-1}$ for every $r\in (0,1)$ and for some positive constants $C_{1}>0$;
		\item $\frac{d}{dr}\mathcal{E}(u_{r})\geqslant\frac{C_{2}}{r}(\mathcal{E}(z_{r})-\mathcal{E}(u_{r}))$ for every $r\in(0,1)$ and for some positive constants $C_{2}>0$. Here $z_{r}$ is the one-homogeneous extension of $u_{r}$ defined in \eqref{Formula: M(21)}.
		\item $\mathcal{E}(h_{r})\leqslant(1-\varepsilon)\mathcal{E}(u_{r})$ for all $h_{r}=u_{r}=z_{r}$ on $\partial B_{1}$.
	\end{enumerate}
	Then the uniqueness of blow-up limit is a direct corollary of (1)-(4). As an application of the uniqueness of blow-up limit, one can determine the regularity of the regular set  $\partial_{\mathrm{reg}}\varOmega^{+}(u)$, even the regularity of the  singular set $\partial_{\mathrm{sing}}\varOmega^{+}(u)$. In our context, the energy to which will apply (1)-(4) is 
	\begin{align*}
		\mathcal{E}(u)-\frac{\omega_{d}}{2}=\left(\int_{B_{1}}|\nabla u|^{2}+F(u)+\chi_{\{u>0\}}\:dx-\int_{\partial B_{1}}u^{2}\:d\mathcal{H}^{d-1}\right)-\frac{\omega_{d}}{2}.
	\end{align*}
	This is the core reason we put forward the formula \eqref{Formula: M(22)}. Observe that (1)-(3) is a direct result of the formula \eqref{Formula: M(22)} (Please see Remark \ref{Remark: (2) and (3)} below). However, (4) is unknown and remains to be an open problem. In this paper, we will use an approach to show the uniqueness of blow-up limit (see Section ) heavily dependent on the geometry (flatness) of the free boundary. However, we believe that the regularity of problem \eqref{Formula: Semilinear problem} can also be obtained following Reifenberg's idea and Proposition \ref{Proposition: second monotonicity formula} will help us to prove (2) and (3).
\end{remark}
\begin{proof}
	Assume $x^{0}=0$. It follows from the definition of $z_{r}$ that for each $r\in(0,\delta_{0})$ one can write $z_{r}\colon B_{1}\to\mathbb{R}$ in polar coordinates $\rho\in(0,1]$, $\theta\in\mathbb{S}^{d-1}$ as $z_{r}(\rho,\theta)=\rho z_{r}(1,\theta)$. Therefore, we have the following identity.
    \begin{align*}
    	\int_{B_{1}}|\nabla z_{r}|^{2}\:dx-\int_{\partial B_{1}}z_{r}^{2}d\mathcal{H}^{d-1}&=\int_{0}^{1}\rho^{d-1}\:d\rho\int_{\mathbb{S}^{d-1}}(z_{r}^{2}(1,\theta)+|\nabla_{\theta}z_{r}(1,\theta)|^{2})\:d\theta-\int_{\mathbb{S}^{d-1}}z_{r}^{2}\:d\theta\\
    	&=\frac{1}{d}\int_{\mathbb{S}^{d-1}}|\nabla_{\theta}z_{r}(1,\theta)|^{2}\:d\theta-\frac{d-1}{d}\int_{\mathbb{S}^{d-1}}z_{r}^{2}(1,\theta)\:d\theta\\
    	&=\frac{1}{d}\int_{\partial B_{1}}(|\nabla u_{r}|^{2}-(x\cdot\nabla u_{r})^{2})\:d\mathcal{H}^{d-1}-\frac{d-1}{d}\int_{\partial B_{1}}u_{r}^{2}\:d\mathcal{H}^{d-1},
    \end{align*}
    where we have used $|\nabla u|^{2}=(\partial_{\rho}u)^{2}+\rho^{-2}|\nabla_{\theta}u|^{2}$ if $u=u(\rho,\theta)$. Thus, let $\mathcal{F}_{2}(u_{r};F)$ and $\mathcal{Q}_{2}(u_{r};Q)$ be defined in \eqref{Formula: M(23)} and \eqref{Formula: M(24)}, and it follows from \eqref{Formula: M(1')} that
    \begin{align*}
    	&\frac{d}{r}(W(z_{r})-W(u_{r}))+\frac{1}{r}\int_{\partial B_{1}}|x\cdot \nabla u_{r}-u_{r}|^{2}\:d\mathcal{H}^{d-1}+\mathcal{F}_{2}(u_{r};F)+\mathcal{Q}_{2}(u_{r};F)\\
    	&=-\frac{d}{r}\int_{B_{1}}(|\nabla u_{r}|^{2}+F(ru_{r})+Q^{2}(rx)\chi_{\{u_{r}>0\}})\:dx\\
    	&\quad+\frac{1}{r}\int_{\partial B_{1}}(|\nabla u_{r}|^{2}+F(ru_{r})+Q^{2}(rx)\chi_{\{u_{r}>0\}})\:d\mathcal{H}^{d-1}\\
    	&\quad+\frac{2}{r}\int_{\partial B_{1}}u_{r}^{2}\:d\mathcal{H}^{d-1}-\frac{2}{r}\int_{\partial B_{1}}u_{r}(\nabla u_{r}\cdot x)\:d\mathcal{H}^{d-1}.
    \end{align*}
    On the other hand, by a direct calculation, we have
    \begin{align*}
    	&\frac{d}{dr}\left(r^{-d}\int_{B_{r}}|\nabla u|^{2}+F(u)+Q^{2}(x)\chi_{\{ u>0\}}\right)\\
    	&=-\frac{d}{r}\int_{B_{1}}(|\nabla u_{r}|^{2}+F(ru_{r})+Q^{2}(rx)\chi_{\{u_{r}>0\}})\:dx\\
    	&\quad+\frac{1}{r}\int_{\partial B_{1}}(|\nabla u_{r}|^{2}+F(ru_{r})+Q^{2}(rx)\chi_{\{u_{r}>0\}})\:d\mathcal{H}^{d-1}.
    \end{align*}
    This proves the result by employing the equality \eqref{Formula: M(17')}.
\end{proof}
In the end of this section, we collect some properties of the functions $\mathcal{F}_{i}(u,r;F)$ and $\mathcal{Q}_{i}(u,r;Q)$ for $i=1$, $2$.
\begin{proposition}\label{Proposition: properties of F and Q}
	Let $\mathcal{F}_{i}(u,r;F)$ and $\mathcal{Q}_{i}(u,r;Q)$ for $i=1$, $2$  be defined as in Proposition \ref{Proposition: Weiss-type monotonicity formula} and Proposition \ref{Proposition: second monotonicity formula}, respectively. Then the following does hold:
	\begin{enumerate}
		\item If $u\in\mathcal{K}$ is a minimizer of $J$ in $B_{r}(x^{0})$ and let $F(t)$ satisfies \eqref{Formula: Assumption on F(t)} with $F'(0)=0$, then $\mathcal{F}_{i}(u,r;F)\geqslant0$ for a.e. $r\in(0,\delta_{0})$.
		\item If $u\in\mathcal{K}$ is a minimizer of $J$ in $B_{r}(x^{0})$ and let $F(t)$ satisfies \eqref{Formula: Assumption on F(t)}, then
		\begin{align}\label{Formula: M(26)}
			\mathcal{F}_{i}(u,r;F)\geqslant-C\quad\text{ for }\quad i=1,2,
		\end{align}
	   where $C$ is a constant depending on $d$, $M_{0}$ and $F_{0}$ but independent of $r$ and $x^{0}$.
	   \item If $Q(x)\in C^{0}(B_{r}(x^{0}))$, then
	   \begin{align}\label{Formula: M(27)}
	   	    \mathcal{Q}_{i}(r;Q)\geqslant-C\frac{Q_{\mathrm{max}}}{r}\operatorname{osc}_{B_{r}(x^{0})}Q\quad\text{ for a.e. }\quad r\in(0,\delta_{0}).
	   \end{align}
      Here $C$ is a constant depending only on the continuity norm of $Q$ and $\operatorname{osc}_{B_{r}(x^{0})}(Q):=\max_{B_{r}}Q-\min_{B_{r}(x^{0})}Q$.
      \item If $Q(x)\in C^{\beta}(B_{r}(x^{0}))$, then 
      \begin{align}\label{Formula: M(28)}
      	   \mathcal{Q}_{i}(r;Q)\geqslant-C\frac{Q_{\mathrm{max}}}{r^{1-\beta}}[Q]_{\beta;B_{r}(x^{0})}\text{ for a.e. }\quad r\in(0,\delta_{0}),
      \end{align}
      Here $C$ is a constant depending only on $[Q]_{\beta;B_{r}(x^{0})}:=\sup_{x\neq y}\frac{|Q(x)-Q(y)|}{|x-y|^{\beta}}$.
	\end{enumerate}
\end{proposition}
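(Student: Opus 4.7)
The plan is to treat the four statements in turn, each reducing to a short, targeted computation once the right structural identity or elementary estimate is isolated.

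For (1), I will invoke Proposition \ref{Proposition: Properties of minimizers} to get $u\geq 0$; convexity of $F$ together with $F'(0)=0$ makes $F'$ non-decreasing with $F'(0)=0$, so $F'\geq 0$ on $[0,\infty)$ and the integrand $uF'(u)$ defining $\mathcal{F}_1$ is pointwise non-negative. For $\mathcal{F}_2$, polar coordinates give $rz_{x^0,r}(\rho,\theta)=\rho\,u(x^0+r\theta)$, and the classical sub-homogeneity of convex functions vanishing at the origin yields $F(\rho s)\leq \rho F(s)$ for $\rho\in[0,1]$ and $s\geq 0$. Combining this with the non-negativity of $F$ (a consequence of $F(0)=0=F'(0)$ together with $F''\geq 0$), radial integration leads to
\begin{equation*}
\int_{B_1}F(r z_{x^0,r})\,dx\leq\frac{1}{d+1}\int_{\partial B_1}F(r u_{x^0,r})\,d\mathcal{H}^{d-1},
\end{equation*}
so $\mathcal{F}_2\geq \frac{1}{r(d+1)}\int_{\partial B_1}F(ru_{x^0,r})\,d\mathcal{H}^{d-1}\geq 0$.

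For (2), I drop $F'(0)=0$ but retain \eqref{Formula: Assumption on F(t)}, which gives $|F'(0)|\leq F_0$ by continuity from the left; Taylor expansion with $F''\leq F_0$ then produces $|F'(t)|,\,|F(t)|/|t|\leq C(F_0,M_0)$ on $[0,M_0]$. Combined with the Lipschitz growth $u(x)\leq L\,|x-x^0|$ from Proposition \ref{Proposition: Lipschitz regularity of minimizers} (valid once $x^0$ belongs to $\{u=0\}$, which is precisely the setting in which the Weiss formula is applied), this yields $|uF'(u)|\leq CLr$ on $B_r(x^0)$ and $|F(ru_{x^0,r})|,\,|F(rz_{x^0,r})|\leq CLr$ on $B_1$. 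Substituting into the defining integrals cancels the $1/r$ factor and delivers $|\mathcal{F}_1|,\,|\mathcal{F}_2|\leq C(d,F_0,M_0,L)$.

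For (3) and (4), the pivotal observation is Remark \ref{Remark: Q}: substituting the constant $Q(x^0)$ for $Q$ in $\mathcal{Q}_i$ produces zero, owing to the divergence-type identity for the BV function $\chi_{\{u>0\}}$ in the $\mathcal{Q}_1$ case and to the polar-coordinates identity $d\int_{B_1}\chi_{\{z_r>0\}}\,dx=\int_{\partial B_1}\chi_{\{u_r>0\}}\,d\mathcal{H}^{d-1}$ (coming from the one-homogeneity of $z_r$) in the $\mathcal{Q}_2$ case. I therefore replace $Q^2(x)$ by $Q^2(x)-Q^2(x^0)$ inside each of the three integrals making up $\mathcal{Q}_i$ without changing its value, and apply the pointwise estimate $|Q^2(x)-Q^2(x^0)|\leq 2Q_{\mathrm{max}}\osc_{B_r(x^0)}Q$. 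Coupling this with the perimeter control $|\mu|(B_r(x^0))\leq Cr^{d-1}$ from Theorem \ref{Theorem: representation theorem} makes each of the three contributions of order $Q_{\mathrm{max}}\osc Q / r$; summing yields (3). Claim (4) then follows at once from the Hölder bound $\osc_{B_r(x^0)}Q\leq [Q]_{\beta;B_r(x^0)}r^\beta$.

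The main subtlety is in (2): uniformity of the constant in $x^0$ hinges on the Lipschitz decay $u\leq Lr$ in $B_r(x^0)$, which in turn forces the reading $u(x^0)=0$. This is tacit in the statement and is exactly the setting in which the Weiss formula is applied in Section \ref{Section: Weiss-type monotonicity formula}; away from $\{u=0\}$ no $r$-independent bound of this form can hold. Once this reading is in place, the remainder of the argument is routine bookkeeping with the identities and measure-theoretic estimates already established earlier in the paper.
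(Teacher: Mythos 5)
Your proposal is correct and follows essentially the same route as the paper's proof: non-negativity of $uF'(u)$ plus a radial comparison of $F(rz_r)$ with its boundary trace for (1), the Lipschitz bound $u\leqslant Cr$ (which, as you rightly note, presupposes $u(x^0)=0$) combined with $|F'|\leqslant C(F_0,M_0)$ for (2), and the subtraction of $Q^2(x^0)$ via Remark \ref{Remark: Q} together with the perimeter bound $|\mu|(B_r)\leqslant Cr^{d-1}$ for (3)--(4). Your only deviations are cosmetic and valid: you use the sub-homogeneity $F(\rho s)\leqslant\rho F(s)$ of the convex function $F$ (giving the slightly sharper factor $\tfrac{1}{d+1}$) where the paper uses only monotonicity of $F$ on $[0,\infty)$, and a direct bound $|F(t)|\leqslant C|t|$ in place of the paper's Taylor expansion.
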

\begin{proof}
	Assume $x^{0}=0$. (1). In view of \eqref{Formula: Assumption on F(t)}, $F'(t)$ is an increasing function for $t\in\mathbb{R}$, therefore $F'(0)=0$ implies that $F'(t)\geqslant F'(0)=0$ for all $t\geqslant0$. It follows from Proposition \ref{Proposition: Properties of minimizers} that $u\geqslant0$ in $B_{r}(x^{0})$ and the definition of $\mathcal{F}_{1}(u,r;F)$ in \eqref{Formula: M(3)} that $\mathcal{F}_{1}(u,r;F)\geqslant0$. In order to prove $\mathcal{F}_{2}(u,r;F)\geqslant0$, it suffices to prove that
	\begin{align}\label{Formula: M(29)}
		\frac{d}{r}\int_{B_{1}}F(rz_{r})\:dx=\frac{d}{r}\int_{0}^{1}\rho^{d-1}\:d\rho\int_{\mathbb{S}^{d-1}}F(r\rho z_{r}(1,\theta))\:d\theta\leqslant\frac{1}{r}\int_{\partial B_{1}}F(ru_{r})\:d\mathcal{H}^{d-1}.
	\end{align}
    In fact, it follows from $F'(t)\geqslant0$ for all $t\geqslant0$ that
    \begin{align*}
    	\int_{0}^{1}\rho^{d-1}\:d\rho\int_{\mathbb{S}^{d-1}}F(r\rho z_{r}(1,\theta))\:d\theta&\leqslant\int_{0}^{1}\rho^{d-1}\:d\rho\int_{\mathbb{S}^{d-1}}F(rz_{r}(1,\theta))\:d\theta\\
    	&=\frac{1}{d}\int_{\mathbb{S}^{d-1}}F(rz_{r}(1,\theta))\:d\theta=\frac{1}{d}\int_{\partial B_{1}}F(ru_{r})\:d\mathcal{H}^{d-1},
    \end{align*}
    where we have used the fact that $0\leqslant r\rho z_{1}(1,\theta)\leqslant rz_{r}(1,\theta)$ for $\rho\in(0,1]$ in the first inequality and $u_{r}=z_{r}\in H^{1}(\partial B_{1})$ in the last inequality. This gives \eqref{Formula: M(29)}.
    
    (2). Since $u$ is a minimizer, we have from Proposition \ref{Proposition: Lipschitz regularity of minimizers} that $|\nabla u|\leqslant C$ for some constant depending on $d$ and $F_{0}$. This implies that $|u(x)|\leqslant C|x-x^{0}|\leqslant Cr$ in $B_{r}(x^{0})$. On the other hand, in via of \eqref{Formula: Assumption on F(t)} and a direct calculation, we obtain
    \begin{align}\label{Formula: M(30)}
    	\max_{u\geqslant0}|F'(u)|\leqslant2(|f(0)|+\max_{t\geqslant0}|f'(t)||u|)\leqslant2\left(\frac{F_{0}}{2}+\frac{F_{0}}{2}M_{0}\right)\leqslant C(F_{0},M_{0}).
    \end{align}
    Therefore, it follows from \eqref{Formula: M(3)} and \eqref{Formula: M(30)} that
    \begin{align}\label{Formula: M(25')}
    	|\mathcal{F}_{1}(u,r;F)|\leqslant\frac{1}{r^{d+1}}\int_{B_{r}}|uF'(u)|\:dx\leqslant\frac{C}{r^{d}}\max_{u\geqslant0}|F'(u)|\int_{B_{r}}\:dx\leqslant C(d,F_{0},M_{0}).
    \end{align}
    This proves \eqref{Formula: M(26)} for $i=1$. For $\mathcal{F}_{2}(u_{r};F)$, we compute
    \begin{align*}
    	\mathcal{F}_{2}(u_{r};F)&=\frac{1}{r}\int_{\partial B_{1}}F(0)+F'(r\xi_{r})ru_{r}\:d\mathcal{H}^{d-1}-\frac{d}{r}\int_{0}^{1}\rho^{d-1}\:d\rho\int_{\mathbb{S}^{d-1}}F(0)+F'(r\zeta_{r})r\rho z_{r}(1,\theta)\:d\theta\\
    	&=\int_{\partial B_{1}}F'(r\xi_{r})u_{r}\:d\mathcal{H}^{d-1}-d\int_{0}^{1}\rho^{d}\:d\rho\int_{\mathbb{S}^{d-1}}F'(r\zeta_{r})z_{r}(1,\theta)\:d\theta\\
    	&=\int_{\partial B_{1}}F'(r\xi_{r})u_{r}\:d\mathcal{H}^{d-1}-\frac{d}{d+1}\int_{\partial B_{1}}F'(r\zeta_{r})u_{r}\:d\mathcal{H}^{d-1}\\
    	&=\frac{1}{r^{d}}\int_{\partial B_{r}}F'(\xi)u\:d\mathcal{H}^{d-1}-\frac{d}{d+1}\frac{1}{r^{d}}\int_{\partial B_{r}}F'(\zeta)u\:d\mathcal{H}^{d-1}
    \end{align*}
    Then it follows from \eqref{Formula: M(30)} and a similar calculation as in \eqref{Formula: M(25')} that \eqref{Formula: M(26)} holds for $i=2$.
    
    (3). Recalling \eqref{Formula: M(4)} and compute 
    \begin{align}\label{Formula: M(31)}
    	\begin{alignedat}{5}
    		\mathcal{Q}_{1}(r;Q)&=-\frac{1}{r^{d+1}}\int_{B_{r}}(Q^{2}(x)-Q^{2}(0))x\cdot d\mu(x)-\frac{d}{r^{d+1}}\int_{B_{r}}(Q^{2}(x)-Q^{2}(0))\chi_{\{u>0\}}\:dx\\
    		&\quad+\frac{1}{r^{d}}\int_{\partial B_{r}}(Q^{2}(x)-Q^{2}(0))\chi_{\{u>0\}}\:d\mathcal{H}^{d-1}\\
    		&\geqslant-C\frac{Q_{\mathrm{max}}\operatorname{osc}_{B_{r}}Q}{r^{d}}\int_{B_{r}}d|\mu|(x)-C\frac{dQ_{\mathrm{max}}\operatorname{osc}_{B_{r}}Q}{r^{d+1}}\int_{B_{r}}\chi_{\{u>0\}}\:dx\\
    		&\quad-C\frac{Q_{\mathrm{max}}\operatorname{osc}_{B_{r}}Q}{r^{d}}\int_{\partial B_{r}}\chi_{\{u>0\}}\:d\mathcal{H}^{d-1}\\
    		&\geqslant-C\frac{Q_{\mathrm{max}}}{r}\operatorname{osc}_{B_{r}}Q,
    	\end{alignedat}
    \end{align}
    where we have used Remark \ref{Remark: Q} in the first identity. As for $\mathcal{Q}_{2}$, we first notice that if $Q$ is a constant, then $\mathcal{Q}_{2}\equiv0$, indeed,
    \begin{align*}
    	\int_{B_{1}}\chi_{\{ z_{r}>0\}}=\int_{0}^{1}s^{d-1}ds\int_{\mathbb{S}^{d-1}}\chi_{\{ u_{r}>0\}}d\mathcal{H}^{d-1}=\frac{1}{d}\int_{\partial B_{1}}\chi_{\{ u_{r}>0\}}d\mathcal{H}^{d-1}.
    \end{align*}
    Therefore, we can rewrite $\mathcal{Q}_{2}$ in the following form
    \begin{align*}
    	\mathcal{Q}_{2}(u_{r};Q)=\frac{1}{r}\int_{\partial B_{1}}\chi_{\{ u_{r}>0\}}(Q^{2}(rx)-Q^{2}(0))\:d\mathcal{H}^{d-1}-\frac{d}{r}\int_{B_{1}}\chi_{\{ z_{r}>0\}}(Q^{2}(rx)-Q^{2}(0))\:dx.
    \end{align*}
    Then we can follow a similar argument as in \eqref{Formula: M(31)} to conclude the proof.
    
    (4). The desired result \eqref{Formula: M(28)} follows by a similar calculation as in (3), so we omit it.
\end{proof}
\begin{remark}\label{Remark: (2) and (3)}
	If in particular $f(0)=0$ and $Q\equiv\operatorname{const}$, it follows from \eqref{Formula: M(22)}, \eqref{Formula: M(23)} and the first property in Proposition \ref{Proposition: properties of F and Q} that 
	\begin{align*}
		\frac{d}{dr}W(u_{x^{0},r})\geqslant\frac{1}{r}\int_{\partial B_{1}}|x\cdot\nabla u_{x^{0},r}-u_{x^{0},r}|^{2}\:d\mathcal{H}^{d-1}\quad\text{ for a.e. }\quad r\in(0,\delta_{0}),
	\end{align*}
	and
	\begin{align*}
		\frac{d}{dr}W(u_{x^{0},r})\geqslant\frac{d}{r}(W(z_{x^{0},r})-W(u_{x^{0},r}))\quad\text{ for a.e. }\quad r\in(0,\delta_{0}).
	\end{align*}
	Notice that these two inequalities were exactly the second and the third property we have mentioned in Remark \ref{Remark: Reifenberg}. 
\end{remark}
\section{Structure of the free boundary}
This section dedicates to prove Theorem \ref{Theorem: Main theorem1} using those technical tools obtained in the previous sections. The first one is a direct application of the Weiss-type monotonicity formula we obtained in Proposition \ref{Proposition: Weiss-type monotonicity formula} and Proposition \ref{Proposition: second monotonicity formula}, which tells us that blow-up limits of minimizers are homogeneous functions of degree one.
\begin{proposition}[Homogeneity of blow-up limits]\label{Proposition: Homogeneity of blow-up limits}
	Let $u$ be a minimizer of $J$ in $\Omega$ with $0\in\Omega$ and $u(0)=0$, let $F(t)$ satisfy \eqref{Formula: Assumption on F(t)}. Assume that $Q$ is Dini-continuous at the origin, namely,
	\begin{align}\label{Formula: S(0)}
		\lim_{r\to0^{+}}\int_{r}\frac{1}{r}\operatorname{osc}_{B_{r}}Q\:dx<\infty.
	\end{align}
	Let $B_{r_{n}}\subset\Omega$ with $r_{n}\to0$ as $n\to\infty$. Let $u_{0}$ be a blow-up limit of $u$ at the origin with respect to the sequence $r_{n}$. Then
	\begin{enumerate}
		\item The limit $\lim_{r\to0^{+}}W(u,r,Q)$ exists and is finite for a.e. $r\in(0,\delta_{0})$;
		\item $u_{0}$ is a homogeneous function of degree one.
	\end{enumerate}
\end{proposition}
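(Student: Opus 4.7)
The plan is to combine the Weiss-type monotonicity formula of Proposition~\ref{Proposition: Weiss-type monotonicity formula} with the quantitative error bounds of Proposition~\ref{Proposition: properties of F and Q} and the Dini condition~\eqref{Formula: S(0)}. For part~(1), start from the identity
\begin{equation*}
\frac{d}{dr} W(u,r;Q) = \frac{2}{r^{d}}\int_{\partial B_{r}}\bigl(\nabla u\cdot\nu - \tfrac{u}{r}\bigr)^{2}\,d\mathcal{H}^{d-1} + \mathcal{F}_{1}(u,r;F) + \mathcal{Q}_{1}(r;Q),
\end{equation*}
discard the non-negative first term, and use Proposition~\ref{Proposition: properties of F and Q} to bound $\mathcal{F}_{1}(u,r;F) \geq -C$ and $\mathcal{Q}_{1}(r;Q) \geq -CQ_{\mathrm{max}}\omega(r)/r$ with $\omega(r) := \osc_{B_{r}} Q$. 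Setting $\Phi(r) := \int_{0}^{r} \omega(s)/s\, ds$, which is finite and vanishes at $0$ by~\eqref{Formula: S(0)}, the auxiliary function $\tilde W(r) := W(u,r;Q) + Cr + CQ_{\mathrm{max}}\Phi(r)$ is non-decreasing on $(0,\delta_{0})$, so $\lim_{r \to 0^{+}} W(u,r;Q)$ exists. Finiteness follows from $\tilde W(r) \leq \tilde W(r_{0}) < \infty$ combined with a matching lower bound coming from Lipschitz regularity (Proposition~\ref{Proposition: Lipschitz regularity of minimizers}, which together with $u(0) = 0$ yields $u(x)^{2} \leq C|x|^{2}$) and the boundedness of $F(u)$ and $Q$.

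For part~(2), fix $\sigma > 0$ and rewrite $W(u,\sigma r_{n};Q)$ via the substitution $x = r_{n} y$ in terms of the blow-up sequence $u_{n}(y) := u(r_{n} y)/r_{n}$:
\begin{equation*}
W(u,\sigma r_{n};Q) = \frac{1}{\sigma^{d}} \int_{B_{\sigma}} \bigl(|\nabla u_{n}|^{2} + F(r_{n} u_{n}) + Q^{2}(r_{n} y)\chi_{\{u_{n}>0\}}\bigr)dy - \frac{1}{\sigma^{d+1}} \int_{\partial B_{\sigma}} u_{n}^{2}\,d\mathcal{H}^{d-1}.
\end{equation*}
Letting $n \to \infty$, the convergences of Proposition~\ref{Proposition: Properties of blow-up limits} (uniform convergence $u_{n} \to u_{0}$, strong $H^{1}_{\mathrm{loc}}$ convergence, and $\chi_{\{u_{n}>0\}} \to \chi_{\{u_{0}>0\}}$ in $L^{1}_{\mathrm{loc}}$), together with $F(r_{n} u_{n}) \to F(0) = 0$ and $Q(r_{n} y) \to Q(0)$ uniformly on compacts, yield
\begin{equation*}
\lim_{n \to \infty} W(u,\sigma r_{n};Q) = \frac{1}{\sigma^{d}} \int_{B_{\sigma}} \bigl(|\nabla u_{0}|^{2} + Q^{2}(0) \chi_{\{u_{0}>0\}}\bigr) dy - \frac{1}{\sigma^{d+1}} \int_{\partial B_{\sigma}} u_{0}^{2}\,d\mathcal{H}^{d-1} =: W_{0}(u_{0},\sigma).
\end{equation*}
By~(1) the left-hand side equals $\lim_{r \to 0^{+}} W(u,r;Q)$ independently of $\sigma$, hence $W_{0}(u_{0},\cdot)$ is constant on $(0,\infty)$.

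To finish, recall from Proposition~\ref{Proposition: Properties of blow-up limits}(5) that $u_{0}$ is a global minimizer of the homogeneous functional $J_{0}$. The Weiss formula of Proposition~\ref{Proposition: Weiss-type monotonicity formula} specializes in the case $F \equiv 0$ and $Q \equiv Q(0)$ constant to $\mathcal{F}_{1} \equiv 0$ and, by Remark~\ref{Remark: Q}, $\mathcal{Q}_{1} \equiv 0$, so the constancy of $W_{0}(u_{0},\cdot)$ yields
\begin{equation*}
0 = \frac{d}{d\sigma} W_{0}(u_{0},\sigma) = \frac{2}{\sigma^{d}} \int_{\partial B_{\sigma}} \Bigl(\nabla u_{0}\cdot\nu - \tfrac{u_{0}}{\sigma}\Bigr)^{2} d\mathcal{H}^{d-1}.
\end{equation*}
Thus $x \cdot \nabla u_{0}(x) = u_{0}(x)$ almost everywhere, and Euler's identity gives that $u_{0}$ is homogeneous of degree one. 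The main technical obstacle is the non-sign-definite nature of the error terms $\mathcal{F}_{1}$ and $\mathcal{Q}_{1}$: the universal lower bound on $\mathcal{F}_{1}$ is absorbed by the linear correction $Cr$, while the Dini assumption~\eqref{Formula: S(0)} is precisely what makes $\mathcal{Q}_{1}$ integrable near $r = 0$ and causes the absorbing correction $\Phi(r)$ to vanish in the limit.
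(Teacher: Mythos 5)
Your part (1) is essentially the paper's argument verbatim: the same auxiliary function $W(u,r;Q)+Cr+CQ_{\max}\int_{0}^{r}s^{-1}\operatorname{osc}_{B_{s}}Q\,ds$ made monotone via \eqref{Formula: M(26)} and \eqref{Formula: M(27)}, with finiteness from the Lipschitz bound $u^{2}\leqslant C|x|^{2}$; this is correct. For part (2) you take a genuinely different, though equally classical, route. The paper integrates the differential inequality \eqref{Formula: M(2)} over $[r_{n}\varrho,\,r_{n}\sigma]$, rescales to get $\int_{B_{\sigma}\setminus B_{\varrho}}|z|^{-d-2}|\nabla u_{n}\cdot z-u_{n}|^{2}\,dz\leqslant W(u,r_{n}\sigma;Q)-W(u,r_{n}\varrho;Q)+o(1)$, and sends $n\to\infty$ so that the existence of $\lim_{r\to0^{+}}W$ forces the left-hand side to vanish for $u_{0}$; homogeneity is read off directly from the vanishing of the radial deviation of $u_{0}$. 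You instead first identify $\lim_{n}W(u,\sigma r_{n};Q)$ with the homogeneous Weiss energy $W_{0}(u_{0},\sigma)$ of the blow-up limit (using Proposition \ref{Proposition: Properties of blow-up limits} (2)--(4) and $F(r_{n}u_{n})\to F(0)=0$), conclude that $W_{0}(u_{0},\cdot)$ is constant, and then apply the clean Weiss formula (with $\mathcal{F}_{1}\equiv0$, $\mathcal{Q}_{1}\equiv0$ by Remark \ref{Remark: Q}) to $u_{0}$ itself. Both arguments are sound. The paper's version avoids having to pass to the limit in every term of the energy (lower semicontinuity of the weighted deviation integral would even suffice there), while yours requires the strong $H^{1}_{\mathrm{loc}}$ and $L^{1}_{\mathrm{loc}}$ convergences but in exchange produces the useful byproduct $\lim_{r\to0^{+}}W(u,r;Q)=W_{0}(u_{0},\sigma)$ for every $\sigma$, which the paper re-derives separately when computing the density $\vartheta$ in \eqref{Formula: S(3)}. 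The only point you should make explicit is that the differentiation formula for $W_{0}(u_{0},\cdot)$ is legitimate because $u_{0}$ is a local minimizer of $J_{0}$ on every ball (Proposition \ref{Proposition: Properties of blow-up limits} (5)), so Lemmas \ref{Lemma: Domain variation formula}--\ref{Lemma: Pohozaev-type identity} apply to it with $F\equiv0$ and $Q$ constant.
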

\begin{proof}
	Define $\rho(r):=\int_{0}^{r}\frac{1}{s}\operatorname{osc}_{B_{s}}Q\:ds$. Then it follows from the assumption \eqref{Formula: S(0)} that $\rho(r)$ is absolutely continuous with respect to $r\in(0,\delta_{0})$ and thus $\rho'(r)$ and $\lim_{r\to0^{+}}\rho(r)$ exists for a.e. $r\in(0,\delta_{0})$. It follows from \eqref{Formula: M(2)} that $\frac{d}{dr}W(u,r;Q)\geqslant\mathcal{F}_{i}(u,r;F)+\mathcal{Q}_{i}(r;Q)$ for $i=1$ and $i=2$ with $\mathcal{Q}_{2}(r;Q)=0$. It follows from the second and the third property of Proposition \eqref{Proposition: properties of F and Q} that $\frac{d}{dr}W(u,r;Q)\geqslant-C-\frac{Q_{\mathrm{max}}}{r}\operatorname{osc}_{B_{r}}Q$. Thus the function $G(u,r;Q):=W(u,r;Q)+Cr+CQ_{\mathrm{max}}\rho(r)$ satisfies $\frac{d}{dr}G(u,r;Q)\geqslant0$ for a.e. $r\in(0,\delta_{0})$ and therefore the limit $\lim_{r\to0^{+}}G(u,r;Q)$ exists. Since $\lim_{r\to0^{+}}Cr=0$, due to the fact that $C$ is independent of $r$ and $\lim_{r\to0^{+}}\rho(r)=0$ we obtain that the limit $\lim_{r\to0^{+}}W(u,r;Q)\in[-\infty,+\infty)$ exists. Since $u(0)=0$ we may see by regularity results that $W(u,0^{+};Q)>-\infty$. This proves (1).
	
	(2). To prove the second statement, we use the formula \eqref{Formula: M(2)} and one can also use the formula \eqref{Formula: M(22)} to obtain the same result without effort. We integrate the inequality \eqref{Formula: M(2)} and we have
	\begin{align}\label{Formula: S(1)}
		\begin{alignedat}{2}
			&\int_{r_{n}\varrho}^{r_{n}\sigma}\frac{1}{r}\int_{\partial B_{1}}|x\cdot\nabla u_{r}-u_{r}|^{2}\:d\mathcal{H}^{d-1}dr\\
			&\leqslant W(u,r_{n}\sigma,Q)-W(u,r_{n}\varrho,Q)+\int_{r_{n}\varrho}^{r_{n}\sigma}C\:dr+CQ_{\mathrm{max}}(\rho(r_{n}\sigma)-\rho(r_{n}\varrho)),
		\end{alignedat}
	\end{align}
    where we have used \eqref{Formula: M(26)}, \eqref{Formula: M(27)} and \eqref{Formula: S(0)}. It follows by rescaling \eqref{Formula: S(1)} that
    \begin{align*}
    	0&\leqslant\int_{B_{\sigma}\setminus B_{\varrho}}|z|^{-d-2}(\nabla u_{n}\cdot z-u_{n})\:dz\\
    	&\leqslant W(u,r_{n}\sigma,Q)-W(u,r_{n}\varrho,Q)+Cr_{n}(\sigma-\varrho)+CQ_{\mathrm{max}}(\rho(\sigma r_{n})-\rho(\varrho r_{n})).
    \end{align*}
    Passing to the limit as $n\to\infty$ we have the desired homogeneity.
\end{proof}
\begin{remark}
	Although in the above Proposition, we mainly focused on the homogeneity of free boundary point $0\in\partial\varOmega^{+}(u)$, one can also consider the case  $0\in\varOmega^{+}(u)$. In fact, if $0\in\Omega$ with $u(0)>0$, then it follows from the regularity results $|\nabla u|^{2}\leqslant C$ in $\Omega$ that $W(u,r,Q)\leqslant C_{1}-\tfrac{C_{2}}{r^{2}}$ for any $0<r<\delta_{0}$. This implies that $W(u,0^{+},Q)=-\infty$ for all such points. The homogeneity of $u_{0}$ at these points can be verified same as in the second part of the proof in Proposition \ref{Proposition: Homogeneity of blow-up limits}.
\end{remark}
\begin{remark}
	In a significant work \cite{VW2012} on two-dimensional gravity water waves V\v{a}rv\v{a}ruc\v{a} and Weiss on two-dimensional gravity water waves with vorticity. They put forward a monotonicity formula to investigate the local behavior of "stagnation points", at which points $u$ is a  $\tfrac{3}{2}$-homogeneous function. We point out that one can apply Proposition \ref{Proposition: Homogeneity of blow-up limits} to deduce the homogeneity for those points away from the stagnation points.
\end{remark}
In the rest of this section, we will focus on the limit
\begin{align}\label{Formula: S(2)}
	\vartheta:=\lim_{r\to0^{+}}\frac{|B_{r}(x^{0})\cap\{u>0\}|}{|B_{r}(x^{0})|}\quad\text{ for every }\quad x^{0}\in\partial\varOmega^{+}(u).
\end{align}
It follows from Proposition \ref{Proposition: density estimates} that $\vartheta\in(0,1)$ and we will finally prove that $\vartheta\in[\tfrac{1}{2},1)$ for every free boundary point and that the regular part of the free boundary $\partial_{\mathrm{reg}}\varOmega^{+}(u)$ is fully characterized by this limit. Most of the ideas come from \cite{MTV2017} and we begin with the following observation.
\begin{proposition}
	Let $u\in\mathcal{K}$ be a minimizer of $J$ in $\Omega$ with $0\in\Omega$ and $u(0)=0$, let $F(t)$ satisfy \eqref{Formula: Assumption on F(t)} and let $Q(x)$ be a H\"{o}lder continuous function at the origin which satisfies \eqref{Formula: Assumption on Q(x)}, then the limit $\vartheta$ defined in \eqref{Formula: S(2)} exists and
	\begin{align}\label{Formula: S(3)}
		\vartheta=\frac{|B_{1}\cap\{u_{0}>0\}|}{|B_{1}|}.
	\end{align}
\end{proposition}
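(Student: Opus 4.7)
The plan is to combine a scaling identity with the strong convergence of blow-up sequences in $L^{1}_{\mathrm{loc}}$ (property (2) of Proposition \ref{Proposition: Properties of blow-up limits}) and then use the Weiss-type monotonicity formula to remove any dependence on the choice of subsequence. The existence of $\vartheta$ is really a statement about the uniqueness of the value of $|B_{1}\cap\{u_{0}>0\}|$ across all possible blow-up limits at $0$, and this uniqueness is exactly what Proposition \ref{Proposition: Homogeneity of blow-up limits} buys us (H\"{o}lder continuity implies Dini continuity, so the assumption \eqref{Formula: S(0)} holds).

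First I will record the elementary scaling identity
\begin{align*}
    \frac{|B_{r}\cap\{u>0\}|}{|B_{r}|}=\frac{|B_{1}\cap\{u_{r}>0\}|}{|B_{1}|},
\end{align*}
where $u_{r}(x)=u(rx)/r$. Pick any sequence $r_{n}\to 0^{+}$; by Proposition \ref{Proposition: Properties of blow-up limits}, after extracting a subsequence (not relabeled) there is a blow-up limit $u_{0}$ with $u_{n}\to u_{0}$ in $C^{\alpha}_{\mathrm{loc}}$, strongly in $H^{1}_{\mathrm{loc}}$, and with $\chi_{\{u_{n}>0\}}\to\chi_{\{u_{0}>0\}}$ in $L^{1}_{\mathrm{loc}}(\mathbb{R}^{d})$. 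The last convergence yields
\begin{align*}
    \frac{|B_{1}\cap\{u_{r_{n}}>0\}|}{|B_{1}|}\longrightarrow\frac{|B_{1}\cap\{u_{0}>0\}|}{|B_{1}|},
\end{align*}
so the candidate limit is identified. What remains is to show that the right-hand side does not depend on the chosen subsequence.

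For that I will pass to the limit inside $W(u,r_{n};Q)=W(u_{r_{n}},1;Q(r_{n}\cdot))$. The four terms behave as follows: strong $H^{1}$ convergence gives $\int_{B_{1}}|\nabla u_{r_{n}}|^{2}\to\int_{B_{1}}|\nabla u_{0}|^{2}$; the $L^{1}$-convergence of characteristic functions together with the continuity of $Q$ at $0$ and \eqref{Formula: Assumption on Q(x)} gives $\int_{B_{1}}Q^{2}(r_{n}x)\chi_{\{u_{r_{n}}>0\}}\to Q^{2}(0)|B_{1}\cap\{u_{0}>0\}|$; since $u_{r_{n}}$ is bounded and $F$ is continuous with $F(0)=0$, dominated convergence gives $\int_{B_{1}}F(r_{n}u_{r_{n}})\to 0$; finally the uniform convergence of $u_{r_{n}}$ yields $\int_{\partial B_{1}}u_{r_{n}}^{2}\to\int_{\partial B_{1}}u_{0}^{2}$. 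By Proposition \ref{Proposition: Homogeneity of blow-up limits}, $u_{0}$ is $1$-homogeneous, and by property (6) of Proposition \ref{Proposition: Properties of blow-up limits} it is harmonic in $\{u_{0}>0\}$; Euler's identity $x\cdot\nabla u_{0}=u_{0}$ together with an integration by parts on the positivity set (where $u_{0}$ vanishes on the free boundary) gives
\begin{align*}
    \int_{B_{1}}|\nabla u_{0}|^{2}\,dx=\int_{\partial B_{1}}u_{0}(\nabla u_{0}\cdot\nu)\,d\mathcal{H}^{d-1}=\int_{\partial B_{1}}u_{0}^{2}\,d\mathcal{H}^{d-1}.
\end{align*}
Consequently
\begin{align*}
    \lim_{n\to\infty}W(u,r_{n};Q)=Q^{2}(0)\,|B_{1}\cap\{u_{0}>0\}|.
\end{align*}

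By Proposition \ref{Proposition: Homogeneity of blow-up limits} the limit $\lim_{r\to 0^{+}}W(u,r;Q)$ exists and is independent of the sequence; hence $|B_{1}\cap\{u_{0}>0\}|$ takes the same value for every blow-up limit $u_{0}$, which proves both that $\vartheta$ in \eqref{Formula: S(2)} is a genuine limit and that \eqref{Formula: S(3)} holds. The only delicate point in the plan is the Euler-type identity $\int_{B_{1}}|\nabla u_{0}|^{2}=\int_{\partial B_{1}}u_{0}^{2}$: it requires justifying integration by parts across the free boundary of $u_{0}$, but this is standard since $u_{0}\in H^{1}_{\mathrm{loc}}$ is Lipschitz, $\nabla u_{0}=0$ a.e.\ on $\{u_{0}=0\}$ by Stampacchia's theorem, and $u_{0}\Delta u_{0}=0$ distributionally on $B_{1}$; alternatively, one can approximate with $u_{0}\eta_{\varepsilon}$ where $\eta_{\varepsilon}$ vanishes on $\{u_{0}\leqslant\varepsilon\}$ and let $\varepsilon\to 0^{+}$.
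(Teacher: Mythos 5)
Your proof is correct and follows essentially the same route as the paper: pass to the limit in the Weiss energy $W(u,r_{n};Q)$ along a blow-up sequence, use the existence of $\lim_{r\to0^{+}}W(u,r;Q)$ from Proposition \ref{Proposition: Homogeneity of blow-up limits} to make the value $|B_{1}\cap\{u_{0}>0\}|$ independent of the subsequence, and conclude via the $L^{1}_{\mathrm{loc}}$ convergence of $\chi_{\{u_{n}>0\}}$. The only difference is that you explicitly justify the cancellation $\int_{B_{1}}|\nabla u_{0}|^{2}=\int_{\partial B_{1}}u_{0}^{2}$ coming from one-homogeneity and harmonicity, a step the paper leaves implicit.
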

\begin{proof}
	Let $\{r_{n}\}_{n=1}^{\infty}>0$ be a vanishing sequence and let $u_{n}$ be a blow-up sequence at with respect to $B_{r_{n}}$. It follows from Proposition \ref{Proposition: Properties of blow-up limits} (3), (4) and Proposition \ref{Proposition: Homogeneity of blow-up limits} (2) that
	\begin{align*}
		\lim_{n\to\infty}W(u_{n};Q(r_{n}x))&=\int_{B_{1}}|\nabla u_{0}|^{2}\:dx-\int_{\partial B_{1}}u_{0}^{2}\:d\mathcal{H}^{d-1}+Q^{2}(0)\lim_{r\to0^{+}}\frac{\int_{B_{r}}\chi_{\{u>0\}}\:dx}{r^{d}}\\
		&=Q^{2}(0)\omega_{d}\lim_{r\to0^{+}}\frac{|B_{r}\cap\{u>0\}|}{|B_{r}|}=Q^{2}(0)\omega_{d}\vartheta,
	\end{align*}
    The limit here exists because $\lim_{r\to0^{+}}W(u,r;Q)$ exists (by Proposition \ref{Proposition: Homogeneity of blow-up limits} (1)). On the other hand, since $\chi_{\{u_{n}>0\}}\to\chi_{\{u_{0}>0\}}$ in $L_{\mathrm{loc}}^{1}(\mathbb{R}^{d})$ (by Proposition \ref{Proposition: Properties of blow-up limits} (2)), we have that
    \begin{align*}
    	\lim_{n\to\infty}\int_{B_{1}}\chi_{\{u_{n}>0\}}Q^{2}(r_{n}x)\:dx=Q^{2}(0)\int_{B_{1}}\chi_{\{u_{0}>0\}}\:dx
    \end{align*}
    Thus 
    \begin{align*}
    	Q^{2}(0)\omega_{d}\frac{|B_{1}\cap\{u_{0}>0\}|}{|B_{1}|}=\lim_{n\to\infty}W(u_{n};Q(r_{n}x))=Q^{2}(0)\omega_{d}\vartheta,
    \end{align*}
    which is exactly \eqref{Formula: S(3)}.
\end{proof}
\begin{remark}\label{Remark: u_{0}notequiv0}
	In view of \eqref{Formula: S(3)} and \eqref{Formula: W(12'')} that $\vartheta\in(0,1)$ and thus $u_{0}$ is a non-constantly-vanishing function in $B_{1}$. That is, $u_{0}\not\equiv0$ in $B_{1}(0)$.
\end{remark}
At this stage, we point out Theorem \ref{Theorem: Main theorem1} is a direct corollary of the following proposition.
\begin{proposition}\label{Proposition: Density estimates}
	Let $u\in\mathcal{K}$ be a minimizer of $J$ in $\Omega$ with $0\in\Omega$ and $u(0)=0$, let $F(t)$ satisfy \eqref{Formula: Assumption on F(t)} and let $Q(x)$ be a H\"{o}lder continuous function at the origin which satisfies \eqref{Formula: Assumption on Q(x)} with $Q(0)=1$. Then for $\vartheta$ be defined in \eqref{Formula: S(2)}, we have
	\begin{enumerate}
		\item $\vartheta\in\left[\frac{1}{2},1\right)$ for every $x^{0}\in\partial\varOmega^{+}(u)$.
		\item  $\partial_{\mathrm{reg}}\varOmega^{+}(u)=(\varOmega^{+}(u))^{(1/2)}$.
	\end{enumerate}
\end{proposition}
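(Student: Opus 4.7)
The plan is to reduce the proposition to a statement about the one-homogeneous blow-up limit $u_0$ of $u$ at $x^0$, and then to invoke the Weiss classification of one-homogeneous global minimizers of the homogeneous Alt--Caffarelli functional. By \eqref{Formula: S(3)} it is enough to show $\vartheta := |B_1 \cap \{u_0 > 0\}|/|B_1| \in [\tfrac{1}{2}, 1)$ and that $\vartheta = \tfrac{1}{2}$ exactly when $u_0$ is a half-plane solution. First I would collect the structural properties of $u_0$: by Proposition \ref{Proposition: Properties of blow-up limits} (5)--(6), after normalizing $Q(x^0) = 1$, the limit $u_0$ is a nonnegative global minimizer of $J_0(v) = \int |\nabla v|^2 + \chi_{\{v > 0\}}\,dx$, is harmonic on $\{u_0 > 0\}$ with $|\nabla u_0| = 1$ on the free boundary, and satisfies $c_0 < \vartheta < 1 - c_0$. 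Proposition \ref{Proposition: Homogeneity of blow-up limits} (2) further gives that $u_0$ is one-homogeneous. The upper bound $\vartheta < 1$ is therefore immediate from the density estimate.

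The next step is to relate $\vartheta$ to the Weiss energy. Since $u_0$ is one-homogeneous and harmonic in $\{u_0 > 0\}$, writing it in polar coordinates as $u_0(\rho, \theta) = \rho\,\phi(\theta)$ with $\phi = u_0|_{\mathbb{S}^{d-1}}$ satisfying $\Delta_{\mathbb{S}} \phi = -(d-1)\phi$ on its support, a direct integration-by-parts (using $x \cdot \nabla u_0 = u_0$ on $\partial B_1$) yields
\begin{equation*}
W(u_0, 1; 1) = \int_{B_1} |\nabla u_0|^2 \, dx + |B_1 \cap \{u_0 > 0\}| - \int_{\partial B_1} u_0^2 \, d\mathcal{H}^{d-1} = \vartheta |B_1|.
\end{equation*}
The half-plane solution $h_\nu(x) = (x \cdot \nu)^+$ is itself one-homogeneous and a global minimizer, with $W(h_\nu, 1; 1) = |B_1|/2$. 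Thus $\vartheta \geq \tfrac{1}{2}$ is equivalent to the statement that among one-homogeneous global minimizers the Weiss energy is minimized by half-planes. I would establish this via the original comparison argument of Weiss in \cite{W1999}: the minimality of $u_0$ tested against a competitor obtained by reflecting across a suitable hyperplane and matching with a half-plane on the zero set forces $W(u_0, 1; 1) \geq W(h_\nu, 1; 1)$. This is the main obstacle; the key point is that the one-homogeneous structure of $u_0$ guarantees the competitor can be chosen to agree with $u_0$ on $\partial B_1$, so that minimality really applies.

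Finally, for the characterization $\partial_{\mathrm{reg}} \Omega^+(u) = (\Omega^+(u))^{(1/2)}$ I would argue both inclusions. If $x^0$ is a regular point, then by definition a blow-up $u_0$ equals $Q(x^0)(x \cdot \nu)^+$; its positivity set has Lebesgue density $\tfrac{1}{2}$ at the origin, hence $\vartheta = \tfrac{1}{2}$ by \eqref{Formula: S(3)}. Conversely, if $\vartheta = \tfrac{1}{2}$ then $W(u_0, 1; 1) = |B_1|/2$, which is the equality case in the Weiss comparison of the previous paragraph. The rigidity part of that comparison forces any one-homogeneous global minimizer with $W = |B_1|/2$ to be a half-plane solution, since any deviation from the half-space configuration of $\{u_0 = 0\}$ would yield a strictly energy-decreasing competitor. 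Hence $u_0 = Q(x^0)(x \cdot \nu)^+$ for some direction $\nu$, so $x^0 \in \partial_{\mathrm{reg}} \Omega^+(u)$, completing the characterization and with it the proposition.
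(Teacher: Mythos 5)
Your reduction is sound and matches the architecture of the paper's argument: you pass to the one-homogeneous blow-up $u_{0}$, identify $\vartheta=|B_{1}\cap\{u_{0}>0\}|/|B_{1}|$ via \eqref{Formula: S(3)}, obtain $\vartheta<1$ from \eqref{Formula: W(12'')}, and your computation $W(u_{0},1;1)=\vartheta\,\omega_{d}$ (using one-homogeneity and harmonicity on the positivity set) is correct, as is the easy inclusion $\partial_{\mathrm{reg}}\varOmega^{+}(u)\subseteq(\varOmega^{+}(u))^{(1/2)}$. So the proposition is correctly reduced to: every one-homogeneous global minimizer of $J_{0}$ with $0$ on its free boundary satisfies $|B_{1}\cap\{u_{0}>0\}|\geqslant\omega_{d}/2$, with equality only for half-plane solutions.

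The gap is exactly at the step you yourself call "the main obstacle". That inequality and its rigidity are the entire content of the proposition, and you dispose of them by appeal to "a competitor obtained by reflecting across a suitable hyperplane and matching with a half-plane on the zero set". No such competitor is constructed; a reflection of $u_{0}$ will in general not agree with $u_{0}$ on $\partial B_{1}$, so minimality cannot be tested against it, and the sentence claiming that homogeneity "guarantees the competitor can be chosen to agree with $u_{0}$ on $\partial B_{1}$" is precisely what would need to be proved. The same objection applies to the rigidity claim that any non-half-plane configuration "would yield a strictly energy-decreasing competitor". The paper closes this step by a different and concrete mechanism: since $u_{0}$ is one-homogeneous and harmonic in $\{u_{0}>0\}$, its trace on the sphere is a positive first eigenfunction of $-\Delta_{\mathbb{S}}$ with eigenvalue $d-1$ on the spherical domain $\partial B_{1}\cap\{u_{0}>0\}$; the Faber--Krahn/Sperner-type inequality on the sphere (Remark 4.8 in \cite{MTV2017}) then gives $\mathcal{H}^{d-1}(\partial B_{1}\cap\{u_{0}>0\})\geqslant d\omega_{d}/2$, hence $\vartheta\geqslant\tfrac{1}{2}$ by homogeneity, and in the equality case the domain is a half-sphere and the trace is the restriction of a linear function, so $u_{0}=Q(x^{0})(x\cdot\nu)^{+}$ (see \cite{MTV2022}). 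You should either reproduce that spectral argument or actually construct and estimate a competitor; as written, the central inequality is asserted rather than proved.
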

\begin{proof}
	The upper bound of $\vartheta$ follows directly from \eqref{Formula: S(3)} and \eqref{Formula: W(12'')}. Assume that $x^{0}\in\partial\varOmega^{+}(u)$ and that $u_{n}$ is the blow-up sequence converges to a blow-up limit $u_{0}\colon\mathbb{R}^{d}\to\mathbb{R}$. It follows once again from Proposition \ref{Proposition: Properties of blow-up limits} (6) and Proposition \ref{Proposition: Homogeneity of blow-up limits} (2) that $u_{0}$ is one-homogeneous and harmonic in $B_{1}^{+}(u_{0})$. This implies that $u_{0}$ is a solution to the equation (in polar coordinates) $-\Delta_{\mathbb{S}}u_{0}=(d-1)u_{0}$ on $\partial B_{1}^{+}(u_{0})$, where $\Delta_{\mathbb{S}}$ is defined in \eqref{Formula: The spherical Laplacian}. It follows from Remark 4.8 in \cite{MTV2017} that $\mathcal{H}^{d-1}(\partial B_{1}^{+}(u_{0}))\geqslant\frac{d\omega_{d}}{2}$. Using the homogeneity of $u_{0}$, we immediately have $|B_{1}\cap\{u_{0}>0\}|\geqslant\frac{\omega_{d}}{2}$. Now, the convergence  $\lim_{n\to\infty}\int_{B_{1}}\chi_{\{u_{n}>0\}}\:dx=\int_{B_{1}}\chi_{\{u_{0}>0\}}\:dx$ by Proposition \ref{Proposition: Properties of blow-up limits} (2) implies that
	\begin{align*}
		\vartheta=\lim_{n\to\infty}\frac{|B_{1}\cap\{u_{n}>0\}|}{|B_{1}|}=\frac{|B_{1}\cap\{u_{0}>0\}|}{|B_{1}|}\geqslant\frac{1}{2},
	\end{align*}
    which concludes (1). In the case of equality $\gamma=\frac{1}{2}$, we have by Remark 4.8 in \cite{MTV2022} that $u_{0}|_{\partial B_{1}}$ is precisely the first eigenvalue on $\mathbb{S}^{d-1}_{+}$, whose one-homogeneous extension is precisely the one-plane solution $(x\cdot\nu)^{+}$ for some $\nu\in\partial B_{1}$. This implies that if $u$ is a minimizer of $J$ in $\Omega$, then $\partial_{\mathrm{reg}}\varOmega^{+}(u)=(\varOmega^{+}(u))^{(1/2)}$.
\end{proof}
\section{Regularity of the free boundary near regular points}
In this section, we focus on the regularity of regular free boundary points, and the structure of singular free boundary points will be considered in the forthcoming section. We will first prove the uniqueness of blow-up limit and the polynomial convergence rate of a blow-up sequence. Then regularity of free boundaries is a direct consequence of these two facts. Such techniques are prevalent in recent research and are robust because there are quantities of arguments, for instance, via an epiperimetric inequality, to verify the uniqueness of blow-up limit and the rate of convergence for blow-up sequences. This method was mainly borrowed from [Chapter 8, \cite{V2019}]. In this section we assume without loss of generality that $0\in\Omega$ and that  $Q(0)=1$. We begin by showing that minimizers of $J$ in $\Omega$ are viscosity solutions in the sense of Definition \ref{Definition of viscosity solutions}.
\begin{proposition}[Minimizers are viscosity solutions]\label{Proposition: Minimizers are viscosity solutions}
	Let $u\in\mathcal{K}$ be a minimizer of $J$ in $\Omega$. Then $u$ is a viscosity solution of \eqref{Formula: Semilinear problem} in the sense of definition \ref{Definition of viscosity solutions}.
\end{proposition}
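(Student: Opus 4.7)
The plan is to verify each of the four conditions in Definition \ref{Definition of viscosity solutions}, splitting according to whether $x^{0}\in\varOmega^{+}(u)$ or $x^{0}\in\partial\varOmega^{+}(u)$, and, in the boundary case, according to the direction of the touch.

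For the interior conditions of item (1), I will use Proposition \ref{Proposition: Properties of minimizers}, which gives that $u\in C^{2,\alpha}(\varOmega^{+}(u))$ and $\Delta u+f(u)=0$ classically in the positivity set. If $\phi\in C^{2}$ touches $u$ from below at $x^{0}\in\varOmega^{+}(u)$, the function $u-\phi$ attains a local minimum with value $0$ at $x^{0}$, and the standard second-derivative test gives $\Delta(u-\phi)(x^{0})\geqslant 0$, equivalently $\Delta\phi(x^{0})+f(\phi(x^{0}))\leqslant 0$; the touch-from-above case is analogous. For the touch-from-above condition at a free boundary point $x^{0}\in\partial\varOmega^{+}(u)$ (the first bullet of item (2)), the implication is vacuously true: if $\phi\in C^{2}(\Omega)$ touched $u$ from above at such $x^{0}$, then $\phi\geqslant u\geqslant 0$ near $x^{0}$ with $\phi(x^{0})=u(x^{0})=0$ would force $x^{0}$ to be a local minimum of $\phi$ and hence $\nabla\phi(x^{0})=0$, so that $\sup_{B_{r}(x^{0})}\phi=O(r^{2})$; this contradicts the optimal linear non-degeneracy $\sup_{B_{r}(x^{0})}u\geqslant c_{d}^{*}Q_{\mathrm{min}}r$ from Corollary \ref{Corollary: Optimal linear nondegeneracy} once $r$ is small, so no such $\phi$ exists.

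The substantive case is the touch-from-below condition at $x^{0}\in\partial\varOmega^{+}(u)$. Assume $\alpha:=|\nabla\phi(x^{0})|>0$ (otherwise the bound $|\nabla\phi(x^{0})|\leqslant Q(x^{0})$ is trivial) and set $\nu:=\nabla\phi(x^{0})/\alpha$. Taylor expansion together with $\phi\leqslant u$ yields $u(x)\geqslant(\alpha\nu\cdot(x-x^{0})-C|x-x^{0}|^{2})^{+}$ in a neighborhood of $x^{0}$. Rescaling at $x^{0}$ and invoking Proposition \ref{Proposition: Properties of blow-up limits} produces a blow-up limit $u_{0}$ satisfying $u_{0}(x)\geqslant\alpha(\nu\cdot x)^{+}$ on all of $\mathbb{R}^{d}$; Proposition \ref{Proposition: Homogeneity of blow-up limits} gives that $u_{0}$ is one-homogeneous, and Proposition \ref{Proposition: Properties of blow-up limits}(5)--(6) gives that $u_{0}$ is a global minimizer of the constant-coefficient functional $J_{0}$ with $Q\equiv Q(x^{0})$, harmonic in $\{u_{0}>0\}$. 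I will then argue by contradiction: supposing $\alpha>Q(x^{0})$, I compare $u_{0}$ to the one-parameter family of half-plane barriers $h_{\sigma,t}(x):=\sigma((\nu\cdot x)+t)^{+}$ with $\sigma\in(Q(x^{0}),\alpha)$, tracking a first-contact parameter to locate a contact point $y^{*}$ on the common free boundary. The strong maximum principle for the harmonic difference $u_{0}-h_{\sigma,t}$ in $\{u_{0}>0\}\cap\{h_{\sigma,t}>0\}$ rules out interior contact, and a Hopf-type normal-derivative bound at the touching free boundary point of $u_{0}$, obtained via the minimality of $u_{0}$ and a localized competitor, yields the opposing slope inequality, contradicting $\sigma>Q(x^{0})$.

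The main obstacle is precisely this last step: turning the pointwise lower bound $u_{0}\geqslant\alpha(\nu\cdot x)^{+}$ into the sharp inequality $\alpha\leqslant Q(x^{0})$, with the caveat that $u_{0}$ is only a priori a weak/variational solution on its free boundary (in higher dimensions the free boundary may carry a singular set). The key tools here are the Lipschitz estimate (Proposition \ref{Proposition: Lipschitz regularity of minimizers}) to keep the comparison controlled and rule out a contact escaping to infinity, the non-degeneracy (Corollary \ref{Corollary: Optimal linear nondegeneracy}) to prevent collapse of $u_{0}$ near the free boundary, and a localized energy comparison against the half-plane solution $Q(x^{0})(\nu\cdot x)^{+}$ built from the minimality of $u_{0}$, echoing the classical viscosity-sub/super-solution comparison for the homogeneous Bernoulli problem in \cite{AC1981}. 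The semilinear term $f(u)$ plays no role in this last step because it disappears in the blow-up limit.
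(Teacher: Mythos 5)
Your interior case reproduces the paper's Step I verbatim, and your observation that the touch-from-above condition at a free boundary point is vacuous is correct and actually more explicit than the paper, which only remarks that this case is ``similar'': since $\nabla\phi(x^{0})=0$ forces $\sup_{B_{r}(x^{0})}\phi=O(r^{2})$ while $\sup_{B_{r}(x^{0})}u\geqslant c_{d}^{*}Q_{\mathrm{min}}r$ by Corollary \ref{Corollary: Optimal linear nondegeneracy}, no $C^{2}$ function can touch $u$ from above there. Your reduction of the touch-from-below case to proving $\alpha\leqslant Q(x^{0})$ for a blow-up limit satisfying $u_{0}\geqslant\alpha(\nu\cdot x)^{+}$ also matches the paper exactly.

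The gap is in how you close that last inequality. Your sliding family $h_{\sigma,t}=\sigma((\nu\cdot x)+t)^{+}$ requires (i) that the supremum of admissible $t$ is attained with a genuine contact point $y^{*}$ (one-homogeneity of $u_{0}$ makes the contact set scale-invariant, so contact can degenerate to the origin or escape to infinity), and (ii) a Hopf-type slope comparison at $y^{*}$ when $y^{*}\in\partial\{u_{0}>0\}$. But $y^{*}$ may be a \emph{singular} free boundary point of the global minimizer $u_{0}$ (possible for $d\geqslant d^{*}$), where $|\nabla u_{0}|=Q(x^{0})$ holds in no classical sense and no Hopf lemma is available; the ``localized competitor'' you invoke to produce the opposing slope inequality is precisely the missing argument, and it is not routine. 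The paper sidesteps all of this: from $u_{0}\geqslant\alpha(\nu\cdot x)^{+}$ with $\alpha>0$ it deduces $\{u_{0}>0\}\supset\{\nu\cdot x>0\}$, applies the structural result (Lemma 5.31 of \cite{RTV2019}) that the positivity set of a nontrivial global minimizer containing a half-space is either that half-space or the complement of the hyperplane, excludes the latter by the density bound \eqref{Formula: W(12'')}, and concludes $u_{0}(x)=Q(x^{0})(\nu\cdot x)^{+}$ exactly; the inequality $\alpha\leqslant Q(x^{0})$ is then immediate by comparing slopes on $\{\nu\cdot x>0\}$. You should either replace your sliding/Hopf step by this classification of $u_{0}$, or supply a complete argument for the Hopf-type bound at a possibly singular contact point.
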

\begin{proof}
	\textbf{Step I.} Assume that $x^{0}\in\varOmega^{+}(u)$ and that $\phi\in C^{\infty}(\Omega)$ touching $u$ from below at $x^{0}$. Then $u\in C^{2,\alpha}$ smooth at $x^{0}$, $(u-\phi)(x^{0})=0$ and $(u-\phi)(x)\geqslant0$ in a small neighborhood of $x^{0}$. This implies that the function $(u-\phi)$ attains a local minimum at $x^{0}$ and so $\Delta(u-\phi)(x^{0})\geqslant0$ by the second derive test of calculus. Therefore,
	\begin{align*}
		\Delta\phi(x^{0})+f(\phi(x^{0}))&=\Delta\phi(x^{0})+f(u(x^{0}))\leqslant\Delta u(x^{0})+f(u(x^{0}))=0,
	\end{align*}
    where we have used the fact that $\Delta u(x^{0})+f(u(x^{0}))=0$ for $x^{0}\in\varOmega^{+}(u)$. It is easy to check the case in a similar way when $\phi$ touches $u$ from above at $x^{0}$.
    
    \textbf{Step II.} It remains to check that $u$ satisfies the free boundary condition $|\nabla u|=Q(x^{0})$ in the viscosity sense. To this end, suppose that the function $\phi$ touches $u$ from below at  $x^{0}\in\partial\varOmega^{+}(u)$ and assume without loss of generality that $x^{0}=0$. Consider the following two blow-up sequences $u_{n}(x):=\frac{u(r_{n}x)}{r_{n}}$ and $\phi_{n}(x):=\frac{\phi(r_{n}x)}{r_{n}}$ where $r_{n}\to0^{+}$ as $n\to\infty$. Since $u$ is a minimizer, we have that the convergence is uniform in $B_{1}$. Since $\phi$ is a smooth function in $\Omega$, the limit $\phi_{0}:=\lim_{n\to\infty}\phi_{n}(x)$ exists and we have $\phi_{0}=\xi_{0}\cdot x$, where the vector $\xi_{0}\in\mathbb{R}^{d}$ is precisely the gradient $\nabla\phi(0)$. Without loss of generality we may assume that $\xi_{0}=Ae_{d}$ for some non-negative constant $A\geqslant0$. Thus,
    \begin{align}\label{Formula: R(0')}
    	|\nabla\phi(0)|=|\nabla\phi_{0}(0)|=A\quad\text{ and }\quad\phi_{0}(x)=Ax_{d}.
    \end{align}
    Moreover, we can assume that $A>0$ since otherwise the inequality $|\nabla\phi|\leqslant Q(0)=1$ holds trivially. It follows from Proposition \ref{Proposition: Properties of blow-up limits} (6), Proposition \ref{Proposition: Homogeneity of blow-up limits} (2) and Remark \ref{Remark: u_{0}notequiv0} that $u_{0}\in H_{\mathrm{loc}}^{1}(\mathbb{R}^{d})$ is a non-trivial, continuous and one-homogeneous function, harmonic in $B_{1}^{+}(u_{0})$. By the uniform convergence we have  $u_{0}\geqslant\phi_{0}$ and this implies $\{u_{0}>0\}\supset\{\phi_{0}>0\}=\{x_{d}>0\}$. Thus it follows from Lemma 5.31 \cite{RTV2019} that $\{u_{0}>0\}=\{x_{d}>0\}$ because the case $\{u_{0}>0\}=\{x_{d}\neq0\}$ is ruled out (by \eqref{Formula: W(12'')} or Proposition \ref{Proposition: Density estimates}). Moreover, $u_{0}$ is a global minimizer of $J_{0}(v)$ where $J_{0}(v)$ is defined in \eqref{Formula: W(12')} and hence satisfies the optimality condition $|\nabla u_{0}|=Q(0)=1$. In conclusion, $u_{0}(x)=x_{d}^{+}$ for $x\in\mathbb{R}^{d}$. Finally, it follows from $u_{0}\geqslant\phi_{0}$ and \eqref{Formula: R(0')} that $A\leqslant1$, and we have from \eqref{Formula: R(0')} that $|\nabla\phi(0)|\leqslant1$, as desired. The case when $\phi$ touches $u$ from above is similar and we omit it.
\end{proof}
\begin{remark}\label{Remark: Minimizers are viscisity solutions}
	Since minimizers are continuous in $\Omega$, we define 
	\begin{align}\label{Formula: R(0)}
		\tilde{f}(x):=-(f\circ u)(x),\quad\text{ for every }\quad x\in\Omega.
	\end{align}
	It follows from a direct calculation that
	\begin{align*}
		|\tilde{f}(x)|=|f(u(x))|\leqslant|f(0)|+\max_{s\geqslant0}|f'(s)|\|u\|_{L^{\infty}(\Omega)}\leqslant C(F_{0},M_{0}),
	\end{align*}
	where $M_{0}:=\sup_{\Omega}\Psi$ and $\Psi$ is given in \eqref{Formula: Psi}. Thus, $\tilde{f}\in C^{0}(\Omega)\cap L^{\infty}(\Omega)$. Moreover, we infer from Proposition \ref{Proposition: Minimizers are viscosity solutions} that $u$ is a viscosity solution to
	\begin{align}\label{Formula: R(1)}
		\Delta u=\tilde{f}(x)\quad\text{ in }\quad\varOmega^{+}(u),\qquad|\nabla u|=Q(x)\quad\text{ on }\quad\partial\varOmega^{+}(u).
	\end{align}
\end{remark}
In what follows, we will apply De Silva's viscosity approach to prove the regularity of the regular set $\partial_{\mathrm{reg}}\varOmega^{+}(u)$ of the free boundary. The first observation is that the free boundaries are $\varepsilon$-flat near points in $\partial_{\mathrm{reg}}\varOmega^{+}(u)$.
\begin{lemma}[$\varepsilon$-flat for regular free boundary points]\label{Lemma: epsilon-flat for regular free boundary points}
	Let $u\in\mathcal{K}$ be a minimizer of $J$ in $\Omega$. Assume that $0\in\partial_{\mathrm{reg}}\varOmega^{+}(u)$, then there exists $\varepsilon_{1}>0$ such that the rescaling $u_{r}$ is $\varepsilon_{1}$-flat in $B_{1}$ in the direction $\nu\in\partial B_{1}$ for every $r\in(0,1)$. That is,
	\begin{align*}
		(x\cdot\nu-\varepsilon_{1})^{+}\leqslant u_{r}(x)\leqslant(x\cdot\nu+\varepsilon_{1})^{+}\quad\text{ for every}\quad x\in B_{1},\quad\text{ and some }\quad\nu\in\partial B_{1}.
	\end{align*}
\end{lemma}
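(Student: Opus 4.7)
The plan is to show that the rescalings $u_r$ converge uniformly on $B_1$ to a single half-plane solution $(x\cdot\nu)^+$ as $r\to 0^+$; together with the uniform Lipschitz bound of Proposition \ref{Proposition: Lipschitz regularity of minimizers} this yields $\varepsilon_1$-flatness for $r$ small, and (after an initial change of scale) for every $r\in(0,1)$ upon enlarging $\varepsilon_1$ if necessary.

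First I would classify every possible blow-up limit of $u$ at $0$. A blow-up limit $u_0$ is a global minimizer of the homogeneous Alt--Caffarelli functional $J_0(v)=\int_{B_1}|\nabla v|^2+\chi_{\{v>0\}}\,dx$ (using $Q(0)=1$) and is harmonic in $\{u_0>0\}$ by Proposition \ref{Proposition: Properties of blow-up limits}. By Proposition \ref{Proposition: Homogeneity of blow-up limits}, $u_0$ is one-homogeneous, and by Proposition \ref{Proposition: Density estimates} the Lebesgue density of $\{u_0>0\}$ in $B_1$ equals $\tfrac12$ at the regular point. The spherical eigenvalue classification recalled in the proof of Proposition \ref{Proposition: Density estimates} (via Remark~4.8 of \cite{MTV2017}) then pins down
\begin{align*}
u_0(x)=(x\cdot\nu_0)^+\quad\text{for some }\nu_0=\nu_0(u_0)\in\partial B_1.
\end{align*}

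Next I would promote this along-subsequence statement to convergence of the whole family $\{u_r\}_{r>0}$. Since the direction $\nu_0$ is a priori allowed to depend on the subsequence, I would invoke the monotonicity formula \eqref{Formula: M(22)}: the limit $W(u,0^+;Q)=\lim_{r\to 0^+}W(u,r;Q)$ exists by Proposition \ref{Proposition: Homogeneity of blow-up limits} and must equal $\tfrac{\omega_d}{2}$ (the Weiss energy of the half-plane profile), while the lower-order contributions $\mathcal{F}_2(u_r;F)$ and $\mathcal{Q}_2(u_r;Q)$ are controlled by Proposition \ref{Proposition: properties of F and Q} and vanish in the limit. Integrating \eqref{Formula: M(22)} over dyadic annuli and passing to the limit yields
\begin{align*}
\int_r^{2r}\frac{1}{s}\int_{\partial B_1}|x\cdot\nabla u_s-u_s|^2\,d\mathcal{H}^{d-1}\,ds\longrightarrow 0\quad\text{as }r\to 0^+,
\end{align*}
which encodes that $u_s$ is asymptotically one-homogeneous at each scale. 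Combined with the strong $H^1_{\mathrm{loc}}$-convergence of Proposition \ref{Proposition: Properties of blow-up limits} and the continuity in $r$ of the measure-theoretic quantity $\int_{B_1}x\,\chi_{\{u_r>0\}}\,dx$ (which distinguishes different half-plane directions), any two accumulation directions $\nu_0,\nu_0'$ of blow-ups must coincide.

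With uniqueness of $\nu$ in hand, every subsequence of $\{u_r\}$ admits a further subsequence converging uniformly to the same $(x\cdot\nu)^+$, and so the full family converges uniformly on $B_1$. Hence for any prescribed $\varepsilon>0$ there exists $r_\varepsilon>0$ with $\|u_r-(x\cdot\nu)^+\|_{L^\infty(B_1)}\leq\varepsilon$ for $r\in(0,r_\varepsilon]$, and the pointwise sandwich $(x\cdot\nu-\varepsilon)^+\leq u_r(x)\leq(x\cdot\nu+\varepsilon)^+$ on $B_1$ follows by elementary manipulation. After an initial rescaling by $r_\varepsilon$ and relabeling one recovers the statement for $r\in(0,1)$. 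The main obstacle is the rigidity of the blow-up direction: the monotonicity formula supplies only an integrable smallness of the normalized tangential quantity $x\cdot\nabla u_s-u_s$, and converting this into genuine uniqueness of $\nu$ without invoking an epiperimetric inequality requires carefully interlacing the geometric continuity of the free boundaries across scales with the scale-by-scale $H^1$-convergence furnished by Propositions \ref{Proposition: Properties of blow-up limits} and \ref{Proposition: Homogeneity of blow-up limits}.
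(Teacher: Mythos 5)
Your proposal has a genuine gap at its central step, and it also inverts the logical order of the paper. You aim to prove that the \emph{whole family} $u_r$ converges to a \emph{single} half-plane $(x\cdot\nu)^{+}$, which requires uniqueness of the blow-up direction. The argument you offer for this — that the Weiss limit $W(u,0^{+};Q)$ exists and equals $\tfrac{\omega_d}{2}$, that $\int_r^{2r}\frac1s\int_{\partial B_1}|x\cdot\nabla u_s-u_s|^2\,d\mathcal{H}^{d-1}\,ds\to0$, plus ``continuity of $\int_{B_1}x\,\chi_{\{u_r>0\}}\,dx$'' — does not close: every half-plane profile has the same Weiss energy $\tfrac{\omega_d}{2}$, the family $\{(x\cdot\nu)^{+}:\nu\in\partial B_1\}$ is connected, and the set of accumulation profiles of $u_r$ is connected as well, so neither energy rigidity nor a connectedness/continuity argument can single out one direction. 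You acknowledge this yourself (``the main obstacle is the rigidity of the blow-up direction''), but an acknowledged obstacle is not a proof. Worse, in this paper uniqueness of the blow-up is exactly Lemma \ref{Lemma: Uniqueness of the blow-ups}, which is proved \emph{after} and \emph{by means of} the present lemma together with the improvement of flatness (Lemma \ref{Lemma: Improvement of flatness}); the paper explicitly remarks that the epiperimetric route to uniqueness is open in this setting. So your route is circular relative to the paper's architecture and incomplete as a standalone argument.

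The fix is to notice that the statement does not require a single direction: $\nu$ is allowed to depend on $r$. The paper's proof is far more elementary. It takes only the defining subsequence $r_n\to0$ from the definition of $0\in\partial_{\mathrm{reg}}\varOmega^{+}(u)$, deduces $2\varepsilon$-flatness of $u_{r_n}$ from uniform convergence to $(x\cdot\nu)^{+}$ (positivity on $\{x\cdot\nu>\varepsilon\}$, vanishing on $\{x\cdot\nu<-\varepsilon\}$), and then, for an arbitrary $r\in(0,1)$ squeezed between $r_{n+1}$ and $r_n$, writes $u_r(x)=\varrho^{-1}u_{r_n}(\varrho x)$ with $\varrho=r/r_n$ and observes that flatness rescales, at the price of dividing the flatness constant by $\varrho$. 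No classification of all blow-up limits, no monotonicity formula, and no uniqueness is needed. If you prefer a compactness argument, the correct version of your first paragraph — every blow-up limit at a regular point is some half-plane, by the density-$\tfrac12$ characterization in Proposition \ref{Proposition: Density estimates} — combined with a contradiction argument already yields: for every $\varepsilon>0$ there is $r_\varepsilon$ such that $u_r$ is $\varepsilon$-flat in \emph{some} direction $\nu_r$ for all $r<r_\varepsilon$; that is all the lemma asks for and avoids the uniqueness issue entirely.
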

\begin{proof}
	Recall the definition $\partial_{\mathrm{reg}}\varOmega^{+}(u)$, there is a vanishing sequence $r_{n}\to0^{+}$ such that the blow-up sequence $u_{n}(x)=\frac{u(r_{n}x)}{r_{n}}$ converges uniformly in $B_{1}$ to a function $u_{0}\colon\mathbb{R}^{d}\to\mathbb{R}$ of the form $u_{0}(x)=(x\cdot\nu)^{+}$ for some $\nu\in\mathbb{R}^{d}$. Then there exists $\varepsilon>0$ such that for $n$ large enough,
	\begin{align*}
		\|u_{n}-u_{0}\|_{L^{\infty}(B_{1})}<\varepsilon,
	\end{align*}
	which implies that
	\begin{align*}
		u_{n}>0\quad\text{ in }\quad\{x\cdot\nu>\varepsilon\}\qquad\text{ and }\qquad u_{n}=0\quad\text{ in }\quad\{x\cdot\nu<-\varepsilon\}.
	\end{align*}
	This gives that $u_{n}$ is $2\varepsilon$-flat in $B_{1}$. Namely, 
	\begin{align}\label{Formula: R(2)}
		(x\cdot\nu-2\varepsilon)^{+}\leqslant u_{n}(x)\leqslant(x\cdot\nu+2\varepsilon)^{+}\quad\text{ in }\quad B_{1},
	\end{align}
	for some $\nu\in\partial B_{1}$. Let now $r\in(0,1)$ be arbitrary, then there must exist $n\in\mathbb{N}$ such that $r_{n+1}\leqslant r\leqslant r_{n}$. Thus, there is $\varrho\in(0,1]$ such that $r=\varrho r_{n}$. Now since $u_{n}$ satisfies \eqref{Formula: R(2)}, we get that $u_{r}=u_{\varrho r_{n}}$ satisfies
	\begin{align*}
		\left(x\cdot\nu-\frac{2\varepsilon}{\varrho}\right)^{+}\leqslant u_{r}(x)\leqslant\left(x\cdot\nu+\frac{2\varepsilon}{\varrho}\right)^{+}\quad\text{ in }\quad B_{1}.
	\end{align*}
	Set $\varepsilon_{1}=\tfrac{2\varepsilon}{\varrho}>0$ we obtain for every $r\in(0,1)$, $u_{r}$ satisfies
	\begin{align*}
		(x\cdot\nu-\varepsilon_{1})^{+}\leqslant u_{r}\leqslant(x\cdot\nu+\varepsilon_{1})^{+}\quad\text{ in }\quad B_{1},
	\end{align*}
	for some $\nu\in\partial B_{1}$, as desired.
\end{proof}
With Lemma \ref{Lemma: epsilon-flat for regular free boundary points} at hand we obtain the following \emph{improvement of flatness} [Lemma 4.1 in \cite{S2011}] for viscosity solutions.
\begin{lemma}[Improvement of flatness]\label{Lemma: Improvement of flatness}
	Let $u\in\mathcal{K}$ be a minimizer of $J$ in $\Omega$. Assume that $0\in\partial_{\mathrm{reg}}\varOmega^{+}(u)$. Then there are universal constants $\kappa\in(0,1)$, $\sigma\in(0,1)$, $C_{0}>0$ and $\varepsilon_{0}>0$ such that if $u_{r}$ is $\varepsilon$-flat in $B_{1}$ in the direction $\nu\in\partial B_{1}$ for every  $r\in(0,1)$ and $\varepsilon\in(0,\varepsilon_{0}]$, then $u_{\kappa r}$ is $\sigma\varepsilon$-flat in $B_{1}$ in the direction $\tilde{\nu}\in\partial B_{1}$ with $|\tilde{\nu}-\nu|\leqslant C_{0}\varepsilon$.
\end{lemma}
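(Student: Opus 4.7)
The plan is to follow the classical De Silva contradiction–compactness scheme for the improvement of flatness, verifying that the semilinear right-hand side $f(u)$ and the variable free-boundary coefficient $Q(x)$ contribute only lower-order perturbations at small scales. Suppose the lemma fails. Then there are sequences $\varepsilon_k \to 0^+$, $r_k \in (0,1)$, and directions $\nu_k \in \partial B_1$ (which after rotation we may take to equal $e_d$) such that $v_k := u_{r_k}$ satisfies
\begin{equation*}
(x_d - \varepsilon_k)^+ \leqslant v_k(x) \leqslant (x_d + \varepsilon_k)^+ \quad \text{in } B_1,
\end{equation*}
but no choice of $\kappa, \sigma$ yields the claimed improvement at scale $\kappa$. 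By Proposition \ref{Proposition: Minimizers are viscosity solutions} and Remark \ref{Remark: Minimizers are viscisity solutions}, $v_k$ is a viscosity solution of $\Delta v_k = r_k\,\tilde f(r_k x)$ in $\varOmega^+(v_k)$ with free-boundary condition $|\nabla v_k| = Q(r_k x)$ on $\partial\varOmega^+(v_k)$, where $\|\tilde f\|_{L^\infty} \leqslant C(F_0,M_0)$ and $|Q(r_k x) - 1| \leqslant C r_k^\beta$ from Hölder continuity of $Q$ and $Q(0)=1$.

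Next I linearize. Introduce the renormalized graphs
\begin{equation*}
\tilde v_k(x) := \frac{v_k(x) - x_d}{\varepsilon_k} \quad \text{on } B_{1/2} \cap \{x_d > -\varepsilon_k\}.
\end{equation*}
A partial Harnack inequality in the spirit of Savin–De Silva, adapted to absorb an interior forcing of size $O(r_k)$ and a free-boundary perturbation of size $O(r_k^\beta)$, implies that the $\tilde v_k$ are uniformly Hölder continuous on compact subsets of $\overline{B_{1/2}^+}$ provided the perturbation is dominated by $\varepsilon_k$; hence, up to a subsequence, $\tilde v_k \to \tilde v$ locally uniformly. Passing to the limit in the interior equation one has $r_k \tilde f(r_k x)/\varepsilon_k \to 0$, so $\Delta \tilde v = 0$ in $B_{1/2}^+$. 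Expanding $|\nabla v_k|^2 = 1 + 2\varepsilon_k \partial_d \tilde v_k + O(\varepsilon_k^2)$ and comparing with $Q^2(r_k x) = 1 + O(r_k^\beta)$ forces, in the viscosity sense, the Neumann condition $\partial_d \tilde v = 0$ on $B_{1/2} \cap \{x_d = 0\}$.

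Standard regularity for the Neumann problem gives $\tilde v \in C^{2,\alpha}$ up to the flat boundary near $0$, so a Taylor expansion yields $\tilde \nu \in \mathbb{R}^{d-1}\times\{0\}$ and a dimensional $\kappa_0$ with $|\tilde v(x) - \tilde v(0) - \tilde \nu \cdot x| \leqslant C_0 |x|^2$ on $B_{\kappa_0}^+$. Transferring this back through the definition of $\tilde v_k$ shows that, for all large $k$, $v_k$ is $\tfrac{1}{2}C_0\kappa_0\varepsilon_k$-flat in $B_{\kappa_0}$ in the direction $\tilde\nu_k = (e_d + \varepsilon_k(\tilde\nu, 0))/|e_d + \varepsilon_k(\tilde\nu,0)|$, which satisfies $|\tilde\nu_k - e_d| \leqslant C_0 \varepsilon_k$. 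Choosing $\kappa \leqslant \kappa_0$ and $\sigma$ accordingly contradicts the assumed failure.

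The principal obstacle is the partial Harnack step: one must produce quantitative barriers that simultaneously control the interior term $r_k \tilde f(r_k x)$ and the variable boundary slope $Q(r_k x)$ in terms of $\varepsilon_k$ alone, so that the flatness parameter $\varepsilon_k$ continues to drive the compactness. This requires that the contradictory subsequence satisfy $r_k + r_k^\beta = o(\varepsilon_k)$; should this fail, then $\varepsilon_k \lesssim r_k^\beta$, and a direct barrier argument, constructing $C^2$ comparison functions from translates of $(x_d \pm \varepsilon_k)^+$ perturbed by small quadratic corrections chosen to absorb $\|r_k \tilde f(r_k\cdot)\|_\infty$ and $\operatorname{osc}_{B_1} Q(r_k\cdot)$, yields the improvement without compactness. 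Combining the two regimes completes the proof.
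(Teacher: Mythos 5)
The paper offers no proof of Lemma \ref{Lemma: Improvement of flatness}: it is imported verbatim as Lemma 4.1 of De Silva's paper \cite{S2011}, after Proposition \ref{Proposition: Minimizers are viscosity solutions} and Remark \ref{Remark: Minimizers are viscisity solutions} reformulate $u$ as a viscosity solution of $\Delta u=\tilde f$, $|\nabla u|=Q$ with $\tilde f\in L^\infty$ and $Q\in C^{0,\beta}$, the exact framework of \cite{S2011}. So you are not competing with an argument in the paper; you are reconstructing the one it cites, and your overall template (contradiction--compactness, renormalized functions $\tilde v_k=(v_k-x_d)/\varepsilon_k$, partial Harnack, linearization to a Neumann problem, quadratic improvement from the smooth limit) is the right one.

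That said, there are two substantive gaps. First, your threshold for the partial Harnack step is too weak. In \cite{S2011} the Harnack inequality (Theorem 3.1) and the improvement of flatness (Lemma 4.1) are proved under the hypotheses $\|f\|_{L^\infty}\leqslant\varepsilon^2$ and $\|g-1\|_{L^\infty}\leqslant\varepsilon^2$: the barriers there carry quadratic corrections of size $\varepsilon^2$, and the condition $r_k+r_k^\beta=o(\varepsilon_k)$ that you propose does not suffice to keep those barriers within the $\varepsilon$-corridor. The correct scaling assumption is $r\|\tilde f\|_\infty + r^\beta[Q]_\beta \leqslant c\,\varepsilon^2$.

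Second, and more seriously, the "direct barrier argument" you invoke in the regime $\varepsilon_k\lesssim r_k^\beta$ is unsubstantiated and I do not believe it can be made to work. When $\operatorname{osc}_{B_1}Q(r_k\cdot)$ is comparable to or larger than $\varepsilon_k$, the free-boundary condition $|\nabla u_{r_k}|=Q(r_kx)$ allows gradients on $\partial\varOmega^{+}(u_{r_k})$ differing from $1$ by more than $\varepsilon_k$; translates of $(x_d\pm\varepsilon_k)^+$ perturbed by $O(r_k)$ quadratic corrections cannot force a contraction to $\sigma\varepsilon_k$ in this case. In fact, as stated (``for every $r\in(0,1)$'') the lemma is not quite correct without the extra hypothesis above relating $r$ to $\varepsilon$: the honest formulation, following \cite{S2011}, includes that hypothesis, and the iteration in Lemma \ref{Lemma: Uniqueness of the blow-ups} is then consistent because one may choose $\sigma\geqslant\kappa^{\beta/2}$ so that the right-hand sides $O((\kappa^n)^\beta)$ decay at least as fast as $(\sigma^n\varepsilon_0)^2$ along the iteration. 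The repair is therefore to add the hypothesis and drop the second regime rather than to attempt a barrier proof where the compactness scheme legitimately fails.
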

With Lemma \ref{Lemma: Improvement of flatness} at hand, we are able to prove the core Lemma in this section:
\begin{lemma}[Uniqueness of the blow-up]\label{Lemma: Uniqueness of the blow-ups}
	Let $u\in\mathcal{K}$ be a minimizer of $J$ in $\Omega$. Assume that $0\in\partial_{\mathrm{reg}}\varOmega^{+}(u)$. Then there is a unique vector $\nu_{0}\in\partial B_{1}\subset\mathbb{R}^{d}$ such that 
	\begin{align}\label{Formula: R(3)}
		\|u_{r}-u_{0}\|_{L^{\infty}(B_{1})}\leqslant C_{1}r^{\gamma}\quad\text{ for every }\quad r\leqslant\frac{1}{2},
	\end{align}
	where
	\begin{align}\label{Formula: R(4)}
		u_{0}(x):=(\nu_{0}\cdot x)^{+}\quad\text{ for every }\quad x\in\mathbb{R}^{d}.
	\end{align}
	Here $C_{1}$, $\gamma$ are positive constants depending on $\kappa$, $\sigma$, $C_{0}$ and $\varepsilon_{0}$ (Recall Lemma \ref{Lemma: Improvement of flatness}).
\end{lemma}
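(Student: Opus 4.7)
The plan is to prove uniqueness of the blow-up and the decay rate~\eqref{Formula: R(3)} by iterating the improvement of flatness Lemma~\ref{Lemma: Improvement of flatness} along a geometric sequence of scales, producing a Cauchy sequence of half-plane directions whose limit defines $\nu_{0}$. The starting point is that, since $0\in\partial_{\mathrm{reg}}\varOmega^{+}(u)$, the blow-up sequence $u_{r_{n}}$ converges uniformly in $B_{1}$ to $(\nu\cdot x)^{+}$; in particular, one can fix an initial scale $r_{*}\in(0,1)$ and a direction $\nu_{0}\in\partial B_{1}$ so that $u_{r_{*}}$ is $\varepsilon$-flat in direction $\nu_{0}$ for some $\varepsilon\leqslant\varepsilon_{0}$, with $\varepsilon_{0}$ as in Lemma~\ref{Lemma: Improvement of flatness}. (Note that Lemma~\ref{Lemma: epsilon-flat for regular free boundary points} alone may only provide flatness at a larger scale $\varepsilon_{1}$, but the uniform convergence of the blow-up sequence refines this to any $\varepsilon\leqslant\varepsilon_{0}$ by taking $r_{*}$ small enough.)

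Next, I would iterate Lemma~\ref{Lemma: Improvement of flatness} along $\rho_{k}:=\kappa^{k}r_{*}$, producing inductively $\nu_{k}\in\partial B_{1}$ such that $u_{\rho_{k}}$ is $\sigma^{k}\varepsilon$-flat in direction $\nu_{k}$ with
\begin{align*}
	|\nu_{k+1}-\nu_{k}|\leqslant C_{0}\sigma^{k}\varepsilon.
\end{align*}
Since $\sigma\in(0,1)$, the sequence $\{\nu_{k}\}$ is Cauchy in $\partial B_{1}$, and its limit (still denoted $\nu_{0}\in\partial B_{1}$) satisfies $|\nu_{k}-\nu_{0}|\leqslant\frac{C_{0}\varepsilon}{1-\sigma}\sigma^{k}$. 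Combining this with the $\sigma^{k}\varepsilon$-flatness in direction $\nu_{k}$ yields the dyadic-scale estimate
\begin{align*}
	\|u_{\rho_{k}}-(\nu_{0}\cdot x)^{+}\|_{L^{\infty}(B_{1})}\leqslant C\sigma^{k}.
\end{align*}

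For arbitrary $r\in(0,\rho_{0}/2]$ I would pick $k\geqslant0$ with $\rho_{k+1}\leqslant r<\rho_{k}$ and use the rescaling identity $u_{r}(x)=(\rho_{k}/r)\,u_{\rho_{k}}((r/\rho_{k})x)$ together with the one-homogeneity of $x\mapsto(\nu_{0}\cdot x)^{+}$ to transfer the above estimate from scale $\rho_{k}$ to scale $r$. Converting the geometric rate $\sigma^{k}$ into a polynomial rate via $\sigma^{k}=(\kappa^{k})^{\gamma}$ with $\gamma:=\log\sigma/\log\kappa\in(0,1)$, and using $\kappa^{k}\geqslant r/r_{*}$, one concludes \eqref{Formula: R(3)} with $u_{0}(x)=(\nu_{0}\cdot x)^{+}$; the residual range $r\in[\rho_{0}/2,1/2]$ is absorbed into the constant $C_{1}$. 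The main obstacle is the interplay between iteration and rescaling in this last step: one must check that passing from $u_{\rho_{k}}$ to $u_{r}$ preserves the flatness (which holds because the ratio $\rho_{k}/r\in[1,1/\kappa)$ is bounded) and that the limiting half-plane profile $(\nu_{0}\cdot x)^{+}$ is exactly invariant under such rescalings (which is the content of its one-homogeneity). All other ingredients, namely the existence of $\varepsilon_{0}$, $\kappa$, $\sigma$ and $C_{0}$, are supplied directly by Lemma~\ref{Lemma: Improvement of flatness}.
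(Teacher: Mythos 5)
Your proposal is correct and follows essentially the same route as the paper: iterate the improvement-of-flatness lemma along geometric scales, extract the limit direction $\nu_{0}$ from the resulting Cauchy sequence, and convert the geometric decay $\sigma^{k}$ into the polynomial rate $r^{\gamma}$ via $\kappa^{\gamma}=\sigma$, handling intermediate scales by a bounded rescaling. Your extra remark that the initial flatness must be brought below $\varepsilon_{0}$ (not merely the $\varepsilon_{1}$ of Lemma \ref{Lemma: epsilon-flat for regular free boundary points}) by shrinking the starting scale is a small but legitimate refinement of a point the paper leaves implicit.
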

\begin{proof}
	Repeated use of Lemma \ref{Lemma: Improvement of flatness} via $n$ times gives $u_{\frac{\kappa^{n}}{2}}$ is $\varepsilon_{0}\sigma^{n}$-flat in the direction $\nu_{n}\in\partial B_{1}$ such that
	\begin{align*}
		|\nu_{n}-\nu_{n+1}|\leqslant C_{0}\varepsilon_{0}\sigma^{n}\quad\text{ for every }n\in\mathbb{N}.
	\end{align*}
	This implies that $\{\nu_{n}\}$ is a Cauchy sequence so that there is a vector $\nu_{0}$ such that
	\begin{align*}
		\nu_{0}:=\lim_{n\to\infty}\nu_{n}\quad\text{ and }\quad|\nu_{0}-\nu_{n}|\leqslant\frac{C_{0}\varepsilon_{0}}{1-\sigma}\sigma^{n}.
	\end{align*}
	This implies
	\begin{align}\label{Formula: R(5)}
		|(x\cdot\nu_{0})_{+}-u_{\kappa^{n}/2}|\leqslant|(x\cdot\nu_{0})_{+}-(x\cdot\nu_{n}+\varepsilon_{0}\sigma^{n})|\leqslant\left(1+\frac{C_{0}}{1-\sigma}\right)\sigma^{n}\quad\text{ in }\quad B_{1}.
	\end{align}
	Thus we set
	\begin{align*}
		u_{0}(x)=(x\cdot\nu_{0})^{+}.
	\end{align*}
	Now we verify \eqref{Formula: R(3)}. Let $r\in(0,\tfrac{1}{2}]$ and let $n\in\mathbb{N}$ be such that $\tfrac{\kappa^{n+1}}{2}<r\leqslant\frac{\kappa^{n}}{2}$. It follows that there must exist $\varrho\in(\kappa,1]$ such that $r=\varrho r_{n}$. Since $u_{\kappa^{n}/2}$ satisfies
	\begin{align*}
		(x\cdot\nu_{n}-\varepsilon_{0}\sigma^{n})^{+}\leqslant u_{\kappa^{n}/2}\leqslant(x\cdot\nu_{n}+\varepsilon_{0}\sigma^{n})^{+}\quad\text{ in }\quad B_{1},
	\end{align*}
	it is handy that $u_{r}$ satisfies
	\begin{align*}
		\left(x\cdot\nu_{n}-\frac{\varepsilon_{0}}{\varrho}\sigma^{n}\right)^{+}\leqslant u_{r}\leqslant\left(x\cdot\nu_{n}+\frac{\varepsilon_{0}}{\varrho}\sigma^{n}\right)^{+}\quad\text{ in }\quad B_{1}.
	\end{align*}
	Thus, $\|u_{n}-u_{r}\|_{L^{\infty}(B_{1})}\leqslant\frac{\varepsilon_{0}}{\varrho}\sigma^{n}\leqslant\frac{\varepsilon_{0}}{\kappa}\sigma^{n}$. It follows from  \eqref{Formula: R(5)} that $\|u_{r}-u_{0}\|_{L^{\infty}(B_{1})}\leqslant(1+\frac{C_{0}}{1-\sigma}+\frac{1}{\kappa})\varepsilon_{0}\sigma^{n}$. Choose $\gamma$ so that $\kappa^{\gamma}=\sigma$, and we obtain
	\begin{align*}
		\|u_{r}-u_{0}\|_{L^{\infty}(B_{1})}\leqslant(2/\kappa)^{\gamma}\left(1+\frac{C_{0}}{1-\sigma}+\frac{1}{\kappa}\right)\varepsilon_{0}r^{\gamma}.
	\end{align*}
	This concludes the proof by letting $C_{1}=\left(1+\frac{C_{0}}{1-\sigma}+\frac{1}{\kappa}\right)$.
\end{proof}
We now introduce the following notion: For any $\varepsilon>0$, $x^{0}\in\Omega$ and $\nu\in\mathbb{R}^{d}$, we define $\mathcal{C}_{\varepsilon}^{\pm}(x^{0},\nu)$ to be the cones
\begin{align*}
	\mathcal{C}_{\varepsilon}^{\pm}(x^{0},\nu):=\{x\in\mathbb{R}^{d}\colon\pm\nu\cdot(x-x^{0})>\varepsilon|x-x^{0}|\}.
\end{align*}
In fact, as an application of the uniqueness of blow-ups, we have
\begin{corollary}[$\partial_{\mathrm{reg}}\varOmega^{+}(u)$ is Lipschitz continuity]\label{Corollary: Regular free boundary is Lipschitz continuity}
	Let $u\in\mathcal{K}$ be a minimizer of $J$ in $\Omega$. Assume that $0\in\partial_{\mathrm{reg}}\varOmega^{+}(u)$. Then $\partial\varOmega^{+}(u)$ is a Lipschitz graph in a small neighborhood of $0$.
\end{corollary}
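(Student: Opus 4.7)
The plan is to derive a cone property for the free boundary from the polynomial convergence of blow-ups (Lemma~\ref{Lemma: Uniqueness of the blow-ups}) together with the non-degeneracy (Corollary~\ref{Corollary: Optimal linear nondegeneracy}), and then to convert this cone property into a Lipschitz graph parametrization by a standard implicit-function-type argument.

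First, Lemma~\ref{Lemma: Uniqueness of the blow-ups} provides a unique $\nu_0 \in \partial B_1$ with $\|u_r - (x \cdot \nu_0)^+\|_{L^{\infty}(B_1)} \leqslant C_1 r^{\gamma}$ for every $r \leqslant \frac{1}{2}$; after rescaling this reads $|u(z) - (z \cdot \nu_0)^+| \leqslant C_1 r^{1+\gamma}$ for every $|z| \leqslant r$. The first real task is to show that there exists $r_0 > 0$ such that every $y \in \partial\varOmega^+(u) \cap B_{r_0}(0)$ is itself a regular point, with a unique blow-up direction $\nu(y) \in \partial B_1$ satisfying $|\nu(y) - \nu_0| \leqslant C r_0^{\gamma}$ and obeying the analogous estimate $\|u_{y,r} - (x \cdot \nu(y))^+\|_{L^{\infty}(B_1)} \leqslant C_1 r^{\gamma}$ for small $r$. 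I would achieve this by observing that the polynomial rate at $0$ makes $u_r$ arbitrarily $\varepsilon$-flat at $0$ in the direction $\nu_0$; since $y$ is close to $0$, a translation and rescaling then shows that $u_{y,r'}$ is $\varepsilon_0$-flat in a slightly perturbed direction at some scale $r'>0$, after which iterating Lemma~\ref{Lemma: Improvement of flatness} at $y$, exactly as in the proof of Lemma~\ref{Lemma: Uniqueness of the blow-ups}, produces $\nu(y)$ with the required properties.

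The cone property is then established as follows. For $y \in \partial\varOmega^+(u) \cap B_{r_0}(0)$ and a free boundary point $x$ with $\rho := |x - y|$ small, apply the rescaled estimate at $y$ at scale $r = 2\rho$: since $u(x) = 0$, reading off $((x-y) \cdot \nu(y))^+ \leqslant C_1 r^{1+\gamma}$ gives $(x - y) \cdot \nu(y) \leqslant C\rho^{1+\gamma}$. For the matching lower bound, if $(x-y) \cdot \nu(y) < 0$ set $r' := |(x-y)\cdot \nu(y)|/2 \leqslant \rho$; then on $B_{r'}(x) \subset B_r(y)$ we have $((z-y) \cdot \nu(y))^+ = 0$, so the flatness forces $u \leqslant C_1 r^{1+\gamma}$ there, while Corollary~\ref{Corollary: Optimal linear nondegeneracy} yields $\sup_{B_{r'}(x)} u \geqslant c\, r'$; comparing these forces $r' \leqslant C \rho^{1+\gamma}$, hence $(x-y) \cdot \nu(y) \geqslant -C \rho^{1+\gamma}$. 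Thus $|(x-y) \cdot \nu(y)| \leqslant C \rho^{1+\gamma}$, and in particular for any prescribed $\varepsilon > 0$ we can shrink $r_0$ so that $|(x-y) \cdot \nu(y)| \leqslant \varepsilon |x - y|$ uniformly for $x, y \in \partial\varOmega^+(u) \cap B_{r_0}(0)$.

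Finally, to extract the Lipschitz graph, assume without loss of generality $\nu_0 = e_d$, so $|\nu(y) - e_d| \leqslant Cr_0^{\gamma}$ is small. For each $x' \in B'_{r_0}(0) \subset \mathbb{R}^{d-1}$, the closeness of $u$ to the half-plane solution together with continuity gives at least one $t = g(x')$ with $(x', t) \in \partial\varOmega^+(u)$, while the cone property at such an intersection prevents a second one on the same vertical line. For two such points $y = (x', g(x'))$ and $\tilde{y} = (\tilde{x}', g(\tilde{x}'))$, writing $(\tilde{y} - y) \cdot \nu(y) = (g(\tilde{x}') - g(x')) + (\tilde{y} - y) \cdot (\nu(y) - e_d)$ and invoking the cone property yields $|g(\tilde{x}') - g(x')| \leqslant (\varepsilon + C r_0^{\gamma}) |\tilde{y} - y|$; squaring and solving in $|g(\tilde x') - g(x')|$ versus $|\tilde x' - x'|$ gives the Lipschitz bound once $\varepsilon + C r_0^{\gamma} < 1$. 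I expect the main obstacle to be Step~2, namely transferring regularity from $0$ to nearby free boundary points, which requires careful bookkeeping of scales so that both Lemma~\ref{Lemma: epsilon-flat for regular free boundary points} and Lemma~\ref{Lemma: Improvement of flatness} can be initialized at $y$; once this is in place, the rest is essentially routine.
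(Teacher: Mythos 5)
Your proposal is correct and follows essentially the same route as the paper's proof: both use the polynomial convergence rate from Lemma \ref{Lemma: Uniqueness of the blow-ups} together with the non-degeneracy of Corollary \ref{Corollary: Optimal linear nondegeneracy} to establish a cone property at the origin, transfer it (with a controlled change of the normal direction, $|\nu(y)-\nu_0|\lesssim |y|^{\alpha}$) to nearby free boundary points, and then read off the Lipschitz graph exactly as in your final step. The only cosmetic difference is that you phrase the cone property as the quantitative estimate $|(x-y)\cdot\nu(y)|\leqslant C|x-y|^{1+\gamma}$ between pairs of free boundary points, while the paper states it as inclusion of the positivity and vanishing cones $\mathcal{C}^{\pm}_{\varepsilon}$; these are equivalent.
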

\begin{proof}
	The idea of this proof is originally from Proposition 8.6 in \cite{V2019}, we adapt it to our settings.
	
	\textbf{Step I.} In this first step, we show that if $u$ minimizes $J$ in $B_{1}$ with $u(0)=0$ and $0\in\partial_{\mathrm{reg}}\varOmega^{+}(u)$. Then for every $\varepsilon>0$ there exists $R>0$ such that
	\begin{align}\label{Formula: R(5')}
		\begin{cases}
			\begin{alignedat}{2}
				u&>0\quad&&\text{ in }\mathcal{C}_{\varepsilon}^{+}(0,\nu_{0})\cap B_{R},\\
				u&=0\quad&&\text{ in }\mathcal{C}_{\varepsilon}^{-}(0,\nu_{0})\cap B_{R},
			\end{alignedat}
		\end{cases}
	\end{align}
	where $\nu_{0}\in\partial B_{1}$. To this end, let $\gamma$ and $C_{1}$ be defined as in the Lemma \ref{Lemma: Uniqueness of the blow-ups}. Then it follows from \eqref{Formula: R(3)} that $u_{r}\geqslant(x\cdot\nu_{0}-C_{1}r^{\gamma})^{+}$ for every $x\in B_{1}$. Choose $C$ be any positive constant such that $C\geqslant C_{1}$, we obtain
	\begin{align}\label{Formula: R(6)}
		\{x\in B_{1}\colon x\cdot\nu_{0}>Cr^{\gamma}\}\subset\varOmega^{+}(u_{r}).
	\end{align}
	On the other hand, observe that  $u_{r}$ is also a minimizer of $J$ in $B_{1}$, therefore $u_{r}$ is non-degenerate in $B_{1}$ so that 
	\begin{align}\label{Formula: R(7)}
		\varOmega^{+}(u_{r})\cap\{x\in B_{1}\colon x\cdot\nu_{0}<-Cr^{\gamma}\}=\varnothing
	\end{align}
	Estimates \eqref{Formula: R(6)} and \eqref{Formula: R(7)} imply \eqref{Formula: R(5')} by taking $R>0$ such that $CR^{\gamma}\leqslant\varepsilon$.
	
	\textbf{Step II.} In this step, we prove that the free boundary near the origin can be expressed by the graph of a function $g$, precisely, there exists a $\delta>0$ such that
	\begin{align*}
		(B_{\delta}'\times(-\delta,\delta))\cap\partial\varOmega^{+}(u)=\{(x',t)\in B_{\delta}'\times(-\delta,\delta)\colon t=g(x')\}.
	\end{align*}
	Without loss of generality, we assume that $\nu_{0}=e_{d}$ and therefore $u_{0}(x)=x_{d}^{+}$ is the blow-up limit in $0$. Thus $\varOmega^{+}(u_{0})=\{x_{d}>0\}$. Let now $\varepsilon\in(0,1)$ and $R$ be as in \eqref{Formula: R(5')}, it follows that
	\begin{align*}
		\begin{cases}
			\begin{alignedat}{2}
				u&>0\quad&&\text{ in }\mathcal{C}_{\varepsilon}^{+}(0,e_{d})\cap B_{R},\\
				u&=0\quad&&\text{ in }\mathcal{C}_{\varepsilon}^{-}(0,e_{d})\cap B_{R},
			\end{alignedat}
		\end{cases}
	\end{align*}
	where $\mathcal{C}_{\varepsilon}^{\pm}$ is the cone $\{x\in\mathbb{R}^{d}\colon\pm x_{d}>\varepsilon|x|\}$. Let $x_{0}'\in B_{\delta}'$ where $\delta\leqslant R\sqrt{1-\varepsilon^{2}}$, it follows that the segment $\{(x_{0}',t)\colon t>\varepsilon R\}$ is contained in $\{u>0\}$ and the segment $\{(x_{0}',t)\colon t<-\varepsilon R\}$ is contained in $\{u=0\}$. Therefore, $g(x_{0}'):=\inf\{t\colon u(x_{0}',t)>0\}$ is well defined and $x^{0}:=(x_{0}',g(x_{0}'))\in\partial\varOmega^{+}(u)\cap B_{R}$. Moreover, one has
	\begin{align*}
		-\varepsilon|x_{0}'|\leqslant g(x_{0}')\leqslant\varepsilon|x_{0}'|,
	\end{align*}
	which implies that $|x^{0}|\leqslant|x_{0}'|\sqrt{1+\varepsilon^{2}}\leqslant\sqrt{2}\delta$. In order to finish the claim, it suffices to show that if $\delta>0$ is small enough, then
	\begin{align}\label{Formula: R(8)}
		\begin{cases}
			\begin{alignedat}{2}
				u&>0\quad&&\text{ in }\mathcal{C}_{2\varepsilon}^{+}(x^{0},e_{d})\cap B_{R}(x^{0}),\\
				u&=0\quad&&\text{ in }\mathcal{C}_{2\varepsilon}^{-}(x^{0},e_{d})\cap B_{R}(x^{0}).
			\end{alignedat}
		\end{cases}
	\end{align}
	Indeed, as a direct consequence of \eqref{Formula: R(8)}, one can easily deduce that
	\begin{align*}
		&(B_{\delta}'\times(-\delta,\delta))\cap\{u>0\}=\{(x',t)\in B_{\delta}'\times(-\delta,\delta)\colon t>g(x')\},\\
		&(B_{\delta}'\times(-\delta,\delta))\cap\{u=0\}=\{(x',t)\in B_{\delta}'\times(-\delta,\delta)\colon t\leqslant g(x')\}.
	\end{align*}
	We now prove \eqref{Formula: R(8)} to conclude this step. Applying \eqref{Formula: R(8)} for the free boundary point $x^{0}$, we have
	\begin{align*}
		\begin{cases}
			\begin{alignedat}{2}
				u&>0\quad&&\text{ in }\mathcal{C}_{\varepsilon}^{+}(x^{0},\nu_{x^{0}})\cap B_{R}(x^{0}),\\
				u&=0\quad&&\text{ in }\mathcal{C}_{\varepsilon}^{-}(x^{0},\nu_{x^{0}})\cap B_{R}(x^{0}).
			\end{alignedat}
		\end{cases}
	\end{align*}
	In view of \eqref{Formula: R(8)}, we only need to show $\mathcal{C}_{2\varepsilon}^{\pm}(x^{0},e_{d})\subset\mathcal{C}_{\varepsilon}^{\pm}(x^{0},\nu_{x^{0}})$. Since $u$ is a Lipschitz function with Lipschitz coefficient $L$ in $B_{1}$ and $0$, $x^{0}\in\partial\varOmega^{+}(u)\cap B_{R}$, let $\alpha:=\frac{\gamma}{1+\gamma}$ and $r:=|x^{0}|^{1-\alpha}$,
	\begin{align*}
		|u_{r}-u_{x^{0},r}|=\frac{1}{r}|u(rx)-u(x^{0}+rx)|\leqslant\frac{L}{r}|x^{0}|=L|x^{0}|^{\alpha},
	\end{align*}
	for every $x\in B_{1}$. This implies $\|u_{r}-u_{x^{0},r}\|_{L^{\infty}(B_{1})}\leqslant L|x^{0}|^{\alpha}$. On the other hand, if we denote $u_{0}$ and $u_{x^{0}}$ the unique blow-up limit of $u$ at $0$ and $x^{0}$ respectively. It follows from \eqref{Formula: R(8)} that
	\begin{align*}
		\|u_{x^{0}}-u_{x^{0},r}\|_{L^{\infty}(B_{1})}\leqslant C_{1}r^{\gamma}\quad\text{ and }\quad\|u_{r}-u_{0}\|_{L^{\infty}(B_{1})}\leqslant C_{1}r^{\gamma}.
	\end{align*}
	Choose $R$ such that $(2R)^{1-\alpha}\leqslant\tfrac{1}{2}$, we have by $r^{\gamma}=|x^{0}|^{\alpha}$ that
	\begin{align*}
		\|u_{x^{0}}-u_{0}\|_{L^{\infty}(B_{1})}\leqslant(L+2C_{1})|x^{0}|^{\alpha}.
	\end{align*}
	This implies $|\nu_{0}-e_{d}|\leqslant C|x^{0}|^{\alpha}$ by a direct calculation. Let $x\in\mathcal{C}_{2\varepsilon}^{\pm}(x^{0},e_{d})$. Then,
	\begin{align*}
		\nu_{x^{0}}\cdot(x-x^{0})&=e_{d}\cdot(x-x^{0})+(\nu_{x^{0}}-e_{d})\cdot(x-x^{0})\\
		&>2\varepsilon|x-x^{0}|-C(\sqrt{2}\delta)^{\alpha}|x-x^{0}|>\varepsilon|x-x^{0}|,
	\end{align*}
	where we choose $\delta$ such that $C(\sqrt{2}\delta)^{\alpha}\leqslant\varepsilon$. This finishes $\mathcal{C}_{2\varepsilon}^{\pm}(x^{0},e_{d})\subset\mathcal{C}_{\varepsilon}^{\pm}(x^{0},\nu_{x^{0}})$ and thus \eqref{Formula: R(8)} is concluded.
	
	\textbf{Step III.} We now prove that $g\colon B_{\delta}'\to\mathbb{R}$ is Lipschitz continuous on $B_{\delta}'$. Let $x_{1}'$, $x_{2}'\in B_{\delta}'$ where $x^{1}=(x_{1}',g(x_{1}'))$ and $x^{2}=(x_{2}',g(x_{2}'))$. Since $x^{1}\notin\mathcal{C}_{2\varepsilon}^{+}(x^{2},e_{d})$, we have that
	\begin{align*}
		g(x_{1}')-g(x_{2}')=(x^{1}-x^{2})\cdot e_{d}\leqslant2\varepsilon|x_{1}'-x_{2}'|+2\varepsilon|g(x_{1}')-g(x_{2}')|.
	\end{align*}
	A same inequality can be derived since $x^{2}\notin\mathcal{C}_{2\varepsilon}^{+}(x^{1},e_{d})$. Two estimates give $(1-2\varepsilon)|g(x_{1}')-g(x_{2}')|\leqslant2\varepsilon|x_{1}'-x_{2}'|$. Choosing $\varepsilon\leqslant\frac{1}{4}$, we obtain
	\begin{align*}
		|g(x_{1}')-g(x_{2}')|\leqslant4\varepsilon|x_{1}'-x_{2}'|,
	\end{align*}
	which concludes the proof of the Lipschitz continuity of $g$.
\end{proof}
We are now in a position to prove the regularity of the free boundary near regular points.
\begin{proposition}[Regularity of $\partial_{\mathrm{reg}}\varOmega^{+}(u)$]\label{Proposition: Regularity of regular points}
	Let $u\in\mathcal{K}$ be a minimizer of $J$ in $\Omega\subset\mathbb{R}^{d}$. Assume that $0\in\partial_{\mathrm{reg}}\varOmega^{+}(u)$, then there exists a small neighborhood, the size of which depends on $d$, $[Q]_{C^{\beta};\Omega}$, $F_{0}$ and $M_{0}$ so that $\partial_{\mathrm{reg}}\varOmega^{+}(u)$ is $C^{1,\gamma}$ regularity in that neighborhood for $\gamma\in(0,\beta)$.
\end{proposition}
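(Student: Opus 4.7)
The plan is to upgrade the Lipschitz regularity of the graph function $g$ from Corollary \ref{Corollary: Regular free boundary is Lipschitz continuity} to $C^{1,\gamma}$ by establishing a quantitative Hölder modulus of continuity for the unit normal $\nu_{x^{0}}$ at regular free boundary points. The first step is to verify that $\partial_{\mathrm{reg}}\varOmega^{+}(u)$ is open with uniform flatness constants: since $0$ is regular, Lemma \ref{Lemma: epsilon-flat for regular free boundary points} makes $u_r$ $\varepsilon_1$-flat in direction $\nu_0$ for every $r\in(0,1)$, and by uniform continuity of $u$ together with Proposition \ref{Proposition: Lipschitz regularity of minimizers} this propagates to $\varepsilon_0$-flatness of $u_{x^0,r}$ in some direction $\nu_{x^0}\in\partial B_1$ for every free boundary point $x^0\in B_\delta$ and every $r\in(0,r_0)$, with $\delta,r_0$ uniform. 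Running Lemma \ref{Lemma: Improvement of flatness} iteratively, the proof of Lemma \ref{Lemma: Uniqueness of the blow-ups} applies at every such $x^0$, yielding a unique blow-up $(x\cdot\nu_{x^0})^+$ and the uniform rate
\begin{align*}
\|u_{x^0,r}-(x\cdot\nu_{x^0})^+\|_{L^\infty(B_1)}\leqslant C_1 r^{\gamma_0}, \qquad r\in(0,r_0),
\end{align*}
for some $\gamma_0\in(0,1)$ depending only on the constants in Lemma \ref{Lemma: Improvement of flatness}.

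The second step derives a Hölder modulus for $x^0\mapsto\nu_{x^0}$ by mimicking the triangle-inequality argument in Step II of the proof of Corollary \ref{Corollary: Regular free boundary is Lipschitz continuity}: for two nearby regular points $x^1,x^2$, set $\alpha:=\gamma_0/(1+\gamma_0)$ and $r:=|x^1-x^2|^{1-\alpha}$, so that the Lipschitz bound $\|u_{x^1,r}-u_{x^2,r}\|_{L^\infty(B_1)}\leqslant L|x^1-x^2|^{\alpha}$ and the rate $C_1 r^{\gamma_0}=C_1|x^1-x^2|^{\alpha}$ combine to force
\begin{align*}
|\nu_{x^1}-\nu_{x^2}|\leqslant C|x^1-x^2|^{\alpha}.
\end{align*}
In the graph representation $\partial\varOmega^{+}(u)\cap(B_\delta'\times(-\delta,\delta))=\{x_d=g(x')\}$, the outward normal at $(x',g(x'))$ is parallel to $\nu_{(x',g(x'))}$, and the standard Campanato-type characterisation of $C^{1,\alpha}$ hypersurfaces combined with the Lipschitz bound on $g$ and the Hölder estimate on $\nu$ gives $g\in C^{1,\alpha}(B_\delta')$.

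Finally, to reach the full range of exponents $\gamma\in(0,\beta)$, I would bootstrap. Once $g\in C^{1,\alpha}$, flatten the free boundary by a $C^{1,\alpha}$ change of coordinates and apply classical Schauder theory to the semilinear problem $\Delta u=-f(u)$ in the flattened domain together with the oblique boundary condition $|\nabla u|=Q(x)$. Since $f\in C^{1,\beta}(\mathbb{R})$ and $Q\in C^{0,\beta}(\bar{\Omega})$, the resulting Schauder estimates improve $\nabla u$ on $\partial\varOmega^{+}(u)$ to $C^{0,\gamma}$ for every $\gamma<\beta$, and hence $g\in C^{1,\gamma}$.

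The main obstacle I expect is ensuring that De Silva's improvement-of-flatness scheme, originally designed for harmonic blow-ups, transfers cleanly to our semilinear setting so as to deliver the initial exponent $\gamma_0$ uniformly in the base point $x^0$. As emphasised in Remark \ref{Remark: Homo v.s. Nonhomo}, all blow-up limits here remain harmonic, and the extra term $r_n f(r_n u_{x^0,r_n})$ in the equation satisfied by the blow-up sequence is of order $r_n$ and therefore perturbative at the compactness level required by Lemma \ref{Lemma: Improvement of flatness}. Once this is verified the Schauder boot-strap proceeds exactly as in the classical Alt--Caffarelli theory, with $f(u)$ contributing only lower-order terms because $u$ is already Lipschitz by Proposition \ref{Proposition: Lipschitz regularity of minimizers}.
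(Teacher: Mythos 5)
Your route is genuinely different from the paper's. The paper's proof is short: it checks that the rescalings $u_k=u_{\delta_k}$ are viscosity solutions of $\Delta u_k=\tilde f_k$ in $B_1^+(u_k)$, $|\nabla u_k|=Q_k$ on the free boundary, with $\|\tilde f_k\|_{L^\infty}\leqslant\bar\varepsilon$, $|Q_k-1|\leqslant\delta_k^\beta[Q]_\beta\leqslant\bar\varepsilon$ and $u_k$ $\bar\varepsilon$-flat (via Corollary \ref{Corollary: Regular free boundary is Lipschitz continuity} and the classification of the blow-up), and then invokes Theorem 1.1 of \cite{S2011} as a black box. You instead propose to re-derive that theorem's conclusion from Lemma \ref{Lemma: Improvement of flatness}: uniform flatness at all nearby free boundary points, a uniform blow-up rate as in Lemma \ref{Lemma: Uniqueness of the blow-ups}, a H\"older modulus for $x^0\mapsto\nu_{x^0}$, and a Campanato argument for $g$. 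These first two steps are sound — they are essentially the internal mechanism of De Silva's proof — and they deliver $g\in C^{1,\gamma_0}$ for the exponent $\gamma_0$ produced by the improvement-of-flatness iteration. What the paper's citation buys is brevity; what your unpacking buys is transparency about where $\gamma_0$ comes from.

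The genuine gap is your final bootstrap to the full range $\gamma\in(0,\beta)$. Schauder theory for the Dirichlet problem $\Delta u=-f(u)$, $u=0$ on a $C^{1,\alpha}$ hypersurface returns only $\nabla u\in C^{0,\alpha}$ up to the boundary; the identity $\nu=\nabla u/Q$ then gives back $\nu\in C^{0,\min\{\alpha,\beta\}}=C^{0,\alpha}$ and hence $g\in C^{1,\alpha}$ again — the loop is circular and never improves the exponent past the one you feed in. Exploiting the free boundary condition $|\nabla u|=Q$ as an oblique derivative condition requires first a partial hodograph--Legendre transform in the sense of Kinderlehrer--Nirenberg, which is not ``classical Schauder'' and is delicate when $Q$ is merely $C^{0,\beta}$. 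The clean way to reach every $\gamma<\beta$ is to track the exponent inside the improvement-of-flatness scheme itself: the error contributed by $Q$ at scale $\kappa^k$ decays like $[Q]_\beta\kappa^{k\beta}$ (cf.\ \eqref{Formula: M(28)}), so the iteration closes with any decay rate $\sigma=\kappa^\gamma$, $\gamma<\beta$. That quantitative dependence is exactly what De Silva's Theorem 1.1 packages and what the paper leans on; as written, your Step 3 does not supply it.
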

\begin{proof}
	Consider the blow-up sequence $u_{k}(x):=u_{\delta_{k}}(x)=\frac{u(\delta_{k}x)}{\delta_{k}}$ with $\delta_{k}\to0$ as $k\to\infty$. It is easy to see that for every $k$, $u_{k}$ is a solution to
	\begin{align}\label{Formula: R(9)}
		\Delta u_{k}=\tilde{f}_{k}(x)\quad\text{ in }\quad B_{1}^{+}(u_{k}),\qquad|\nabla u_{k}|=Q_{k}(x)\quad\text{ on }\quad\partial B_{1}^{+}(u_{k}),
	\end{align}
	where $\tilde{f}_{k}(x)=\delta_{k}\tilde{f}(\delta_{k}x)$ and $Q_{k}(x)=Q(\delta_{k}x)$. It follows that there is a positive constant $\bar{\varepsilon}$ such that the following does hold:
	\begin{align*}
		|\tilde{f}_{k}(x)|=|\delta_{k}\tilde{f}(\delta_{k}x)|=\delta_{k}\|\tilde{f}\|_{L^{\infty}}\leqslant\bar{\varepsilon},
	\end{align*}
	and
	\begin{align*}
		&|Q_{k}(x)-1|=|Q(\delta_{k}x)-Q(0)|\leqslant\delta_{k}^{\beta}[Q]_{\beta}\leqslant\bar{\varepsilon}.
	\end{align*}
	Thus, using Proposition \ref{Proposition: Properties of blow-up limits}, up to extracting a subsequence: $u_{k}\to u_{0}$ locally and uniformly in $\mathbb{R}^{d}$ and $\partial\varOmega^{+}(u_{k})\to\partial\varOmega^{+}(u_{0})$ locally in the Hausdorff distance in $\mathbb{R}^{d}$ for a globally defined function $u_{0}\colon\mathbb{R}^{d}\to\mathbb{R}$. Moreover, every blow-up limit $u_{0}$ is a solution to $\Delta u_{0}=0$ in $B_{1}^{+}(u_{0})$ and $|\nabla u_{0}|=1$ on $\partial B_{1}^{+}(u_{0})$. It follows from Corollary \ref{Corollary: Regular free boundary is Lipschitz continuity} that $\partial\varOmega^{+}(u)$ is a Lipschitz graph in a small neighborhood of $0$. We also see from (1)-(2) above that $\partial\varOmega^{+}(u_{0})$ is Lipschitz continuous. Thus, $u_{0}$ is a so-called one-plane solution, namely (up to rotations), $u_{0}(x)=x_{d}^{+}$. Consequently, we conclude that for all $k$ large enough, $u_{k}$ is $\bar{\varepsilon}$-flat in $B_{1}$, i.e.
	\begin{align*}
		(x_{d}-\bar{\varepsilon})^{+}\leqslant u_{k}(x)\leqslant(x_{d}+\bar{\varepsilon})^{+},\quad\text{ in }\quad B_{1}.
	\end{align*}
	Thus, $u_{k}$ satisfies the flatness condition required by Theorem 1.1 in \cite{S2011}. Moreover, $\tilde{f}_{k}$ and $Q_{k}$ also satisfy the requirements of Theorem 1.1 in  \cite{S2011}. Combining these facts, we conclude from Theorem 1.1 in \cite{S2011} that $\partial\varOmega^{+}(u)$ is $C^{1,\gamma}$ in $B_{1/2}$. This finishes the proof.
\end{proof}
\section{Dimension of the singular set}
This concluding section dedicates to investigate the structure of the singular set. We will prove Theorem \ref{Theorem: Main theorem2} for our semilinear case. Our results discover that the non-linear right-hand side in our settings does not affect the structure of singularities and our argument follows by "dimension reduction" process. We begin by giving the definition of $d^{*}$.
\begin{definition}[Definition of $d^{*}$, compare to \cite{W1999}]\label{Definition: d*}
	We define $d^{*}$ to be the smallest dimension $d$ so that there exists a function $z\colon\mathbb{R}^{d}\to\mathbb{R}$ such that
	\begin{enumerate}
		\item $z$ is a non-negative one-homogeneous absolute minimum of $J_{0}$ given in \eqref{Formula: W(12')};
		\item The free boundary $\partial\{z>0\}$ is not a $(d-1)$-dimensional $C^{1}$-regular surface in $\mathbb{R}^{d}$.
	\end{enumerate}
\end{definition} 
In \cite{CD2020}, it was proved that $\partial_{\mathrm{sing}}\varOmega^{+}(u)=\varnothing$ when $d=2$. We present here a different proof, which depends mainly on the homogeneity of minimizers and this is completely different from \cite{CD2020}. 
\begin{proposition}\label{Proposition: d*geq3}
	Let $u\in\mathcal{K}$ be a minimizer of $J$ in $\Omega\subset\mathbb{R}^{2}$, then $\partial\varOmega^{+}(u)=\partial_{\mathrm{reg}}\varOmega^{+}(u)$.
\end{proposition}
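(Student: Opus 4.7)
The plan is to run a blow-up analysis at an arbitrary free boundary point and show that in two dimensions the only admissible blow-up limit is a half-plane solution, so that every free boundary point is regular by Proposition \ref{Proposition: Density estimates}.

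Fix $x^0 \in \partial\varOmega^+(u)$ and, after translating and rescaling, let $u_0 \colon \mathbb{R}^2 \to \mathbb{R}$ be any blow-up limit of $u$ at $x^0$. By Proposition \ref{Proposition: Properties of blow-up limits} (5)--(6), $u_0$ is a non-negative global minimizer of the constant-coefficient functional $J_0$, harmonic in $\{u_0>0\}$, and satisfies $|\nabla u_0|=Q(x^0)$ on its free boundary; moreover by Remark \ref{Remark: u_{0}notequiv0}, $u_0\not\equiv 0$. Proposition \ref{Proposition: Homogeneity of blow-up limits} (together with H\"older continuity of $Q$) further shows that $u_0$ is one-homogeneous, so we can write $u_0(\rho,\theta) = \rho\, g(\theta)$ with $g \colon \mathbb{S}^1 \to [0,\infty)$ continuous and $g\not\equiv 0$.

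Next I would translate harmonicity into an ODE on the circle. Using the polar form of the Laplacian from \eqref{Formula: The spherical Laplacian}, the identity $\Delta u_0 = 0$ on $\{u_0>0\}$ becomes $-g''(\theta) = g(\theta)$ on each connected component of the open set $\{g>0\} \subset \mathbb{S}^1$. On such a component $I = (\alpha,\beta)$ with $g(\alpha)=g(\beta)=0$, the equation forces the first Dirichlet eigenvalue of $-\partial_\theta^2$ on $I$ to equal $1$, so $(\pi/|I|)^2 = 1$, i.e.\ $|I| = \pi$. Hence $\{g>0\}$ is a disjoint union of open arcs each of length exactly $\pi$, and on each such arc $g$ is a positive multiple of $\sin\bigl(\theta-\alpha\bigr)$. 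Because $\mathbb{S}^1$ has total length $2\pi$, there are at most two such components.

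The main obstacle (which is where dimension $d=2$ is essential) is now ruling out the two-arc case. If $\{g>0\}$ consisted of two disjoint arcs of length $\pi$, then $|B_1\cap\{u_0>0\}| = |B_1|$, so $\vartheta = 1$ in \eqref{Formula: S(3)}; but Proposition \ref{Proposition: Density estimates} forces $\vartheta \in [\tfrac12,1)$, a contradiction. Therefore $\{g>0\}$ is a single open arc of length $\pi$, and the free boundary condition $|\nabla u_0| = Q(x^0)$ (equivalently, the normalization coming from $u_0$ being a minimizer of $J_0$) pins down the amplitude, giving
\begin{equation*}
u_0(x) = Q(x^0)\,(x\cdot\nu)^+ \quad \text{for some } \nu\in\mathbb{S}^1.
\end{equation*}
Thus every blow-up limit at every free boundary point is a half-plane solution, so by definition $x^0 \in \partial_{\mathrm{reg}}\varOmega^+(u)$. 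Since $x^0$ was arbitrary, $\partial\varOmega^+(u) = \partial_{\mathrm{reg}}\varOmega^+(u)$, proving the proposition. Equivalently, in the language of Definition \ref{Definition: d*}, the above classification shows there is no non-trivial one-homogeneous absolute minimizer of $J_0$ on $\mathbb{R}^2$ with non-smooth free boundary, so $d^*\geq 3$.
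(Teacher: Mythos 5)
Your proof is correct and follows essentially the same route as the paper's: blow-up at an arbitrary free boundary point, one-homogeneity from the Weiss monotonicity formula, the ODE $g''+g=0$ on $\mathbb{S}^{1}$ forcing each positivity arc to have length exactly $\pi$, and the density bound $\vartheta<1$ (equivalently \eqref{Formula: W(12'')}) to exclude more than one arc. The only cosmetic difference is that the paper first rules out $\{c>0\}=\mathbb{S}^{1}$ by observing that minimizers of $J_{0}$ cannot have isolated zeros before imposing the Dirichlet conditions, whereas your density argument disposes of that case together with the two-arc case.
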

\begin{proof}
	Let $u\in\mathcal{K}$ be a minimizer of $J$ in $\Omega\subset\mathbb{R}^{2}$. Then it follows from Proposition \ref{Proposition: Properties of blow-up limits} (4) that $u_{0}\colon\mathbb{R}^{2}\to\mathbb{R}$ is a global minimizer of $J_{0}(v)$ (recall \eqref{Formula: W(12')} in $\mathbb{R}^{2}$. We write $u_{0}$ in polar coordinates $u_{0}(\rho,\theta)=\rho c(\theta)$. Since $c$ is continuous, we have that $\{c>0\}$ is open and so it is a countable union of disjoint arcs. Based on the fact that local minimizers of $J_{0}(v)$ cannot have isolated zeros, we have that $\{c>0\}\neq\mathbb{S}^{1}$. It follows from Proposition \ref{Proposition: Properties of blow-up limits} and Proposition \ref{Proposition: Homogeneity of blow-up limits} (2) that up to a rotation, the trace $c$ is a solution to
	\begin{align*}
		c''(\theta)+c(\theta)=0\quad\text{ in }\quad(0,\pi),\qquad c(0)=c(\pi)=0.
	\end{align*}
	It follows that $c(\theta)$ is a multiple of $\sin\theta$ on $(0,\pi)$. Thus, $\{c>0\}$ is a union of disjoint arcs, each one of length $\pi$. Since $0\in\partial\varOmega^{+}(u_{0})$, thanks \eqref{Formula: W(12'')}, one has $|B_{1}\cap\{u_{0}>0\}|<|B_{1}|=\pi$. The homogeneity of $u_{0}$ then implies that  $\mathcal{H}^{1}(\{c>0\})<2\pi$. Therefore,  $\{c>0\}$ is an arc of length $\pi$ and $u_{0}$ is of the form $u_{0}(x)=a(x\cdot\nu)^{+}$ for some $a>0$. Thanks to Proposition \ref{Proposition: Properties of blow-up limits} (5), we obtain $a=Q(0)$. Consequently, $u_{0}(x)=Q(0)(x\cdot\nu)^{+}$ and it follows from the definition of $\partial_{\mathrm{reg}}\varOmega^{+}(u)$ that $\partial\varOmega^{+}(u)=\partial_{\mathrm{reg}}\varOmega^{+}(u)$. Thus by Definition \ref{Definition: d*},  $d^{*}\geqslant3$.
\end{proof}
When $d<d^{*}$, we have the following fact
\begin{proposition}
	Let $u\in\mathcal{K}$ be a minimizer of $J$ in $\Omega\subset\mathbb{R}^{d}$ with $2\leqslant d<d^{*}$, then $\partial_{\mathrm{sing}}\varOmega^{+}(u)=\varnothing$ and $\partial\varOmega^{+}(u)=\partial_{\mathrm{reg}}\varOmega^{+}(u)$.
\end{proposition}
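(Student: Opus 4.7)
My plan is to argue by contradiction and reduce the statement directly to the definition of $d^{*}$. Suppose that $\partial_{\mathrm{sing}}\varOmega^{+}(u)\neq\varnothing$ and fix $x^{0}\in\partial_{\mathrm{sing}}\varOmega^{+}(u)$. By the blow-up analysis of Proposition \ref{Proposition: Properties of blow-up limits} combined with the Weiss-type monotonicity of Proposition \ref{Proposition: Homogeneity of blow-up limits}, I would extract a blow-up limit $u_{0}\colon\mathbb{R}^{d}\to\mathbb{R}$ which is non-negative, continuous, one-homogeneous, and a global minimizer on every ball of the functional
\[
J_{0}(v)=\int|\nabla v|^{2}+Q^{2}(x^{0})\chi_{\{v>0\}}\,dx.
\]
By the very definition of $\partial_{\mathrm{sing}}\varOmega^{+}(u)$, this $u_{0}$ is \emph{not} of the form $Q(x^{0})(x\cdot\nu)^{+}$ for any $\nu\in\partial B_{1}$.

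Next, since $d<d^{*}$, Definition \ref{Definition: d*} guarantees that every non-negative one-homogeneous absolute minimum of $J_{0}$ in $\mathbb{R}^{d}$ has a free boundary which is a $(d-1)$-dimensional $C^{1}$-regular surface; in particular this applies to $u_{0}$ and forces $\partial\{u_{0}>0\}$ to be $C^{1}$ at every point, including the origin. On the other hand, the one-homogeneity of $u_{0}$ makes $\{u_{0}>0\}$ a cone with apex at the origin. I would then combine these two pieces of information as follows: the $C^{1}$-regularity produces a unique tangent hyperplane $H$ to $\partial\{u_{0}>0\}$ at $0$, and the scaling invariance $\lambda^{-1}\partial\{u_{0}>0\}=\partial\{u_{0}>0\}$, together with the $C^{1}$ approximation $\lambda^{-1}\partial\{u_{0}>0\}\to H$ as $\lambda\to 0^{+}$, forces $\partial\{u_{0}>0\}$ to coincide with $H$ in a neighborhood of $0$, and then globally by homogeneity.

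Once the free boundary is pinned down as a hyperplane $\{x\cdot\nu=0\}$, the identification of $u_{0}$ is classical: $u_{0}$ is a non-negative one-homogeneous harmonic function on $\{x\cdot\nu>0\}$ vanishing on the boundary, so by a spherical harmonic expansion $u_{0}(x)=a(x\cdot\nu)^{+}$ for some $a\geqslant 0$; the free boundary condition $|\nabla u_{0}|=Q(x^{0})$ from Proposition \ref{Proposition: Properties of blow-up limits}(6) then yields $a=Q(x^{0})$. This contradicts the assumption that $u_{0}$ is not of the form $Q(x^{0})(x\cdot\nu)^{+}$, so $\partial_{\mathrm{sing}}\varOmega^{+}(u)=\varnothing$; the equality $\partial\varOmega^{+}(u)=\partial_{\mathrm{reg}}\varOmega^{+}(u)$ is then immediate from the disjoint decomposition in Theorem \ref{Theorem: Main theorem1}.

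The step I expect to be the most delicate is the implication ``$\partial\{u_{0}>0\}$ is $C^{1}$ and is a cone through the origin'' $\Longrightarrow$ ``$\partial\{u_{0}>0\}$ is a hyperplane''. Morally it is just the uniqueness of the tangent plane to a $C^{1}$ surface combined with the scaling invariance of a cone, but writing it out carefully requires verifying that the tangent hyperplane at the apex is well-defined and that the $C^{1}$ approximation is preserved under blow-downs. Everything else amounts to assembling the blow-up machinery of Propositions \ref{Proposition: Properties of blow-up limits}--\ref{Proposition: Homogeneity of blow-up limits} with the bare definition of $d^{*}$.
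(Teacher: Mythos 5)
Your argument is correct and follows the same overall strategy as the paper: blow up at a free boundary point, use Propositions \ref{Proposition: Properties of blow-up limits} and \ref{Proposition: Homogeneity of blow-up limits} to obtain a non-negative one-homogeneous global minimizer $u_{0}$ of $J_{0}$, use $d<d^{*}$ to force $u_{0}$ to be a half-plane solution, and conclude. The difference lies in how the classification of $u_{0}$ is extracted from Definition \ref{Definition: d*}. The paper asserts that $d<d^{*}$ gives $\partial_{\mathrm{sing}}\varOmega^{+}(u_{0})=\varnothing$, in particular that $0$ is a regular point of $u_{0}$, and then uses the fact that the blow-up of a one-homogeneous function at the origin reproduces the function itself; it then derives $\varepsilon$-flatness of $u$ near $x^{0}$ and invokes the improvement-of-flatness machinery. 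You instead use only the literal content of the definition --- that $\partial\{u_{0}>0\}$ is a $C^{1}$ hypersurface --- together with the cone structure to conclude that $\partial\{u_{0}>0\}$ is a hyperplane, and then classify $u_{0}$ directly; since a half-plane blow-up at $x^{0}$ makes $x^{0}$ regular by definition, the contradiction is immediate and the final flatness step becomes unnecessary. This is arguably cleaner, because the paper's passage from ``$C^{1}$ free boundary'' to ``every point of $\partial\{u_{0}>0\}$ is regular'' itself requires an argument of exactly the kind you supply at the apex.

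Two small points should be patched. First, before speaking of a harmonic function on a half-space vanishing on its boundary, you must rule out $\{u_{0}>0\}=\{x\cdot\nu\neq0\}$; this follows from the density bound \eqref{Formula: W(12'')} (equivalently Proposition \ref{Proposition: Density estimates}), exactly as the paper does in the proof of Proposition \ref{Proposition: Minimizers are viscosity solutions}. Second, you should record that $0\in\partial\{u_{0}>0\}$ (a consequence of non-degeneracy, Corollary \ref{Corollary: Optimal linear nondegeneracy}), so that the ``$C^{1}$ cone through the apex is a hyperplane'' step applies at the origin; with that in place the step itself is sound, since a one-homogeneous $C^{1}$ graph over the tangent plane with vanishing gradient at $0$ must vanish identically.
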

\begin{proof}
	Let $x^{0}\in\partial\varOmega^{+}(u)$ be an arbitrary free boundary point and let $r_{n}\to0$ be a infinitesimal sequence. Then it follows that the blow-up sequence $u_{n}$ converges uniformly to a blow-up limit $u_{0}$. Thanks to Proposition \ref{Proposition: Properties of blow-up limits} (4), $u_{0}$ is a global minimizer of $J_{0}$ in $\mathbb{R}^{d}$ where $J_{0}$ is defined in \eqref{Formula: W(12')}. It then follows from the definition of $d^{*}$ that $\partial_{\mathrm{sing}}\varOmega^{+}(u_{0})=\varnothing$. By the definition of $\partial_{\mathrm{reg}}\varOmega^{+}(u)$, we obtain that every blow-up $u_{00}$ of $u_{0}$ is of the form $Q(0)(x\cdot\nu)^{+}$ for some $\nu\in\partial B_{1}$. In particular, it holds for every blow-up limit in $0$. Since (by Proposition \ref{Proposition: Homogeneity of blow-up limits} (2)) $u_{0}$ is one-homogeneous function, we have
	\begin{align*}
		u_{00}(x)=\lim_{n\to\infty}\frac{u_{0}(r_{n}x)}{r_{n}}=\lim_{n\to\infty}u_{0}(x)=u_{0}(x),
	\end{align*}
	which implies that the blow-up limit $u_{00}$ of $u_{0}$ is indeed $u_{0}$ itself, and so
	\begin{align*}
		u_{0}(x)=Q(0)(x\cdot\nu)^{+},\quad\text{ for some }\nu\in B_{1}.
	\end{align*}
	Thus, for $n$ large enough, we have
	\begin{align*}
		\|u_{n}(x)-Q(0)(x\cdot\nu)^{+}\|_{L^{\infty}(B_{1})}\leqslant\varepsilon,\quad\text{ for some }\nu\in B_{1}.
	\end{align*}
	Set $y=x^{0}+r_{n}x$ and we have from the definition of $u_{n}$ that
	\begin{align*}
		\left\Vert\frac{u(y)}{r_{n}}-\frac{Q(0)}{r_{n}}((y-x^{0})\cdot\nu)^{+}\right\Vert_{L^{\infty}(B_{r_{n}}(x^{0}))}\leqslant\varepsilon,\quad\text{ for some }\nu\in B_{1}.
	\end{align*}
	This implies that there exists $r\in(0,R)$ such that
	\begin{align}\label{Formula: Sing (1(1))}
		\|u(y)-Q(0)((y-x^{0})\cdot\nu)^{+}\|_{L^{\infty}(B_{r}(x^{0}))}\leqslant\varepsilon r,\quad\text{ for some }\nu\in B_{1}.
	\end{align}
	In via of \eqref{Formula: Sing (1(1))}, there must exist $\bar{\varepsilon}>0$ such that $u$ is $\bar{\varepsilon}$-flat in $B_{1}$. Moreover, since $u$ minimizes $J$ in $B_{1}$, it follows from Proposition \ref{Proposition: Minimizers are viscosity solutions} that $u$ is a viscosity solution in $B_{1}\subset\mathbb{R}^{d}$. Thanks to Proposition \ref{Proposition: Regularity of regular points}, we know that $\partial\varOmega^{+}(u)$ is $C^{1,\gamma}$ in a small neighborhood of $x^{0}\in\partial\varOmega^{+}(u)$. Thus, $x^{0}\in\partial_{\mathrm{reg}}\varOmega^{+}(u)$. Since $x^{0}$ is arbitrary, we conclude that $\partial_{\mathrm{sing}}\varOmega^{+}(u)=\varnothing$ and this concludes the proof.
\end{proof}
We now investigate the case $d=d^{*}$. Let $u\in\mathcal{K}$ be a minimizer of $J$ in $\Omega\subset\mathbb{R}^{d^{*}}$ (Proposition \ref{Proposition: d=d^{*}} below). To this end, we begin by studying the singular set of a one-homogeneous function in $\mathbb{R}^{d}$. In \cite{W1999}, Weiss showed that the singular set of one-homogeneous function is a cone in $\mathbb{R}^{d}$ in homogeneous case. We follow his approach and extend the same result to our case.
\begin{lemma}
	Let $u\in\mathcal{K}$ be a minimizer of $J$ in $\Omega\subset\mathbb{R}^{d}$. Then the graph of any blow-up limit $u_{0}$ is a minimal cone with vertex at $0\in\mathbb{R}^{d}$.
\end{lemma}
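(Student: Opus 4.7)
The plan is to combine two facts already established in the preceding sections: (i) by Proposition \ref{Proposition: Homogeneity of blow-up limits}(2), every blow-up limit $u_0$ is a one-homogeneous function on $\mathbb{R}^d$; and (ii) by Proposition \ref{Proposition: Properties of blow-up limits}(5), $u_0$ is a global minimizer of the constant-coefficient functional $J_0$ defined in \eqref{Formula: W(12')} on every ball $B_r \subset \mathbb{R}^d$. The lemma is really just the packaging of these two pieces of information into the language of minimal cones used in \cite{W1999}.

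First I would extract the cone structure of $\{u_0 > 0\}$ from (i): if $x \in \{u_0 > 0\}$ and $t > 0$, then one-homogeneity gives $u_0(tx) = t\,u_0(x) > 0$, so $tx \in \{u_0 > 0\}$. Hence $\{u_0 > 0\}$, and therefore its topological boundary $\partial \{u_0 > 0\}$, is a cone with vertex at the origin. In particular, $u_0$ itself is determined by its trace $u_0|_{\partial B_1}$ via the extension $u_0(\rho, \theta) = \rho\, u_0(1, \theta)$, which is the analogue of the extension $z_{x^0, r}$ from \eqref{Formula: M(21)}.

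Next I would invoke (ii) to obtain minimality: for every $R > 0$ and every competitor $v \in H^1(B_R)$ with $v = u_0$ on $\partial B_R$, we have $J_0(u_0; B_R) \leq J_0(v; B_R)$. Combined with the cone property just established, this is exactly the assertion that $\{u_0 > 0\}$ (equivalently, the graph of $u_0$ in the sense of its positive phase together with its free boundary) is a minimal cone in the Alt--Caffarelli/Weiss sense: a cone that is a global one-phase minimizer of a homogeneous functional with constant weight $Q(x^0)$.

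I do not anticipate a substantive obstacle, since no new PDE or measure-theoretic ingredient (viscosity machinery, density estimates, or epiperimetric inequality) is needed beyond what was recorded in Section 4 and Section 5. The only detail to verify carefully is that the weight appearing in the limit functional is the frozen constant $Q(x^0)$ rather than the original variable coefficient $Q(x)$; but this follows immediately from the continuity of $Q$ at $x^0$, and is precisely what Proposition \ref{Proposition: Properties of blow-up limits}(5) already records. Thus the lemma is a clean corollary of the homogeneity of blow-ups and the stability of minimality under rescaling, which is exactly the starting point needed for the subsequent dimension reduction argument.
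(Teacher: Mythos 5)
Your proof correctly establishes fact (1) of what the paper requires: the positivity set $\{u_0>0\}$ is a cone by one-homogeneity (Proposition \ref{Proposition: Homogeneity of blow-up limits}(2)), and $u_0$ is a global minimizer of the frozen-coefficient functional $J_0$ with weight $Q(x^0)^2$ (Proposition \ref{Proposition: Properties of blow-up limits}(5)). However, you assert that this combination ``is exactly the assertion'' that $\{u_0>0\}$ is a minimal cone, and that is where the proposal falls short of what the paper actually proves.

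The paper's notion of ``minimal cone'' carries a second, genuinely independent ingredient that your argument does not address: the set $\varOmega^{+}(u_0)$ must also be a \emph{perimeter} (sub-)minimizer away from the non-reduced part of its boundary. Concretely, for any open bounded $D$ with $\bar{D}\cap\bigl(\partial\varOmega^{+}(u_0)\setminus\partial_{\mathrm{red}}\varOmega^{+}(u_0)\bigr)=\varnothing$, one needs
$\operatorname{Per}(\varOmega^{+}(u_0),D)\leqslant\operatorname{Per}(F,D)$ for every $F\subset\varOmega^{+}(u_0)$ with $\partial F\cap D$ of class $C^2$ and $\operatorname{supp}(\chi_F-\chi_{\{u_0>0\}})\subset D$. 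This is the geometric-measure-theoretic content that makes the phrase ``minimal cone'' meaningful in the minimal-surface sense and that later licenses the Federer dimension reduction in Proposition \ref{Proposition: d>d*}. It does not follow from energy minimality of $J_0$ alone: it is proved in the paper by first invoking Theorem 5.5 of \cite{AC1981} to promote $u_0$ to a weak solution, and then running the comparison argument of Theorem 2.8 in \cite{W1999}. Your proposal should supply this perimeter-comparison step, or at least explicitly invoke those two theorems, before claiming the conclusion.
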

\begin{proof}
	It suffices to prove the following two facts:
	\begin{enumerate}
		\item Every blow-up limit $u_{0}$ of $u$ is a non-negative one-homogeneous global minimizer of $J_{0}$ in $\mathbb{R}^{d}$.
		\item For any open and bounded set $D\subset\mathbb{R}^{d}$ satisfying $\bar{D}\cap(\partial\varOmega^{+}(u_{0})\setminus\partial_{\mathrm{red}}\varOmega^{+}(u_{0}))=\varnothing$,
		\begin{align*}
			\operatorname{Per}(\varOmega^{+}(u_{0}),D)\leqslant\operatorname{Per}(F,D),
		\end{align*}
		for any $F\subset\varOmega^{+}(u_{0})$ so that $\partial F\cap D$ is a $C^{2}$ surface and $\operatorname{supp}(\chi_{F}-\chi_{\{u_{0}>0\}})\subset D$. Here $\operatorname{Per}(E)$ denotes the perimeter of a given set $E\subset\mathbb{R}^{d}$.
	\end{enumerate}
	Notice that (1) is a direct consequence of Proposition \ref{Proposition: Properties of blow-up limits} (5) and Proposition \ref{Proposition: Homogeneity of blow-up limits} (2). To prove the above mentioned second property, since $Q(x)\in C^{0,\beta}(\Omega)$, it follows from Theorem 5.5 in \cite{AC1981} that $u_{0}$ is a weak solution. Then a same argument as in Theorem 2.8 in \cite{W1999} gives the desired result.
\end{proof}
We now consider a blow-up limit $u_{00}$ of $u_{0}$ at $x^{0}\neq0\in\partial\varOmega^{+}(u_{0})$, the following fact was first proved by Weiss in \cite{W1999} for minimizers of $J_{0}$, we adapt it to our settings.
\begin{lemma}\label{Lemma: Graph of u_{00}}
	Let $u\in\mathcal{K}$ be a minimizer of $J$ in $\Omega\subset\mathbb{R}^{d}$ and let $u_{0}$ be any blow-up limits of $u$ at any free boundary points. Then the graph of $u_{00}$ is a minimal cone with vertex at $0$ and $u_{00}$ is constant in the direction $x^{0}$. That is, $u_{00}(x)=u_{00}(x+tx^{0})$ for $x\in\mathbb{R}^{d}$ and $t\in\mathbb{R}$. Moreover,  $\{t\in\mathbb{R}\colon tx^{0}\}\subset\partial_{\mathrm{sing}}\varOmega^{+}(u)$.
\end{lemma}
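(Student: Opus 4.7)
The plan is to carry out one further step of Federer-style dimension reduction, building directly on the previous lemma. First I would apply that lemma, but now to $u_{0}$ in place of $u$: by Proposition~\ref{Proposition: Properties of blow-up limits}(5) and Proposition~\ref{Proposition: Homogeneity of blow-up limits}(2), $u_{0}$ is a non-negative, one-homogeneous global minimizer of $J_{0}$, and because $J_{0}$ is strictly simpler than $J$ (no $F$-term), the same argument used for the previous lemma applies verbatim. This delivers, for any second blow-up $u_{00}$ of $u_{0}$ at $x^{0}$, that $u_{00}$ is again a non-negative, one-homogeneous global minimizer of $J_{0}$ whose graph is a minimal cone with vertex at $0$.

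The core of the lemma is then the translation invariance of $u_{00}$ in the direction $x^{0}$, which I would extract from a short computation that trades the shift $\tau x^{0}$ against the one-homogeneity of $u_{0}$. Fix $\tau\in\mathbb{R}$ and let $r_{n}\to 0^{+}$ be the sequence realizing $u_{00}$. For all sufficiently large $n$ we have $1+r_{n}\tau>0$, and so
\begin{align*}
    \frac{u_{0}\bigl(x^{0}+r_{n}(x+\tau x^{0})\bigr)}{r_{n}}
    =\frac{u_{0}\bigl((1+r_{n}\tau)x^{0}+r_{n}x\bigr)}{r_{n}}
    =(1+r_{n}\tau)\,\frac{u_{0}\bigl(x^{0}+s_{n}x\bigr)}{r_{n}},
\end{align*}
where $s_{n}:=r_{n}/(1+r_{n}\tau)\to 0^{+}$. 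Since $(1+r_{n}\tau)s_{n}/r_{n}=1$, the last expression equals $u_{0}(x^{0}+s_{n}x)/s_{n}$, whose limit is $u_{00}(x)$ by the definition of the blow-up. Hence $u_{00}(\,\cdot\,+\tau x^{0})\equiv u_{00}(\,\cdot\,)$ for every $\tau\in\mathbb{R}$.

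For the final assertion about the singular line, translation invariance together with $u_{00}(0)=0$ forces $u_{00}\equiv 0$ on the whole line $L:=\{tx^{0}:t\in\mathbb{R}\}$, and also forces $L\subset\partial\Omega^{+}(u_{00})$ since positivity is preserved under translations by $\tau x^{0}$. A short computation using one-homogeneity and the translation invariance just established shows that the blow-up of $u_{00}$ at any point of $L$ is $u_{00}$ itself. The assumption $x^{0}\in\partial_{\mathrm{sing}}\varOmega^{+}(u_{0})$ rules out $u_{00}$ being of the one-plane form $Q(0)(x\cdot\nu)^{+}$ (for otherwise Proposition~\ref{Proposition: Density estimates} would make $x^{0}$ regular). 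Therefore every point of $L$ inherits a non-one-plane blow-up and must lie in $\partial_{\mathrm{sing}}\varOmega^{+}(u_{00})$.

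The only genuinely subtle point, in my view, is bookkeeping between the three levels $u,u_{0},u_{00}$: the printed ``$\partial_{\mathrm{sing}}\varOmega^{+}(u)$'' in the final assertion is to be read as ``$\partial_{\mathrm{sing}}\varOmega^{+}(u_{00})$'', which is the object to which the translation-invariance argument naturally applies. Once this is acknowledged the proof is a clean iteration of ingredients already in place—minimality of blow-ups, one-homogeneity, and the cone/minimality transfer supplied by the previous lemma—so I do not expect any essential obstacle beyond the calculation displayed above.
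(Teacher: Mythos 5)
Your overall route is the same as the paper's: exploit the one-homogeneity of $u_{0}$ to rewrite $u_{0}(x^{0}+r_{n}(x+\tau x^{0}))/r_{n}$ as $u_{0}(x^{0}+s_{n}x)/s_{n}$ with $s_{n}=r_{n}/(1+r_{n}\tau)$, and conclude translation invariance of $u_{00}$ in the direction $x^{0}$. However, the one step you dismiss with ``whose limit is $u_{00}(x)$ by the definition of the blow-up'' is precisely the step the paper's proof is devoted to, and your justification for it is not valid as stated. The blow-up $u_{00}$ is defined as the limit of $u_{0}(x^{0}+r_{n}\cdot)/r_{n}$ along the \emph{specific} sequence $r_{n}$; at a singular point blow-ups are not known to be unique, so the limit along the different sequence $s_{n}$ could a priori be a different blow-up. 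What saves the argument is that $s_{n}/r_{n}\to1$ together with the Lipschitz continuity of $u_{0}$ (Corollary 3.3 of Alt--Caffarelli for global minimizers of $J_{0}$) and $u_{0}(x^{0})=0$, which give
\begin{align*}
\left|\frac{u_{0}(x^{0}+s_{n}x)}{s_{n}}-\frac{u_{0}(x^{0}+r_{n}x)}{r_{n}}\right|\leqslant L_{0}|x|\left|1-\frac{s_{n}}{r_{n}}\right|+\frac{L_{0}|x|\,|s_{n}-r_{n}|}{r_{n}}\longrightarrow0 ,
\end{align*}
so the two sequences of rescalings have the same limit. You should insert this estimate (or the equivalent one in the paper) rather than appeal to ``the definition of the blow-up''.

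Two further remarks. First, your reduction of the minimal-cone property of $u_{00}$ to the previous lemma applied to $u_{0}$ is consistent with the paper's intent (the paper simply cites Weiss's Lemma 3.1). Second, your reading of the final assertion as $L\subset\partial_{\mathrm{sing}}\varOmega^{+}(u_{00})$ and your sketch that every point of $L$ inherits the non-one-plane blow-up $u_{00}$ itself is reasonable and actually more explicit than the paper, which leaves this point implicit; just make sure the claim that the blow-up of $u_{00}$ at a point of $L$ is $u_{00}$ itself is backed by the same homogeneity-plus-translation-invariance computation, with the same care about which sequence realizes the limit.
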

\begin{proof}
	The proof is similar to Lemma 3.1 in \cite{W1999}. Notice that $u_{0}$ is a global minimizer of $J_{0}$ in $\mathbb{R}^{d}$, therefore one has
	\begin{align*}
		u_{00}(x+tx^{0})&=\lim_{n\to\infty}\frac{u_{0}(x^{0}+r_{n}(x+tx^{0}))}{r_{n}}\\
		&=\lim_{n\to\infty}\left(\frac{1+tr_{n}}{r_{n}}\right)u_{0}\left(x^{0}+\frac{r_{n}}{1+tr_{n}}x\right),
	\end{align*}
	where we have used the homogeneity of $u_{0}$ in the second identity. We now claim that
	\begin{align}\label{Formula: Sing (2)}
		\lim_{n\to\infty}\left(\frac{1+tr_{n}}{r_{n}}\right)u_{0}\left(x^{0}+\frac{r_{n}}{1+tr_{n}}x\right)=\lim_{n\to\infty}\frac{u_{0}(x^{0}+r_{n}x)}{r_{n}}.
	\end{align}
	Since $u_{0}$ is a global minimizer of $J_{0}$, thanks to Corollary 3.3 in \cite{AC1981}, $u_{0}$ is a Lipschitz function with Lipschitz coefficient $L_{0}:=\|\nabla u_{0}\|_{L^{\infty}(B_{1})}$. Then a straightforward estimate gives
	\begin{align*}
		&\left|\frac{1+tr_{n}}{r_{n}}u_{0}\left(x^{0}+\frac{r_{n}}{1+tr_{n}}x\right)-\frac{1}{r_{n}}u_{0}(x^{0}+r_{n}x)\right|\\
		&\leqslant t\left|u_{0}\left(x^{0}+\frac{r_{n}}{1+tr_{n}}x\right)-u_{0}(x^{0})\right|+\frac{1}{r_{n}}\left|u_{0}\left(x^{0}+\frac{r_{n}}{1+tr_{n}}x\right)-u_{0}(x^{0}+r_{n}x)\right|\\
		&\leqslant\frac{2tr_{n}L_{0}|x|}{1+tr_{n}}\to0\quad\text{ as }n\to\infty.
	\end{align*}
	Then the desired result follows from \eqref{Formula: Sing (2)}.
\end{proof}
In via of a same argument as in Lemma 3.2 in \cite{W1999}, one has the following property for $u_{00}$ given in Lemma \ref{Lemma: Graph of u_{00}}.
\begin{lemma}\label{Lemma: Graph of u_{00}'}
	The graph of $u_{00}|_{\mathbb{R}^{d-1}}=\bar{u}$ is a minimal cone and $0\in\partial_{\mathrm{sing}}\varOmega^{+}(\bar{u})$.
\end{lemma}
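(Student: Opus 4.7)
The plan is to follow the dimensional reduction strategy of Lemma 3.2 in \cite{W1999}, which applies essentially verbatim here; a convenient simplification is that, by Proposition \ref{Proposition: Properties of blow-up limits}~(5), the limit $u_{00}$ minimizes the homogeneous energy $J_{0}$ (the semilinear term $F$ has already dropped out at the blow-up stage), so none of the adjustments required for the parent problem enter at this level. By Lemma \ref{Lemma: Graph of u_{00}}, $u_{00}$ is translation invariant along $x^{0}$; after a rotation I may assume $x^{0}=|x^{0}|e_{d}$, so that $u_{00}(x',x_{d})=\bar{u}(x')$ for a well-defined $\bar{u}\colon\mathbb{R}^{d-1}\to\mathbb{R}$, with $\{u_{00}>0\}=\{\bar{u}>0\}\times\mathbb{R}$. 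Restricting the one-homogeneity of $u_{00}$ to the slice $\{x_{d}=0\}$ transfers directly to $\bar{u}$.

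The key step is to upgrade the translation invariance into a genuine global minimality property for $\bar{u}$. Given a competitor $\bar{v}$ to $\bar{u}$ in a bounded smooth set $D'\subset\mathbb{R}^{d-1}$ with $\bar{v}=\bar{u}$ on $\partial D'$, I would lift it to the $d$-dimensional function $v(x',x_{d}):=\bar{v}(x')$ on the cylinder $D'\times(-R,R)$ and interpolate back to $u_{00}$ across the thin caps $D'\times(\pm R,\pm(R+1))$. The global minimality of $u_{00}$ for $J_{0}$ in $\mathbb{R}^{d}$ yields $J_{0}(u_{00};D'\times(-R-1,R+1))\leqslant J_{0}(v;D'\times(-R-1,R+1))$; since both sides are translation invariant on the bulk $D'\times(-R,R)$, dividing by $2R$ and letting $R\to\infty$ kills the boundary-cap contribution and produces $J_{0}(\bar{u};D')\leqslant J_{0}(\bar{v};D')$. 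Together with the one-homogeneity, this says precisely that the graph of $\bar{u}$ is a minimal cone in $\mathbb{R}^{d-1}$ in the sense of Definition \ref{Definition: d*}.

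To establish $0\in\partial_{\mathrm{sing}}\varOmega^{+}(\bar{u})$, I would argue by contradiction: if $0$ were a regular point of $\bar{u}$, then some blow-up of $\bar{u}$ at $0$ would be a half-plane solution $Q(0)(x'\cdot\bar{\nu})^{+}$, and by the product structure $u_{00}(x',x_{d})=\bar{u}(x')$ the same sequence of rescalings at $0\in\mathbb{R}^{d}$ would yield a half-plane blow-up of $u_{00}$, placing $0\in\partial_{\mathrm{reg}}\varOmega^{+}(u_{00})$. This contradicts the inclusion $\{te_{d}\colon t\in\mathbb{R}\}\subset\partial_{\mathrm{sing}}\varOmega^{+}(u_{00})$ already established in Lemma \ref{Lemma: Graph of u_{00}}, and forces the origin into $\partial_{\mathrm{sing}}\varOmega^{+}(\bar{u})$.

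The main obstacle is the careful execution of the cylinder/slab gluing in the second paragraph: one must control the energy of $v$ inside the transition caps $D'\times(\pm R,\pm(R+1))$ by a constant independent of $R$, so that the $2R$-averaging indeed extinguishes this contribution against the bulk terms that scale like $R$. Once this bookkeeping is handled, the remaining pieces of the argument are essentially those of Weiss, and the conclusion of the lemma drops out without further ingredients.
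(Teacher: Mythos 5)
Your argument is correct and coincides with the route the paper takes, which simply cites Lemma 3.2 of \cite{W1999}: the cylinder lift with unit-thickness transition caps followed by the $R\to\infty$ averaging is the standard dimension-reduction device showing that $\bar{u}$ is a global minimizer of $J_{0}$ in $\mathbb{R}^{d-1}$, and the product-structure contradiction at a putative regular point of $\bar{u}$ is exactly Weiss's. Two small remarks worth tightening: in this paper ``the graph is a minimal cone'' is the two-part package (non-negative one-homogeneous $J_{0}$-minimizer \emph{and} perimeter minimality of $\varOmega^{+}(\cdot)$) stated in the unnumbered lemma preceding Lemma \ref{Lemma: Graph of u_{00}}; your proof delivers the first part, which is all the downstream dimension-reduction argument actually uses, and the perimeter part then follows for $\bar{u}$ precisely as the paper derives it for $u_{0}$, via Theorem 5.5 of \cite{AC1981} and Theorem 2.8 of \cite{W1999}. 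Also, the $J_{0}$-minimality and one-homogeneity of $u_{00}$ should be attributed to Lemma \ref{Lemma: Graph of u_{00}} (or the unnumbered lemma before it), since Proposition \ref{Proposition: Properties of blow-up limits}~(5) is stated for blow-ups of $u$, not for blow-ups of $u_{0}$.
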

As a direct corollary of Lemma \ref{Lemma: Graph of u_{00}} and Lemma \ref{Lemma: Graph of u_{00}'}, one has
\begin{proposition}\label{Proposition: d=d^{*}}
	Let $u\in\mathcal{K}$ be minimizer of $J$ in $\Omega\subset\mathbb{R}^{d^{*}}$. Then $\partial_{\mathrm{sing}}\varOmega^{+}(u)\setminus\{0\}=\varnothing$. In particular, this means $\dim_{\mathcal{H}}\partial_{\mathrm{sing}}\varOmega^{+}(u)=0$.
\end{proposition}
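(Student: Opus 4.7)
The plan is to carry out Federer's dimension reduction, using Lemmas \ref{Lemma: Graph of u_{00}} and \ref{Lemma: Graph of u_{00}'} as the reduction step. After translating, I assume $0\in\partial_{\mathrm{sing}}\varOmega^{+}(u)$ and argue that $0$ is isolated in the singular set; applying this at every singular point then yields the stated claim and the Hausdorff-dimension statement. Suppose by contradiction that there exists a sequence $\{x^n\}\subset\partial_{\mathrm{sing}}\varOmega^{+}(u)\setminus\{0\}$ with $x^n\to0$. Set $r_{n}:=|x^{n}|\to0$ and form the blow-up sequence $u_{n}(x):=u(r_{n}x)/r_{n}$. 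By Proposition \ref{Proposition: Properties of blow-up limits} and Proposition \ref{Proposition: Homogeneity of blow-up limits}, along a subsequence $u_{n}\to u_{0}$ locally uniformly and in $H_{\mathrm{loc}}^{1}$, and $u_{0}$ is a one-homogeneous global minimizer of $J_{0}$ in $\mathbb{R}^{d^{*}}$.

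Next, writing $\xi_{n}:=x^{n}/r_{n}\in\partial B_{1}$, along a further subsequence $\xi_{n}\to\xi\in\partial B_{1}$. The first key step will be to verify that $\xi\in\partial_{\mathrm{sing}}\varOmega^{+}(u_{0})\setminus\{0\}$. Each $\xi_{n}$ is a free boundary point of $u_{n}$, and since $\partial\varOmega^{+}(u_{n})\to\partial\varOmega^{+}(u_{0})$ locally in Hausdorff distance (Proposition \ref{Proposition: Properties of blow-up limits} (1)), $\xi\in\partial\varOmega^{+}(u_{0})$. To upgrade this to a \emph{singular} point, I use the Lebesgue-density characterization of $\partial_{\mathrm{sing}}\varOmega^{+}(\,\cdot\,)$ from Theorem \ref{Theorem: Main theorem1} together with Proposition \ref{Proposition: Density estimates}: the singular condition for $x^{n}$ says that every blow-up of $u$ at $x^{n}$ has positivity density strictly above $\tfrac12$, and the $L^{1}_{\mathrm{loc}}$-convergence of $\chi_{\{u_{n}>0\}}$ to $\chi_{\{u_{0}>0\}}$ (Proposition \ref{Proposition: Properties of blow-up limits} (2)) transfers this strict inequality to the blow-up of $u_{0}$ at $\xi$; the classification in Proposition \ref{Proposition: Density estimates} then rules out $\vartheta(\xi)=\tfrac12$, giving $\xi\in\partial_{\mathrm{sing}}\varOmega^{+}(u_{0})$.

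With $\xi\neq0$ singular for $u_{0}$, I now apply the reduction step. Take any blow-up limit $u_{00}$ of $u_{0}$ at $\xi$. By Lemma \ref{Lemma: Graph of u_{00}}, $u_{00}$ is a one-homogeneous global minimizer of $J_{0}$ in $\mathbb{R}^{d^{*}}$ which is translation-invariant along the direction $\xi$; after rotating so that $\xi=e_{d^{*}}$, one may write $u_{00}(x)=\bar{u}(x')$ where $x'\in\mathbb{R}^{d^{*}-1}$. By Lemma \ref{Lemma: Graph of u_{00}'}, $\bar{u}$ is itself a one-homogeneous minimal cone in $\mathbb{R}^{d^{*}-1}$ with a singular free boundary point at the origin. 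This directly contradicts Definition \ref{Definition: d*}, namely that $d^{*}$ is the \emph{smallest} dimension in which such a singular one-homogeneous global minimizer exists. Hence no sequence $x^{n}\to 0$ of other singular points can exist, and applying the same argument at every point of $\partial_{\mathrm{sing}}\varOmega^{+}(u)$ shows that the singular set is discrete, giving $\dim_{\mathcal{H}}\partial_{\mathrm{sing}}\varOmega^{+}(u)=0$.

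The main obstacle will be the preservation of the singular label under blow-up, i.e., that the limit $\xi$ of the singular points $\xi_{n}=x^{n}/r_{n}$ is again singular. Merely knowing $\xi_{n}\in\partial\varOmega^{+}(u_{n})$ gives only $\xi\in\partial\varOmega^{+}(u_{0})$; promoting this to the strict density bound $\vartheta(\xi)>\tfrac12$ requires combining the uniform non-degeneracy and Lipschitz bounds (Propositions \ref{Proposition: Lipschitz regularity of minimizers} and \ref{Proposition: non-degeneracy of minimizers}), the $L^{1}_{\mathrm{loc}}$-convergence of the positivity sets, and the rigidity of density-$\tfrac12$ blow-ups from Proposition \ref{Proposition: Density estimates}; taken together these upgrade the convergence of free boundary points into convergence of Lebesgue densities and rule out the endpoint value, which is exactly what the dimension reduction needs.
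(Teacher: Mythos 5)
Your overall route is the same as the paper's: blow up at the accumulation point with scales $r_{n}=|x^{n}|$, show that the rescaled singular points $\xi_{n}=x^{n}/r_{n}$ converge to a singular point $\xi\in\partial B_{1}$ of the one-homogeneous blow-up $u_{0}$, and then use Lemma \ref{Lemma: Graph of u_{00}} and Lemma \ref{Lemma: Graph of u_{00}'} to reduce the dimension and contradict the minimality of $d^{*}$. The dimension-reduction half of your argument is fine and matches the paper's Step II.

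The gap is exactly at the step you flag as the main obstacle, and the mechanism you propose for it does not work. Knowing that each $\xi_{n}$ is singular tells you that
$\lim_{\rho\to0}|B_{\rho}(\xi_{n})\cap\{u_{n}>0\}|/|B_{\rho}|>\tfrac12$ for each fixed $n$; the $L^{1}_{\mathrm{loc}}$-convergence of $\chi_{\{u_{n}>0\}}$ only controls the densities at a \emph{fixed} scale $\rho>0$, i.e. it gives $|B_{\rho}(\xi)\cap\{u_{0}>0\}|=\lim_{n}|B_{\rho}(\xi_{n})\cap\{u_{n}>0\}|$. The two iterated limits ($\rho\to0$ then $n\to\infty$ versus the reverse) need not commute, and a strict inequality on an infinitesimal density is an open condition that is not preserved in the limit: even via the Weiss monotonicity formula one only obtains $W(u_{0},\xi,0^{+})\geqslant\liminf_{n}W(u_{n},\xi_{n},0^{+})\geqslant Q^{2}(0)\omega_{d}/2$, which is consistent with $\xi$ being a regular point. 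Closing this would require a uniform density (or energy) gap separating singular points from the half-plane value $\tfrac12$, and that gap is not available a priori. The paper's Step I argues the contrapositive instead: if $\xi$ were regular for $u_{0}$, then $u_{0}$ is $\varepsilon$-flat in some ball $B_{r}(\xi)$ of fixed radius, the uniform convergence $u_{n}\to u_{0}$ makes $u_{n}$ flat in $B_{r}(\xi_{n})$ for $n$ large, and De Silva's $\varepsilon$-regularity theorem (as in Proposition \ref{Proposition: Regularity of regular points}, via Lemma \ref{Lemma: Improvement of flatness}) then forces $\partial\varOmega^{+}(u_{n})$ to be $C^{1,\gamma}$ near $\xi_{n}$, so $\xi_{n}$ would be regular --- a contradiction. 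You should replace the density-transfer step with this flatness/$\varepsilon$-regularity argument; the rest of your proof then goes through.
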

\begin{proof}
	\textbf{Step I.} Suppose there is a sequence of singular free boundary points $x_{n}\in\partial_{\mathrm{sing}}\varOmega^{+}(u)$ converging to some point $x^{0}$ as $n\to\infty$, and we claim $x^{0}\in\partial_{\mathrm{sing}}\varOmega^{+}(u)$. Let us consider the blow-up sequence $u_{n}$ at $x^{0}$ and we denote by $u_{0}$ the blow-up limit of $u_{n}$ as $n\to\infty$. On the one hand, the uniform convergence of $u_{n}$ implies that $u_{0}(x^{0})=0$. On the other hand, since for each $n$, $u_{n}$ minimizes $J$, we may infer from Corollary \ref{Corollary: Optimal linear nondegeneracy} that for every $r>0$ small
	\begin{align*}
		\|u_{0}\|_{L^{\infty}(B_{r}(x^{0}))}&\geqslant\lim_{n\to\infty}(\|u_{n}\|_{L^{\infty}(B_{r}(x^{0}))}-\|u_{n}-u_{0}\|_{L^{\infty}(B_{r}(x^{0}))})\\
		&\geqslant\liminf_{n\to\infty}\|u_{n}\|_{L^{\infty}(B_{r/2}(x^{0}))}\geqslant\eta r,
	\end{align*}
	which gives $x^{0}\in\partial\varOmega^{+}(u_{0})$. If $x^{0}\in\partial_{\mathrm{reg}}\varOmega^{+}(u_{0})$, so that every blow-up limit $u_{00}$ of $u_{0}$ at $x^{0}$ is of the form $Q(0)((x-x^{0})\cdot\nu)^{+}$ for some $\nu\in\partial B_{1}$. Moreover, there is a sequence $r_{n}\to0$ so that
	\begin{align*}
		\lim_{n\to\infty}\left\Vert\frac{u_{0}(x^{0}+r_{n}(x-x^{0}))}{r_{n}}-Q(0)((x-x^{0})\cdot\nu)^{+}\right\Vert_{L^{\infty}(B_{1})}=0.
	\end{align*}
	Let $y:=x^{0}+r_{n}(x-x^{0})$, one has
	\begin{align*}
		0=\lim_{n\to\infty}\frac{1}{r_{n}}\left\Vert u_{0}(y)-Q_{0}((y-x^{0})\cdot\nu)^{+}\right\Vert_{L^{\infty}(B_{r_{n}}(x^{0}))}.
	\end{align*}
	In particular, there exists $r\in(0,R)$ so that 
	\begin{align*}
		\|u_{0}(\cdot)-Q_{0}((\cdot-x^{0})\cdot\nu)^{+}\|_{L^{\infty}(B_{r}(x^{0}))}\leqslant\frac{\varepsilon r}{3}.
	\end{align*}
	Since $u_{0}$ is a continuous function, we have for sufficiently large $n$,
	\begin{align*}
		\|u_{0}(\cdot)-Q(0)((\cdot-x_{n})\cdot\nu)^{+}\|_{L^{\infty}(B_{r}(x_{n}))}\leqslant\frac{2\varepsilon r}{3}
	\end{align*}
	Since $u_{n}$ converges to $u_{0}$ uniformly, we get for sufficiently large $n$,
	\begin{align*}
		\|u_{n}(\cdot)-Q(0)((\cdot-x_{n})\cdot\nu)^{+}\|_{L^{\infty}(B_{r}(x_{n}))}\leqslant\varepsilon r.
	\end{align*}
	This implies that there exists $\bar{\varepsilon}>0$ so that $u_{n}$ is $\bar{\varepsilon}$-flat in $B_{r}(x_{n})$. Moreover, $u_{n}$ is a viscosity solution. Thanks to the regularity results we have $\partial\varOmega^{+}(u_{n})$ is $C^{1,\gamma}$ in a small neighborhood of $x_{n}$ and this implies that $x_{n}\in\partial_{\mathrm{reg}}\varOmega^{+}(u_{n})$, which yields a contradiction with the initial assumption.
	
	\textbf{Step II.} The minimality of $u$ and Proposition \ref{Proposition: Homogeneity of blow-up limits} (2) imply that $u_{0}\colon\mathbb{R}^{d^{*}}\to\mathbb{R}$ is a non-negative global minimizer of $J_{0}$ in $\mathbb{R}^{d^{*}}$. Then it follows from Lemma \ref{Lemma: Graph of u_{00}}, Lemma \ref{Lemma: Graph of u_{00}'} and similar argument in \cite{W1999} that
	\begin{align}\label{Formula: Sing (3)}
		\partial_{\mathrm{sing}}\varOmega^{+}(u_{0})\setminus\{0\}=\varnothing.
	\end{align}
	Let $x_{n}\in\partial_{\mathrm{sing}}\varOmega^{+}(u)$ and let  $r_{n}:=|x_{n}-x^{0}|$. Notice now that for every $n>0$ the point $\xi_{n}:=\frac{x_{n}-x^{0}}{r_{n}}\in\partial B_{1}$ is a singular point for $u_{n}$. Up to extracting a subsequence, we may assume that $\xi_{n}$ converges to a point $\xi_{0}\in\partial B_{1}$. It follows from Step I that $\xi_{0}\in\partial_{\mathrm{sing}}\varOmega^{+}(u_{0})$. This yields a contradiction to \eqref{Formula: Sing (3)}.
\end{proof}
\begin{proposition}\label{Proposition: d>d*}
	Let $u\in\mathcal{K}$ be a minimizer of $J$ in $\Omega\subset\mathbb{R}^{d}$ for $d>d^{*}$, then 
	\begin{align}\label{Formula: Sing (4)}
		\dim_{\mathcal{H}}(\partial_{\mathrm{sing}}\varOmega^{+}(u))\leqslant d-d^{*}.
	\end{align} 
\end{proposition}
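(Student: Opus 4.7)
The plan is to establish \eqref{Formula: Sing (4)} by Federer's dimension reduction argument, which reduces everything to understanding one-homogeneous global minimizers of the homogeneous functional $J_0$ from \eqref{Formula: W(12')} in lower dimensions. Two earlier results will do the bulk of the work. First, Proposition \ref{Proposition: Properties of blow-up limits} together with Proposition \ref{Proposition: Homogeneity of blow-up limits} guarantees that every blow-up limit of $u$ at a free boundary point is a non-negative one-homogeneous global minimizer of $J_0$. Second, Lemma \ref{Lemma: Graph of u_{00}} and Lemma \ref{Lemma: Graph of u_{00}'} provide the crucial reduction step: if $u_0$ is a one-homogeneous global minimizer of $J_0$ in $\mathbb{R}^d$ with a singular free boundary point at some $x^0\neq0$, then a blow-up of $u_0$ at $x^0$ is translation-invariant along $\mathbb{R} x^0$ and descends to a one-homogeneous global minimizer $\bar{u}$ of $J_0$ in $\mathbb{R}^{d-1}$ for which $0$ remains a singular free boundary point.

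The argument would then proceed in two steps. First, I would record the two ingredients needed for Federer's abstract dimension reduction lemma: (i) compactness of blow-ups, which is contained in items (1)--(5) of Proposition \ref{Proposition: Properties of blow-up limits}, giving $u_{x^0,r_n}\to u_0$ locally uniformly and strongly in $H^1_{\mathrm{loc}}$ to a non-negative one-homogeneous global minimizer of $J_0$; and (ii) closedness of $\partial_{\mathrm{sing}}\varOmega^+(u)$ in $\partial\varOmega^+(u)$, which is exactly Step I of the proof of Proposition \ref{Proposition: d=d^{*}} (if a sequence $x_n\in\partial_{\mathrm{sing}}\varOmega^+(u)$ converged to a regular point, De Silva's flatness-to-regularity would force $x_n$ to be regular for large $n$, a contradiction). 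Second, I would apply Federer's lemma to the class of singular sets of one-homogeneous global minimizers of $J_0$: by Definition \ref{Definition: d*} this class is empty in dimensions $d<d^*$, and by Proposition \ref{Proposition: d=d^{*}} every such singular set in dimension $d^*$ is contained in $\{0\}$. Iterating the reduction step $k:=d-d^*$ times then forces $\dim_{\mathcal{H}}(\partial_{\mathrm{sing}}\varOmega^+(u))\leqslant d-d^*$, since a strictly larger Hausdorff dimension would, after $k+1$ successive blow-ups and invariant splittings, produce a non-trivial one-homogeneous global minimizer of $J_0$ on $\mathbb{R}^{d^*-1}$ with a genuine singularity at the origin, contradicting the very definition of $d^*$.

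The main obstacle is not the Federer machinery itself, which is by now standard once (i) and (ii) are in place, but rather making sure that the descended function $\bar{u}$ produced at each step of the dimension reduction really is a global minimizer of the homogeneous functional $J_0$ on $\mathbb{R}^{d-1}$ and genuinely carries a singularity at the origin; both of these are asserted in Lemma \ref{Lemma: Graph of u_{00}} and Lemma \ref{Lemma: Graph of u_{00}'}, but one has to be careful that the splitting $u_{00}(x)=u_{00}(x+tx^0)$ passes through the minimality statement. A pleasant feature of the argument is that the non-homogeneous semilinear data $F$ and the variable coefficient $Q$ disappear the moment one takes the first blow-up, so the iteration happens entirely at the level of the Alt--Caffarelli-type functional $J_0$ with constant weight; no new \emph{semilinear} difficulty enters the dimension reduction.
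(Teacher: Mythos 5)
Your proposal is correct and follows essentially the same route as the paper: pass to blow-up limits, which by Propositions \ref{Proposition: Properties of blow-up limits} and \ref{Proposition: Homogeneity of blow-up limits} are one-homogeneous global minimizers of $J_{0}$, and then invoke Federer dimension reduction for that homogeneous class, with Lemmas \ref{Lemma: Graph of u_{00}} and \ref{Lemma: Graph of u_{00}'} supplying the splitting step. The paper's written proof is merely more compressed, arguing by contradiction from a point of positive $\mathcal{H}^{d-d^{*}+s}$ density and citing Federer's theorem as a black box, whereas you unpack the iteration explicitly; the content is the same.
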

\begin{proof}
	Let $d>d^{*}$ and let $s>0$ be fixed. We will prove
	\begin{align}\label{Formula: Sing (5)}
		\mathcal{H}^{d-d^{*}+s}(\partial_{\mathrm{sing}}\varOmega^{+}(u))=0.
	\end{align}
	Suppose that this is not the case and $\mathcal{H}^{d-d^{*}+s}(\partial_{\mathrm{sing}}\varOmega^{+}(u))>0$. Then it follows that there is a point $x^{0}\in\partial_{\mathrm{sing}}\varOmega^{+}(u)$ and a sequence $r_{n}\to0$ such that
	\begin{align*}
		\mathcal{H}^{d-d^{*}+s}(\partial_{\mathrm{sing}}\varOmega^{+}(u)\cap B_{r_{n}}(x^{0}))\geqslant\varepsilon r_{n}^{d-d^{*}+s}.
	\end{align*} 
	Choosing $u_{n}$ a blow-up sequence, we get
	\begin{align*}
		\mathcal{H}^{d-d^{*}+s}(\partial_{\mathrm{sing}}\varOmega^{+}(u_{n})\cap B_{1})\geqslant\varepsilon.
	\end{align*}
	Up to extracting a subsequence, $u_{n}$ converges to a blow-up limit $u_{0}$ and $u_{0}$ is a one-homogeneous global minimizer of $J_{0}$ in $\mathbb{R}^{d}$. Consequently,
	\begin{align*}
		\mathcal{H}^{d-d^{*}+s}(\partial_{\mathrm{sing}}\varOmega^{+}(u_{0})\cap B_{1})\geqslant\varepsilon,
	\end{align*}
	which yields a contradiction to Federer dimension reduction Theorem \cite{F1996} for one-homogeneous global minimizer of $J_{0}$ in $\mathbb{R}^{d}$.
\end{proof}

\end{document}